\newcommand{\bvs}{\mathbf{\varsigma}}
\newcommand{\vs}{\varsigma}
\begin{document}
\input xy
\xyoption{all}

%Theorem for the introduciton
\newtheorem{innercustomthm}{{\bf Main~Theorem}}
\newenvironment{customthm}[1]
  {\renewcommand\theinnercustomthm{#1}\innercustomthm}
  {\endinnercustomthm}

  \newtheorem{innercustomcor}{{\bf Corollary}}
\newenvironment{customcor}[1]
  {\renewcommand\theinnercustomcor{#1}\innercustomcor}
  {\endinnercustomthm}

  \newtheorem{innercustomprop}{{\bf Proposition}}
\newenvironment{customprop}[1]
  {\renewcommand\theinnercustomprop{#1}\innercustomprop}
  {\endinnercustomthm}

\newcommand{\iadd}{\operatorname{iadd}\nolimits}
\newcommand{\Gr}{\operatorname{Gr}\nolimits}
\newcommand{\FGS}{\operatorname{FGS}\nolimits}

\renewcommand{\mod}{\operatorname{mod^{\rm nil}}\nolimits}
\newcommand{\proj}{\operatorname{proj}\nolimits}
\newcommand{\inj}{\operatorname{inj.}\nolimits}
\newcommand{\rad}{\operatorname{rad}\nolimits}
\newcommand{\Span}{\operatorname{Span}\nolimits}
\newcommand{\soc}{\operatorname{soc}\nolimits}
\newcommand{\ind}{\operatorname{inj.dim}\nolimits}
\newcommand{\Ginj}{\operatorname{Ginj}\nolimits}
\newcommand{\res}{\operatorname{res}\nolimits}
\newcommand{\np}{\operatorname{np}\nolimits}
\newcommand{\Fac}{\operatorname{Fac}\nolimits}
\newcommand{\Aut}{\operatorname{Aut}\nolimits}
\newcommand{\DTr}{\operatorname{DTr}\nolimits}
\newcommand{\TrD}{\operatorname{TrD}\nolimits}

\newcommand{\Mod}{\operatorname{Mod}\nolimits}
\newcommand{\R}{\operatorname{R}\nolimits}
\newcommand{\End}{\operatorname{End}\nolimits}
\newcommand{\lf}{\operatorname{l.f.}\nolimits}
\newcommand{\Iso}{\operatorname{Iso}\nolimits}
\newcommand{\aut}{\operatorname{Aut}\nolimits}
\newcommand{\Ui}{{\mathbf U}^\imath}
\newcommand{\UU}{{\mathbf U}\otimes {\mathbf U}}
\newcommand{\UUi}{(\UU)^\imath}
\newcommand{\tUU}{{\tU}\otimes {\tU}}
\newcommand{\tUUi}{(\tUU)^\imath}
\newcommand{\tUi}{\widetilde{{\mathbf U}}^\imath}
\newcommand{\sqq}{{\bf v}}
\newcommand{\sqvs}{\sqrt{\vs}}
\newcommand{\dbl}{\operatorname{dbl}\nolimits}
\newcommand{\swa}{\operatorname{swap}\nolimits}
\newcommand{\Gp}{\operatorname{Gp}\nolimits}

\newcommand{\U}{{\mathbf U}}
\newcommand{\tU}{\widetilde{\mathbf U}}
\newcommand{\fgm}{{\rm mod}^{{\rm fg}}}
\newcommand{\fgmz}{\mathrm{mod}^{{\rm fg},\Z}}
\newcommand{\fdmz}{\mathrm{mod}^{{\rm nil},\Z}}

\newcommand{\ov}{\overline}
\newcommand{\und}{\underline}
\newcommand{\tk}{\widetilde{k}}
\newcommand{\tK}{\widetilde{K}}
\newcommand{\tTT}{\operatorname{\widetilde{\texttt{\rm T}}}\nolimits}

\newcommand{\tH}{\operatorname{{\ch}_{\rm{tw}}}\nolimits}

\newcommand{\utM}{\operatorname{\cm\ch}\nolimits}
\newcommand{\tM}{\operatorname{\cs\cd\widetilde{\ch}}\nolimits}
\newcommand{\rM}{\operatorname{\cm\ch_{\rm{red}}}\nolimits}
\newcommand{\utMH}{\cs\cd\ch(\Lambda^\imath)}
\newcommand{\tMH}{\cs\cd\widetilde{\ch}(\Lambda^\imath)}
\newcommand{\tCMH}{{\cc\widetilde{\ch}(\K Q,\btau)}}

\newcommand{\rMH}{\operatorname{\cm\ch_{\rm{red}}(\Lambda^\imath)}\nolimits}
\newcommand{\utMHg}{\operatorname{\ch(Q,\btau)}\nolimits}
\newcommand{\tMHg}{\operatorname{\widetilde{\ch}(Q,\btau)}\nolimits}
\newcommand{\tMHk}{{\widetilde{\ch}(\K Q,\btau)}}
\newcommand{\rMHg}{\operatorname{\ch_{\rm{red}}(Q,\btau)}\nolimits}

\newcommand{\rMHd}{\operatorname{\cm\ch_{\rm{red}}(\Lambda^\imath)_{\bvsd}}\nolimits}
\newcommand{\tMHd}{\operatorname{\cs\cd\widetilde{\ch}(\Lambda^\imath)_{\bvsd}}\nolimits}

%LU
\newcommand{\tMHl}{\cs\cd\widetilde{\ch}({\bs}_\ell\Lambda^\imath)}
\newcommand{\rMHl}{\cm\ch_{\rm{red}}({\bs}_\ell\Lambda^\imath)_{\bvsd}}
\newcommand{\tMHi}{\cs\cd\widetilde{\ch}({\bs}_i\Lambda^\imath)}
\newcommand{\rMHi}{\cm\ch_{\rm{red}}({\bs}_i\Lambda^\imath)_{\bvsd}}
\newcommand{\tMHgi}{\widetilde{\ch}({\bs}_i Q,\btau)}

\newcommand{\utGpg}{\operatorname{\ch^{\rm Gp}(Q,\btau)}\nolimits}
\newcommand{\tGpg}{\operatorname{\widetilde{\ch}^{\rm Gp}(Q,\btau)}\nolimits}
\newcommand{\rGpg}{\operatorname{\ch_{red}^{\rm Gp}(Q,\btau)}\nolimits}

\newcommand{\colim}{\operatorname{colim}\nolimits}
\newcommand{\gldim}{\operatorname{gl.dim}\nolimits}
\newcommand{\cone}{\operatorname{cone}\nolimits}
\newcommand{\rep}{\operatorname{rep}\nolimits}
\newcommand{\Ext}{\operatorname{Ext}\nolimits}
\newcommand{\Tor}{\operatorname{Tor}\nolimits}
\newcommand{\Hom}{\operatorname{Hom}\nolimits}
\newcommand{\Top}{\operatorname{top}\nolimits}
\newcommand{\Coker}{\operatorname{Coker}\nolimits}
\newcommand{\thick}{\operatorname{thick}\nolimits}
\newcommand{\rank}{\operatorname{rank}\nolimits}
\newcommand{\Gproj}{\operatorname{Gproj}\nolimits}
\newcommand{\Len}{\operatorname{Length}\nolimits}
\newcommand{\RHom}{\operatorname{RHom}\nolimits}
\renewcommand{\deg}{\operatorname{deg}\nolimits}
\renewcommand{\Im}{\operatorname{Im}\nolimits}
\newcommand{\Ker}{\operatorname{Ker}\nolimits}
\newcommand{\Coh}{\operatorname{Coh}\nolimits}
\newcommand{\Id}{\operatorname{Id}\nolimits}
\newcommand{\Qcoh}{\operatorname{Qch}\nolimits}
\newcommand{\CM}{\operatorname{CM}\nolimits}
\newcommand{\sgn}{\operatorname{sgn}\nolimits}
\newcommand{\Gdim}{\operatorname{G.dim}\nolimits}
\newcommand{\fpr}{\operatorname{\mathcal{P}^{\leq1}}\nolimits}

\newcommand{\For}{\operatorname{{\bf F}or}\nolimits}
\newcommand{\coker}{\operatorname{Coker}\nolimits}
\renewcommand{\dim}{\operatorname{dim}\nolimits}
\newcommand{\rankv}{\operatorname{\underline{rank}}\nolimits}
\newcommand{\dimv}{{\operatorname{\underline{dim}}\nolimits}}
\newcommand{\diag}{{\operatorname{diag}\nolimits}}
\newcommand{\qbinom}[2]{\begin{bmatrix} #1\\#2 \end{bmatrix} }

\renewcommand{\Vec}{{\operatorname{Vec}\nolimits}}
\newcommand{\pd}{\operatorname{proj.dim}\nolimits}
\newcommand{\gr}{\operatorname{gr}\nolimits}
\newcommand{\id}{\operatorname{Id}\nolimits}
\newcommand{\Res}{\operatorname{Res}\nolimits}
\def \tT{\widetilde{\mathcal T}}
\def \tTL{\tT(\Lambda^\imath)}

\newcommand{\mbf}{\mathbf}
\newcommand{\mbb}{\mathbb}
\newcommand{\mrm}{\mathrm}
\newcommand{\cbinom}[2]{\left\{ \begin{matrix} #1\\#2 \end{matrix} \right\}}
\newcommand{\dvev}[1]{{B_1|}_{\ev}^{{(#1)}}}
\newcommand{\dv}[1]{{B_1|}_{\odd}^{{(#1)}}}
\newcommand{\dvd}[1]{t_{\odd}^{{(#1)}}}
\newcommand{\dvp}[1]{t_{\ev}^{{(#1)}}}
\newcommand{\ev}{\bar{0}}
\newcommand{\odd}{\bar{1}}
\newcommand{\Iblack}{\I_{\bullet}}
\newcommand{\wb}{w_\bullet}
\newcommand{\Uidot}{\dot{\bold{U}}^{\imath}}

\newcommand{\kk}{h}
\newcommand{\la}{\lambda}
\newcommand{\LR}[2]{\left\llbracket \begin{matrix} #1\\#2 \end{matrix} \right\rrbracket}
\newcommand{\ff}{B}
\newcommand{\pdim}{\operatorname{proj.dim}\nolimits}
\newcommand{\idim}{\operatorname{inj.dim}\nolimits}
\newcommand{\Gd}{\operatorname{G.dim}\nolimits}
\newcommand{\Ind}{\operatorname{Ind}\nolimits}
\newcommand{\add}{\operatorname{add}\nolimits}
\newcommand{\pr}{\operatorname{pr}\nolimits}
\newcommand{\oR}{\operatorname{R}\nolimits}
\newcommand{\oL}{\operatorname{L}\nolimits}
\newcommand{\ext}{{ \mathfrak{Ext}}}
\newcommand{\Perf}{{\mathfrak Perf}}
\def\scrP{\mathscr{P}}
\newcommand{\bk}{{\mathbb K}}
\newcommand{\cc}{{\mathcal C}}
\newcommand{\gc}{{\mathcal GC}}
\newcommand{\dg}{{\rm dg}}
\newcommand{\ce}{{\mathcal E}}
\newcommand{\cs}{{\mathcal S}}
\newcommand{\cl}{{\mathcal L}}
\newcommand{\cf}{{\mathcal F}}
\newcommand{\cx}{{\mathcal X}}
\newcommand{\cy}{{\mathcal Y}}
\newcommand{\ct}{{\mathcal T}}
\newcommand{\cu}{{\mathcal U}}
\newcommand{\cv}{{\mathcal V}}
\newcommand{\cn}{{\mathcal N}}
\newcommand{\mcr}{{\mathcal R}}
\newcommand{\ch}{{\mathcal H}}
\newcommand{\ca}{{\mathcal A}}
\newcommand{\cb}{{\mathcal B}}
\newcommand{\ci}{{\I}_{\btau}}
\newcommand{\cj}{{\mathcal J}}
\newcommand{\cm}{{\mathcal M}}
\newcommand{\cp}{{\mathcal P}}
\newcommand{\cg}{{\mathcal G}}
\newcommand{\cw}{{\mathcal W}}
\newcommand{\co}{{\mathcal O}}
\newcommand{\cq}{{Q^{\rm dbl}}}
\newcommand{\cd}{{\mathcal D}}
\newcommand{\ck}{\widetilde{\mathcal K}}
\newcommand{\calr}{{\mathcal R}}
\newcommand{\iLa}{\Lambda^{\imath}}
\newcommand{\La}{\Lambda}
\newcommand{\ol}{\overline}
\newcommand{\ul}{\underline}
\newcommand{\st}{[1]}
\newcommand{\ow}{\widetilde}
\renewcommand{\P}{\mathbf{P}}
\newcommand{\pic}{\operatorname{Pic}\nolimits}
\newcommand{\Spec}{\operatorname{Spec}\nolimits}

\newtheorem{theorem}{Theorem}[section]
\newtheorem{acknowledgement}[theorem]{Acknowledgement}
\newtheorem{algorithm}[theorem]{Algorithm}
\newtheorem{assumption}[theorem]{Assumption}
\newtheorem{axiom}[theorem]{Axiom}
\newtheorem{case}[theorem]{Case}
\newtheorem{claim}[theorem]{Claim}
\newtheorem{conclusion}[theorem]{Conclusion}
\newtheorem{condition}[theorem]{Condition}
\newtheorem{conjecture}[theorem]{Conjecture}
\newtheorem{construction}[theorem]{Construction}
\newtheorem{corollary}[theorem]{Corollary}
\newtheorem{criterion}[theorem]{Criterion}
\newtheorem{definition}[theorem]{Definition}
\newtheorem{example}[theorem]{Example}
\newtheorem{exercise}[theorem]{Exercise}
\newtheorem{lemma}[theorem]{Lemma}
\newtheorem{notation}[theorem]{Notation}
\newtheorem{problem}[theorem]{Problem}
\newtheorem{proposition}[theorem]{Proposition}
\newtheorem{solution}[theorem]{Solution}
\newtheorem{summary}[theorem]{Summary}
\numberwithin{equation}{section}

\theoremstyle{remark}
\newtheorem{remark}[theorem]{Remark}
\newcommand{\Pd}{\pi_*}
\def \bvs{{\boldsymbol{\varsigma}}}
\def \bvsd{{\boldsymbol{\varsigma}_{\diamond}}}
\def \btau{{{\tau}}}

\def \bp{{\mathbf p}}
\def \bq{{\bm q}}
\def \bv{{v}}
\def \bs{{\bm s}}

\def \bfK{{\mathbf K}}

\newcommand{\tCMHg}{\cc\widetilde{\ch}(Q,\btau)}
\newcommand{\bfv}{\mathbf{v}}
\def \bA{{\mathbf A}}
\def \ba{{\mathbf a}}
\def \bL{{\mathbf L}}
\def \bF{{\mathbf F}}
\def \bS{{\mathbf S}}
\def \bC{{\mathbf C}}
\def \bU{{\mathbf U}}
\def \bc{{\mathbf c}}
\def \fpi{\mathfrak{P}^\imath}
\def \Ni{N^\imath}
\def \fp{\mathfrak{P}}
\def \fg{\mathfrak{g}}
\def \fk{\fg^\theta}  %\mathfrak{k}}

\def \fn{\mathfrak{n}}
\def \fh{\mathfrak{h}}
\def \fu{\mathfrak{u}}
\def \fv{\mathfrak{v}}
\def \fa{\mathfrak{a}}
\def \fq{\mathfrak{q}}
\def \Z{{\Bbb Z}}
\def \F{{\Bbb F}}
\def \D{{\Bbb D}}
\def \C{{\Bbb C}}
\def \N{{\Bbb N}}
\def \Q{{\Bbb Q}}
\def \G{{\Bbb G}}
\def \P{{\Bbb P}}
\def \K{{\mathbf k}}
\def \bK{{\Bbb K}}

\def \E{{\Bbb E}}
\def \A{{\Bbb A}}
\def \L{{\Bbb L}}
\def \I{{\Bbb I}}
\def \BH{{\Bbb H}}
\def \T{{\Bbb T}}
\newcommand{\TT}{\operatorname{\texttt{\rm T}}\nolimits}
\newcommand {\lu}[1]{\textcolor{red}{$\clubsuit$: #1}}

\newcommand{\nc}{\newcommand}
\newcommand{\browntext}[1]{\textcolor{brown}{#1}}
\newcommand{\greentext}[1]{\textcolor{green}{#1}}
\newcommand{\redtext}[1]{\textcolor{red}{#1}}
\newcommand{\bluetext}[1]{\textcolor{blue}{#1}}
\newcommand{\brown}[1]{\browntext{ #1}}
\newcommand{\green}[1]{\greentext{ #1}}
\newcommand{\red}[1]{\redtext{ #1}}
\newcommand{\blue}[1]{\bluetext{ #1}}

%todo
\newcommand{\wtodo}{\todo[inline,color=orange!20, caption={}]}
\newcommand{\lutodo}{\todo[inline,color=green!20, caption={}]}

%%%%%
\title[Hall algebras and quantum symmetric pairs of Kac-Moody type]{Hall algebras and quantum symmetric pairs of Kac-Moody type}

\author[Ming Lu]{Ming Lu}
\address{Department of Mathematics, Sichuan University, Chengdu 610064, P.R.China}
\email{luming@scu.edu.cn}

\author[Weiqiang Wang]{Weiqiang Wang}
\address{Department of Mathematics, University of Virginia, Charlottesville, VA 22904, USA}
\email{ww9c@virginia.edu}

\subjclass[2010]{Primary 17B37, %17B67.}
16E45, 18E30.}  %, 18E35}
\keywords{Hall algebras, Quantum groups, Quantum symmetric pairs, $\imath$Serre relation}

\begin{abstract}
We extend our $\imath$Hall algebra construction from acyclic to arbitrary $\imath$quivers, where the $\imath$quiver algebras are infinite-dimensional 1-Gorenstein in general. Then we establish an injective homomorphism from the universal $\imath$quantum group of Kac-Moody type arising from quantum symmetric pairs to the $\imath$Hall algebra associated to a virtually acyclic $\imath$quiver.
\end{abstract}

\maketitle
 \setcounter{tocdepth}{1}
 \tableofcontents

%%%%%%%
%%%%%%%
\section{Introduction}

\subsection{}

In \cite{LW19a}, the authors formulated the $\imath$Hall algebras, denoted by $\tMHk$ in this paper, of $\imath$quiver algebras $\iLa$ associated to acyclic $\imath$quivers $(Q,\tau)$ over a finite field $\K= \mathbb F_q$ in the framework of semi-derived Ringel-Hall algebras of 1-Gorenstein algebras \cite{LP, Lu19}. %; for ADE $\imath$quivers, the $\imath$Hall algebras provide a realization of the universal $\imath$quantum groups $\tUi$ arising from quantum symmetric pairs.
 This new form of Hall algebras was motivated by the constructions of Bridgeland's Hall algebra of complexes \cite{Br} and Gorsky's semi-derived Hall algebras \cite{Gor1, Gor2} (which were in turn built on \cite{Rin, L90, Gr95}; for a survey see \cite{Sch06}). The $\imath$Hall algebras of $\imath$quiver algebras were conjectured to provide a realization of the universal $\imath$quantum groups arising from quasi-split quantum symmetric pairs of Kac-Moody type, and for finite type this was established in \cite{LW19a}.

 Bridgeland's Hall algebra construction in \cite{Br} produces the Drinfeld double $\tU$ of a quantum group $\U$, and our $\imath$Hall algebra construction produces a universal $\imath$quantum group $\tUi$. The main difference between the $\imath$quantum groups $\Ui$ (namely, the quantum symmetric pair coideal subalgebra of $\U$) \`a la G. ~Letzter \cite{Let99} and the universal $\imath$quantum groups $\tUi$ (a coideal subalgebra of $\tU$) in \cite{LW19a} is that $\Ui$ depends on various parameters while $\tUi$ admit various central elements. A central reduction of $\tUi$ recovers $\Ui$.

We view $\imath$quantum groups as a vast generalization of Drinfeld-Jimbo quantum groups, and aim at extending various fundamental constructions from quantum groups to $\imath$quantum groups \cite{BW18a} (see also \cite{BW18b, FL+20}). Bridgeland's Hall algebra realization of a quantum group \cite{Br} has been reformulated in \cite{LW19a} as $\imath$Hall algebra for $\imath$quivers of diagonal type, just as a quantum group can be viewed as an $\imath$quantum group of diagonal type.

A Serre presentation of {\em quasi-split} $\imath$quantum groups $\Ui$ of Kac-Moody type is more complicated than a Serre presentation (which is the definition) of a quantum group, and it was recently completed in full generality in our work joint with X.~Chen \cite{CLW18}. Our work was built on partial results in \cite{Ko14, BK15} in Kac-Moody setting; a complete presentation of $\Ui$ in finite type was already given earlier by Letzter \cite{Let02}. A crucial relation, known as the $\imath$Serre relation, in the final presentation for $\Ui$, involves the $\imath$divided powers which originated from the theory of canonical basis for quantum symmetric pairs \cite{BW18a, BeW18}. The $\imath$divided powers come in 2 forms, depending on a parity.

\subsection{}

In this paper, we first extend the definition of $\imath$Hall algebra from acyclic $\imath$quivers as treated in \cite{LW19a} to general $\imath$quivers (allowing oriented cycles), $(Q,\tau)$. Since the $\imath$quiver algebra $\Lambda^\imath$ associated to a non-acyclic $\imath$quiver is infinite-dimensional (and still 1-Gorenstein), the technique of Bridgeland's Hall algebras or Gorsky's semi-derived Hall algebras does not seem to apply.  However, the foundation (such as singularity categories and Hall basis) for the semi-derived Ringel-Hall algebra of 1-Gorenstein algebras in \cite{Lu19} (see also \cite{LP}) can be extended to this infinite-dimensional setting. To keep the exposition at a reasonable length, we have chosen to focus on formulating the $\imath$Hall algebra $\tMHk$ and its main properties (instead of treating general 1-Gorenstein algebras; see Remark~\ref{rem:MH}). Some of these new technical developments can be applied to shed new light to the Hall algebra realization of Drinfeld-Jimbo quantum groups via non-acyclic quivers.

Motivated by the connection to the $\imath$quantum groups $\tUi$, we formulate the notion of virtually acyclic $\imath$quivers; see Definition~\ref{def:i-acyclic}. The virtually acyclic $\imath$quivers include all acyclic $\imath$quivers, but also allow the generalized Kronecker $\imath$quivers $Q$ \eqref{eq:Kq} as new rank one $\imath$subquivers.
By the requirement of $\imath$quivers that the nontrivial involution $\tau$ preserves the generalized Kronecker quiver $Q$, the number of arrows in $Q$ is necessarily even. This translates into that the generalized Cartan matrix $C=(c_{ij})_{i,j\in \I}$ associated to $Q$ satisfies that $c_{i, \tau i} \in -2 \N$ whenever $i\neq \tau i$. (In the setting of \cite{LW19a}, the acyclic condition on $\imath$quivers imposes that $c_{i, \tau i} =0$ whenever $i\neq \tau i$.)

The $\imath$divided powers in the setting of $\tUi$ are formulated in \eqref{eq:iDPodd}--\eqref{eq:iDPev}, by suitably modifying earlier versions in $\Ui$ in various generalities (cf. \cite{BW18a, BeW18, CLW18, Li20}). These $\imath$divided powers are then used to provide a presentation of $\tUi$ with generators $B_i, \tk_i$\; $(i\in \I)$ subject to relations \eqref{relation1}--\eqref{relation6} in  Theorem~\ref{thm:Serre}, a variant of the presentation for $\Ui$ in \cite{CLW18}.

With the above constructions in place, we are ready to formulate the main result of this paper, which generalizes \cite[Theorem~7.7]{LW19a} for ADE type and, in case of acyclic $\imath$quivers, settles \cite[Conjecture~7.9]{LW19a} completely. Set $\sqq =\sqrt{q}$.

\begin{customthm}{} [Theorem~\ref{thm:main}]
 Let $(Q, \btau)$ be a virtually acyclic $\imath$quiver. Then there exists a $\Q(\sqq)$-algebra monomorphism
$ %\begin{align*}
\widetilde{\psi}: \tUi_{|v= \sqq} \longrightarrow \tMHk,
$ %\end{align*}
which sends
\begin{align*}
B_j \mapsto \frac{-1}{q-1}[S_{j}],\text{ if } j\in\ci,
&\qquad\qquad
\tk_i \mapsto - q^{-1}[\bK_i], \text{ if }\btau i=i \in \I;
  \\
B_{j} \mapsto \frac{{\sqq}}{q-1}[S_{j}],\text{ if }j\notin \ci,
&\qquad\qquad
\tk_i \mapsto \sqq^{\frac{-c_{i,\btau i}}{2}}[\bK_i],\quad \text{ if }\btau i\neq i \in \I.
  \end{align*}
\end{customthm}

\subsection{}

There are 2 relations for $\tUi$ which are quite involved at this level of generality, namely the BK relation \eqref{relation5} (which goes back to \cite{BK15}) and the $\imath$Serre relation \eqref{relation6}. The main new technical difficulty in showing that $\widetilde{\psi}$ in the Main Theorem is a homomorphism is to verify the BK relation \eqref{relation5} and especially the $\imath$Serre relation \eqref{relation6} in the $\imath$Hall algebra $\tMHk$. (In contrast, in the ADE setting of \cite{LW19a}, the relation \eqref{relation5} is easy thanks to $i\neq \tau i$ and hence $c_{i, \tau i} =0$, while the $\imath$Serre relation \eqref{relation6} for $c_{ij}=-1$ is all one needs to verify.)

The proof of the relation \eqref{relation5} in the $\imath$Hall algebra $\tMHk$ requires some interesting Hall algebra computation in Section~\ref{sec:Relation5}. In particular, we are able to see clearly how the two summands in \eqref{relation5} arise from the viewpoint of Hall algebra.

The verification of the $\imath$Serre relation \eqref{relation6} in the $\imath$Hall algebra setting is highly nontrivial and occupies Sections~\ref{sec:iDP} through \ref{sec:comb}. The strategy here bears some similarities with that used in establishing the $\imath$Serre relation for $\Ui$; see \cite{CLW18}. The expansion formulas \cite{BeW18} for the $\imath$divided powers in terms of PBW basis of $\U$ are used {\em loc. cit.} to reduce the verification of the $\imath$Serre relation in $\Ui$ to some new $v$-binomial identity, which was then established after some serious work.

In the current $\imath$Hall algebra setting, we first establish closed formulas for the $\imath$divided powers in terms of an $\imath$Hall basis; see Propositions~\ref{prop:iDPev}--\ref{prop:iDPodd}. These formulas are of independent interest and have other applications; for example, they will play a basic role in our forthcoming work when we develop further the reflection functors on $\imath$Hall algebras \cite{LW19b} to establish a conjecture in \cite{CLW20} on the braid group action on $\tUi$. The existence of such closed formulas (as well as those in \cite{BeW18}) is in our view a manifestation of the basic nature of $\imath$divided powers. (In contrast, closed formulas for monomials $[S_i]^{*n}$ or $B_i^n$, for $i=\tau i$ and $n\in \N$, in terms of Hall basis or PBW basis are unknown.)

Next we convert the summation in the $\imath$Serre relation (which are defined via $\imath$divided powers) into a linear combination of the $\imath$Hall basis, and a new quantum binomial identity arises this way. We eventually reduce the proof of this identity (see Proposition~\ref{prop:T=0}) further to the following identities  \eqref{eq:km5}--\eqref{eq:kmrd}: for $p, d\ge 1$,
\begin{align*}
\sum_{k=0}^{p} v^{-k(p-k+1)} \qbinom{p}{k}  & =  \prod_{j=1}^p (1+v^{-j}),
\qquad
\sum_{\stackrel{k,m,r \in \N}{k+m+r =d}} (-1)^r \frac{v^{{r+1 \choose 2} -2(k-1)m}}{[r]^! [2k]^{!!} [2m]^{!!}} =0.
\end{align*}
(The first $v$-binomial formula here is non-standard, and as we learned from G.~Andrews, it is a variant of a known identity of Rogers-Szeg\"o polynomials, cf. \cite[Exercise~ 5, pp.49]{An98}.)

Both $\tUi$ and $\tMHk$ admit natural filtered algebra structures, whose associated graded are half a quantum group $\U^-$ and Ringel-Hall algebra $\widetilde{\ch}(\K Q)$ over a quantum torus, respectively. Once we know that $\widetilde{\psi}$ is a homomorphism, the injectivity of $\widetilde{\psi}$ can be established by applying some filtered algebra argument and reducing to the main theorem of Ringel and Green on Hall algebra realization of $\U^-$.

\subsection{}
Note that a general quiver (possibly with loops) leads to a Borcherds-Cartan matrix, Borcherds Lie algebra and its corresponding quantum group. The theory of $\imath$Hall algebras developed in Sections~\ref{sec:quiver}--\ref{sec:Hall} and a conjectural extension of Theorem~\ref{thm:main} for general $\imath$quivers call for a development of a
 theory of quantum symmetric pairs and $\imath$quantum groups associated to Borcherds-Cartan matrices, which should be of independent interest.

\subsection{}

The paper is organized as follows.
In Section~\ref{sec:quiver}, following and generalizing \cite{LW19a}, we formulate the $\imath$quiver algebras, their projective modules and singularity categories in the generality of arbitrary $\imath$quivers. This requires us to overcome various technical issues.
The $\imath$Hall algebras of $\imath$quiver algebras associated to general $\imath$quivers and their $\imath$Hall bases are established in Section~\ref{sec:Hall}.

In Section~\ref{sec:iQG}, we review and set up notations for quantum groups and $\imath$quantum groups. A Serre presentation for $\tUi$ is formulated.
The verification of the BK relation \eqref{relation5} in the $\imath$Hall algebra is taken up in Section~\ref{sec:Relation5}.

In Section~\ref{sec:iDP}, we formulate and establish the Hall basis expansion formulas for the $\imath$divided powers. These formulas are applied in Section~\ref{sec:SerreRel} to reduce the verification of the $\imath$Serre relation \eqref{relation6} in the $\imath$Hall algebra to a new $v$-binomial identity; the proof of this identity is given in section~\ref{sec:comb}.

Finally, in Section~\ref{sec:iQGHall} we verify the remaining defining relations for $\tUi$ in the $\imath$Hall algebra setting. We complete the proof of the main Theorem~\ref{thm:main}, providing a Hall algebra realization of the quasi-split $\imath$quantum groups of Kac-Moody type.

\vspace{2mm}
\noindent{\bf Acknowledgments.}
We thank Changjian Fu and Yang Han for helpful discussions on quiver algebras. ML thanks Liangang Peng for his continuing encouragement and helpful discussions on Hall algebras. ML thanks for University of Virginia for hospitality and support. We thank East China Normal University for hospitality and support which helps to facilitate this collaboration. WW is partially supported by the NSF grant DMS-1702254 and DMS-2001351.

\section{$\imath$Quiver algebras and homological properties}
 \label{sec:quiver}

%We construct the $\imath$Hall algebra for all quivers (not acyclic) in this part. \blue{give later}.

In this section, we review and generalize the $\imath$quiver algebras $\Lambda^\imath$ and their homological properties from acyclic $\imath$quivers to general $\imath$quivers allowing oriented cycles. Following \cite[\S3]{LW19a}, we shall prove that $\Lambda^\imath$ is $1$-Gorenstein algebra, describe its singularity category $D_{sg}(\mod(\Lambda^\imath))$ and characterize the finite-dimensional nilpotent modules of finite projective dimensions. However, since the $\imath$quiver algebra $\Lambda^\imath$ may be infinite-dimensional, various results for $\fgm(\Lambda^\imath)$ and $D_{sg}(\fgm(\Lambda^\imath))$, known for $\iLa$  finite-dimensional, have to be reestablished for $\mod(\Lambda^\imath)$ and $D_{sg}(\mod(\Lambda^\imath))$ (see Lemma \ref{lem: resolution} and Lemma \ref{lem: iso in singularity}).

\subsection{Notations}

Let $\K$ be a field.
For a quiver algebra $A=\K Q/I$ (not necessarily finite-dimensional), a representation $V=(V_i,V(\alpha))_{i\in Q_0,\alpha\in Q_1}$ of $A$ is called {\em nilpotent} if for each oriented cycle $\alpha_m\cdots\alpha_1$ at a vertex $i$, the $\K$-linear map $V(\alpha_m)\cdots V(\alpha_1):V_i\rightarrow V_i$ is nilpotent. We denote

$\triangleright$ $\fgm(A)$ -- category of finitely generated (left) $A$-modules

$\triangleright$ $\proj(A)$ -- category of finitely generated projective $A$-modules

$\triangleright$ $\mod(A)$ -- category of finite-dimensional nilpotent $A$-modules

$\triangleright$ $K^b(\proj(A))$ -- bounded homotopy category of $\proj(A)$

$\triangleright$ $D^b(\fgm(A))$ -- bounded derived category of $\fgm(A)$, with shift functor $\Sigma$

$\triangleright$ $D^b(\mod(A))$ -- bounded derived category for $\mod(A)$

$\triangleright$ ${\rm proj.dim}_AM$ -- projective dimension of an $A$-module $M$

$\triangleright$ ${\rm inj.dim}_AM$ -- injective dimension of $M$

%Let $\fgm(A)$ be the category of finitely generated (left) $A$-modules, and let $\proj(A)$ be the subcategory of finitely generated projective $A$-modules. Let $D^b(\fgm(A))$ be the bounded derived category of $\fgm(A)$, and $K^b(\proj(A))$ the bounded homotopy category of $\proj(A)$, with  their shift functors denoted by $\Sigma$. Denote by $\mod(A)$ the category of finite-dimensional nilpotent $A$-modules, and $D^b(\mod(A))$ its bounded derived category. For any $A$-module $M$, denote by $\pd_{A}M$ its projective dimension, and by ${\rm inj.dim}_AM$ its injective dimension.

%
%
\subsection{The $\imath$quiver algebras}
\label{subsec:iquiveralg}

Let $Q=(Q_0,Q_1)$ be a general quiver (where oriented cycles are allowed). Throughout the paper, we shall identify $Q_0=\I$. An {\em involution} of $Q$ is defined to be an automorphism $\btau$ of the quiver $Q$ such that $\btau^2=\Id$. In particular, we allow the {\em trivial} involution $\Id:Q\rightarrow Q$. An involution $\btau$ of $Q$ induces an involution of the path algebra $\K Q$, again denoted by $\btau$.
A quiver together with an involution $\btau$, $(Q, \btau)$, will be called an {\em $\imath$quiver}.

Let $R_1$ denote the truncated polynomial algebra $\K[\varepsilon]/(\varepsilon^2)$.
Let $R_2$ denote the radical square zero of the path algebra of $\xymatrix{1 \ar@<0.5ex>[r]^{\varepsilon} & 1' \ar@<0.5ex>[l]^{\varepsilon'}}$, i.e., $\varepsilon' \varepsilon =0 =\varepsilon\varepsilon '$. Define a $\K$-algebra
\begin{equation}
  \label{eq:La}
\Lambda=\K Q\otimes_\K R_2.
\end{equation}

Associated to the quiver $Q$, the {\em double framed quiver} $Q^\sharp$ is the quiver such that
\begin{itemize}
\item
the vertex set of $Q^{\sharp}$ consists of 2 copies of the vertex set $Q_0$, $\{i,i'\mid i\in Q_0\}$;
\item
the arrow set of $Q^{\sharp}$ is
\[
\{\alpha: i\rightarrow j,\alpha': i'\rightarrow j'\mid(\alpha:i\rightarrow j)\in Q_1\}\cup\{ \varepsilon_i: i\rightarrow i' ,\varepsilon'_i: i'\rightarrow i\mid i\in Q_0 \}.
\]
\end{itemize}
Note $Q^\sharp$ admits a natural involution, denoted by $\swa$. The involution $\btau$ of a quiver $Q$ induces an involution ${\btau}^{\sharp}$ of $Q^{\sharp}$ which is basically the composition of $\swa$ and $\btau$ (on the two copies of subquivers $Q$ and $Q'$ of $Q^\sharp$), cf. \cite[\S2.1]{LW19a}.

The algebra $\Lambda$ can be described in terms of the quiver $Q^{\sharp}$ with relations \cite[\S2.2]{LW19a}. More precisely, we have $\Lambda\cong \K Q^{\sharp} \big/ I^{\sharp}$, where $I^{\sharp}$ is the admissible ideal of $\K Q^{\sharp}$ generated by
\begin{itemize}
\item
$\varepsilon_i \varepsilon_i'$, $\varepsilon_i'\varepsilon_i$ for each $i\in Q_0$;
\item
$\varepsilon_j' \alpha' -\alpha\varepsilon_i'$, $\varepsilon_j \alpha -\alpha'\varepsilon_i$ for each $(\alpha:i\rightarrow j)\in Q_1$.
\end{itemize}
%Let $Q$ (respectively, $Q'$) be the full subquiver of $Q^{\sharp}$ formed by all vertices $i$ (respectively, $i'$) for $i\in Q_0$. Then $Q\sqcup Q'$ is a subquiver of $Q^{\sharp}$.

By \cite[Lemma~2.4]{LW19a}, ${\btau}^{\sharp}$ on $Q^\sharp$ preserves $I^\sharp$ and hence induces an involution ${\btau}^{\sharp}$ on the algebra $\Lambda$. The {\em $\imath$quiver algebra} of $(Q, \btau)$ is defined to be the ${\btau}^{\sharp}$-fixed point subalgebra of $\Lambda$:
\begin{equation}
   \label{eq:iLa}
\iLa
= \{x\in \Lambda\mid {\btau}^{\sharp}(x) =x\}.
\end{equation}

Let $\ov{Q}$ be a new quiver obtained from $Q$ by adding a loop $\varepsilon_i$ at the vertex $i\in Q_0$ if $\btau i=i$, and adding an arrow $\varepsilon_i: i\rightarrow \btau i$ for each $i\in Q_0$ if $\btau i\neq i$. The algebra $\iLa$ can be described in terms of the quiver $\ov Q$ with relations, cf. \cite[Proposition 2.6]{LW19a}; that is, $\iLa \cong \K \ov{Q} / \ov{I}$, where $\ov{I}$ is generated by
\begin{itemize}
\item
$\varepsilon_{i}\varepsilon_{\btau i}$ for each $i\in Q_0$;
\item
$\varepsilon_i\alpha-\btau(\alpha)\varepsilon_j$ for each arrow $\alpha:j\rightarrow i$ in $Q_1$.
\end{itemize}
The algebras $\Lambda$ and $\Lambda^\imath$ are finitely generated and hence are Neotherian.
Note also that $\Lambda^\imath$ is finite dimensional if and only if $Q$ is acyclic. We call $(Q,\btau)$ an {\em acyclic $\imath$quiver} if $Q$ is acyclic.

Note that $\Lambda^{\imath}$ is an $\N$-graded algebra, $\Lambda^\imath=\Lambda^{\imath}_0 \bigoplus \Lambda^\imath_1$,  where $\Lambda^{\imath}_0= \K Q$, with the grading $|\cdot |$ given by
$|\varepsilon_i|=1,
|\alpha| =0
$,
for $i\in \I$ and $\alpha$ in $Q\subseteq \ov{Q}$. It follows that $\K Q$ is naturally a subalgebra and also a quotient algebra of $\Lambda^\imath$, cf. \cite[Corollary 2.12]{LW19a}.

%Viewing $\K Q$ as a subalgebra of $\Lambda^{\imath}$, we have a restriction functor
%\[
%\res: \mod (\Lambda^{\imath})\longrightarrow \mod (\K Q).
%\]
%Viewing $\K Q$ as a quotient algebra of $\Lambda^{\imath}$, we obtain a pullback functor
%\begin{equation}\label{eqn:rigt adjoint}
%\iota:\mod(\K Q)\longrightarrow\mod(\Lambda^{\imath}).
%\end{equation}
%Hence a simple module $S_i$ $(i\in Q_0)$ of $\K Q$ is naturally a simple $\iLa$-module.

%
%
\subsection{$\Lambda^\imath$ as a $1$-Gorenstein algebra}
 \label{subsec:Goren}

Similar to \cite[Remark 2.11]{LW19a}, we obtain a pushdown functor
\begin{align}  \label{eq:pi}
\Pd:\mod (\Lambda) \longrightarrow \mod (\Lambda^{\imath}).
\end{align}
In particular, $\Pd$ preserves projective modules, injective modules, and the almost split sequences. However, $\Pd$ may not be dense in general.
$\Pd$ admits a left and also right adjoint functor, i.e., the pullup functor $\pi^*: \mod(\Lambda^\imath)\longrightarrow \mod(\Lambda)$.

Viewing $\K Q$ as a subalgebra of $\Lambda^{\imath}$, we have restriction functors
\begin{equation*}
\res: \fgm\Lambda^\imath)\longrightarrow \fgm (\K Q),\qquad
\res: \mod (\Lambda^{\imath})\longrightarrow \mod (\K Q);
\end{equation*}
viewing $\K Q$ as a quotient algebra of $\Lambda^{\imath}$, we obtain pullback functors
\begin{equation}\label{eqn:rigt adjoint}
\iota:\fgm(\K Q)\longrightarrow\fgm(\Lambda^{\imath}), \qquad
\iota:\mod(\K Q)\longrightarrow\mod(\Lambda^{\imath}).
\end{equation}
In this way, we can and shall view $\fgm(\K Q)$ (respectively,  $\mod(\K Q)$) as subcategory of $\fgm(\Lambda^{\imath})$ (respectively,  $\mod(\Lambda^{\imath})$).

%Recall that $R_2$ is the radical square zero of the path algebra of $\xymatrix{0 \ar@<0.5ex>[r]^{\varepsilon^0} & 1 \ar@<0.5ex>[l]^{\varepsilon^1}}$.
%As $\Lambda=\K Q\otimes_\K R_2$, w
Let $\cc_{\Z/2}(\fgm(\K Q))$ be the category of $\Z/2$-graded complexes over $\fgm(\K Q)$.
We shall identify $\fgm(\Lambda)\cong \cc_{\Z/2}(\fgm(\K Q))$ below.
For $i\in Q_0$, we denote by $P_i$ the indecomposable projective $\K Q$-module $(\K Q)e_i$.

\begin{lemma} \cite[Proposition 3.11]{LW19a}
  \label{prop:projective module of lambdai}
A $\Lambda^{\imath}$-module $X=(X_i,X(\alpha), X(\varepsilon_i))_{i\in Q_0,\alpha\in Q_1}$ is isomorphic to an indecomposable projective $\Lambda^\imath$-module $\Lambda^\imath e_j$ if and only if
\begin{equation*}
\begin{cases}
\text{the $\K Q$-module $(X_i,X(\alpha))_{i\in Q_0}$ is equal to $P_j\oplus P_{\btau j}$}
\\
\text{and $X(\varepsilon_j)$ is a linear isomorphism, }
\end{cases}
\end{equation*}
for some $j\in Q_0$; see \eqref{eqn:rigt adjoint}. In particular, we have a short exact sequence in $\mod(\Lambda^\imath):$
\begin{align*}
0\longrightarrow P_{\btau j}\longrightarrow (\Lambda^\imath) e_j \longrightarrow P_{j}\longrightarrow0.
\end{align*}
\end{lemma}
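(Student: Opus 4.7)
The plan is to compute $\Lambda^\imath e_j$ explicitly as a representation of $\overline{Q}$, read off its $\K Q$-module structure, and then handle the converse direction together with the short exact sequence.

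First I would establish a canonical normal form for elements of $\Lambda^\imath e_j \cong (\K\overline{Q}/\overline{I})e_j$. Using the commuting relation $\varepsilon_i\alpha=\btau(\alpha)\varepsilon_{j'}$ for $\alpha:j'\to i$ in $Q_1$, any monomial representing such an element can be rewritten so that all $\varepsilon$-arrows are pushed to the right (closest to $e_j$), and then the relation $\varepsilon_i\varepsilon_{\btau i}=0$ forces at most one $\varepsilon$-arrow to survive. So each element is a $\K$-linear combination of monomials of the form $p$, with $p$ a path in $Q$ starting at $j$, or $q\,\varepsilon_j$, with $q$ a path in $Q$ starting at $\btau j$. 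Linear independence of these normal forms follows from the $\N$-grading $\Lambda^\imath=\Lambda^\imath_0\oplus \Lambda^\imath_1$ with $|\varepsilon_i|=1$ together with the identification $\Lambda^\imath_0\cong \K Q$, which places the two types of monomials in different graded components and identifies each graded piece with $P_j$ or $P_{\btau j}$.

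Next I would read off the representation structure. The normal form provides a $\K$-linear decomposition
\[
\Lambda^\imath e_j \;\cong\; P_j \;\oplus\; (P_{\btau j})\cdot\varepsilon_j,
\]
which is preserved by left multiplication by any $\alpha\in Q_1$, hence is a decomposition as $\K Q$-modules; this yields $(\Lambda^\imath e_j)|_{\K Q}\cong P_j\oplus P_{\btau j}$. The action of $\varepsilon_i$ is then computed directly on each summand: for a path $p$ starting at $j$, one uses $\varepsilon_i\cdot p = \btau(p)\,\varepsilon_j$, which carries the first summand into the second by the $\btau$-twist followed by right multiplication by $\varepsilon_j$; and $\varepsilon_i\cdot(q\,\varepsilon_j)=\btau(q)\,\varepsilon_{\btau j}\varepsilon_j=0$. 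In particular, the component of $X(\varepsilon_j)$ sending the $P_j$-summand to the $P_{\btau j}$-summand is the claimed linear isomorphism.

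For the short exact sequence, $(P_{\btau j})\,\varepsilon_j$ is a $\Lambda^\imath$-submodule: it is $\K Q$-invariant and killed by every $\varepsilon_i$ by the computation above, so it coincides with $\iota(P_{\btau j})$, where $\iota:\mod(\K Q)\to\mod(\Lambda^\imath)$ is the pullback along the quotient $\Lambda^\imath\twoheadrightarrow \K Q$; the quotient is then $\iota(P_j)$, giving $0\to P_{\btau j}\to\Lambda^\imath e_j\to P_j\to 0$. For the converse, assuming $X$ satisfies the hypotheses, lift a generator of the $P_j$-summand to an element $x\in X_j$; then the universal property of $\Lambda^\imath e_j$ produces $\varphi:\Lambda^\imath e_j\to X$ with $e_j\mapsto x$, and the hypothesis on $X(\varepsilon_j)$ forces $\varphi$ to induce isomorphisms on both $\K Q$-summands, so $\varphi$ is an isomorphism. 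The main obstacle compared with \cite[Proposition~3.11]{LW19a} is that when $Q$ has oriented cycles the module $\Lambda^\imath e_j$ is infinite-dimensional, so the normal-form uniqueness and the final comparison cannot rely on a dimension count and must instead be run through the $\N$-grading on $\Lambda^\imath$ and the identification $\Lambda^\imath_0\cong \K Q$.
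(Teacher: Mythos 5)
Your overall strategy — a direct normal-form computation in $\K\ov Q/\ov I$, with the $\N$-grading replacing a dimension count — is a self-contained alternative to the route of \cite{LW19a}, which the paper cites without reproof; the latter passes through the pushdown $\pi_*:\mod(\Lambda)\to\mod(\Lambda^\imath)$ and the $\Z/2$-graded-complex form $\xymatrix{P\ar@<0.5ex>[r]^1 & P\ar@<0.5ex>[l]^0}$ of indecomposable projective $\Lambda$-modules (exactly the description invoked again in this paper, e.g.\ in the proof of the lemma producing short exact sequences with $\cp^{\leq 1}$-ends). Your reading of ``$X(\varepsilon_j)$ is a linear isomorphism'' as the component $(P_j)_j\to(P_{\btau j})_{\btau j}$ is also the correct one: the full map $X(\varepsilon_j)$ is never an isomorphism since $X(\varepsilon_{\btau j})X(\varepsilon_j)=0$.

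There is, however, a genuine gap in your linear-independence step. The $\N$-grading together with $\Lambda^\imath_0=\K Q$ does identify the degree-$0$ piece of $\Lambda^\imath e_j$ with $P_j$, but it says nothing about the degree-$1$ piece: you still need to know that the monomials $q\varepsilon_j$ are linearly independent modulo $\ov I$, i.e.\ that $\Lambda^\imath_1 e_j$ is $\K Q$-free on $\varepsilon_j$, and that is precisely the nontrivial content — it requires showing that $(\ov I e_j)_1$, the span of rewriting differences $a(\varepsilon_i\alpha-\btau(\alpha)\varepsilon_{j'})b$ with $a,b$ paths in $Q$, kills no nonzero combination of your normal forms. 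The cleanest repair is to run your converse step first: define the $\ov Q$-representation $X$ with $X|_{\K Q}=P_j\oplus P_{\btau j}$ and each $\varepsilon_i$ acting by $p\mapsto\btau(p)$ from the $P_j$-summand into the $P_{\btau j}$-summand and by $0$ on the $P_{\btau j}$-summand, verify directly that $\ov I$ annihilates $X$, and observe that the induced $\Lambda^\imath$-module map $\Lambda^\imath e_j\to X$, $e_j\mapsto e_j$, carries your spanning set of normal forms bijectively onto the evident path basis of $P_j\oplus P_{\btau j}$. This establishes linear independence, the isomorphism, and the short exact sequence simultaneously, and absorbs the converse.
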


%\begin{proof}
%A $\Lambda$-module is indecomposable projective if and only if it is isomorphic to $\xymatrix{ P \ar@<0.5ex>[r]^{1}& P \ar@<0.5ex>[l]^{0}  }$ or
%$\xymatrix{ P \ar@<0.5ex>[r]^{0}& P \ar@<0.5ex>[l]^{1}  }$ for some indecomposable projective $\K Q$-module $P$.

%The first assertion follows by noting that the pushdown functor $\pi_*:\fgm(\Lambda)\rightarrow \fgm(\Lambda^\imath)$ preserves projective.

%The proof is identical to \cite[Proposition 3.11]{LW19a}.
%\end{proof}

Similarly, one can describe the injective $\Lambda^\imath$-modules.

Following \cite{Ha3,EJ}, a noetherian algebra $A$ is called {\em $d$-Gorenstein} if $\ind{}_AA\leq d$ and $\ind A_A\leq d$.

\begin{proposition}
\label{proposition of 1-Gorenstein}
For a general $\imath$quiver $(Q,\btau)$, $\Lambda$ and $\Lambda^\imath$ are $1$-Gorenstein algebras.
\end{proposition}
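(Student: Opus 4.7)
The plan is to extend the acyclic-case proof in \cite[\S3]{LW19a} to arbitrary $\imath$quivers by replacing arguments that require $\Lambda^\imath$ to be finite-dimensional (notably those invoking the $\K$-duality $D=\Hom_\K(-,\K)$) with arguments valid in the full module categories.

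First, I would establish that $\Lambda=\K Q\otimes_\K R_2$ is $1$-Gorenstein. Under the identification $\Lambda$-Mod $\simeq \cc_{\Z/2}(\K Q\text{-Mod})$ of $\Lambda$-modules with $\Z/2$-graded complexes, each indecomposable projective $\Lambda$-module is a contractible complex of the form $(P_i\xrightarrow{\id}P_i)$ with $P_i=\K Qe_i$. Because the path algebra $\K Q$ of an arbitrary quiver is hereditary (so $\gldim\K Q\le 1$), any injective envelope $P_i\hookrightarrow I_i$ in $\K Q$-Mod has an injective cokernel $I_i/P_i$, and the sequence
\[
0 \longrightarrow (P_i\xrightarrow{\id}P_i) \longrightarrow (I_i\xrightarrow{\id}I_i) \longrightarrow (I_i/P_i\xrightarrow{\id}I_i/P_i) \longrightarrow 0
\]
is a length-one injective resolution in $\Lambda$-Mod, the two rightmost terms being contractible complexes of injective $\K Q$-modules, hence injective $\Lambda$-modules. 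This proves $\idim{}_\Lambda\Lambda\le 1$; the right-module case is symmetric.

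Next, I would transfer the $1$-Gorenstein property to $\Lambda^\imath$ via the pushdown $\pi_*\colon \Lambda\text{-Mod}\to\Lambda^\imath\text{-Mod}$. Because the pullup $\pi^*$ is simultaneously the left and the right adjoint of $\pi_*$ and is exact, $\pi_*$ itself is exact and preserves both projective and injective modules. Applying $\pi_*$ to the length-one injective resolution of $\Lambda$ constructed above therefore yields a length-one injective resolution of $\pi_*\Lambda$ in $\Lambda^\imath$-Mod. The involution $\btau^\sharp$ on $\Lambda$ commutes with left multiplication by $\Lambda^\imath\subset\Lambda$, so (provided $\operatorname{char}\K\neq 2$) its $\pm 1$-eigenspace decomposition realizes $\Lambda^\imath$ as a $\Lambda^\imath$-module direct summand of $\pi_*\Lambda$, giving $\idim_{\Lambda^\imath}\Lambda^\imath\le 1$. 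The right-module argument is identical, and together they establish $1$-Gorensteinness of $\Lambda^\imath$.

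The principal obstacle lies in Step~1: one must verify that contractible $\Z/2$-graded complexes built from injective $\K Q$-modules $I_i$, which can be \emph{infinite-dimensional} as soon as $Q$ contains oriented cycles, are genuinely injective in the full category $\Lambda$-Mod and not merely in $\fgm(\Lambda)$. This is precisely the point at which the finite-dimensional argument of \cite{LW19a} does not transplant verbatim, and where the infinite-dimensional foundations developed in the present Section~\ref{sec:quiver} are needed; once this verification is in place, the remainder of the proof proceeds exactly as sketched.
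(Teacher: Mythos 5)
Your proof follows the same two-step structure the paper inherits from \cite{LW19a}: first show $\Lambda = \K Q \otimes_\K R_2$ is $1$-Gorenstein via the identification with $\Z/2$-graded complexes, then transfer along the pushdown $\pi_*$. The skeleton is sound, but there are two points worth correcting.

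First, the $\operatorname{char}\K \neq 2$ restriction in Step~2 is both unnecessary and harmful, since the paper works over arbitrary $\F_q$ (including $q$ a power of $2$). Rather than using the $\pm 1$-eigenspace decomposition of $\btau^\sharp$, one should appeal to the covering structure directly: since $\btau^\sharp$ acts on $Q^\sharp_0 = \{i, i'\}_{i\in Q_0}$ by $i \mapsto (\btau i)'$, the action is free (no element of $Q^\sharp_0$ is $\btau^\sharp$-fixed). The pushdown then sends each indecomposable projective $\Lambda e_{\tilde x}$ to the indecomposable projective $\Lambda^\imath e_{\pi(\tilde x)}$, and summing over the two preimages of each vertex of $\ov Q$ gives $\pi_*\Lambda \cong (\Lambda^\imath)^{\oplus 2}$ as left $\Lambda^\imath$-modules, in any characteristic. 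This is the decomposition to quote.

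Second, the ``principal obstacle'' you flag at the end is not an obstacle. Writing the injective $\Lambda$-module as $(I \xrightarrow{1} I, d'=0)$, the adjunction $\Hom_\Lambda\big(M^\bullet, (I \xrightarrow{1} I)\big) \cong \Hom_{\K Q}(M_1, I)$ shows at once that this object is injective in $\cc_{\Z/2}(\K Q\text{-Mod}) \simeq \Lambda\text{-Mod}$ precisely when $I$ is injective in $\K Q$-Mod; no finiteness hypothesis enters, and the same computation applies in the module category as in $\fgm(\Lambda)$. In other words, Step~1 really is verbatim, which is exactly why the paper's own proof is a one-line citation to \cite[Proposition~3.5(1)]{LW19a}. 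The places where the move from acyclic to general $\imath$quivers requires genuinely new arguments are elsewhere in Section~2 (notably Lemmas~\ref{lem: resolution} and \ref{lem: iso in singularity}), not in this proposition.
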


\begin{proof}
The proof in \cite[Proposition 3.5(1)]{LW19a} works verbatim for a general $\imath$quiver.
%It is well known that $kQ$ is hereditary, and then it is $1$-Gorenstein. $R_2$ is self-injective, so it is $0$-Gorenstein. Then
%$\Lambda\cong \K Q\otimes_\K R_2$ is $1$-Gorenstein.
%
%Using the pushdown functor
%$\Pd:\fgm(\Lambda) \longrightarrow \fgm (\Lambda^{\imath})$ in \eqref{eq:pi}, it is easy to see that $\Pd(_\Lambda\Lambda e_i)= {}_{\Lambda^\imath} \Lambda^\imath e_{i}$ for any $i\in Q_0$. Since $\Pd$ preserves injective modules, we have $\ind_{\Lambda^\imath}\Lambda^\imath\leq1$. By considering the opposite quiver, one can also prove that $\ind \Lambda^\imath_{\Lambda^\imath}\leq1$. So $\Lambda^\imath$ is $1$-Gorenstein.
\end{proof}

\subsection{Modules of finite projective dimensions}

Let $(Q, \btau)$ be an $\imath$quiver. Recall that $\Lambda^\imath=\K\ov{Q}/\ov{I}$ with $(\ov{Q}, \ov{I})$ as defined in \S\ref{subsec:iquiveralg}.
Following \cite[(2.7)]{LW19a}, for each $i\in Q_0$, define a $\K$-algebra
\begin{align}\label{dfn:Hi}
\BH _i:=\left\{ \begin{array}{cc}  \K[\varepsilon_i]/(\varepsilon_i^2) & \text{ if }\btau i=i,
 \\
\K(\xymatrix{i \ar@<0.5ex>[r]^{\varepsilon_i} & \btau i \ar@<0.5ex>[l]^{\varepsilon_{\btau i}}})/( \varepsilon_i\varepsilon_{\btau i},\varepsilon_{\btau i}\varepsilon_i)  &\text{ if } \btau i \neq i .\end{array}\right.
\end{align}
Note that $\BH _i=\BH _{\btau i}$ for any $i\in Q_0$. %Recall $\ci$ from \eqref{eq:ci}.
Choose one representative for each $\btau$-orbit on $\I =Q_0$, and let
\begin{align}   \label{eq:ci}
\ci = \{ \text{the chosen representatives of $\btau$-orbits in $\I$} \}.
\end{align}
Define a subalgebra of $\Lambda^{\imath}$:
\begin{equation}  \label{eq:H}
\BH =\bigoplus_{i\in \ci }\BH _i.
\end{equation}
Note that $\BH $ is a radical square zero selfinjective algebra. Denote by
\begin{align}
\res_\BH :\mod(\Lambda^\imath)\longrightarrow \mod(\BH )
\end{align}
the natural restriction functor.
As $\BH $ is a quotient algebra of $\Lambda^\imath$, every $\BH $-module can also be viewed as a $\Lambda^\imath$-module.

Recall the algebra $\BH _i$ for $i \in \ci$ from \eqref{dfn:Hi}. For $i\in Q_0 =\I$, define the indecomposable module over $\BH _i$ (if $i\in \ci$) or over $\BH_{\btau i}$ (if $i\not \in \ci$)
\begin{align}
  \label{eq:E}
\bK_i =\begin{cases}
\K[\varepsilon_i]/(\varepsilon_i^2), & \text{ if }\btau i=i;
\\
\xymatrix{\K\ar@<0.5ex>[r]^1 & \K\ar@<0.5ex>[l]^0} \text{ on the quiver } \xymatrix{i\ar@<0.5ex>[r]^{\varepsilon_i} & \btau i\ar@<0.5ex>[l]^{\varepsilon_{\btau i}} }, & \text{ if } \btau i\neq i.
\end{cases}
\end{align}
%let $\bK_i$ be the (indecomposable) regular $\BH _i$-module if $\btau i=i$ and be the (indecomposable) $\BH _i$-module $\xymatrix{\K\ar@<0.5ex>[r]^1 & \K\ar@<0.5ex>[l]^0 }$ if $\btau i\neq i$, with $\xymatrix{i\ar@<0.5ex>[r]^{\varepsilon_i} & \btau i\ar@<0.5ex>[l]^{\varepsilon_{\btau i}} }$ as the quiver of $\BH _i$.
Then $\bK_i$, for $i\in Q_0$, can be viewed as a $\Lambda^\imath$-module and will be called a {\em generalized simple} $\Lambda^\imath$-module.

For any $\K$-algebra $A$, denote by $\cp^{\leq d}(A)$ the subcategory of $\mod(A)$ formed by modules of projective dimension $\leq d$ for any $d\in\N$.
Similarly, $\cp^{<\infty}(A)$ denotes the subcategory of $\mod(A)$ formed by modules of finite projective dimensions.

%Since $\Lambda^\imath$ is $1$-Gorenstein, we have $\cp^{<\infty}(\Lambda^\imath)=\cp^{\leq1}(\Lambda^\imath)$, and ${\rm inj.dim}_{\Lambda^\imath} (K)\leq 1$ for any $K\in \cp^{<\infty}(\Lambda^\imath)$.

\begin{lemma}
\label{lem: res proj}
We have the following.

(a) $\pd_{\Lambda^{\imath}} (\bK_i)\leq1$ and $\ind_{\Lambda^{\imath}} (\bK_i)\leq1$ for any $i\in Q_0$.

(b) For any $M\in\mod(\Lambda^\imath)$, if $\pd_{\Lambda^\imath} M<\infty$, then $\res_\BH (M)$ is projective as $\BH $-module.
\end{lemma}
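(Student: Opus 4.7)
For part (a), we exploit the $1$-Gorenstein property of $\Lambda^\imath$ (Proposition~\ref{proposition of 1-Gorenstein}): on a $1$-Gorenstein algebra one has, for any module $M$, the equivalences
$$\pd_{\Lambda^\imath} M \leq 1 \;\Longleftrightarrow\; \pd_{\Lambda^\imath} M < \infty \;\Longleftrightarrow\; \ind_{\Lambda^\imath} M < \infty \;\Longleftrightarrow\; \ind_{\Lambda^\imath} M \leq 1.$$
Hence it suffices to construct, for each $i\in Q_0$, a length-one projective resolution
\[
0 \longrightarrow \bigoplus_{(\alpha: i\to k)\,\in\, Q_1} \Lambda^\imath e_{k} \xrightarrow{\;\phi\;} \Lambda^\imath e_i \xrightarrow{\;\pi\;} \bK_i \longrightarrow 0,
\]
where $\pi$ is the $\Lambda^\imath$-morphism sending $e_i$ to the generator of $\bK_i$ (viewed through the quotient $\Lambda^\imath \twoheadrightarrow \BH$, which kills all $Q$-arrows), and the $\alpha$-component of $\phi$ is right multiplication by $\alpha$. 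Exactness is checked by combining the $\K Q$-module decomposition $\Lambda^\imath e_i \cong P_i \oplus P_{\btau i}$ of Lemma~\ref{prop:projective module of lambdai} with the commutation $\varepsilon_k \alpha = \btau(\alpha)\varepsilon_i$: this identifies $\ker\pi$ with $\rad(P_i) \oplus \rad(P_{\btau i})\varepsilon_i$ and realizes it as the internal direct sum $\bigoplus_{\alpha:i\to k}\bigl(P_k\alpha \oplus P_{\btau k}\btau(\alpha)\varepsilon_i\bigr) = \bigoplus_\alpha \Lambda^\imath\alpha$, which is exactly the image of $\phi$.

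For part (b), the same $1$-Gorenstein property gives $\pd_{\Lambda^\imath} M \leq 1$, so $M$ admits a projective resolution $0 \to P^1 \to P^0 \to M \to 0$ by finitely generated projective $\Lambda^\imath$-modules (possibly infinite-dimensional when $Q$ has cycles). The argument has two steps. First, we will show that each $\Lambda^\imath e_j$ restricts to a \emph{free} (hence projective) $\BH$-module. Second, since $\BH = \bigoplus_{i\in \ci}\BH_i$ is radical-square-zero self-injective, every projective $\BH$-module is injective; applying the exact functor $\res_\BH$ to the resolution then yields a short exact sequence whose outer two terms are projective-injective, which splits, exhibiting $\res_\BH M$ as a direct summand of $\res_\BH P^0$ and hence projective.

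For the first step, we reuse the basis analysis from part (a). For each path $p: j \to k$ in $Q$, the pair $\{\,p,\; \btau(p)\varepsilon_j\,\}$ consists of two basis elements of $\Lambda^\imath e_j$, at vertices $k$ and $\btau k$ respectively. The commutation $\varepsilon_k \cdot p = \btau(p)\varepsilon_j$ combined with $\varepsilon_k\varepsilon_{\btau k} = 0$ shows that this pair spans a $\BH$-submodule isomorphic to the indecomposable projective $\BH_{\mathrm{orbit}(k)}\,e_k$, while other generators of $\BH$ annihilate them. Running over all paths $p: j\to k$ in $Q$ yields an internal direct sum decomposition of $\Lambda^\imath e_j$ as $\BH$-module into projective summands, i.e.\ a $\BH$-free structure.

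The main technical difficulty comes precisely from the non-acyclic setting: then $\Lambda^\imath e_j$ and the resolutions above are infinite-dimensional, but the proofs still yield projectivity in $\Mod(\Lambda^\imath)$ and $\Mod(\BH)$. The most delicate verifications are the two internal-direct-sum statements---that $\ker\pi = \bigoplus_\alpha \Lambda^\imath\alpha$ in (a), and that $\Lambda^\imath e_j = \bigoplus_{p:j\to k} \BH\cdot\{p,\btau(p)\varepsilon_j\}$ in (b). Both reduce, after tracking the $\K Q$-module structure $\Lambda^\imath e_j = P_j \oplus P_{\btau j}\varepsilon_j$ and propagating via the commutation relations to the $\varepsilon$-twisted half, to a straightforward bookkeeping of how basis elements of $P_i$ (and $P_{\btau i}$) split according to the first arrow, and this is the only place where some care with the cyclic paths is required.
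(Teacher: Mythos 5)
Your proof is correct and follows essentially the same route as the paper: the paper cites \cite[Lemma 3.7]{LW19a} for part (a), and for part (b) likewise observes that $\res_\BH(\Lambda^\imath e_j)$ is projective and applies the exact functor $\res_\BH$ to a finite projective resolution of $M$, using self-injectivity of $\BH$ to conclude. One small slip in wording: in (b), the ``outer two terms'' of the sequence $0 \to \res_\BH P^1 \to \res_\BH P^0 \to \res_\BH M \to 0$ are $\res_\BH P^1$ and $\res_\BH M$, and only the \emph{former} is known to be projective-injective; what splits the sequence is the injectivity of $\res_\BH P^1$, which then exhibits $\res_\BH M$ as a summand of the projective $\res_\BH P^0$.
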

(We shall see from Corollary \ref{cor: res proj} below that the converse in (b) here also holds.)

\begin{proof}
(a). The proof is the same as for \cite[Lemma 3.7]{LW19a}.

(b). It follows from Lemma \eqref{prop:projective module of lambdai} that $\res_\BH(\Lambda^\imath e_i)$ is projective for any $i\in\I$. By considering the projective resolution of $M$ and applying the exact functor $\res_{\BH}$  the result follows.
\end{proof}

As we cannot find a suitable reference for the following result below, we include a proof here.

\begin{lemma}
\label{lem: resolution}
For any $M\in\mod(\Lambda^\imath)$, there exist short exact sequences
\begin{align}
\label{resolution 1}
0\longrightarrow M\longrightarrow H^M\longrightarrow X^M\longrightarrow0\\
\label{resolution 2}
0\longrightarrow X_M\longrightarrow H_M\longrightarrow M\longrightarrow0.
\end{align}
with $H^M,H_M\in \cp^{\leq1}(\Lambda^\imath)$.
\end{lemma}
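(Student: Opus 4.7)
The plan is to construct $H^M$ and $H_M$ explicitly as $\btau$-twisted mapping cones of the identity of $M$, generalizing the classical cone-on-identity construction in chain complex theory.

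Writing $M=(V,\varepsilon)$ with $V\in\mod(\K Q)$ and $\varepsilon=(\varepsilon_i:V_i\to V_{\btau i})$ satisfying $\varepsilon_{\btau i}\varepsilon_i=0$ and the naturality $\varepsilon_iV(\alpha)=V(\btau\alpha)\varepsilon_j$ for $\alpha:j\to i$, I set
\[
H^M:=M\oplus \btau^*M
\]
as a $\K Q$-module with diagonal action (here $\btau^*M$ is the $\K Q$-module with $(\btau^*M)_i=V_{\btau i}$ and $(\btau^*M)(\alpha)=V(\btau\alpha)$), equipped with the $\iLa$-module structure
\[
\varepsilon^{H^M}_i=\begin{pmatrix}\varepsilon_i & \id_{V_{\btau i}} \\ 0 & -\varepsilon_{\btau i}\end{pmatrix}\colon V_i\oplus V_{\btau i}\longrightarrow V_{\btau i}\oplus V_i.
\]
The natural embedding $\iota\colon M\hookrightarrow H^M$, $v\mapsto(v,0)$, is then $\iLa$-linear and its cokernel $X^M$ is isomorphic to the ``$\btau$-shift'' $M[1]:=(\btau^*V,-\btau^*\varepsilon)$, yielding~\eqref{resolution 1}. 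For~\eqref{resolution 2} I take $H_M:=\btau^*M\oplus M$ with the symmetric $\varepsilon$-structure and $\pi\colon H_M\twoheadrightarrow M$ the projection onto the second summand, whose kernel is again isomorphic to $M[1]$.

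The three verifications are, in the order I would carry them out: (i) $\varepsilon^{H^M}$ defines an $\iLa$-module structure, i.e.\ $(\varepsilon^{H^M})^2=0$ and $\varepsilon^{H^M}$ is $\K Q$-natural in the $\btau$-twisted sense; both relations reduce by direct matrix computation to the corresponding relations for $\varepsilon_M$ together with the cancellation of the identity entry against the off-diagonal $\pm\varepsilon$ entries. (ii) $\iota$ and $\pi$ are $\iLa$-linear, immediate from the matrix formula (for instance, $\varepsilon^{H^M}_i\iota(v)=(\varepsilon_iv,0)=\iota(\varepsilon_iv)$). (iii) $H^M,H_M\in\cp^{\leq 1}(\iLa)$: $\res_\BH H^M$ is $\BH$-projective, equivalently the $\BH$-complex at each vertex pair $(i,\btau i)$ is acyclic, which follows by a direct computation showing $\ker\varepsilon^{H^M}_i=\im\varepsilon^{H^M}_{\btau i}$; a length-one projective $\iLa$-resolution of $H^M$ is then assembled from the explicit length-one projective resolutions of the indecomposable summands $\bK_j$ of $\res_\BH H^M$ provided by Lemma~\ref{lem: res proj}(a), lifted along the $\iLa$-structure.

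The main obstacle is pinning down the correct formula for $\varepsilon^{H^M}$. A naive translation of the classical cone-on-identity construction (which works when $\btau=\id$ since $M$ and $M[1]$ then have the same underlying $\K Q$-module) fails for nontrivial $\btau$, because $M$ and $M[1]$ live on different $\K Q$-modules $V$ and $\btau^*V$. The coupling term $\id_{V_{\btau i}}$ bridging the two summands in a $\btau$-compatible way is the key new ingredient: it is precisely what makes $\iota$ (and dually $\pi$) $\iLa$-linear while still forcing the acyclicity of the $\BH$-restriction. Once this formula is found, all subsequent verifications are routine bookkeeping, and the finite-dimensional nilpotency of $H^M,H_M$ (required for membership in $\mod(\iLa)$) follows immediately from that of $M$.
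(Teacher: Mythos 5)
Your explicit ``cone-on-identity'' construction of $H^M$ is a genuinely different approach from the paper's, which proves existence of $H^M$ by induction on $\dim M$ via a pushout diagram and the $1$-Gorenstein property (and uses no explicit formula). Your construction is correct as far as it goes: the matrix $\varepsilon^{H^M}_i=\begin{pmatrix}\varepsilon_i & \id\\ 0 & -\varepsilon_{\btau i}\end{pmatrix}$ does define an $\iLa$-module structure (one checks $\varepsilon^{H^M}_{\btau i}\varepsilon^{H^M}_i=0$ using $\varepsilon_{\btau i}\varepsilon_i=0=\varepsilon_i\varepsilon_{\btau i}$, and the $\K Q$-naturality reduces to the relation $\varepsilon_i V(\alpha)=V(\btau\alpha)\varepsilon_j$ and its $\btau$-twist), $\iota$ is $\iLa$-linear, $X^M\cong(\btau^*V,-\btau^*\varepsilon)$, and the base case $H^{S_i}\cong\bK_{\btau i}$ recovers the paper's starting sequence $0\to S_i\to\bK_{\btau i}\to S_{\btau i}\to 0$. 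You are also right that $\res_\BH H^M$ is projective: the computation $\Ker\varepsilon^{H^M}_i=\{(v,-\varepsilon_i v)\}=\Im\varepsilon^{H^M}_{\btau i}$ works.

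However, step (iii) --- deducing $H^M\in\cp^{\leq 1}(\iLa)$ --- is a genuine gap as written. You propose to ``assemble a length-one projective $\iLa$-resolution from the explicit resolutions of the indecomposable summands $\bK_j$ of $\res_\BH H^M$, lifted along the $\iLa$-structure.'' This does not work: the $\bK_j$'s are direct summands of $H^M$ only after restriction to $\BH$; the $\K Q$-arrows of $Q$ couple them, so $H^M$ is in general \emph{not} a direct sum of $\bK_j$'s as an $\iLa$-module, and there is no direct-sum assembly of resolutions. Nor can you invoke the converse implication ``$\res_\BH(-)$ projective $\Rightarrow\pd\leq 1$'' (Corollary~\ref{cor: res proj}): in the paper that corollary is proved \emph{using} Lemma~\ref{lem: resolution} (via Lemma~\ref{lem: dense}, Corollary~\ref{cor: triangulated equiv}, Theorem~\ref{thm:sigma} and Corollary~\ref{corollary for stalk complexes}), so citing it here would be circular. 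What \emph{does} work, and is close in spirit to what you want, is the following: the $\K Q$-linear automorphism $\phi_i=\begin{pmatrix}\id & 0\\ \varepsilon_i & \id\end{pmatrix}$ of $V_i\oplus V_{\btau i}$ (its naturality over $\K Q$ is again exactly the relation $\varepsilon_i V(\alpha)=V(\btau\alpha)\varepsilon_j$) conjugates $\varepsilon^{H^M}$ to the ``canonical'' nilpotent $\begin{pmatrix}0&\id\\0&0\end{pmatrix}$, so $H^M\cong\cn(V)$ where $\cn\colon\mod(\K Q)\to\mod(\iLa)$, $\cn(W)=(W\oplus\btau^*W,\begin{pmatrix}0&\id\\0&0\end{pmatrix})$, is an exact functor with $\cn(S_j)\cong\bK_{\btau j}$; applying $\cn$ to a composition series of $V$ exhibits $H^M$ with a filtration whose subquotients are generalized simples $\bK_j\in\cp^{\leq1}(\iLa)$ (Lemma~\ref{lem: res proj}(a)), and $\cp^{\leq 1}(\iLa)$ is closed under extensions by the long exact $\Ext$-sequence. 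You should replace your ``summand assembly'' step by this filtration argument (or by some other genuinely non-circular construction of the length-one projective resolution). Finally, the ``symmetric $\varepsilon$-structure'' on $H_M$ needs to be pinned down: the dual construction $\varepsilon^{H_M}_i=\begin{pmatrix}-\varepsilon_{\btau i}&\id\\0&\varepsilon_i\end{pmatrix}\colon V_{\btau i}\oplus V_i\to V_i\oplus V_{\btau i}$ makes the projection onto the second summand $\iLa$-linear with kernel $M[1]$, but your phrasing leaves the matrix ambiguous.
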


\begin{proof}
We prove it by induction on the dimension of $M$.

First, if $M=S_i$, then we have a short exact sequence
$$0\longrightarrow S_i\longrightarrow \bK_{\btau i}\longrightarrow S_{\btau i}\longrightarrow0.$$

For any nonzero $M$, we have a short exact sequence for some $i\in \I$:
$$0\longrightarrow N\longrightarrow M\longrightarrow S_i\longrightarrow0.$$
By induction, there exists a short exact sequence
$$0\longrightarrow N\longrightarrow H^N\longrightarrow X^N\longrightarrow0$$
with $H^N\in\cp^{\leq1}(\Lambda^\imath)$.
We have the following commutative pushout diagram
\begin{equation*}
\xymatrix{ N\ar[r] \ar[d] &H^N\ar[r] \ar[d]^{f_1} & X^N\ar@{=}[d] \\
M\ar[r]^{g_1} \ar[d] & X\ar[r]\ar[d]^{f_2} & X^N\\
S_i\ar@{=}[r] &S_i }
\end{equation*}

Since $\Lambda^\imath$ is $1$-Gorenstein, we have $\ind_{\Lambda^\imath} H^N\leq1$ by \cite[Theorem 9.1.10]{EJ}. Then there exists a commutative diagram of short exact sequences:
\begin{equation}
\label{eqn: diag 2}
\xymatrix{ H^N\ar@{=}[r] \ar[d]^{f_1} & H^N \ar[d] \\
X \ar[r]^{g_2} \ar[d]^{f_2} & H^M \ar[r] \ar[d] & S_{\btau i}\ar@{=}[d] \\
S_i\ar[r] & \bK_{\tau i} \ar[r]& S_{\btau i} }
\end{equation}
by noting that $\Ext^2_{\Lambda^\imath}( S_{\btau i}, H^N)=0$.
We have $H^M\in \cp^{\leq1}(\Lambda^\imath)$ by using the short exact sequence in the second column of \eqref{eqn: diag 2}, and $g_2\circ g_1:M\longrightarrow H^M$ is injective. Hence the desired short exact sequence \eqref{resolution 1} follows.

Dually, one can prove the existence of the second short exact sequence \eqref{resolution 2}.
\end{proof}

\subsection{Singularity categories}
 \label{subsec:Sing}

\subsubsection{}

The \emph{singularity category} of $\fgm(A)$ is defined to be the Verdier localization
\begin{align*}
D_{sg}(\fgm(A)):=D^b(\fgm(A))/ K^b(\proj(A)).
\end{align*}
As $D^b(\mod(A))$ is a thick subcategory of $D^b(\fgm(A))$, we define the \emph{singularity category} $D_{sg}(\mod(A))$ of $\mod(A)$ to be the subcategory of $D_{sg}(\fgm(A))$ formed by all
objects in $D^b(\mod(A))$. Then $D_{sg}(\mod(A))$ is a triangulated category.

Note that $\Lambda^\imath$ is a $1$-Gorenstein algebra. Denote by
\begin{align}
\Gproj(\Lambda^\imath):=\{X\in\fgm(\Lambda^\imath)\mid \Ext^1_{\Lambda^\imath}(X,\Lambda^\imath)=0\}%=\Omega(\fgm(\Lambda^\imath))
\end{align}
the category of {\em Gorenstein projective modules}. %Here $\Omega$ denotes the syzygy functor.
Buchweitz-Happel's Theorem shows that $\Gproj(\Lambda^\imath)$ is a Frobenius category with projective modules as projective-injective objects, and its stable category $\underline{\Gproj}(\Lambda^\imath)$ is triangulated equivalent to $D_{sg}(\fgm(\Lambda^\imath))$:
\begin{align}
\Phi:\underline{\Gproj}(\Lambda^\imath)\stackrel{\simeq}{\longrightarrow}D_{sg}(\fgm(\Lambda^\imath)).
\end{align}

%Denote by  $\Gproj^f(\Lambda^\imath)$ be the subcategory of $\Gproj(\Lambda^\imath)$ defined by
%\begin{align}
%\Gproj^f(\Lambda^\imath):=\{G\in \Gproj(\Lambda^\imath) \mid \exists X\in \mod(A) \text{ such that } G\cong X \text{ in }D_{sg}(\mod(A)) \}.
%\end{align}
%It follows from

%\begin{lemma}
%$\Gproj^f(\Lambda^\imath)$ is a Frobenius subcategory of $\Gproj(\Lambda^\imath)$. Moreover, $\underline{\Gproj}^f(\Lambda^\imath)\simeq D_{sg}(\mod(A))$.
%\end{lemma}

%\begin{proof}

%\end{proof}

\begin{lemma}
\label{lem: existence SES}
For any projective $\Lambda^\imath$-module $V$ and for any $N>0$, there exists a short exact sequence
$$0\longrightarrow V'\xrightarrow{f} V\longrightarrow U\longrightarrow0$$
such that $U\in\cp^{\leq1}(\Lambda^\imath)$, and $f$ is induced by paths of length $\geq N$.
\end{lemma}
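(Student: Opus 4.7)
The plan is to reduce to the case of an indecomposable projective $V = \Lambda^\imath e_i$, and then to construct $V'$ and $f$ explicitly using paths of length $N$ in the original quiver $Q$ (avoiding the added arrows $\varepsilon_?$). First I decompose a finitely generated projective $V = \bigoplus_k \Lambda^\imath e_{i_k}$ into indecomposable summands; since the desired short exact sequence decomposes along direct summands, it suffices to treat each $V = \Lambda^\imath e_i$ separately.

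For fixed $i \in Q_0$ and $V = \Lambda^\imath e_i$, let $\Pi_N(i)$ denote the (finite) set of paths in $Q$ of length exactly $N$ with source $i$, and for $p \in \Pi_N(i)$ write $t(p)$ for its target. I set
\[
V' := \bigoplus_{p \in \Pi_N(i)} \Lambda^\imath e_{t(p)}, \qquad f: V' \longrightarrow V, \qquad f|_{\Lambda^\imath e_{t(p)}}(e_{t(p)}) := p,
\]
so that $f$ is induced by these length-$N$ paths, which are in particular paths in $\ov Q$ of length $\geq N$. By Lemma~\ref{prop:projective module of lambdai}, each indecomposable projective $\Lambda^\imath e_j$ decomposes as $P_j \oplus P_{\btau j}$ as a $\K Q$-module. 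Under this decomposition, $f$ becomes block-diagonal with two $\K Q$-linear blocks: on the ``plain'' component, $\bigoplus_p P_{t(p)} \to P_i$ by $e_{t(p)} \mapsto p$; and on the ``$\varepsilon$-shifted'' component, $\bigoplus_p P_{\btau t(p)} \to P_{\btau i}$ by $e_{\btau t(p)} \mapsto \btau(p)$, after using the identity $\varepsilon_{t(p)}\cdot p = \btau(p)\cdot\varepsilon_i$ obtained from iterated application of the defining relation $\varepsilon_j\alpha = \btau(\alpha)\varepsilon_m$ (for $\alpha:m\to j$). Both blocks are injective by the unique factorization of a $\K Q$-path of length $\geq N$ starting at $i$ (respectively $\btau i$) as (its initial $N$-arrow segment) composed with (remainder).

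To finish, I analyze the cokernel $U = V/f(V')$. The block decomposition identifies $U \cong (P_i/\rad^N P_i) \oplus (P_{\btau i}/\rad^N P_{\btau i})$ as $\K Q$-modules, which is finite-dimensional since $Q$ has finitely many arrows. The $\Lambda^\imath$-module $U$ is nilpotent: cycles in $Q$ act nilpotently on $U$ because their iterates eventually lie in $\rad^N$, while cycles in $\ov Q$ involving $\varepsilon$ are already zero by the relations $\varepsilon_i\varepsilon_{\btau i} = 0$. Finally, $\pd_{\Lambda^\imath} U \leq 1$ is immediate from the short exact sequence $0 \to V' \to V \to U \to 0$ with $V'$ and $V$ both projective, so $U \in \cp^{\leq 1}(\Lambda^\imath)$.

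The main technical obstacle is the injectivity of $f$, where the $\varepsilon$-commutation relations must be tracked carefully. The key intermediate computation is the identity $\varepsilon_{t(p)} \cdot p = \btau(p) \cdot \varepsilon_i$ in $\Lambda^\imath$, which rewrites the $\varepsilon$-component of $f$ so that both blocks take a common form and can be handled uniformly via unique factorization of paths in $\K Q$.
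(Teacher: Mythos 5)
Your proposal is correct, and reaches the same short exact sequence, but by a genuinely different route than the paper. The paper lifts the problem through the pushdown functor $\pi_*$: it writes $V=\pi_*(W)$ for an indecomposable projective $\Lambda$-module $W$ of the form $\xymatrix{ P \ar@<0.5ex>[r]^{1}& P \ar@<0.5ex>[l]^{0}  }$, takes the submodule $Q\subseteq P$ of $\K Q$-paths of length $\geq N$, and simply applies the exact, projective-preserving functor $\pi_*$ to the sequence $0\to Q\to P\to P/Q\to 0$. All the $\varepsilon$-bookkeeping is thereby absorbed into the known structure of projective $\Lambda$-modules. You instead stay entirely inside $\mod(\Lambda^\imath)$: you build $V'=\bigoplus_{p\in\Pi_N(i)}\Lambda^\imath e_{t(p)}$ explicitly, define $f$ by right multiplication by length-$N$ paths, and verify injectivity by decomposing each $\Lambda^\imath e_j$ into its ``plain'' block $(\K Q)e_j$ and its ``$\varepsilon$-shifted'' block $(\K Q)e_{\btau j}\varepsilon_j$ (as in Lemma~\ref{prop:projective module of lambdai}) and checking that the commutation $\varepsilon_{t(p)}p=\btau(p)\varepsilon_i$ makes $f$ block-diagonal with each block an embedding of $\K Q$-projectives. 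Your approach is more elementary and self-contained (it does not invoke $\pi_*$ or the classification of projective $\Lambda$-modules), at the cost of having to track the $\varepsilon$-relations by hand; both yield the same $V'$, $f$, and $U\cong (P_i/\rad^N P_i)\oplus(P_{\btau i}/\rad^N P_{\btau i})$.

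One small imprecision to fix: you assert that ``cycles in $\ov Q$ involving $\varepsilon$ are already zero by the relations $\varepsilon_i\varepsilon_{\btau i}=0$.'' That is false for a cycle containing exactly one $\varepsilon$, say $c=q_1\varepsilon_j q_2$ with $q_1,q_2$ paths in $Q$; such a $c$ is generally nonzero in $\Lambda^\imath$. What is true, and suffices for nilpotency of $U$, is that $c^2=0$: moving the left copy of $\varepsilon_j$ across $q_2q_1$ via the commutation relation produces $\varepsilon_{\btau j}\varepsilon_j$, which vanishes. You should state it this way rather than claiming the cycle itself vanishes.
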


\begin{proof}
We can assume $V$ is indecomposable. Then $V=\pi_*(W)$ for some indecomposable projective $\Lambda$-module $W$. By \cite[Lemma 3.10]{LW19a}, without loss of generality, we assume $W$ to be of the form
$\xymatrix{ P \ar@<0.5ex>[r]^{1}& P \ar@<0.5ex>[l]^{0}  }$ for some indecomposable projective $\K Q$-module $P$. Let $Q$ be the submodule of $P$ generated by paths of length $\geq N$, then $Q$ is projective. We have $P/Q$ is finite-dimensional nilpotent $\K Q$-module. Let $V'= \xymatrix{ Q \ar@<0.5ex>[r]^{1}& Q \ar@<0.5ex>[l]^{0}  }$ and $U=\xymatrix{ P/Q \ar@<0.5ex>[r]^{1}& P/Q \ar@<0.5ex>[l]^{0}  }$. By applying $\pi_*$, the desired short exact sequence follows.
\end{proof}

The following lemma is well known for $\fgm(\Lambda^\imath)$, %\red{(when $\iLa$ is finite dimensional)},
and we need to prove it for $\mod(\Lambda^\imath)$.

\begin{lemma}
\label{lem: iso in singularity}
For any $X,Y\in\mod(\Lambda^\imath)$, we have $X\cong Y$ in $D_{sg}(\mod(\Lambda^\imath))$ if and only if
there exist two short exact sequences
\begin{align*}
0\longrightarrow U_1\longrightarrow Z \longrightarrow X\longrightarrow0,
\qquad 0\longrightarrow U_2\longrightarrow Z\longrightarrow Y\longrightarrow0
\end{align*}
with $U_1,U_2\in\cp^{<\infty}(\Lambda^\imath)$, $Z\in\mod(\Lambda^\imath)$.
\end{lemma}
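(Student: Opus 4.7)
The \emph{``if''} direction is immediate: every $U\in\cp^{<\infty}(\Lambda^\imath)$ admits a bounded resolution by finitely generated projective $\Lambda^\imath$-modules, so $U$ is isomorphic to zero in $D_{sg}(\fgm(\Lambda^\imath))$, and hence also in its full triangulated subcategory $D_{sg}(\mod(\Lambda^\imath))$. The two given short exact sequences then realize isomorphisms $X\cong Z\cong Y$ in $D_{sg}(\mod(\Lambda^\imath))$.

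For the \emph{``only if''} direction, the strategy is to first solve the problem in the ambient category $\fgm(\Lambda^\imath)$ (the well-known case referenced just before the lemma), and then truncate to land in $\mod(\Lambda^\imath)$. Combining the Buchweitz--Happel equivalence $\Phi:\underline{\Gproj}(\Lambda^\imath)\xrightarrow{\sim}D_{sg}(\fgm(\Lambda^\imath))$ with the 1-Gorenstein property of $\Lambda^\imath$ (so that the first syzygy of any finitely generated module is Gorenstein projective), one obtains, for given finitely generated projective covers $P_0\twoheadrightarrow X$ and $Q_0\twoheadrightarrow Y$ with syzygies $\Omega X,\Omega Y$, a stable isomorphism $\Omega X\cong\Omega Y$ in $\underline{\Gproj}$ from the hypothesis $X\cong Y$ in $D_{sg}$. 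After adding projectives this upgrades to an honest isomorphism $\Omega X\oplus P_1\cong\Omega Y\oplus Q_1=:G$. Taking the pushout along this identification of the two projective covers yields a common $W\in\fgm(\Lambda^\imath)$ sitting in short exact sequences
$$0\to P_0\oplus P_1\to W\to Y\to 0,\qquad 0\to Q_0\oplus Q_1\to W\to X\to 0$$
with projective kernels; in particular $P_0\oplus P_1$ and $Q_0\oplus Q_1$ both embed in $W$ with intersection equal to $G$.

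The remaining task is to replace $W$ by a quotient $Z\in\mod(\Lambda^\imath)$. Apply Lemma~\ref{lem: existence SES} to each of the two projective kernels: for any $N$ there are short exact sequences $0\to P'\to P_0\oplus P_1\to U^P\to 0$ and $0\to Q'\to Q_0\oplus Q_1\to U^Q\to 0$ with $P',Q'$ generated by paths of length $\ge N$ and with $U^P,U^Q\in\cp^{\le 1}(\Lambda^\imath)\subset\mod(\Lambda^\imath)$. Since $X$ and $Y$ are finite-dimensional nilpotent, for $N$ sufficiently large every path of length $\ge N$ acts as zero on both; hence $P'\subseteq G$ inside $P_0\oplus P_1$ and $Q'\subseteq G$ inside $Q_0\oplus Q_1$, so inside $W$ we have $P'+Q'\subseteq G$. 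Set $Z:=W/(P'+Q')$. Then $Z$ is finite-dimensional nilpotent, since it is built out of $X$ (or $Y$) and the $\cp^{\le 1}$-cokernels $U^P,U^Q$ by iterated extensions, all of which lie in $\mod(\Lambda^\imath)$; and by construction the natural maps $Z\twoheadrightarrow X$ and $Z\twoheadrightarrow Y$ are well-defined surjections.

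The main technical obstacle is the final verification that the induced kernels $U_1,U_2$ of $Z\to X$ and $Z\to Y$ genuinely lie in $\cp^{<\infty}(\Lambda^\imath)$ (and not merely in $\mod(\Lambda^\imath)$). This should follow from a snake-lemma chase: the kernels fit into short exact sequences exhibiting them as extensions built from $U^P,U^Q\in\cp^{\le 1}(\Lambda^\imath)$ together with quotients of the original projective kernels, so the extension-closure of $\cp^{<\infty}(\Lambda^\imath)$ (a consequence of the 1-Gorenstein property together with Lemma~\ref{lem: res proj}) forces $U_1,U_2\in\cp^{<\infty}(\Lambda^\imath)$. This diagram chase, and the precise identification of the extensions involved, is where the delicacy of passing from $\fgm$ to $\mod$ is concentrated.
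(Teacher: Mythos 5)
Your proof attempts a genuinely different route from the paper's, and the final step has a genuine gap.

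\textbf{Comparison.} Both you and the paper use the Buchweitz--Happel equivalence for the $1$-Gorenstein algebra $\Lambda^\imath$, Lemma~\ref{lem: existence SES}, and pushout constructions. However, the paper works with Gorenstein projective \emph{precovers} $0\to P_X\to G_X\to X\to 0$ (projective kernel, Gorenstein projective middle, coming from Enochs--Jenda), performs a \emph{single} truncation $P\subseteq P_X$, and then observes that $g_2\circ f_1\circ h_1=0$ forces $g_2\colon G\to Y$ to factor through $Z$. This factorization delivers the second short exact sequence for free, with the bonus that its kernel $U_2$ sits in a sequence $0\to P\to P_Y\to U_2\to 0$ with both outer terms projective, so $\pd U_2\le 1$ immediately. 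You instead work with syzygy sequences $0\to\Omega X\to P_0\to X\to 0$, form a pushout $W$ in $\fgm(\Lambda^\imath)$, and then truncate $W$ by a sum $P'+Q'$ of \emph{two} projective submodules produced by two separate applications of Lemma~\ref{lem: existence SES}.

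\textbf{The gap.} The kernels of $Z\to X$ and $Z\to Y$ in your construction are $(Q_0\oplus Q_1)/(P'+Q')$ and $(P_0\oplus P_1)/(P'+Q')$. These are finite-dimensional nilpotent quotients of $U^Q$ and $U^P$, respectively, but being a \emph{quotient} of a module in $\cp^{\le1}(\Lambda^\imath)$ does not put a module in $\cp^{<\infty}(\Lambda^\imath)$ --- after all, \emph{every} finite-dimensional nilpotent module arises as the quotient of a projective by the submodule generated by long paths. Your appeal to ``extension-closure of $\cp^{<\infty}(\Lambda^\imath)$'' does not apply: extension-closure concerns middle terms of short exact sequences with both outer terms in $\cp^{<\infty}$, whereas here you are looking at the \emph{cokernel} term $0\to(P'+Q')/P'\to U^P\to(P_0\oplus P_1)/(P'+Q')\to 0$, and the submodule $(P'+Q')/P'\cong Q'/(P'\cap Q')$ has no obvious finiteness of projective dimension. (To push your $\res_\BH$-criterion through via Corollary~\ref{cor: res proj}, you would need $\res_\BH(P'\cap Q')$ projective, but $P'\cap Q'$ is merely a submodule of a projective hence Gorenstein projective, and nothing established in the paper gives you that.) You flag this step as ``where the delicacy is concentrated,'' but that is precisely the point that must be proven and is not. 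The paper's single-truncation argument is specifically engineered to avoid having to control such double quotients.
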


\begin{proof}
The ``if part'' follows by definition.

For the ``only if part'', since $\Lambda^\imath$ is $1$-Gorenstein, by \cite[Theorem 11.5.1]{EJ}, we have two short exact sequences
\begin{align*}
0\longrightarrow P_X \stackrel{f_1}{\longrightarrow} G_X\stackrel{f_2}{\longrightarrow} X\longrightarrow0,\qquad 0\longrightarrow P_Y\stackrel{g_1}{\longrightarrow}G_Y\stackrel{g_2}{\longrightarrow} Y\longrightarrow0
\end{align*}
with $P_X,P_Y\in\proj(\Lambda^\imath)$, and $G_X,G_Y\in\Gproj(\Lambda^\imath)$.
Since $X\cong Y$ in $D_{sg}(\mod(\Lambda^\imath))$, we have $G_X\cong G_Y$ in $\underline{\Gproj}(\Lambda^\imath)$. Without loss of generality, we can assume that
$G_X=G=G_Y$.

Consider $g_2\circ f_1: P_X\longrightarrow Y$. Since $Y$ is nilpotent, there exists $N>0$ such that $g_2\circ f_1(p)=0$, for any path $p$ of length $\geq N$.
By Lemma \ref{lem: existence SES}, there exists a nilpotent finite dimensional module $U_1\in\cp^{<\infty}(\Lambda^\imath)$ and a projective resolution
$$0\longrightarrow P \stackrel{h_1}{\longrightarrow} P_X\longrightarrow U_1\longrightarrow0$$
with $h_1$ induces by paths of length $\geq N$.
Then we have the following pushout diagram:
\[
\xymatrix{ P\ar@{=}[r] \ar[d]^{h_1} & P\ar[d] \\
P_X\ar[r]^{f_1} \ar[d] & G\ar[r]^{f_2} \ar[d]^{h_2} & X\ar@{=}[d] \\
U_1\ar[r] & Z \ar[r] &X  }
\]
Clearly, $Z\in\mod(\Lambda^\imath)$, and the third row gives us the first short exact sequence in the lemma.

By assumption, $g_2f_1h_1=0$. So $g_2$ factors through $h_2$, i.e., there exists $h:Z\longrightarrow Y$ such that $g_2=hh_2$.
Note that $h$ is epic. So we have the following commutative diagram of short exact sequences:
\[
\xymatrix{ P\ar[r] \ar@{=}[d]& P_Y \ar[r] \ar[d]^{g_1} & U_2 \ar[d] \\
P\ar[r]^{f_1h_1}& G\ar[r]^{h_2} \ar[d]^{g_2} & Z\ar[d]^{h} \\
&Y \ar@{=}[r] &Y }
\]
The exact sequence in the third column shows that $U_2\in\mod(\Lambda^\imath)$; and together with the short exact sequence in the first row, we have $U_2\in\cp^{\leq1}(\Lambda^\imath)$. Then the third column gives us the second short exact sequence in the lemma.
\end{proof}

\subsubsection{}

Let $\ct$ be an algebraic triangulated category with $\Sigma$ as its shift functor. We call $T\in\ct$ a \emph{partial tilting object} if $\Hom_\ct(T,\Sigma^iT)=0$ for any $i\neq0$. In this case, we have a triangulated embedding $K^b(\proj(\End(T)^{\text{op}}))\longrightarrow \ct$; see \cite{Ke2}.

Recall that $\Lambda^\imath$ is positively graded by $\deg\varepsilon_i=1$, $\deg\alpha=0$ for any $i\in Q_0$, $\alpha\in Q_1$. Note that $\Lambda^\imath_0=\K Q$.
Let $\fgmz(\Lambda^\imath)$ be the category of finitely generated graded $\iLa$-modules. One can define $D_{sg}(\fgmz(\Lambda^\imath))$ similarly; see, e.g., \cite[\S3.5]{LW19a}.

\begin{lemma} %(cf. \cite[Proposition 3.14]{LW19a})
   \label{prop:tilting object}
The $\underline{T}= \Lambda^{\imath}_0$ is a partial tilting object in $D_{sg}(\fgmz(\Lambda^{\imath}))$, and its (opposite) endomorphism algebra is isomorphic to $\K Q$. In particular, we have the following triangulated embedding
\[
D^b(\fgm(\K Q))\longrightarrow D_{sg}(\fgmz(\Lambda^{\imath})).
\]
\end{lemma}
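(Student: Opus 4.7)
The plan is to work in the graded stable category $\underline{\Gproj}^\Z(\Lambda^\imath)$, which is triangle equivalent to $D_{sg}(\fgmz(\Lambda^\imath))$ by a graded version of Buchweitz--Happel, available since $\Lambda^\imath$ is $1$-Gorenstein (Proposition~\ref{proposition of 1-Gorenstein}). Decompose $\underline{T} = \bigoplus_{j\in Q_0} P_j$, with each $P_j = \K Q e_j$ viewed as a graded $\Lambda^\imath$-module via the quotient $\Lambda^\imath \twoheadrightarrow \K Q$ and concentrated in degree $0$.

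First, I would verify that each $P_j$ is Gorenstein projective and identify its shifts in the stable category. Lemma~\ref{prop:projective module of lambdai} furnishes a short exact sequence of graded modules
\[
0 \longrightarrow P_{\btau j}\langle 1\rangle \longrightarrow \Lambda^\imath e_j \longrightarrow P_j \longrightarrow 0,
\]
so $\Omega P_j = P_{\btau j}\langle 1\rangle$; iterating produces a $2$-periodic graded projective resolution of $P_j$, confirming Gorenstein projectivity (and in particular $\Ext^{>0}_{\Lambda^\imath}(P_j,\Lambda^\imath)=0$ via 1-Gorensteinness). Dually, the top-degree part of $\Lambda^\imath e_{\btau j}\langle -1\rangle$ provides an embedding $P_j \hookrightarrow \Lambda^\imath e_{\btau j}\langle -1\rangle$ with cokernel $P_{\btau j}\langle -1\rangle$, yielding $\Omega^{-1} P_j = P_{\btau j}\langle -1\rangle$. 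By induction, $\Sigma^n P_j \simeq P_{\btau^n j}\langle -n\rangle$ in $\underline{\Gproj}^\Z(\Lambda^\imath)$ for every $n\in\Z$.

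Next, I would compute $\Hom_{D_{sg}^\Z(\Lambda^\imath)}(P_i, \Sigma^n P_j) = \underline{\Hom}^{\fgmz}(P_i, P_{\btau^n j}\langle -n\rangle)$. For $n\neq 0$ the source sits in degree $0$ while the target sits in degree $-n\neq 0$, so the graded Hom already vanishes, yielding the partial tilting property. For $n=0$ one must show that no nontrivial morphism $P_i\to P_j$ factors through a graded projective $\Lambda^\imath e_k\langle m\rangle$, so that $\underline{\Hom}^{\fgmz}(P_i, P_j) = \Hom_{\K Q}(P_i, P_j)$. For $m\notin\{0,-1\}$ the degree-$0$ part of $\Lambda^\imath e_k\langle m\rangle$ vanishes, ruling out degree-preserving maps from $P_i$. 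For $m=0$, a degree-preserving $\psi\colon P_i \to \Lambda^\imath e_k$ sends $e_i$ into the degree-$0$ summand $P_k$ subject to $\varepsilon_i\psi(e_i)=0$; applying the defining relation $\varepsilon_i\alpha = \btau(\alpha)\varepsilon_j$ rewrites $\varepsilon_i\psi(e_i)$ as $\btau(\psi(e_i))\varepsilon_k$, and injectivity of right multiplication by $\varepsilon_k$ on $\K Q$ forces $\psi(e_i)=0$. For $m=-1$ the cyclic generator $e_k$ now lives in degree $-1$, while its image under any $\chi\colon\Lambda^\imath e_k\langle -1\rangle \to P_j$ must lie in degree $-1$ of $P_j$, which is zero; hence $\chi=0$. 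Summing over $i,j$ gives $\End_{D_{sg}^\Z(\Lambda^\imath)}(\underline{T})^{\mathrm{op}} \cong \End_{\K Q}(\K Q)^{\mathrm{op}} \cong \K Q$.

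Finally, by Keller's theorem on partial tilting objects in algebraic triangulated categories \cite{Ke2}, one obtains a fully faithful embedding $K^b(\proj(\K Q)) \hookrightarrow D_{sg}(\fgmz(\Lambda^\imath))$; since $\K Q$ is hereditary, the standard identification $K^b(\proj(\K Q)) \simeq D^b(\fgm(\K Q))$ yields the stated embedding. The principal technical point is the factoring analysis in the $n=0$ case; once the graded structure of indecomposable projective $\Lambda^\imath$-modules from Lemma~\ref{prop:projective module of lambdai} is in hand, it reduces to a direct verification using the defining relations of $\Lambda^\imath$.
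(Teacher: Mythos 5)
Your proof is correct and takes essentially the same route the paper relies on (via the graded stable category $\underline{\Gproj}^{\Z}(\Lambda^\imath)$, the syzygy computation $\Omega P_j \cong P_{\btau j}\langle 1\rangle$ from Lemma~\ref{prop:projective module of lambdai}, degree-counting for $\Hom$ vanishing, and Keller's embedding); the paper simply defers to the identical argument in \cite[Proposition 3.14]{LW19a}. One small stylistic remark: "$2$-periodic resolution" alone does not quite establish Gorenstein projectivity; the cleaner justification you in fact have available is that $P_j \cong \Omega(P_{\btau j}\langle -1\rangle)$ is a first syzygy, hence Gorenstein projective since $\Lambda^\imath$ is $1$-Gorenstein.
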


\begin{proof}
%It is enough to prove $\Hom_{D_{sg}(\fgmz(\Lambda^\imath))}(\underline{T},\Sigma^i\underline{T})=0$ for any $i\neq0$.
The proof is the same as for \cite[Proposition 3.14]{LW19a}, and hence omitted here.
%
%It follows from Proposition \ref{prop:projective module of lambdai} that $\underline{T}$ is a Gorenstein projective $\Lambda^\imath$-module. Moreover, there exists the following exact sequence in $\fgmz(\Lambda^\imath)$:
%\begin{align*}
%0\longrightarrow \underline{T}(-1) \longrightarrow \Lambda^\imath\longrightarrow \underline{T}\longrightarrow0.
%\end{align*}
%Similar to the proof of \cite[Proposition 3.4]{LZ}, we have $\Hom_{D_{sg}(\fgmz(\Lambda^\imath))}(\underline{T},\Sigma^i\underline{T})=0$ for any $i\neq0$. Then $\underline{T}$ is a partial tilting object.
%
%Similar to the proof of \cite[Proposition 3.14]{LW19a}, we have \[
%\End_{D_{sg}(\fgmz(\Lambda^{\imath}))}(\underline{T})^{op}\cong\End_{\fgmz (\Lambda^\imath)}(\underline{T})^{op}\cong\End_{\Lambda^{\imath}}(\underline{T})^{op}
%\]
%and then
%\[
%\End_{D_{sg}(\fgmz(\Lambda^{\imath}))}(\underline{T})^{op}\cong \End_{\Lambda^{\imath}}(\K Q)^{op}\cong \K Q.
%\]
\end{proof}

The triangulated embedding in Lemma~\ref{prop:tilting object}, denoted by $G$, is given by the composition of functors:
\begin{align}\label{eqn:G}
G:D^b(\fgm(\K Q))\xrightarrow{\underline{T}\otimes_{\K Q}^\L-} D^b(\fgmz (\Lambda^{\imath}))\stackrel{\pi}{\longrightarrow}  D_{sg}(\fgmz (\Lambda^{\imath})).
\end{align}
On the other hand, $\underline{T}$ is isomorphic to $\K Q$ as a $\Lambda^{\imath}$-$\K Q$-bimodule, so $( \underline{T}\otimes_{\K Q}-)\simeq \iota$, where $\iota$ is defined in \eqref{eqn:rigt adjoint}. So $G$ is equivalent to the composition
\[
D^b(\fgm(\K Q))\xrightarrow{D^b(\iota)} D^b(\fgmz (\Lambda^{\imath}))\stackrel{\pi}{\longrightarrow}  D_{sg}(\fgmz (\Lambda^{\imath})),
\]
where $D^b(\iota)$ is the derived functor of $\iota$ since $\iota$ is exact.
Moreover, the restriction of $G$ to $D^b(\mod(\K Q))$ gives a triangulated embedding
\[
D^b(\mod(\K Q))\xrightarrow{D^b(\iota)} D^b(\fdmz (\Lambda^{\imath}))\stackrel{\pi}{\longrightarrow}  D_{sg}(\fdmz (\Lambda^{\imath})),
\]

Let $\widehat{\btau}$ be the triangulated auto-equivalence of $D^b(\mod(\K Q))$ induced by $\btau$. Similar to \cite[Theorem 3.18]{LW19a}, we have
\begin{align}
\label{eqn: shift}
(1)\circ G\simeq G\circ \Sigma\circ\widehat{\btau}.
\end{align}

\begin{lemma}
(cf. \cite[Lemma 4.3]{Ha3})
  \label{lem: dense}
The natural functor $U:\mod(\Lambda^\imath)\longrightarrow D_{sg}(\mod(\Lambda^\imath))$ is dense.
\end{lemma}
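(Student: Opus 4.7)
The plan is to show that every object $X^\bullet \in D^b(\mod(\Lambda^\imath))$ is isomorphic in $D_{sg}(\mod(\Lambda^\imath))$ to (the image of) a single module placed in degree $0$. The overall strategy follows \cite[Lemma~4.3]{Ha3}, adapted to the nilpotent setting by substituting the short exact sequences of Lemma~\ref{lem: resolution} for the usual injective coresolutions (which would leave $\mod(\Lambda^\imath)$, since $\Lambda^\imath$ is infinite-dimensional in general). I will proceed by induction on the amplitude $b-a$ of a bounded complex representing $X^\bullet$.

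For the base case (amplitude $0$), an object is a shift $M[n]$ for some $M \in \mod(\Lambda^\imath)$. Applying Lemma~\ref{lem: resolution}, the short exact sequence $0 \to M \to H^M \to X^M \to 0$ with $H^M \in \cp^{\leq 1}(\Lambda^\imath)$ gives a triangle in $D^b(\mod(\Lambda^\imath))$ whose middle term vanishes in $D_{sg}$, so $M[1] \cong X^M$ in $D_{sg}(\mod(\Lambda^\imath))$. Dually, $0 \to X_M \to H_M \to M \to 0$ yields $M[-1] \cong X_M$. Iterating these identifications converts any shift $M[n]$ into a module in degree $0$.

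For the inductive step, given $X^\bullet$ in degrees $[a,b]$ with $b>a$, the stupid truncation
\[
0 \longrightarrow \sigma^{>a} X^\bullet \longrightarrow X^\bullet \longrightarrow X^a[-a] \longrightarrow 0
\]
gives a distinguished triangle $\sigma^{>a} X^\bullet \to X^\bullet \to X^a[-a] \to \sigma^{>a} X^\bullet[1]$ whose endpoints have amplitude strictly less than $b-a$. By induction, in $D_{sg}(\mod(\Lambda^\imath))$ this becomes a triangle $N \to X^\bullet \to M \xrightarrow{\delta} N[1]$ with $N, M \in \mod(\Lambda^\imath)$. The goal is then to realize the connecting morphism $\delta \colon M \to N[1]$ by an honest short exact sequence $0 \to N' \to P \to M' \to 0$ in $\mod(\Lambda^\imath)$ with $N' \cong N$, $M' \cong M$ in $D_{sg}$; the middle term $P$ will then be a module isomorphic to $X^\bullet$ in $D_{sg}(\mod(\Lambda^\imath))$ by uniqueness of cones, closing the induction.

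To carry out this final step I will represent $\delta$ by a roof $M \xleftarrow{s} Y^\bullet \xrightarrow{f} N[1]$ with $\cone(s) \in K^b(\proj(\Lambda^\imath))$, and use Lemma~\ref{lem: iso in singularity} together with the shifting procedure from Step~1 to replace $Y^\bullet$ by a module $M'$ fitting into a short exact sequence $0 \to U \to M' \to M \to 0$ with $U \in \cp^{<\infty}(\Lambda^\imath)$; then $f$ descends to a class in $\Ext^1_{\Lambda^\imath}(M', N)$, whose middle term is the desired $P$. The main obstacle is precisely this last reduction: the Verdier-fraction denominator $Y^\bullet$ has a priori unbounded amplitude, so one cannot directly invoke the induction hypothesis on it. The resolution is to exploit that Lemma~\ref{lem: iso in singularity} already provides a criterion, internal to $\mod(\Lambda^\imath)$, for two modules to be isomorphic in $D_{sg}$; combining this with iterated applications of Lemma~\ref{lem: resolution} keeps every intermediate object inside $\mod(\Lambda^\imath)$, which is exactly what is needed to convert the abstract Verdier-quotient morphism $\delta$ into a genuine extension of finite-dimensional nilpotent modules.
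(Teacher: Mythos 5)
Your approach is genuinely different from the paper's and, as written, has a gap in the inductive step that you acknowledge but do not resolve.

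The paper's proof follows Happel \cite[Lemma~4.3]{Ha3} quite literally, with $\cp^{\leq1}(\Lambda^\imath)$ playing the role of projectives: using Lemma~\ref{lem: resolution}, one shows that every bounded complex $X^\bullet \in \cc^b(\mod(\Lambda^\imath))$ admits a quasi-isomorphism from some $P^\bullet \in \cc^{-,b}(\cp^{\leq1}(\Lambda^\imath))$ (bounded above, bounded cohomology, all terms of finite projective dimension), so that $D^b(\mod(\Lambda^\imath)) \simeq D^{-,b}(\cp^{\leq 1}(\Lambda^\imath))$. One then brutally truncates $P^\bullet$ in a degree below the range of nonvanishing cohomology: the cokernel of the relevant differential is a nilpotent module $M$ quasi-isomorphic to $P^\bullet$ up to a bounded complex of objects of $\cp^{\leq1}$, and the latter are killed in $D_{sg}$, giving $X^\bullet \cong M[-n]$ and then a stalk complex after the shifting you describe in your base case. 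This route never needs to touch morphisms in the Verdier quotient individually, only objects. Your base case coincides with the last part of this argument and is fine.

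The gap is in your inductive step. You reduce to realizing a connecting morphism $\delta \colon M \to N[1]$ in $D_{sg}$ by an honest short exact sequence, and propose to do so via a roof $M \xleftarrow{s} Y^\bullet \xrightarrow{f} N[1]$. But the denominator $Y^\bullet$ is an arbitrary bounded complex (a priori even only in $D^b(\fgm(\Lambda^\imath))$, since $D_{sg}(\mod(\Lambda^\imath))$ is defined as a full subcategory of $D_{sg}(\fgm(\Lambda^\imath))$ rather than directly as a Verdier quotient), and ``replacing $Y^\bullet$ by a module $M'$'' is precisely the density statement you are trying to prove. Lemma~\ref{lem: iso in singularity} cannot be invoked here: it characterizes when two \emph{modules} are isomorphic in $D_{sg}$, and says nothing about a complex $Y^\bullet$. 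Even granting that $Y^\bullet$ could be replaced by a module $M'$ isomorphic to it in $D_{sg}$, the morphism $f \colon Y^\bullet \to N[1]$ lives in $D^b$, and an isomorphism $Y^\bullet \cong M'$ valid only in $D_{sg}$ does not transport $f$ to an element of $\Ext^1_{\Lambda^\imath}(M', N) = \Hom_{D^b}(M', N[1])$; the natural map $\Ext^1_{\Lambda^\imath}(M',N) \to \Hom_{D_{sg}}(M', N[1])$ need not be surjective, so one would still have to pass to a syzygy of $M'$ and justify why the procedure terminates and why the resulting extension's connecting map agrees with $\delta$. The cleaner way to close this is to abandon the amplitude induction and instead establish the $\cp^{\leq1}$-resolution statement first, as the paper does; all the difficulties about roofs then evaporate because one works with a single complex throughout.
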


\begin{proof}
Let $\cc^b(\mod(\Lambda^\imath))$ be the category of bounded complexes. Let $\cc^{-,b}(\cp^{\leq1}(\Lambda^\imath))$ be the category of complexes bounded above with bounded cohomology over $\cp^{\leq1}(\Lambda^\imath)$. Using Lemma~ \ref{lem: resolution}, similar to \cite[Lemma 4.1]{Ke1} (see also \cite[Proposition 5.6]{LP}), one can prove that for any bounded complex $X^\bullet\in \cc^b(\mod(\Lambda^\imath))$, there exists $P^\bullet\in\cc^{-,b}(\mod(\cp^{\leq1}(\Lambda^\imath)))$ and an epimorphism $P^\bullet \longrightarrow X^\bullet$ which is a quasi-isomorphism.

Let $D^{-,b}(\cp^{\leq1}(\Lambda^\imath))$ be the derived category of $\cc^{-,b}(\cp^{\leq1}(\Lambda^\imath))$. Then we have
$D^b(\mod(\Lambda^\imath))\simeq D^{-,b}(\cp^{\leq1}(\Lambda^\imath))$, and we shall identify them.
The remaining part of the proof is the same as in \cite[Lemma~ 4.3]{Ha3} by using Lemma~ \ref{lem: resolution}, and will be omitted here.
%For any $X^\bullet=(X^i,d^i)\in D^{-,b}(\cp^{\leq1}(\Lambda^\imath))$, there exists $n_0\in\Z$ such that $H^j(X^\bullet)=0$ for any $j\leq n_0$.
%Let $M=\ker d^j$ for some $j<n_0$. Then $X^\bullet\cong \Sigma^{-j}M$ in $D_{sg}(\mod(\Lambda^\imath))$. Using Lemma \ref{lem: resolution} again, we have the following exact sequence
%$$0\longrightarrow M\xrightarrow{d^0} J^0\xrightarrow{d^1} \cdots\xrightarrow{d^r} J^r\xrightarrow{d^{r+1}}\cdots$$
%with $J^i\in\cp^{\leq1}(\Lambda^\imath)$.
%Let $W =\coker d^{j} $. Then $\Sigma^{-j}M\cong W$ in $D_{sg}(\mod(\Lambda^\imath))$. Moreover $X^\bullet\cong W$ in $D_{sg}(\mod(\Lambda^\imath))$. So $U$ is dense.
\end{proof}

\begin{corollary}
\label{cor: triangulated equiv}
The restriction of $G$ to $D^b(\mod(\K Q))$ gives a triangulated equivalence
\[D^b(\mod(\K Q))\stackrel{\simeq}{\longrightarrow} D_{sg}(\fdmz(\Lambda^\imath)).\]
\end{corollary}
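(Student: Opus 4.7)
Lemma~\ref{prop:tilting object} already furnishes a triangulated embedding $G\colon D^b(\mod(\K Q))\to D_{sg}(\fdmz(\Lambda^\imath))$ (fully faithful, because the ambient ungraded statement restricts to the nilpotent/finite-dimensional subcategory on both sides). So the whole content of the corollary is essential surjectivity: every object of $D_{sg}(\fdmz(\Lambda^\imath))$ should be isomorphic to $G(X)$ for some $X\in D^b(\mod(\K Q))$. My plan is to reduce first to a single graded module via a density argument, then to graded simples via a composition-series argument, and finally to identify each graded simple as a shift of the image of a $\K Q$-simple using the compatibility \eqref{eqn: shift}.

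\emph{Step 1 (graded density).} I would first upgrade Lemma~\ref{lem: dense} to the graded setting: every object of $D_{sg}(\fdmz(\Lambda^\imath))$ is isomorphic to an object of $\fdmz(\Lambda^\imath)$. The only inputs in the proof of Lemma~\ref{lem: dense} were Lemma~\ref{lem: resolution} (existence of $\cp^{\leq 1}$-resolutions of length two) and Lemma~\ref{lem: existence SES} (existence of a projective cover with kernel in $\cp^{\leq 1}$ using paths of length $\geq N$). Both statements have obvious graded analogs: the pushout diagrams in the proof of Lemma~\ref{lem: resolution} respect any grading on $M$ (the module $\bK_{\btau i}$ and the generalized simple extensions all have canonical graded lifts since $\Lambda^\imath$ is $\N$-graded), and the construction of Lemma~\ref{lem: existence SES} starts from a graded projective $V=\pi_*(W)$ and uses the length filtration on $P$, which is graded. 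So Keller's argument (as in \cite[Lemma~4.3]{Ha3}) goes through verbatim in the graded setting.

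\emph{Step 2 (reduction to graded simples).} Since the essential image of $G$ is a triangulated subcategory of $D_{sg}(\fdmz(\Lambda^\imath))$ closed under isomorphism, and every module in $\fdmz(\Lambda^\imath)$ has a finite composition series in the graded category whose subquotients are graded simples $S_i(n)$ for $i\in\I$, $n\in\Z$, combining with Step~1 it suffices to show each $S_i(n)$ is in the essential image of $G$.

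\emph{Step 3 (graded simples).} By construction of $G$ via $\iota$, the $\K Q$-simple $S_i$ pulls back to the $\Lambda^\imath$-module $S_i$ concentrated in degree $0$, so $G(S_i)\cong S_i(0)$ in $D_{sg}(\fdmz(\Lambda^\imath))$. The compatibility \eqref{eqn: shift}, iterated $n$ times, gives $(n)\circ G\simeq G\circ(\Sigma\circ\widehat{\btau})^n$, hence
\[
S_i(n)\;\cong\;G\bigl((\Sigma\circ\widehat{\btau})^n S_i\bigr)\;=\;G\bigl(\Sigma^n\widehat{\btau}^n S_i\bigr),
\]
which lies in the image of $G|_{D^b(\mod(\K Q))}$ since $\widehat{\btau}^n S_i = S_{\btau^n i}\in\mod(\K Q)$. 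Together with Steps~1 and~2 this establishes essential surjectivity, and combined with the inherited full faithfulness yields the triangulated equivalence.

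The main technical hurdle is Step~1, i.e.\ making sure the resolution lemmas used in the proof of Lemma~\ref{lem: dense} go through in the graded setting; once that is in hand, Steps~2--3 are formal consequences of \eqref{eqn: shift} and the existence of graded composition series in $\fdmz(\Lambda^\imath)$.
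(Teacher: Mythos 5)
Your proof follows essentially the same route as the paper: reduce via a density lemma to showing every graded module lies in the essential image, reduce further to graded simples $S_i(n)$ (the paper cites \cite[Lemma~3.2]{LZ}; your composition-series argument is the same idea spelled out), and then identify $S_i(n)$ with $G(\Sigma^n\widehat{\btau}^n S_i)$ via \eqref{eqn: shift}. Your Step~1, upgrading Lemma~\ref{lem: dense} to the graded category $\fdmz(\Lambda^\imath)$, is a point the paper passes over silently, and your observation that Lemmas~\ref{lem: resolution} and~\ref{lem: existence SES} are compatible with the $\N$-grading on $\Lambda^\imath$ is the right justification.
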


\begin{proof}
By Lemma \ref{lem: dense}, it suffices to check that all graded $\Lambda^\imath$-modules are in $G(D^b(\mod(\K Q)))$.
Similar to \cite[Lemma 3.2]{LZ}, we only need to check that $S(i)\in G(D^b(\mod(\K Q)))$ for any simple $\K Q$-module $S$ and $i\in\Z$.
From \eqref{eqn: shift}, it is equivalent to that $\Sigma^i S\in G(D^b(\mod(\K Q)))$ for any $i\in\Z$, which is clear.
\end{proof}

\begin{theorem}\label{thm:sigma}
Let $(Q, \btau)$ be an $\imath$quiver. Then $D^b(\mod(\K Q))/\Sigma \circ \widehat{\btau}$ is a triangulated orbit category \`a la Keller \cite{Ke2}, and we have the following triangulated equivalence
\[
D_{sg}(\mod(\Lambda^{\imath}))\simeq D^b(\mod(\K Q))/\Sigma \circ \widehat{\btau}.
\]
\end{theorem}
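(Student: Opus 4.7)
My plan is to build the equivalence by factoring the forgetful functor from the graded to the ungraded singularity category through the orbit of the grading shift, and then to invoke Corollary~\ref{cor: triangulated equiv} to translate the graded side into $D^b(\mod(\K Q))$.

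\textbf{Step 1 (setup).} The grading on $\Lambda^\imath$ defined by $\deg \varepsilon_i = 1$, $\deg \alpha = 0$ gives a degree-forgetting functor $\For\colon \fdmz(\Lambda^\imath)\to \mod(\Lambda^\imath)$, which is exact and passes to the singularity categories, producing a triangulated functor
\[
\widetilde{\For}\colon D_{sg}(\fdmz(\Lambda^\imath))\longrightarrow D_{sg}(\mod(\Lambda^\imath)).
\]
Since $\widetilde{\For}$ is unchanged after twisting the grading, it is canonically isomorphic to $\widetilde{\For}\circ(1)$, so it factors through the orbit category of $D_{sg}(\fdmz(\Lambda^\imath))$ by $(1)$. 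Composing with the equivalence $G\colon D^b(\mod(\K Q))\stackrel{\simeq}{\longrightarrow} D_{sg}(\fdmz(\Lambda^\imath))$ of Corollary~\ref{cor: triangulated equiv} and using the identification $(1)\circ G\simeq G\circ\Sigma\circ\widehat{\btau}$ from \eqref{eqn: shift}, I obtain a well-defined triangulated functor
\[
\Psi\colon D^b(\mod(\K Q))\big/\Sigma\circ\widehat{\btau}\longrightarrow D_{sg}(\mod(\Lambda^\imath)).
\]

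\textbf{Step 2 (triangulated orbit category).} To legitimize the left-hand side as a triangulated category in the sense of Keller \cite{Ke2}, I will exhibit a dg-enhancement of $D^b(\mod(\K Q))$ on which $\Sigma\circ\widehat{\btau}$ lifts to a dg auto-equivalence; the standard dg-enhancement of the bounded derived category of a hereditary algebra provides this, and $\widehat{\btau}$ is induced by the dg-functor coming from the quiver involution. Keller's theorem then endows the orbit category with a canonical triangulated structure in which $\Psi$ is triangulated by construction.

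\textbf{Step 3 (density).} Density of $\Psi$ follows from Lemma~\ref{lem: dense}: every object of $D_{sg}(\mod(\Lambda^\imath))$ is represented by a module $M\in \mod(\Lambda^\imath)$, and any such $M$ can be lifted to a graded module in $\fdmz(\Lambda^\imath)$ (e.g., by choosing any compatible grading on the underlying representation of $\overline Q$), hence lies in the essential image of $\widetilde{\For}$, and therefore in the image of $\Psi$ via $G$.

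\textbf{Step 4 (fully faithful — the main obstacle).} The crucial point is to show that for $X,Y\in D^b(\mod(\K Q))$,
\[
\Hom_{D_{sg}(\mod(\Lambda^\imath))}(\Psi X,\Psi Y) \;=\; \bigoplus_{n\in \Z}\Hom_{D^b(\mod(\K Q))}(X,(\Sigma\circ\widehat{\btau})^n Y).
\]
The $\supseteq$ direction comes from summing the morphisms that exist in each graded degree via $G$ and $\widetilde{\For}$. The $\subseteq$ direction is the delicate point: I will use Lemma~\ref{lem: iso in singularity} to represent an arbitrary morphism in $D_{sg}(\mod(\Lambda^\imath))$ by a roof of honest short exact sequences with cones in $\cp^{<\infty}(\Lambda^\imath)$, and then refine this roof to one that respects a chosen grading, using Lemma~\ref{lem: existence SES} to truncate projective covers by paths of length $\geq N$ so that all gradings involved are bounded. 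Once everything is represented within $\fdmz(\Lambda^\imath)$, the morphism decomposes into a finite sum over graded degrees, which via $G^{-1}$ translates into a finite sum over powers of $\Sigma\circ\widehat{\btau}$ in $D^b(\mod(\K Q))$. The principal difficulty is controlling the grading on the cones after applying Lemma~\ref{lem: iso in singularity}; Lemma~\ref{lem: existence SES} is precisely the tool designed to perform this truncation argument in the infinite-dimensional $\Lambda^\imath$ setting, and this is where the non-acyclic case differs from \cite[Theorem~3.19]{LW19a}.

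\textbf{Step 5 (conclusion).} Combining Steps~3 and 4, $\Psi$ is a triangulated equivalence, giving the desired identification
\(
D_{sg}(\mod(\Lambda^{\imath}))\simeq D^b(\mod(\K Q))/\Sigma \circ \widehat{\btau}.
\)
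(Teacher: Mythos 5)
Your overall route matches the paper's: factor the degree-forgetting functor from $D_{sg}(\fdmz(\Lambda^\imath))$ through the orbit of the grading shift, combine with Corollary~\ref{cor: triangulated equiv} and \eqref{eqn: shift}, and invoke Keller's theorem for the triangulated structure (the paper simply delegates the remainder to [LW19a, Lemma~3.17, Theorem~3.18]). However, Step~3 contains a genuine gap. You assert that ``any such $M$ can be lifted to a graded module in $\fdmz(\Lambda^\imath)$, e.g., by choosing any compatible grading on the underlying representation of $\ov{Q}$.'' This is unjustified and, as a blanket claim, false: over a $\Z$-graded algebra whose degree-zero part is not semisimple --- and $\Lambda^\imath_0=\K Q$ is not semisimple here --- a finite-dimensional module need not admit any graded lift, so a compatible grading is a constraint to be verified, not a free choice. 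The correct density argument is that $D_{sg}(\mod(\Lambda^\imath))$ is triangle-generated by the simple modules $S_i$, which are $\K Q$-modules and hence lift (placed in degree zero) to $\fdmz(\Lambda^\imath)$; but for this generation to give density of $\Psi$ one needs the essential image of $\Psi$ to be closed under triangles, which requires the full-faithfulness of Step~4. Your Steps~3 and~4 are thus in the wrong logical order, and the density argument as written does not stand on its own.

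Step~4 is where the real content lies, and your sketch there is too thin to be sure it closes. Lemma~\ref{lem: iso in singularity} characterizes \emph{isomorphisms}, not arbitrary morphisms, in $D_{sg}(\mod(\Lambda^\imath))$, and the proposed ``roof refinement by grading'' is not spelled out into an actual argument that produces the Hom decomposition. A concrete route, consistent with the 1-Gorenstein structure, is to compare graded and ungraded stable $\Hom$ groups via Gorenstein-projective approximations, using Corollary~\ref{cor: res proj} and Lemma~\ref{lem: existence SES} to keep the gradings bounded in the infinite-dimensional setting --- this is, in effect, what the cited [LW19a, Lemma~3.17] accomplishes. You should either carry out that comparison explicitly or cite the covering-theoretic Hom formula and verify its hypotheses in the present, not necessarily finite-dimensional, situation.
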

\begin{proof}
The proof is the same as for \cite[Lemma 3.17, Theorem 3.18]{LW19a} by using now Corollary~ \ref{cor: triangulated equiv} and \eqref{eqn: shift}.
\end{proof}

%Similar to \cite[Corollary 3.21]{LW19a}, we have the following corollary.

\begin{corollary} (cf. \cite[Corollary 3.21]{LW19a})
   \label{corollary for stalk complexes}
For any $M\in D_{sg}(\mod(\Lambda^{\imath}))$, there exists a unique (up to isomorphisms) module $N\in \mod(\K Q)\subseteq \mod(\Lambda^{\imath})$ such that
$M\cong N$ in $D_{sg}(\mod(\Lambda^{\imath}))$.
%In particular, we have $\Ind (\mod(\K Q))=\Ind D_{sg}(\mod(\Lambda^\imath)).$
%\blue{$\Ind$ or $\Iso$? both are right. equal as sets? In particular, we have a bijection between $\Iso (\mod(\K Q))$ and $\Iso D_{sg}(\mod(\Lambda^\imath))$ with the bijection given by mapping $[N]$ to $[N]$. } \red{I added the previous sentence in analogy to Remark 3.22  below.}
\end{corollary}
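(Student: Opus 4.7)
My plan is to invoke Theorem~\ref{thm:sigma} and work inside the orbit category $D^b(\mod(\K Q))/F$, with $F=\Sigma\circ\widehat{\btau}$. Throughout I write $\simeq$ for isomorphism in the orbit category and $\cong$ for isomorphism in $D^b(\mod(\K Q))$ or in $\mod(\K Q)$.

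For existence, I would choose any preimage $Y\in D^b(\mod(\K Q))$ of $M$ and use the fact that $\K Q$ is hereditary to split $Y\cong\bigoplus_{i\in\Z}\Sigma^i H^i(Y)$. The defining relation $FZ\simeq Z$ of the orbit category reads $\Sigma Z\simeq\widehat{\btau}^{-1}Z$, so $\Sigma^i Z\simeq\widehat{\btau}^{-i}Z$ for every $i$; assembling gives
\[
Y\;\simeq\;\bigoplus_{i\in\Z}\widehat{\btau}^{-i}H^i(Y)\;\in\;\mod(\K Q),
\]
and I take this direct sum as the desired $N$.

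For uniqueness, suppose $N,N'\in\mod(\K Q)$ both represent $M$; then there is a mutually inverse pair $\phi\colon N\to N'$, $\psi\colon N'\to N$ in the orbit category with components
\[
\phi_n\in\Hom_{D^b(\mod(\K Q))}(N,F^nN')=\Ext^n_{\K Q}(N,\widehat{\btau}^nN'),
\]
and analogously for $\psi_n$. These vanish for $n<0$ (negative $\Ext$) and for $n>1$ (hereditary property of $\K Q$), so only $\phi_0,\phi_1$ and $\psi_0,\psi_1$ can survive. Applying the orbit composition rule $(\psi\circ\phi)_k=\sum_{i+j=k}F^i(\psi_j)\circ\phi_i$, only the pair $(i,j)=(0,0)$ contributes at $k=0$, so $(\psi\circ\phi)_0=\psi_0\circ\phi_0$. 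Since the orbit identity is concentrated in degree $0$, this forces $\psi_0\circ\phi_0=\mathrm{id}_N$ and, symmetrically, $\phi_0\circ\psi_0=\mathrm{id}_{N'}$. Hence $\phi_0\colon N\to N'$ is already an isomorphism in $\mod(\K Q)$.

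The only technical point I foresee is ensuring that the degree-$0$ component of a composition in the orbit category picks up no cross-contribution from the $\Ext^1$-parts $\phi_1,\psi_1$; this is secured by the hereditary property of $\K Q$ (bounding nonzero components to $n\in\{0,1\}$) together with the purely arithmetic observation that $i+j=0$ with $i,j\in\{0,1\}$ admits only the solution $(0,0)$.
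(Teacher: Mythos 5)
Your proof is correct and takes the same route the paper intends (the paper simply defers to the analogous \cite[Corollary 3.21]{LW19a}, whose argument runs through the orbit-category equivalence): you pass through Theorem~\ref{thm:sigma}, split a preimage complex into a direct sum of shifted modules using hereditarity of $\mod(\K Q)$, and read off uniqueness from the $\Z$-graded $\Hom$-spaces $\Hom_{D^b}(N,F^nN')=\Ext^n_{\K Q}(N,\widehat{\btau}^nN')$ together with the orbit composition rule. One small convention slip: the hereditary splitting should be $Y\cong\bigoplus_{i}\Sigma^{-i}H^i(Y)$ (so that $Y\simeq\bigoplus_i\widehat{\btau}^{\,i}H^i(Y)$ in the orbit category) rather than $\bigoplus_i\Sigma^{i}H^i(Y)$, but this sign has no bearing on the argument.
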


\begin{proof}
The proof is the same as for \cite[Corollary 3.21]{LW19a}.
\end{proof}

\begin{corollary}
\label{cor: res proj}
For any $M\in\mod(\Lambda^{\imath})$ the following are equivalent.
\begin{itemize}
\item[(i)] $\pd M<\infty$;
\item[(ii)] $\ind M<\infty$;
\item[(iii)] $\pd M\leq1$;
\item[(iv)] $\ind M\leq1$;
\item[(v)] $\res_\BH (M)$ is projective as an $\BH $-module.
\end{itemize}
\end{corollary}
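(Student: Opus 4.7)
The equivalences (i) $\Leftrightarrow$ (ii) $\Leftrightarrow$ (iii) $\Leftrightarrow$ (iv) are standard consequences of $\Lambda^\imath$ being $1$-Gorenstein (Proposition~\ref{proposition of 1-Gorenstein}): over a Gorenstein algebra of self-injective dimension $d$, any module of finite projective (resp.\ injective) dimension automatically has projective (resp.\ injective) dimension $\le d$, and finite projective dimension is equivalent to finite injective dimension. It therefore remains to prove (i) $\Leftrightarrow$ (v). The implication (i) $\Rightarrow$ (v) is precisely Lemma~\ref{lem: res proj}(b), so the real work is in (v) $\Rightarrow$ (i).

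The plan for (v) $\Rightarrow$ (i) is to pass through the singularity category and transfer the condition ``$\res_\BH(-)$ is projective'' from $M$ to a module in $\mod(\K Q)$, where it becomes trivial to analyze. Assume $\res_\BH(M)$ is projective. By Corollary~\ref{corollary for stalk complexes}, choose $N \in \mod(\K Q) \subseteq \mod(\Lambda^\imath)$ with $M \cong N$ in $D_{sg}(\mod(\Lambda^\imath))$. Applying Lemma~\ref{lem: iso in singularity} yields a module $Z \in \mod(\Lambda^\imath)$ fitting into short exact sequences
\begin{align*}
0 \longrightarrow U_1 \longrightarrow Z \longrightarrow M \longrightarrow 0,
\qquad
0 \longrightarrow U_2 \longrightarrow Z \longrightarrow N \longrightarrow 0,
\end{align*}
with $U_1, U_2 \in \cp^{<\infty}(\Lambda^\imath)$.

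Next, I would apply the exact restriction functor $\res_\BH$ to both sequences. By Lemma~\ref{lem: res proj}(b), $\res_\BH U_1$ and $\res_\BH U_2$ are projective over $\BH$, and since $\BH$ is self-injective they are also injective. The first sequence splits (as its right-hand term $\res_\BH M$ is projective by assumption), so $\res_\BH Z \cong \res_\BH U_1 \oplus \res_\BH M$ is projective. The second sequence then also splits, so $\res_\BH N$ is a direct summand of the projective $\res_\BH Z$ and hence projective. But $N \in \mod(\K Q)$ viewed inside $\mod(\Lambda^\imath)$ via $\iota$ has every $\varepsilon_i$ acting as zero, so $\res_\BH N$ is a semisimple $\BH$-module; since each indecomposable projective $\BH$-module $\bK_i$ of \eqref{eq:E} has length $2$, no nonzero simple $\BH$-module is projective, forcing $\res_\BH N = 0$ and thus $N = 0$.

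Plugging $N = 0$ into the second short exact sequence gives $Z \cong U_2 \in \cp^{<\infty}(\Lambda^\imath)$. The first short exact sequence then has both outer terms of finite projective dimension, so $\pd_{\Lambda^\imath} M < \infty$, establishing (i). The single mildly nontrivial observation is the non-projectivity of nonzero semisimple $\BH$-modules; everything else is assembly of the preceding lemmas. The potential obstacle, namely the fact that $\Lambda^\imath$ may be infinite-dimensional, has been fully absorbed into Corollary~\ref{corollary for stalk complexes} and Lemma~\ref{lem: iso in singularity}, so this step is not an issue here.
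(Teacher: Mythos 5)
Your proposal is correct and follows essentially the same route as the paper: the equivalences (i)--(iv) from 1-Gorensteinness, (i)~$\Rightarrow$~(v) from Lemma~\ref{lem: res proj}(b), and (v)~$\Rightarrow$~(i) by combining Corollary~\ref{corollary for stalk complexes} with Lemma~\ref{lem: iso in singularity} and then restricting to $\BH$. The only difference is that you spell out two steps the paper compresses — namely, why a projective semisimple $\BH$-module must vanish (hence $N=0$), and why $N=0$ yields $\pd M<\infty$ from the two short exact sequences — which is a welcome clarification but not a change in method.
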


\begin{proof}
Proposition \ref{proposition of 1-Gorenstein} states that $\Lambda^{\imath}$ is $1$-Gorenstein, and then the equivalence of (i)--(iv) follows by \cite[Theorem 9.1.10]{EJ}.

(i)$\Rightarrow$(v) follows from Lemma \ref{lem: res proj}, so it remains to prove (v)$\Rightarrow$(i).

Assume $\res_\BH (M)$ is projective as $\BH $-module.
By Corollary \ref{corollary for stalk complexes}, there exists $N\in \mod(\K Q)$ such that $M\cong N$ in $D_{sg}(\mod(\Lambda^{\imath}))$.
Together with Lemma \ref{lem: iso in singularity}, we have
\begin{align*}
0\longrightarrow U_1\longrightarrow Z \longrightarrow M\longrightarrow0,
\qquad 0\longrightarrow U_2\longrightarrow Z\longrightarrow N\longrightarrow0
\end{align*}
in $\mod(\Lambda^\imath)$ with $U_1,U_2\in\cp^{<\infty}(\Lambda^\imath)$.

By applying $\res_\BH$ to the first short exact sequence, since $\res_\BH(U_1)$ is projective, so is $\res_\BH(Z)$. Then $\res_\BH(N)$ is projective by applying $\res_\BH$ to
the second one, which implies that $N=0$. So $\pd_{\Lambda^\imath} M<\infty$.
\end{proof}

%%

%For $\btau=\Id$, we have the following corollary, which generalises the result in \cite{RZ} to arbitrary quivers $Q$.

%\begin{corollary}
%Let $\cc_{\Z/1}(\mod(\K Q))$ be the category of $\Z/1$-graded complexes of $\mod(\K Q)$. Then we have
%$$\cd_{\Z/1}(\mod(\K Q))\simeq D^b(\K Q)/\Sigma.$$
%where $\cd_{\Z/1}(\mod(\K Q))$ is the derived category of $\cc_{\Z/1}(\mod(\K Q))$.
%\end{corollary}

%\subsection{Grothendieck groups}

%%
%%
%%

\section{The $\imath$Hall algebras}
  \label{sec:Hall}

%We shall follow \cite{Lu19} closely. For any $\imath$quiver $(Q,\btau)$, $\mod(\Lambda^\imath)$ satisfies (E.a)--(E.d) formulated in \cite[\S A.2]{Lu19}, where (E.d) holds thanks to Lemma~ \ref{lem: resolution}. Thus, we can define the semi-derived Ringel-Hall algebra of $\iLa$, which is denoted by $\utMH$.

In this section, we take the field $\K=\mathbb F_q$, a finite field of $q$ elements. We formulate the $\imath$Hall algebra $\tMHk$ as a twisted semi-derived Hall algebra for the $\imath$quiver algebra $\iLa$ and study its properties.

\subsection{Euler forms}

As in \cite[(A.1)-(A.2)]{Lu19}, we define the Euler form $\langle\cdot,\cdot\rangle =\langle\cdot,\cdot\rangle_{\Lambda^\imath}$ for $\Lambda^\imath$:
\begin{align}
\label{eq:Euler1}
\langle\cdot,\cdot\rangle: K_0(\cp^{\leq 1}(\Lambda^\imath))\times K_0(\mod(\Lambda^\imath))\longrightarrow \Z,
\\
\label{eq:Euler2}
\langle\cdot,\cdot\rangle: K_0(\mod(\Lambda^\imath))\times K_0(\cp^{\leq 1}(\Lambda^\imath))\longrightarrow \Z.
\end{align}

Denote by $\langle\cdot,\cdot\rangle_Q$ the Euler form of $\K Q$. Denote by $S_i$ the simple $\K Q$-module (respectively, $\Lambda^{\imath}$-module) corresponding to vertex $i\in Q_0$ (respectively, $i\in\ov{Q}_0$).

\begin{lemma}
\label{lem:Euler}
For $K,K'\in \cp^{\leq1}(\Lambda^\imath)$, $M\in\mod(\Lambda^\imath)$, $i,j\in\I$, we have
\begin{align}
\label{Eform1}
\langle K,M\rangle =\langle \res_{\BH}(K),M\rangle,& \qquad \langle M,K\rangle =\langle M,\res_{\BH}(K)\rangle,
\\
\label{Eform2}
\langle \bK_i,S_j\rangle=\langle S_i,S_j\rangle_Q,&\qquad \langle S_j,\bK_i\rangle =\langle S_j, S_{\btau i} \rangle_Q,
\\
\label{Eform3}
\langle K,K'\rangle=&\frac{1}{2}\langle \res(K),\res(K')\rangle_Q.
\end{align}
\end{lemma}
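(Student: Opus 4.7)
My plan is to verify the three identities in the order \eqref{Eform2} $\Rightarrow$ \eqref{Eform1} $\Rightarrow$ \eqref{Eform3}, using explicit projective resolutions together with the structure of $\iLa$-projective modules given by Lemma~\ref{prop:projective module of lambdai}. This adapts the strategy of the analogous result \cite[Lemma~3.22]{LW19a} from the acyclic setting to the present case where $\iLa$ may be infinite-dimensional.

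For \eqref{Eform2}: by Lemma~\ref{lem: res proj}(a), $\bK_i$ has projective dimension $\leq 1$, so it admits a resolution $0 \to P_1 \to \iLa e_i \to \bK_i \to 0$. Using Lemma~\ref{prop:projective module of lambdai} to decompose $\iLa e_i \cong P_i \oplus P_{\btau i}$ as a $\K Q$-module, I identify $P_1$; applying $\Hom_\iLa(-,S_j)$ and taking alternating dimensions, the $P_{\btau i}$-contributions cancel between $P_1$ and $P_0$, leaving $\langle S_i,S_j\rangle_Q$. The second identity is obtained dually (or by an injective coresolution), noting that the socle of $\bK_i$ is concentrated at $S_{\btau i}$, which accounts for the $\btau$-shift in the second slot. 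For \eqref{Eform1}: by Lemma~\ref{lem: res proj}(b), $\res_\BH K$ is projective over the radical-square-zero selfinjective algebra $\BH$, hence isomorphic to $\bigoplus_i \bK_i^{n_i}$ for some $n_i \in \N$. Since $\BH$ is both a subalgebra and a quotient of $\iLa$, inflation back along $\iLa \twoheadrightarrow \BH$ gives an $\iLa$-module $\widehat{\res}_\BH K$ whose class in $K_0(\cp^{\leq 1}(\iLa))$ is $\sum_i n_i [\bK_i]$. The claim then reduces to showing that $[K] - [\widehat{\res}_\BH K]$ lies in the $\Z$-span of projective classes $[\iLa e_j]$, on which both sides coincide: indeed, for $K=\iLa e_j$ one has $\langle\iLa e_j,M\rangle = \dim e_j M$, which after applying \eqref{Eform2} and the decomposition $\res_\BH(\iLa e_j)\cong\bK_j\oplus \bK_{\btau j}$ (or $\bK_j$ when $\btau j=j$) matches the right-hand side of \eqref{Eform1}.

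For \eqref{Eform3}: applying \eqref{Eform1} to both slots gives $\langle K,K'\rangle = \langle \res_\BH K, \res_\BH K'\rangle$; expanding each restriction in the basis $\{[\bK_i]\}_{i\in\I}$ and evaluating each $\langle \bK_i,\bK_j\rangle$ via \eqref{Eform2} expresses everything in terms of the $\K Q$-Euler form of underlying $\K Q$-modules. The factor $\tfrac{1}{2}$ emerges from orbit bookkeeping: each $\btau$-orbit $\{i,\btau i\}$ contributes twice to $\res K$ (once as $S_i$, once as $S_{\btau i}$) but only once as $\bK_i$ to the $\BH$-side, and the case $\btau i=i$ requires parallel but separate accounting since then $\bK_i$ is $2$-dimensional but supported at a single vertex.

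The main technical obstacle I anticipate is the reduction step in \eqref{Eform1}: proving that $[K]-[\widehat{\res}_\BH K]$ actually lies in the projective subgroup of $K_0(\cp^{\leq 1}(\iLa))$ requires combining the $1$-Gorenstein property from Proposition~\ref{proposition of 1-Gorenstein} with the explicit description of indecomposable projectives from Lemma~\ref{prop:projective module of lambdai}, and handling the fact that $\iLa$-projectives are no longer finite-dimensional. A secondary subtlety is the bookkeeping for \eqref{Eform3}, where the distinction between the $\btau$-fixed and $\btau$-swapped vertices must be tracked carefully to confirm that the two cases assemble into a uniform factor $\tfrac{1}{2}$.
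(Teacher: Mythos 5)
Your proposal diverges from the paper's argument for \eqref{Eform1}, and there is a genuine gap precisely at the point you flagged as a "technical obstacle."

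The difficulty is not merely that $\iLa$-projectives are infinite-dimensional; it is that when $Q$ has oriented cycles, $\iLa e_j \notin \mod(\Lambda^\imath)$, so $[\iLa e_j]$ is \emph{not a class} in $K_0(\cp^{\leq1}(\Lambda^\imath))$ at all, and the Euler form \eqref{eq:Euler1}--\eqref{eq:Euler2} is undefined on it. Hence "reduce to showing $[K] - [\widehat{\res}_\BH K]$ lies in the $\Z$-span of projective classes" is not a well-posed statement in the Grothendieck group one is actually working with. Even granting some extension of the pairing, the projective cover of $\bK_i$ involves these infinite-dimensional objects, and $\res_\BH(\iLa e_j)$ is infinite-dimensional too, so the "both sides coincide on projectives" check cannot be carried out in $\mod(\BH)$ either. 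This is exactly the failure mode the paper's generalization from \cite{LW19a} to non-acyclic $\imath$quivers has to route around, and your plan does not supply the replacement argument.

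The paper's actual proof of \eqref{Eform1} sidesteps projectives entirely. After reducing to $M = S_i$, it introduces the sign-flip involution $\Phi$ on $\mod(\Lambda^\imath)$ (negating each $K(\varepsilon_i)$) and uses the short exact sequence
\[
0 \longrightarrow K \longrightarrow \pi_*\pi^*(K) \longrightarrow \Phi(K) \longrightarrow 0
\]
built from the pullup/pushdown adjunction. Since $\Phi$ fixes each $S_i$ and preserves the Euler form, one gets $2\langle K, S_i\rangle = \langle \pi_*\pi^*(K), S_i\rangle$. The decisive replacement for your projective-span claim is that $\widehat{\pi_*\pi^*(K)}$ lies in the subgroup of $K_0(\cp^{\leq1}(\Lambda^\imath))$ generated by the \emph{generalized simples} $\widehat{\bK}_i$ — finite-dimensional objects — which follows from \cite[Proposition 2.3]{LP} applied to $\Lambda$ and exactness of $\pi_*$. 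Since $\res_\BH(K) \cong \res_\BH(\Phi(K))$, combining these gives \eqref{Eform1} after dividing by 2. You should replace your reduction-to-projectives step with an argument of this type: find a finite-dimensional proxy for $K$ (here $\pi_*\pi^*(K)$) whose $K_0$-class is demonstrably a $\Z$-combination of the $\widehat{\bK}_i$, rather than appealing to projective modules.

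Your treatment of \eqref{Eform2}--\eqref{Eform3} is in the spirit of the paper (which simply cites \cite[Lemma 4.3]{LW19a} for these two), but note that your resolution $0 \to P_1 \to \iLa e_i \to \bK_i \to 0$ again involves infinite-dimensional projectives; this is less fatal here since one can compute $\langle \bK_i, S_j\rangle = \dim\Hom(\bK_i,S_j) - \dim\Ext^1(\bK_i,S_j)$ directly from finite-dimensional data, but the "alternating dimensions from the resolution" phrasing should be rewritten to avoid applying $\res_\BH$ or a $\K Q$-Euler count to the infinite-dimensional $P_1$ and $\iLa e_i$.
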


\begin{proof}
The proof of \eqref{Eform2}--\eqref{Eform3} is the same as for \cite[Lemma 4.3]{LW19a}.

It remains to prove \eqref{Eform1}. Since $K_0(\mod(\Lambda^\imath))=\langle S_i\mid i\in\I\rangle\cong\Z\I$, without loss of generality, we assume $M=S_i$ for some $i\in\I$.
For any $K= \big(K_i, K(\alpha),K(\varepsilon_i) \big)_{i\in Q_0,\alpha\in Q_1} \in \mod(\Lambda^\imath)$, define a $\Lambda^\imath$-module
\[
\Phi(K):= \big(K_i,K(\alpha),-K(\varepsilon_i) \big)_{i\in Q_0,\alpha\in Q_1},
\]
which lies in $\mod(\Lambda^\imath)$. This defines an involution $\Phi$ of $\mod(\Lambda^\imath)$.

For any $K\in\cp^{\leq1}(\Lambda^\imath)$, we have $\Phi(K),\pi_*\pi^*(K)\in\cp^{\leq1}(\Lambda^\imath)$ by Corollary \ref{cor: res proj}. Note that $\pi_*\pi^*(K)_i= K_i\oplus K_i$ for $i\in\I$.
We have the following short exact sequence
$$0\longrightarrow K\xrightarrow{(f_i)_{i\in\I}} \pi_*\pi^*(K)\xrightarrow{(g_i)_{i\in\I}} \Phi(K) \longrightarrow0,$$
where $f_i=(\Id_{K_i},\Id_{K_i})^t$ and $g_i=(\Id_{K_i},-\Id_{K_i})$.
%This implies that $K^*\in \cp^{\leq1}(\Lambda^\imath)$.

%Note that $P\cong P^*$ for any finitely generated projective $\Lambda^\imath$-modules.  Let $0\longrightarrow Q\xrightarrow{h_1} P\xrightarrow{h_2} K\longrightarrow 0$ be a projective resolution of $K$. Then $0\longrightarrow Q^*\xrightarrow{h_1} P^*\xrightarrow{h_2} K^*\longrightarrow 0$ is also a projective resolution.

Clearly $\Phi$ preserves the Euler form \eqref{eq:Euler1}--\eqref{eq:Euler2}. Since $\Phi(S_i)=S_i$ for any $i\in\I$, it follows that
\begin{align}
  \label{eq:KS}
\langle K,S_i\rangle= \langle \Phi(K),S_i\rangle,\qquad \langle S_i,K\rangle= \langle S_i,\Phi(K)\rangle.
\end{align}
%Dually, we also have $\langle S_i,K\rangle= \langle S_i,K^*\rangle$ for any $i\in\I$.

By the proof of \cite[Proposition 2.3]{LP}, we have $\widehat{\pi^*(K)}\in K_0(\cp^{\leq1}(\Lambda))=\langle \widehat{\bK}_i,\widehat{\bK}_{i'}\mid i\in\I\rangle$. Since $\pi_*$ preserves the exactness, we have $\widehat{\pi_*\pi^*}(K)\in \langle \widehat{\bK}_i \mid i\in\I\rangle\subseteq K_0(\cp^{\leq1}(\Lambda^\imath))$. So one can show that \eqref{Eform1} with $K$ replaced by $\pi_*\pi^*(K)$ holds. Then \eqref{Eform1} follows from this, using \eqref{eq:KS} and the fact that $\res_\BH(K)\cong \res_\BH(\Phi(K))$.
\end{proof}

\subsection{Semi-derived Hall algebras for $\imath$quiver algebras}

We shall follow \cite{Lu19} with some suitable modification to define semi-derived Hall algebra of $\Lambda^\imath$ for an arbitrary $\imath$quiver $(Q,\btau)$. Let
\[
\sqq=\sqrt{q}.
\]
Let $\ch(\Lambda^\imath)$ be the Ringel-Hall algebra of $\mod(\Lambda^\imath)$ over $\Q(\sqq)$.
Define $J$ to be the linear subspace of $\ch(\Lambda^\imath)$ spanned by
\begin{align}
  \label{eq:ideal}
&\{[K]-[K'] \mid \res_\BH(K)\cong\res_\BH(K'),  K,K'\in\cp^{\leq1}(\Lambda^\imath)\} \bigcup
\\\notag
&\{[L]-[K\oplus M]\mid \exists \text{ exact sequence } 0 \longrightarrow K \longrightarrow L \longrightarrow M \longrightarrow 0, K\in\cp^{\leq1}(\Lambda^\imath)\}.
\end{align}
Let $I$ be the two-sided ideal of $\ch(\Lambda^\imath)$ generated by $J$.
%\red{Note that the ideal defined here is different to the one defined in \cite{Lu19} since we do not know if $K_0(\cp^{\leq1}(\Lambda^\imath))\cong \Z\I$.}

%Since $I$ is generated by $K_0(\ca)$-homogeneous elements, the quotient algebra $\ch(\ca)/I$ is a $K_0(\ca)$-graded algebra.

Consider the following subset of $\ch(\ca)/I$:
\begin{equation}
  \label{eq:Sca}
\cs_{\Lambda^\imath} := \{ a[K]  \mid a\in \Q^\times, K\in \cp^{\leq1}(\ca)\}.
\end{equation}
%We see that $\cs_{\Lambda^\imath}$ is a multiplicatively closed subset with the identity $[0]\in \cs_{\Lambda^\imath}$. %In the following, we shall prove that $\cs_{\ca}$ satisfies Ore conditions. First, let us recall the Ore conditions.
For any $\imath$quiver $(Q,\btau)$, $\mod(\Lambda^\imath)$ satisfies \cite[\S A.2, (E.a)--(E.d)]{Lu19}, where (E.d) holds thanks to Lemma~ \ref{lem: resolution}. Thus, we can define the semi-derived Ringel-Hall algebra of $\iLa$ as
\[
\utMH:=(\ch(\Lambda^\imath)/I)[\cs_{\Lambda^\imath}^{-1}].
\]

The quantum torus $\ct(\Lambda^\imath)$ is defined to be $(\ch(\cp^{\leq1}(\Lambda^\imath))/I_{ac})[\cs_{\Lambda^\imath}^{-1}]$, where $I_{ac}$ is the ideal generated by
$$\{[K]-[K'] \mid \res_\BH(K)\cong\res_\BH(K'),  K,K'\in\cp^{\leq1}(\Lambda^\imath)\}.$$
Then
\begin{align}
\ct(\Lambda^\imath)=\langle [\bK_i]\mid i\in\I\rangle.
\end{align}

For any $\alpha=\sum\limits_{i\in\I} a_i\widehat{S_i}\in K_0(\mod(\K Q))=\Z\I$, define
\[
\bK_\alpha:=q^{-\langle \widehat{X}-\widehat{Y},\widehat{Y}\rangle} [X]\diamond [Y]^{-1} \in \utMH,
\]
where $X= \bigoplus\limits_{i\in\I:a_i\geq0} \bK_i^{\oplus a_i}$ and $Y=\bigoplus\limits_{i\in\I:a_i<0} \bK_i^{\oplus (-a_i)}$.
In this way, we have $\ct(\Lambda^\imath)=\{\bK_\alpha\mid \alpha\in \Z\I\}$.

\begin{lemma}
\label{lem: Groth 1}
$\{\bK_\alpha\mid \alpha\in\Z\I\}$ forms a basis of $\ct(\Lambda^\imath)$.
%We have $K_0(\cp^{\leq1}(\Lambda^\imath))=\Z\I$ with a basis given by
%$\{\widehat{\bK}_i\mid i\in \I\}$.
\end{lemma}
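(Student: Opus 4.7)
The plan is to identify $\ct(\Lambda^\imath)$ with a quantum torus on the lattice $\Z^\I$ by first parametrizing isomorphism classes in $\cp^{\leq 1}(\Lambda^\imath)$ (modulo the equivalence enforced by $I_{ac}$) by $\N^\I$, and then extending to $\Z^\I$ via the localization at $\cs_{\Lambda^\imath}$.

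First, I would use Corollary~\ref{cor: res proj} to identify $\cp^{\leq 1}(\Lambda^\imath)$ as the modules $K$ whose restriction $\res_\BH(K)$ is projective over $\BH$. Inspecting $\BH=\bigoplus_{i\in\ci}\BH_i$ from \eqref{dfn:Hi}--\eqref{eq:E}, the indecomposable projective $\BH$-modules are exactly the generalized simples $\{\bK_i\}_{i\in\I}$: when $\btau i=i$, $\bK_i$ is the unique indecomposable projective over $\BH_i=\K[\varepsilon_i]/(\varepsilon_i^2)$; when $\btau i\neq i$, the two indecomposable projectives of $\BH_i=\BH_{\btau i}$ are $\bK_i$ and $\bK_{\btau i}$. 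Thus every projective $\BH$-module is uniquely $\bigoplus_{i\in\I}\bK_i^{\oplus a_i}$ for $(a_i)\in\N^\I$. Since $\bK_i\in\cp^{\leq1}(\Lambda^\imath)$ (Lemma~\ref{lem: res proj}(a)) with $\res_\BH(\bK_i)=\bK_i$, the map $[K]\mapsto[\res_\BH(K)]$ from iso classes in $\cp^{\leq1}$ to iso classes of projective $\BH$-modules is surjective, and by the definition of $I_{ac}$ it becomes bijective after quotienting. Hence $\ch(\cp^{\leq1}(\Lambda^\imath))/I_{ac}$ has a $\Q(\sqq)$-basis $\{[\bigoplus_i\bK_i^{\oplus a_i}]\mid(a_i)\in\N^\I\}$.

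Next, for spanning, the relations in $J\subseteq I$ force $[L]=[\bK_i\oplus\bK_j]$ for every extension $L$ of $\bK_j$ by $\bK_i$, so Ringel's formula collapses to
\[
[\bK_i]\diamond[\bK_j]=\frac{|\Ext^1(\bK_i,\bK_j)|}{|\Hom(\bK_i,\bK_j)|}\,[\bK_i\oplus\bK_j],
\]
a nonzero scalar multiple of the direct-sum class. Iterating, any $[\bigoplus_i\bK_i^{\oplus a_i}]$ is a nonzero scalar multiple of an ordered product of $[\bK_i]$'s, and the $[\bK_i]$'s quasi-commute. After localizing at $\cs_{\Lambda^\imath}^{-1}$, the inverses $[\bK_i]^{-1}$ exist, and since $\ct(\Lambda^\imath)=\langle[\bK_i]\mid i\in\I\rangle$, every element is a $\Q(\sqq)$-linear combination of monomials $[X]\diamond[Y]^{-1}$ with $X,Y$ disjoint direct sums of $\bK_i$'s. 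The normalizing scalar $q^{-\langle\widehat{X}-\widehat{Y},\widehat{Y}\rangle}$ is nonzero, so such monomials are scalar multiples of the $\bK_\alpha$'s, and the $\{\bK_\alpha\}_{\alpha\in\Z^\I}$ span $\ct(\Lambda^\imath)$.

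For linear independence, I would exploit the $\N^\I$-basis established above. Given a finite relation $\sum c_\alpha\bK_\alpha=0$, write each $\alpha$ as $\widehat{X_\alpha}-\widehat{Y_\alpha}$ and multiply both sides on the right by a common $[Y]\in\cs_{\Lambda^\imath}$ that absorbs all denominators (i.e.\ $Y\supseteq Y_\alpha$ for every $\alpha$); since the $[\bK_i]$'s quasi-commute up to nonzero scalars, the result is a $\Q(\sqq)$-linear combination of pairwise distinct basis elements $[\bigoplus_i\bK_i^{\oplus b_i(\alpha)}]$ in $\ch(\cp^{\leq1}(\Lambda^\imath))/I_{ac}$, forcing every $c_\alpha=0$. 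The main obstacle I expect is the bookkeeping for the localization: verifying that $\cs_{\Lambda^\imath}$ satisfies the Ore conditions (so that the localization is well-behaved) and that the quasi-commutation scalars produced by the iterated Ringel formula remain nonzero throughout the common-denominator argument. Both follow once the multiplication formula $[\bK_i]\diamond[\bK_j]=c_{ij}\,[\bK_i\oplus\bK_j]$ with $c_{ij}\in\Q^\times$ is established together with the corresponding symmetric formula for $[\bK_j]\diamond[\bK_i]$, which I would read off from the Euler-form identities \eqref{Eform1}--\eqref{Eform3}.
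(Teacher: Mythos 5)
Your proof is correct and relies on the same structural inputs as the paper's (Corollary~\ref{cor: res proj}, the unique decomposition of projective $\BH$-modules into $\bK_i$'s, and the Euler-form identity \eqref{Eform1}), but the packaging differs. The paper works at the level of Grothendieck groups: it observes that every exact sequence in $\cp^{\leq1}(\Lambda^\imath)$ splits after applying $\res_\BH$, so $\ct(\Lambda^\imath)$ is literally the twisted group algebra of $\ov{K}_0(\cp^{\leq1}(\Lambda^\imath))$, and then shows $\ov{K}_0(\cp^{\leq1}(\Lambda^\imath)) \cong \Z\I$ by observing that $\res_\BH$ induces a surjection onto $K_0(\proj \BH)=\Z\I$ which, combined with the fact that $\ov K_0$ is generated by the $\widehat{\bK}_\alpha$, must be an isomorphism. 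This sidesteps the common-denominator bookkeeping entirely: the basis claim for a twisted group algebra of a free abelian group is automatic. Your route instead establishes an explicit $\N\I$-indexed basis of $\ch(\cp^{\leq1}(\Lambda^\imath))/I_{ac}$ first and then pushes it through the localization. That also works, but as you noted yourself, the linear-independence step needs the natural map $\ch(\cp^{\leq1}(\Lambda^\imath))/I_{ac}\to\ct(\Lambda^\imath)$ to be injective, i.e.\ the localization to be faithful; this in turn follows once one recognizes that $\ch(\cp^{\leq1}(\Lambda^\imath))/I_{ac}$ is a twisted monoid algebra on $\N\I$ (a quantum affine space, in which $\cs_{\Lambda^\imath}$ is a left-and-right Ore set of nonzerodivisors), which is precisely the content the paper's $K_0$-level argument packages more economically. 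Two small slips to fix: the relations you invoke to collapse $[L]$ to $[\bK_i\oplus\bK_j]$ should come from $I_{ac}$, not $J$ (which is a subspace of $\ch(\Lambda^\imath)$, not of $\ch(\cp^{\leq1}(\Lambda^\imath))$); and when you argue that the span of $\{[K]-[K']\}$ is already a two-sided ideal, you are tacitly using $\langle L,K\rangle=\langle L,\res_\BH(K)\rangle=\langle L,K'\rangle$ from \eqref{Eform1} to match the Hall-number scalars, which is worth saying explicitly.
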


\begin{proof}

Consider the group $\ov{K}_0(\cp^{\leq1}(\Lambda^\imath)):=K_0(\cp^{\leq1}(\Lambda^\imath))/ (\widehat{K}-\widehat{K'} \mid \res_{\BH}(K)= \res_{\BH}(K'))$. Clearly we have
\begin{align}
\label{eq:quotientK}
\ov{K}_0(\cp^{\leq1}(\Lambda^\imath))=\{\widehat{\bK}_\alpha \mid \alpha\in\Z\I \}.
\end{align}
For any $0\longrightarrow K\longrightarrow K'\longrightarrow K''\longrightarrow0$ in $\cp^{\leq}(\Lambda^\imath)$, we have $\res_\BH(K')=\res_\BH(K\oplus K'')$.
So $\ct(\Lambda^\imath)$ is the group algebra of $\ov{K}_0(\cp^{\leq1}(\Lambda^\imath))$ over $\Q(\sqq)$ with its multiplication twisted by $q^{-\langle \cdot,\cdot \rangle}$.
%\begin{align}
%\label{def:modstru}
%[K_1]\diamond[K_2]=q^{-\langle K_1,K_2\rangle} [K_1\oplus K_2],\,\,\forall K_1,K_2\in \cp^{\leq1}(\Lambda^\imath).
%\end{align}
By  Corollary \ref{cor: res proj}, there is a morphism $\ov{K}_0(\cp^{\leq1}(\Lambda^\imath))\longrightarrow K_0(\proj(\BH))=\Z\I$ induced by $K\mapsto \res_\BH(K)$, which is surjective. Together with \eqref{eq:quotientK}, we have $\ov{K}_0(\cp^{\leq1}(\Lambda^\imath))\cong K_0(\proj(\BH))=\Z\I$, which is a free abelian group. So $\{\bK_\alpha\mid \alpha\in\Z\I\}$ is a basis of $\ct(\Lambda^\imath)$.
\end{proof}

The following Hall multiplication endows $\utMH$ a $\ct(\Lambda^\imath)$-bimodule structure:
\begin{eqnarray}
[M]\diamond [K]=q^{-\langle M,K\rangle} [M\oplus K]
   \label{right module structure},
\quad
[K]\diamond[M]&=&q^{-\langle K,M\rangle} [K\oplus M]
\end{eqnarray}
for any $K\in\cp^{\leq1}(\Lambda^\imath)$, $M\in\mod(\Lambda^\imath)$.
%the subalgebra of $\cm\ch(\Lambda^\imath)$ generated by $[M]$ in $\fpr(\Lambda^\imath)$. By definition, we have

%\begin{definition}
%   \label{def:torus}
%The quantum torus $\ct(\Lambda^\imath)$ is defined to be the subalgebra of $\cm\ch(\Lambda^\imath)$ generated by $[M]$ in $\fpr(\Lambda^\imath)$.
%\end{definition}
%Then $\cm\ch(\Lambda^\imath)$ is naturally a $\ct(\Lambda^\imath)$-bimodule. In fact, we have
%\begin{align}
%\ct(\Lambda^\imath)=\langle \bK_i\mid i\in\I\rangle.
%\end{align}
%In fact, since $[K]=[K']$ if $\res_\BH(K)\cong\res_\BH(K')$, for any $K,K'\in\cp^{\leq1}(\Lambda^\imath)$, we can assume that $K$ is an $\BH$-module which is viewed as %$\Lambda^\imath$-module naturally.

%Then $\ct(\Lambda^\imath)$ is a subalgebra of $\utMH$. %, which is isomorphic to $\cm\ch(\cp^{\leq 1}(\Lambda^\imath))$.
%So $\utMH$ is a $\ct(\Lambda^\imath)$-bimodule with the bimodule structure induced by Hall multiplication.

%\subsection{}

%We also define $J$ to be the linear subspace of $\ch(\Lambda^\imath)$ spanned by
%\begin{align}
%\label{subspace I}
%&\{[K]-[K'], \mid \res_\BH(K)\cong\res_\BH(K'),  K,K'\in\cp^{\leq1}(\Lambda^\imath)\} \bigcup \\
%\{[L]-[K\oplus M]~ \mid ~&  K\rightarrowtail L\twoheadrightarrow M \mbox{ is a short exact sequence in }\mod(\Lambda^\imath)\mbox{ with }K\in\cp^{\leq1}(\Lambda^\imath) \}. \notag
%\end{align}

%Let $\A(\Lambda^\imath)$ be the monoid subalgebra of $\ct(\Lambda^\imath)$ generated by $[K]$, $K\in \cp^{\leq1}(\Lambda^\imath)$.
With the help of \eqref{Eform1}, similar to \cite[\S A.3]{Lu19}, we can define a $\ct(\Lambda^\imath)$-bimodule $\cm(\Lambda^\imath):=\ct(\Lambda^\imath)\otimes_{\ch(\cp^{\leq1}(\Lambda^\imath))/I_{ac}} \big(\ch(\Lambda^\imath)/J\big)\otimes_{\ch(\cp^{\leq1}(\Lambda^\imath))/I_{ac}} \ct(\Lambda^\imath)$ via the action given by \eqref{right module structure}. The proof of \cite[Lemmas~ A.11-A.12]{Lu19} proceeds in the same way with the help of \eqref{resolution 1}. Therefore, $\utMH$ is isomorphic to $\cm(\Lambda^\imath)$ as $\ct( \Lambda^\imath)$-bimodules by \cite[Proposition A.13]{Lu19}.

\subsection{An $\imath$Hall basis}
  \label{subsec:Hall basis}

In this subsection, we shall construct a Hall basis for $\utMH$ via a new approach; compare \cite[Lemma A.17]{Lu19}. For \cite[Lemma A.17]{Lu19} (in the setting of a finite-dimensional 1-Gorenstein algebra), we argue that $\utMH$ is isomorphic to the semi-derived Hall algebra $\cs\cd\ch(\Gproj(\Lambda^\imath))$ of $\Gproj(\Lambda^\imath)$ defined in \cite{Gor1}, and then a basis of $\cs\cd\ch(\Gproj(\Lambda^\imath))$ gives rise to a Hall basis of $\utMH$. For arbitrary (non-acyclic) $\imath$quiver, Gorenstein projective $\Lambda^\imath$-modules may be infinite-dimensional, and then its semi-derived Hall algebra is not well defined.

For $[X]\in\Iso(\mod(\K Q))\subseteq \Iso(\mod(\Lambda^\imath))$, by Corollary~ \ref{corollary for stalk complexes}, we define $\ch(\Lambda^\imath)_{[X]}$ to be the subspace of $\ch(\Lambda^\imath)$ spanned by
$\{[M]\in\Iso(\mod(\Lambda^\imath))\mid M\cong X \text{ in }D_{sg}(\mod(\Lambda^\imath))\}$. One can decompose $\ch(\Lambda^\imath)$ into a direct sum
\begin{align*}
\ch(\Lambda^\imath)=\bigoplus_{[X]\in \Iso(\mod(\K Q))} \ch(\Lambda^\imath)_{[X]}.
\end{align*}
Then $\ch(\Lambda^\imath)$ is an $\Iso(\mod(\K Q))$-graded vector space.

For a short exact sequence $0\longrightarrow K\longrightarrow L\longrightarrow M \longrightarrow0$ in $\mod(\Lambda^\imath)$ with $K$ of finite projective dimension, we have $L\cong M\cong K\oplus M$ in $D_{sg}(\mod(\Lambda^\imath))$. It follows from \eqref{eq:ideal} that $\ch(\Lambda^\imath)/J$, and then $\cm(\Lambda^\imath)$, are $\Iso(\mod(\K Q))$-graded vector spaces, that is,
\begin{align*}
\cm(\Lambda^\imath)=\bigoplus_{[X]\in \Iso(\mod(\K Q))} \cm(\Lambda^\imath)_{[X]}.
\end{align*}

\begin{lemma}
\label{lem: basis 1}
We have
$\cm(\Lambda^\imath)_{[X]}=[X]\diamond \ct(\Lambda^\imath)$
 for any $[X]\in \Iso(\mod(\K Q))\subseteq \Iso(\mod(\Lambda^\imath))$.
\end{lemma}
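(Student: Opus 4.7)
The plan is to establish the two inclusions separately. For $[X]\diamond \ct(\Lambda^\imath) \subseteq \cm(\Lambda^\imath)_{[X]}$: every $\bK_\alpha \in \ct(\Lambda^\imath)$ has $\res_\BH(\bK_\alpha)$ projective by construction, hence lies in $\cp^{\leq 1}(\Lambda^\imath)$ by Corollary~\ref{cor: res proj}. Any module of finite projective dimension is zero in $D_{sg}(\mod(\Lambda^\imath))$, so the Hall product $[X]\diamond [\bK_\alpha]$, being a scalar multiple of $[X \oplus \bK_\alpha]$ by \eqref{right module structure}, remains in the $\Iso(\mod(\K Q))$-graded component indexed by $[X]$.

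For the reverse inclusion, I would start with an arbitrary $[M] \in \cm(\Lambda^\imath)_{[X]}$, so that $M \cong X$ in $D_{sg}(\mod(\Lambda^\imath))$. Invoking Lemma~\ref{lem: iso in singularity} produces two short exact sequences
\[
0 \to U_1 \to Z \to M \to 0, \quad 0 \to U_2 \to Z \to X \to 0,
\]
with $U_1, U_2 \in \cp^{<\infty}(\Lambda^\imath)$; by Corollary~\ref{cor: res proj} these automatically lie in $\cp^{\leq 1}(\Lambda^\imath)$. The second family of relations defining $J$ in \eqref{eq:ideal} then gives $[Z] = [U_1 \oplus M] = [U_2 \oplus X]$ in $\ch(\Lambda^\imath)/J$, which translates via the bimodule action \eqref{right module structure} into the identity
\[
q^{-\langle U_1, M\rangle}\, [U_1] \diamond [M] = q^{-\langle U_2, X\rangle}\, [U_2] \diamond [X]
\]
in $\cm(\Lambda^\imath)$. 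Since $[U_1] \in \cs_{\Lambda^\imath}$ is invertible in the localization $\utMH$, I can solve for $[M]$ as a scalar multiple of $[U_1]^{-1} \diamond [U_2] \diamond [X]$.

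It remains to move the $\ct(\Lambda^\imath)$-factor to the right of $[X]$. The element $[U_1]^{-1} \diamond [U_2]$ lies in $\ct(\Lambda^\imath)$ by Lemma~\ref{lem: Groth 1}, and \eqref{right module structure} yields the commutation rule $[K]\diamond [X] = q^{\langle X, K\rangle - \langle K, X\rangle}\, [X] \diamond [K]$ for any $K \in \cp^{\leq 1}(\Lambda^\imath)$. Iterating this with each $\bK_i$-factor in the basis expansion of $[U_1]^{-1} \diamond [U_2]$ from Lemma~\ref{lem: Groth 1} places $[M]$ in $[X] \diamond \ct(\Lambda^\imath)$. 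The main obstacle is guaranteeing that the abstract singularity-category isomorphism $M \cong X$ can be realized by an honest roof of short exact sequences in $\mod(\Lambda^\imath)$ with $\cp^{\leq 1}$-kernels; this is precisely what Lemma~\ref{lem: iso in singularity} supplies (itself nontrivial because $\Lambda^\imath$ need no longer be finite-dimensional), and once it is in hand the remainder reduces to bookkeeping with the Hall multiplication in the localized algebra.
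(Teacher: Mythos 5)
Your argument is correct and follows essentially the same route as the paper: invoke Lemma~\ref{lem: iso in singularity} (with $\cp^{<\infty}=\cp^{\le 1}$ via Corollary~\ref{cor: res proj}) to produce the common cover $Z$, then solve for $[M]$ in terms of $[X]$ and classes in $\ct(\Lambda^\imath)$ using the relations defining $J$. The only differences are cosmetic: the paper writes both sequences using the \emph{right} $\ct(\Lambda^\imath)$-action, which places the torus factor on the right of $[X]$ from the outset and avoids your final commutation step; and your displayed identity has a harmless sign slip, since $[K\oplus M]=q^{+\langle K,M\rangle}[K]\diamond[M]$ by \eqref{right module structure}, so the exponents should be $+\langle U_1,M\rangle$ and $+\langle U_2,X\rangle$ rather than their negatives (this rescales the constant but does not affect the conclusion).
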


\begin{proof}
For any $M\in\mod(\Lambda^\imath)$ such that $M\cong X$ in $D_{sg}(\mod(\Lambda^\imath))$, by Lemma \ref{lem: iso in singularity} we have the following short exact sequences
\begin{align*}
0\longrightarrow U_1\longrightarrow Z\longrightarrow X\longrightarrow0,\qquad 0\longrightarrow U_2\longrightarrow Z\longrightarrow M\longrightarrow0
\end{align*}
with $U_1,U_2\in\cp^{\leq1}(\Lambda^\imath)$.
Then $[X]=q^{-\langle X,U_1\rangle}[Z]\diamond [U_1]^{-1}$, and $[M]= q^{-\langle M,U_2\rangle}[Z]\diamond [U_2]^{-1}$ in $\cm(\Lambda^\imath)$.
Therefore,
\begin{align*}
[M]=& q^{-\langle M,U_2\rangle}[Z]\diamond [U_2]^{-1}  \\
=&q^{-\langle M,U_2\rangle+\langle X,U_1\rangle}[X]\diamond [U_1]\diamond [U_2]^{-1} \in[X]\diamond \ct(\Lambda^\imath).
\end{align*}
The lemma is proved.
\end{proof}

It is well known that $K_0(\mod(\K Q))\cong K_0(\mod(\Lambda^\imath))\cong\Z\I$ are free abelian groups with a basis $\{\widehat{S}_i\mid i\in \I\}$. For any $M=(M_i, M(\alpha),M(\varepsilon_i))_{i\in\I,\alpha\in Q_1}$ in $\mod(\Lambda^\imath)$, we denote
\begin{align}
\widehat{\Im}(M(\varepsilon))=\sum_{i\in\I} \dim_\K(\Im(\varepsilon_i))\widehat{S}_i\in K_0(\mod(\K Q)).
\end{align}

\begin{lemma}
\label{lem: grade 1}
For any short exact sequence $0\longrightarrow K\longrightarrow L\longrightarrow M\longrightarrow0$ in $\mod(\Lambda^\imath)$ with $K$ of finite projective dimension, we have
$\widehat{\Im}(L(\varepsilon))=\widehat{\Im}(K(\varepsilon))+\widehat{\Im}(M(\varepsilon))$.
\end{lemma}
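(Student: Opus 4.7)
The plan is to reduce to an additive computation on $\BH$-modules by exploiting that $\BH$ is radical-square-zero self-injective and that $K$, having finite projective dimension over $\Lambda^\imath$, restricts to a projective (hence injective) $\BH$-module.

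First I would observe that the quantity $\widehat{\Im}(X(\varepsilon))$ depends only on the $\varepsilon_i$-actions, i.e.\ only on the $\BH$-module structure $\res_\BH(X)$, since the maps $\varepsilon_i$ are precisely the generators of $\BH$ as a quotient algebra of $\Lambda^\imath$. Moreover, $\widehat{\Im}$ is manifestly additive on direct sums of $\BH$-modules. So it suffices to show that $\res_\BH(L) \cong \res_\BH(K) \oplus \res_\BH(M)$ as $\BH$-modules.

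Next, since $\pdim_{\Lambda^\imath}(K) < \infty$, Corollary~\ref{cor: res proj} gives that $\res_\BH(K)$ is projective as an $\BH$-module. Because $\BH$ is a radical square zero self-injective algebra (noted in the paragraph after \eqref{eq:H}), every projective $\BH$-module is also injective. Applying the exact functor $\res_\BH$ to the given short exact sequence yields
\[
0 \longrightarrow \res_\BH(K) \longrightarrow \res_\BH(L) \longrightarrow \res_\BH(M) \longrightarrow 0
\]
in $\mod(\BH)$, and injectivity of $\res_\BH(K)$ forces this sequence to split. Hence $\res_\BH(L) \cong \res_\BH(K) \oplus \res_\BH(M)$, and combining with the first step yields
\[
\widehat{\Im}(L(\varepsilon)) \;=\; \widehat{\Im}(K(\varepsilon)) + \widehat{\Im}(M(\varepsilon)),
\]
as desired.

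There is no real obstacle here; the only point requiring care is to justify why the restricted sequence splits, and this is exactly the point where the hypothesis ``$K$ of finite projective dimension'' is used (through Corollary~\ref{cor: res proj}) together with the self-injectivity of $\BH$. Without this hypothesis one would only have an exact sequence of $\BH$-modules, and $\Im(\varepsilon_i)$ is not additive on general exact sequences, which is exactly why the assumption on $K$ is essential in the statement.
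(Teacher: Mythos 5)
Your proof is correct and follows essentially the same approach as the paper: both reduce to the $\BH$-module structure and invoke Corollary~\ref{cor: res proj} to deduce that $\res_\BH(K)$ is projective. The paper cites \cite[Lemma~3.12]{LP} for the resulting additivity, whereas you make the argument explicit and self-contained by noting that projectivity of $\res_\BH(K)$ implies injectivity (self-injectivity of $\BH$), hence the restricted sequence splits; this is a clean way to see the same fact.
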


\begin{proof}
It suffices to show that $\dim_\K \Im(L(\varepsilon_i))= \dim_\K \Im(K(\varepsilon_i))+ \dim_\K \Im(M(\varepsilon_i))$ for any $i\in \I$. It is equivalent to consider it in ${\rm mod}(\BH_i)$. %Then the desired formula follows by Corollary \ref{cor: res proj}.
For $i\neq \btau i$, it follows from \cite[Lemma~ 3.12]{LP} by using Corollary \ref{cor: res proj}. A similar proof for $i=\btau i$ will be omitted here.
\end{proof}

Consider the following set
\begin{align}
\mathbb G := \big\{(\alpha,[X])\mid \alpha\in K_0(\mod(\K Q)), [X]\in \Iso(\mod(\K Q)) \big\}.
\end{align}
Then $\ch(\Lambda^\imath)$ is a $\mathbb G$-graded vector space, that is,
\begin{align}
\label{eqn: grade 1}
\ch(\Lambda^\imath) =\bigoplus_{(\alpha,[X])\in \mathbb G} \Big(\bigoplus_{\stackrel{\widehat{ \Im  }(M(\varepsilon))=\alpha}{
M\cong X \text{ in }D_{sg}(\mod(\Lambda^\imath))}} \Q(\sqq)[M]\Big).
\end{align}

\begin{lemma}
\label{lem: grade 2}
$\cm(\Lambda^\imath)$ is a $\mathbb G$-graded vector space with grading induced by \eqref{eqn: grade 1}.
\end{lemma}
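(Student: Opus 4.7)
The plan is to transport the $\mathbb{G}$-grading on $\ch(\Lambda^\imath)$ from \eqref{eqn: grade 1} first to the quotient $\ch(\Lambda^\imath)/J$ and then to the localization that defines $\cm(\Lambda^\imath)$. Concretely, I would verify homogeneity of the generating relations in $J$ from \eqref{eq:ideal} and of the inverted elements in $\cs_{\Lambda^\imath}$; both follow from results already in place.

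For the first-type relations $[K]-[K']$ with $\res_\BH(K)\cong\res_\BH(K')$ and $K,K'\in\cp^{\leq 1}(\Lambda^\imath)$: both $K$ and $K'$ vanish in $D_{sg}(\mod(\Lambda^\imath))$ (they have finite projective dimension by Corollary~\ref{cor: res proj}), so they share singularity class $[0]\in\Iso(\mod(\K Q))$; and $\widehat{\Im}(K(\varepsilon))$ depends only on $\res_\BH(K)$, so their $K_0(\mod(\K Q))$-components also agree. For the second-type relations $[L]-[K\oplus M]$ arising from a short exact sequence $0\to K\to L\to M\to 0$ with $K\in\cp^{\leq 1}(\Lambda^\imath)$: since $K\cong 0$ in $D_{sg}$, both $L$ and $K\oplus M$ become isomorphic to $M$ in $D_{sg}(\mod(\Lambda^\imath))$, so their singularity labels coincide; and the equality $\widehat{\Im}(L(\varepsilon))=\widehat{\Im}((K\oplus M)(\varepsilon))$ is exactly Lemma~\ref{lem: grade 1}. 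Thus $J$ is $\mathbb{G}$-homogeneous and $\ch(\Lambda^\imath)/J$ inherits the grading.

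It then remains to pass to the localization. The multiplicative set $\cs_{\Lambda^\imath}$ from \eqref{eq:Sca} consists of homogeneous elements: each generator $a[K]$ with $K\in\cp^{\leq 1}(\Lambda^\imath)$ sits in $\mathbb{G}$-bidegree $(\widehat{\Im}(K(\varepsilon)),[0])$. Localizing at a set of homogeneous elements preserves the grading: one declares $\deg([K]^{-1})=(-\widehat{\Im}(K(\varepsilon)),[0])$, which makes sense because $\mathbb{G}$ carries an evident additive structure with group law on the $K_0$-factor and $\oplus$ on the $\Iso$-factor, and the inverted degrees land in the submonoid $\{(\alpha,[0])\}$. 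Transporting through the bimodule tensor product description of $\cm(\Lambda^\imath)$ (equivalently, through the identification $\utMH\cong\cm(\Lambda^\imath)$) produces the desired $\mathbb{G}$-grading of $\cm(\Lambda^\imath)$ that is compatible with \eqref{eqn: grade 1}.

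No step here is genuinely hard; the substantive inputs are the additivity of $\widehat{\Im}$ across short exact sequences with a finite-projective-dimension end term (Lemma~\ref{lem: grade 1}) and the identification of $\cp^{\leq 1}(\Lambda^\imath)$ with the zero object in the singularity category (Corollary~\ref{cor: res proj}). The only place one must be careful is in checking that $\widehat{\Im}$, although defined via $\res_\BH$, is additive for the specific short exact sequences appearing in $J$; this is precisely what Lemma~\ref{lem: grade 1} supplies.
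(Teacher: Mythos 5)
Your verification that the generating relations of $J$ are $\mathbb G$-homogeneous is correct, and it uses precisely the right inputs: Corollary~\ref{cor: res proj} (so that $\cp^{\leq 1}(\Lambda^\imath)$ maps to zero in $D_{sg}(\mod(\Lambda^\imath))$, and $\widehat{\Im}$ factors through $\res_\BH$) together with Lemma~\ref{lem: grade 1}. This is the substantive content, and it matches the paper's own indication, which only cites \cite[Lemma~3.13]{LP} plus Lemma~\ref{lem: grade 1}.

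However, your framing around ``the localization that defines $\cm(\Lambda^\imath)$'' does not match the definitions, and as stated that part of the argument doesn't go through. First, $\cm(\Lambda^\imath)$ is introduced as the tensor product $\ct(\Lambda^\imath)\otimes_{R}\bigl(\ch(\Lambda^\imath)/J\bigr)\otimes_{R}\ct(\Lambda^\imath)$, with $R=\ch(\cp^{\leq 1}(\Lambda^\imath))/I_{ac}$, not as a localization of $\ch(\Lambda^\imath)/J$. Second, $J$ is only a linear subspace (the two-sided ideal is $I$), so $\ch(\Lambda^\imath)/J$ has no ring structure and cannot itself be localized. Third, one cannot upgrade homogeneity of $J$ to homogeneity of $I$, because the Hall multiplication on $\ch(\Lambda^\imath)$ is \emph{not} $\mathbb G$-graded: already for a split $\imath$quiver of rank one, $[S]\cdot[S]$ involves both $S\oplus S$ (with $\widehat{\Im}=0$, nonzero in $D_{sg}$) and $\bK$ (with $\widehat{\Im}=\widehat S$, zero in $D_{sg}$), which lie in different $\mathbb G$-components. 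So an argument running through $(\ch/I)[\cs_{\Lambda^\imath}^{-1}]$ would need a separate justification. The clean route, which you gesture at only in your final sentence, is to work directly with the tensor product: $R$ and $\ct(\Lambda^\imath)$ are $\mathbb G$-graded and concentrated in degrees $(\alpha,[0])$ (the latter because $\cs_{\Lambda^\imath}$ is homogeneous of that shape), and the $R$-bimodule actions on $\ch(\Lambda^\imath)/J$ in \eqref{right module structure} are direct sums twisted by a scalar, hence graded by Lemma~\ref{lem: grade 1} and Corollary~\ref{cor: res proj}. Once that last check is made explicit, the tensor product is $\mathbb G$-graded and the lemma follows. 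So the idea is right, but you should replace the localization narrative with the graded-bimodule check on the tensor product.
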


\begin{proof}
The proof is the same as for \cite[Lemma 3.13]{LP} with the help of Lemma \ref{lem: grade 1}, and hence omitted here.
\end{proof}

\begin{theorem}[$\imath$Hall basis]
\label{thm:utMHbasis}
Let $(Q,\btau)$ be an $\imath$quiver. Then
\begin{equation}
  \label{eq:Hall basis}
\big\{ [X]\diamond \bK_\alpha ~\big |~ [X]\in\Iso(\mod(\K Q))\subseteq \Iso(\mod(\Lambda^{\imath})), \alpha\in \Z\I \big\}
\end{equation}
is a basis of $\utMH$.
\end{theorem}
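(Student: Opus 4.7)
The plan is to establish spanning and linear independence of \eqref{eq:Hall basis} in $\cm(\Lambda^\imath) \cong \utMH$ separately, by exploiting the two gradings developed in this subsection.

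For spanning I would combine three ingredients: the $\Iso(\mod(\K Q))$-decomposition of $\cm(\Lambda^\imath)$ recorded just before Lemma \ref{lem: basis 1}; Lemma \ref{lem: basis 1}, which identifies $\cm(\Lambda^\imath)_{[X]} = [X] \diamond \ct(\Lambda^\imath)$; and Lemma \ref{lem: Groth 1}, which exhibits $\{\bK_\alpha\}_{\alpha \in \Z\I}$ as a basis of $\ct(\Lambda^\imath)$. Together these show every element of $\cm(\Lambda^\imath)$ is a finite $\Q(\sqq)$-linear combination of elements $[X] \diamond \bK_\alpha$.

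For linear independence I would refine via the $\mathbb{G}$-grading of Lemma \ref{lem: grade 2}. Direct inspection of \eqref{eq:E} gives $\widehat{\Im}(\bK_i(\varepsilon)) = \widehat{S}_{\btau i}$ for each $i \in \I$, so $[\bK_i]$ is $\mathbb{G}$-homogeneous of degree $(\widehat{S}_{\btau i}, [0])$; the second coordinate is $[0]$ because $\bK_i \in \cp^{\leq 1}(\Lambda^\imath)$ vanishes in $D_{sg}(\mod(\Lambda^\imath))$ by Lemma \ref{lem: res proj}. Extending the grading through the inverses implicit in the definition of $\bK_\alpha$ (by negating the first coordinate), the element $[X] \diamond \bK_\alpha$ becomes $\mathbb{G}$-homogeneous of degree $(\btau(\alpha), [X])$, where $\btau$ denotes the $\Z$-linear involution of $\Z\I$ induced by $\btau:\I \to \I$. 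Because $\btau$ is a bijection, distinct pairs $([X],\alpha)$ land in distinct $\mathbb{G}$-components, so any linear relation $\sum c_{X,\alpha}\,[X] \diamond \bK_\alpha = 0$ reduces to the individual statements $c_{X,\alpha}\,[X] \diamond \bK_\alpha = 0$.

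The last step is to check $[X] \diamond \bK_\alpha \neq 0$, which, since $\bK_\alpha$ is a unit in $\ct(\Lambda^\imath)$, reduces to $[X] \neq 0$ in $\cm(\Lambda^\imath)$. I would isolate $[X]$ inside the $\mathbb{G}$-component of degree $(0,[X])$: the only generators of $J$ (see \eqref{eq:ideal}) that meet this component are type-(b) relations $[L] - [K \oplus M]$ with $L \cong M \cong X$ in $D_{sg}(\mod(\Lambda^\imath))$ and $\widehat{\Im}(K) = \widehat{\Im}(M) = 0$ (using Lemma \ref{lem: grade 1}). But then $K \in \cp^{\leq 1}(\Lambda^\imath) \cap \mod(\K Q)$, so Corollary \ref{cor: res proj} forces $\res_\BH(K)$ to be $\BH$-projective; since $K(\varepsilon) = 0$ also makes $\res_\BH(K)$ semisimple, the non-semisimplicity of the radical-square-zero algebra $\BH$ yields $K = 0$ and the relation is trivial. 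Hence $[X]$ is nonzero in $\cm(\Lambda^\imath)$. I expect this non-vanishing step, with its careful interplay between Corollary \ref{cor: res proj} and the structure of $\BH$, to be the main obstacle; the other pieces are direct assemblies of the lemmas just proved.
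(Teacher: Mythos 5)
Your spanning argument is exactly the paper's (Lemma \ref{lem: basis 1} plus Lemma \ref{lem: Groth 1}), and using the $\mathbb G$-grading to split a putative linear relation into its $([X],\alpha)$-homogeneous pieces is fine in principle (modulo a small slip: since by definition the $i$-th coordinate of $\widehat{\Im}(M(\varepsilon))$ is $\dim\Im(\varepsilon_i)$, one gets $\widehat{\Im}(\bK_i(\varepsilon))=\widehat S_i$, not $\widehat S_{\btau i}$; the conclusion that $\alpha\mapsto\deg(\bK_\alpha)$ is a bijection is unaffected). The real gap is in the last step, the nonvanishing of $[X]$ in $\cm(\Lambda^\imath)$.

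You show that no generator of $J$ lives in the $\mathbb G$-component $(0,[X])$ of $\ch(\Lambda^\imath)$, and hence $[X]\neq 0$ in $\ch(\Lambda^\imath)/J$. But the target object is not $\ch/J$: it is $\cm(\Lambda^\imath)=\ct(\Lambda^\imath)\otimes_{\ch(\cp^{\leq1})/I_{ac}}(\ch/J)\otimes_{\ch(\cp^{\leq1})/I_{ac}}\ct(\Lambda^\imath)$, equivalently $\utMH=(\ch/I)[\cs^{-1}]$ with $I$ the two-sided ideal generated by $J$ followed by a localization. The map $\ch/J\to\cm(\Lambda^\imath)$ is a composite of a quotient-by-ideal and a (bimodule) localization, and there is no a priori reason for it to be injective — an element can be annihilated by the inverted classes $a[K]$, or by new relations created in the ideal $I\supsetneq J$ after multiplying by other Hall elements. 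Your argument controls only $J$ in a single graded slot, so it does not rule this out, and this is precisely the point the paper is careful about.

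The paper closes this gap by a different device: it defines $\xi:\cm(\Lambda^\imath)\to\Q(\sqq)[K_0^{\rm split}(\BH)]$, $[M]\mapsto\res_\BH(M)$, checks that $\xi$ is well defined on $\cm$ (this is where Corollary \ref{cor: res proj} enters, to see that short exact sequences with $\cp^{\leq1}$-kernel split upon restriction to $\BH$), and then builds an explicit one-sided inverse $\zeta'$ to $\zeta:\ct(\Lambda^\imath)\to\cm_{[X]}\to\cm(\Lambda^\imath)\xrightarrow{\xi}\Q(\sqq)[K_0^{\rm split}(\BH)]$ by extracting the projective summand of a $\BH$-module. That forces $\ct(\Lambda^\imath)\to[X]\diamond\ct(\Lambda^\imath)$ to be an isomorphism \emph{inside} $\cm(\Lambda^\imath)$, which in particular yields $[X]\neq 0$ there and, combined with Lemma \ref{lem: Groth 1}, gives linear independence without ever needing the finer separation by $\alpha$ from the $\mathbb G$-grading. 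To repair your proposal you would need some analogous mechanism — a retraction out of $\cm(\Lambda^\imath)$ detecting $[X]$ — rather than a computation performed only inside $\ch/J$.
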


\begin{proof}
Our proof here is inspired by that of \cite[Theorem 3.7]{Gor2}.

%For any $[X]\in \Iso(\mod(\K Q))$, we have
%$$\cm(\Lambda^\imath)_{[X]}= \bigoplus_{\alpha\in K_0(\mod(\K Q))}(\bigoplus_{\begin{array}{cc} \widehat{ \Im  }(M(\varepsilon))=\alpha\\
%M\cong X \text{ in }D_{sg}(\mod(\Lambda^\imath))\end{array}} \Q(\sqq)[M]).$$
By Lemma \ref{lem: basis 1}, we have the following surjective morphism
\begin{align*}
\ct(\Lambda^\imath)\longrightarrow \cm(\Lambda^\imath)_{[X]}=[X]\diamond \ct(\Lambda^\imath),\qquad [K]\mapsto [X]\diamond [K].
\end{align*}

Let $K_0^{{\rm split}}(\BH)$ be the split Grothendieck group of ${\rm mod}(\BH)$. Then we have the following composition of natural maps
$$\zeta:  \ct(\Lambda^\imath)\longrightarrow \cm(\Lambda^\imath)_{[X]}\longrightarrow \cm(\Lambda^\imath)\stackrel{\xi}{\longrightarrow} \Q(\sqq)[K_0^{{\rm split}}(\BH)]$$
where $\xi$ maps $M$ to $\res_\BH(M)$. Note that $\xi$ is well defined. Indeed, applying $\res_{\BH}$ to a short exact sequence
$0\longrightarrow K\longrightarrow L\longrightarrow M\longrightarrow0$ make it split in ${\rm mod}(\BH)$ since $\res_\BH(K)$ is injective by Corollary \ref{cor: res proj}.

We claim that $\zeta$ is injective. Indeed, any $M\in {\rm mod}(\BH)$ can be decomposed in a unique way (up to a permutation of factors) into a direct sum of indecomposables:
$M=\bigoplus_{i\in\I}
(S_i^{\oplus n_i}\oplus \bK_i^{\oplus m_i})$, for some $m_i,n_i\in \N$. Then the linear map
$$\zeta': \Q(\sqq)[K_0^{{\rm split}}(\BH)]\longrightarrow \ct(\Lambda^\imath),\quad [M]\mapsto [\bigoplus_{i\in\I}\bK_i^{\oplus m_i}]$$
is well defined. Note that $\ct(\Lambda^\imath)=\langle [\bK_i]\mid i\in\I \rangle$. Then $\zeta'\circ \zeta=\Id$.
So $\zeta$ is injective.

It follows that the map
\begin{align}
\label{eqn: isomor}
\ct(\Lambda^\imath) \longrightarrow  \cm(\Lambda^\imath)_{[X]}=[X]\diamond \ct(\Lambda^\imath),\quad [K] \mapsto [X]\diamond [K]
\end{align}
is an isomorphism.

Assume that
$$\sum_{\alpha\in \Z\I,
[X]\in \Iso(\mod(\K Q))
} a_{X,\alpha} [X]\diamond \bK_\alpha=0$$
in $\cm(\Lambda^\imath)$, where $a_{X,\alpha}\in\Q(\sqq)$. It follows from \eqref{eqn: grade 1} that
$\sum\limits_{\alpha\in \Z\I} a_{X,\alpha} [X]\diamond\bK_\alpha=0$ for any $[X]\in\Iso(\mod(\K Q))$ in $\cm(\Lambda^\imath)$. Together with \eqref{eqn: isomor}, we have $\sum_{\alpha\in \Z\I} a_{X,\alpha} \bK_\alpha=0$ in $\ct(\Lambda^\imath)$, and then
$a_{X,\alpha}=0$ by Lemma \ref{lem: Groth 1}.
 So \eqref{eq:Hall basis} is a basis of $\cm(\Lambda^\imath)$.

The lemma follows since $\utMH$ is isomorphic to $\cm(\Lambda^\imath)$ as $\ct( \Lambda^\imath)$-bimodules.
\end{proof}

\begin{remark}
Using the ideal $I$ in \cite{Lu19} to define $\utMH$, one can prove that $\utMH$ is a (left) free $\ct(\Lambda^\imath)$-module with a basis given by
$\{[X]\mid X\in \mod(\K Q)\subseteq \mod(\Lambda^\imath)\}$. However, it is not clear if $\ct(\Lambda^\imath)$ defined there is generated by $\{\bK_i\mid i\in\I\}$. %So we modify our definition here.
\end{remark}

\begin{remark}
  \label{rem:MH}
For an infinite-dimensional finitely generated 1-Gorenstein algebra $A$, we can still define the semi-derived Ringel-Hall algebra $\cs\cd\ch (A)$ using the ideal $I$ in \cite{Lu19}. Then $\cs\cd\ch(A)$ is a (left) free $\ct(A)$-module with $\{[X] \in \text{Iso}(D_{sg} (\mod (A))) \mid X\in \mod(A)\}$ as its basis. However, we have chosen to focus on $\utMH$ and making it more explicit in this section.
\end{remark}

Via the restriction functor $\res: \mod(\Lambda^{\imath})\rightarrow\mod (\K Q)$, we define the twisted semi-derived Ringel-Hall algebra to be the $\Q(\sqq)$-algebra on the same vector space as $\utMH$ with twisted multiplication given by
\begin{align}
   \label{eqn:twsited multiplication}
[M]* [N] =\sqq^{\langle \res(M),\res(N)\rangle_Q} [M]\diamond[N].
\end{align}
We shall denote this algebra $(\utMH, *)$ by %$\tMH$ or preferably,
 $\tMHk$, and call it the {\em Hall algebra associated to the $\imath$quiver $(Q, \btau)$}, (or an {\em $\imath$Hall algebra}, for short). The {\em twisted quantum torus} $\tTL$ is defined to be the subalgebra of $\tMHk$ generated by $\bK_\alpha$, $\alpha\in \Z\I$.
By Lemma~ \ref{lem: Groth 1},  $\tTL$ is a Laurent polynomial algebra generated by $[\bK_i]$, for $i\in \I$; and $[\bK_\alpha]*[\bK_\beta]=[\bK_{\alpha+\beta}]$ for any $\alpha,\beta\in\Z\I$.

\subsection{$\imath$Hall algebras for $\imath$subquivers}

Let $(Q, \btau)$ be an $\imath$quiver and $\Lambda^\imath$ be its $\imath$quiver algebra. Let $'Q$ be a full subquiver of $Q$ preserved by $\btau$. %Denoting by $'\btau$ the restriction of $\btau$ to $'Q$,
Hence we obtain an {\em $\imath$subquiver} $('Q, \btau)$ of $(Q, \btau)$, and denote by $'\Lambda^\imath$ the $\imath$quiver algebra of $('Q, \btau)$. Clearly, $'\Lambda^\imath$ is a quotient algebra (also a subalgebra) of $\Lambda^\imath$. Then we can view $\mod('\Lambda^\imath)$ as a full subcategory of $\mod(\Lambda^\imath)$.

\begin{lemma}\cite[Lemma 4.12]{LW19a}
\label{lem:subalgebra}
Retain the notation as above. Then %$\tM('\Lambda^\imath)$
$\widetilde{\ch}(\K\, {{}'Q}, \btau)$ is naturally a subalgebra of $\tMHk$, with the inclusion morphism induced by
$\mod('\Lambda^\imath)\subseteq \mod(\Lambda^\imath)$.
\end{lemma}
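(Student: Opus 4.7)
The plan is to exhibit $\widetilde{\ch}(\K\,'Q,\btau)$ as the subalgebra of $\tMHk$ spanned by those $\imath$Hall basis elements supported on $'Q_0$, and to check that the multiplication rules on the two sides coincide. First, since $'Q$ is a full $\btau$-stable subquiver, $'\Lambda^\imath$ is simultaneously a quotient algebra and a subalgebra of $\Lambda^\imath$ (via the idempotent $e = \sum_{i\in {'Q_0}} e_i$). In particular, the pullback along the quotient map realizes $\mod('\Lambda^\imath)$ as a full subcategory of $\mod(\Lambda^\imath)$ consisting of those nilpotent modules supported on $'Q_0$. This subcategory is closed under subobjects, quotients and extensions in $\mod(\Lambda^\imath)$; hence the inclusion is an exact fully faithful functor and $\mathrm{Ext}^1_{'\Lambda^\imath}(M,N) = \mathrm{Ext}^1_{\Lambda^\imath}(M,N)$ for all $M,N \in \mod('\Lambda^\imath)$. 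Consequently, the assignment $[M] \mapsto [M]$ induces a $\Q(\sqq)$-linear injection $\ch('\Lambda^\imath) \hookrightarrow \ch(\Lambda^\imath)$ that is a homomorphism of Ringel-Hall algebras, since the orbit counts of short exact sequences coincide.

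Next I verify compatibility with the semi-derived and localization constructions. By Corollary~\ref{cor: res proj}, a module $K \in \mod('\Lambda^\imath)$ has finite projective dimension over $'\Lambda^\imath$ iff $\res_{'\BH}(K)$ is projective over $'\BH = \bigoplus_{i\in 'Q_0 \cap \ci} \BH_i$. Viewed inside $\mod(\Lambda^\imath)$, its restriction $\res_{\BH}(K)$ is zero at vertices of $\ci \setminus 'Q_0$ (hence projective there) and coincides with $\res_{'\BH}(K)$ at vertices in $'Q_0 \cap \ci$; so again by Corollary~\ref{cor: res proj}, $K \in \cp^{\leq 1}(\Lambda^\imath)$. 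Thus $\cp^{\leq 1}('\Lambda^\imath) \subseteq \cp^{\leq 1}(\Lambda^\imath)$, and the relations generating the ideal $J$ (see \eqref{eq:ideal}) for $'\Lambda^\imath$ are carried into those for $\Lambda^\imath$; similarly the multiplicative set $\cs_{'\Lambda^\imath}$ is sent into $\cs_{\Lambda^\imath}$. This produces a $\Q(\sqq)$-algebra map $\utMH('\,Q,\btau) \to \utMH$. Under this map, the generalized simples $\bK_i$ ($i \in 'Q_0$) in $\mod('\Lambda^\imath)$ are sent to the corresponding $\bK_i$ in $\mod(\Lambda^\imath)$, and hence the quantum torus $\ct('\Lambda^\imath) = \langle [\bK_i] : i \in 'Q_0\rangle$ is mapped injectively into $\ct(\Lambda^\imath)$ by Lemma~\ref{lem: Groth 1}.

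It remains to match the twists. By \eqref{Eform1}--\eqref{Eform3} applied on both sides, the Euler pairings used in the Hall products \eqref{right module structure} and the twisting factor $\sqq^{\langle \res(M),\res(N)\rangle_Q}$ in \eqref{eqn:twsited multiplication} depend only on restrictions to the relevant $\BH$ or to $\K Q$; for $M,N \in \mod('\Lambda^\imath)$ those restrictions are supported on $'Q_0$, so $\langle \res(M),\res(N)\rangle_Q = \langle \res(M),\res(N)\rangle_{'Q}$, and the twisted multiplications agree. Therefore the induced map $\widetilde\psi: \widetilde{\ch}(\K\,'Q,\btau) \to \tMHk$ is an algebra homomorphism.

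Finally, injectivity follows from the $\imath$Hall basis Theorem~\ref{thm:utMHbasis}: the basis of $\widetilde{\ch}(\K\,'Q,\btau)$ consists of elements $[X] \diamond \bK_\alpha$ with $X \in \mod(\K\,'Q)$ and $\alpha \in \Z\,'Q_0$, which are sent to the subset of the $\imath$Hall basis of $\tMHk$ indexed by those $(X,\alpha)$ with $X \in \mod(\K\,'Q) \subseteq \mod(\K Q)$ and $\alpha \in \Z\,'Q_0 \subseteq \Z\I$. The main obstacle, as I see it, is the verification that $\cp^{\leq 1}('\Lambda^\imath)$ embeds inside $\cp^{\leq 1}(\Lambda^\imath)$ under the pullback; once that is handled by Corollary~\ref{cor: res proj} as above, the rest is essentially bookkeeping of Euler forms and bases.
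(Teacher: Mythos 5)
Your proof is essentially correct and follows the natural approach (presumably the one in [LW19a, Lemma~4.12], which the paper cites rather than re-proves): view $\mod('\Lambda^\imath)$ as a full, extension-, sub- and quotient-closed subcategory of $\mod(\Lambda^\imath)$ via pullback along the quotient $\Lambda^\imath \twoheadrightarrow {}'\Lambda^\imath$, check that $\cp^{\leq 1}$, $J$, $\cs$ and the twist are all compatible, and then read off injectivity from the $\imath$Hall basis.

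One small inaccuracy that you should fix: the parenthetical claim that $'\Lambda^\imath$ sits inside $\Lambda^\imath$ \emph{via the idempotent} $e=\sum_{i\in{}'Q_0}e_i$, i.e.\ that $'\Lambda^\imath = e\Lambda^\imath e$, is not correct in general. Already for $Q=(1\to 2\to 3)$ with $\btau=\Id$ and $'Q_0=\{1,3\}$ (a full $\btau$-stable subquiver with no arrows), the path $\beta\alpha$ from $1$ to $3$ lies in $e\Lambda^\imath e$ but not in $'\Lambda^\imath$; thus $e\Lambda^\imath e \supsetneq {}'\Lambda^\imath$. The correct subalgebra description is that the natural map $\K\ov{{}'Q}\to\Lambda^\imath$ descends to an injective algebra map $'\Lambda^\imath\hookrightarrow\Lambda^\imath$ (its composition with the quotient $\Lambda^\imath\to{}'\Lambda^\imath$ is the identity). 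Since the rest of your argument only uses the quotient map to embed $\mod('\Lambda^\imath)$ in $\mod(\Lambda^\imath)$, this slip does not propagate; just drop the clause about the idempotent, or replace it with the splitting description above.

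Two further points worth making explicit for completeness, though you essentially have them. First, when invoking Corollary~\ref{cor: res proj} to show $\cp^{\leq 1}('\Lambda^\imath)\subseteq\cp^{\leq 1}(\Lambda^\imath)$, one should note that $'\ci = \ci\cap{}'Q_0$ once the orbit representatives are chosen compatibly (possible because $'Q_0$ is $\btau$-stable). Second, the claim that the Euler forms agree, $\langle\cdot,\cdot\rangle_{'Q}=\langle\cdot,\cdot\rangle_Q$ on modules supported on $'Q_0$, holds precisely because $'Q$ is a \emph{full} subquiver, so that $\langle S_i,S_j\rangle_Q=\delta_{ij}-\#\{\text{arrows }i\to j\}$ is computed identically on both sides for $i,j\in{}'Q_0$; this is used both for the twist and (implicitly) for the compatibility of the bimodule structure with $\ct$.

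Subject to these clarifications, the argument is sound.
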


\subsection{An $\imath$Hall multiplication formula}

Let $\ce$ be an exact category. For any short exact sequence $0\rightarrow A \xrightarrow{f} B\xrightarrow{g}C\rightarrow0$ in $\ce$, we denote by $\ov{(f,g)}$ the corresponding element in $\Ext^1_\ce(C,A)$.
Below, we present a fairly general multiplication formula in the $\imath$Hall algebra $\tMHk$ with $\tau=\Id$.
In concrete situations (see Proposition~\ref{prop:SSSM} for example), the items appearing in RHS \eqref{Hallmult1} below are computable, and this makes Proposition~\ref{prop:iHallmult} useful and applicable.  

\begin{proposition}
\label{prop:iHallmult}
Let $(Q,\tau)$ be an $\imath$quiver with $\tau=\Id$.
For any $A,B\in\mod(\K Q)\subset \mod(\Lambda^\imath)$, we have
%\begin{align}
%\label{Hallmult}
%[A]\diamond[B]=&%\sum_{\alpha\in K_0(\ca)}
%\sum_{[M]\in\Iso(\ca)}\sum_{[L],[N]\in\Iso(\ca)} q^{ \langle M,A-N\rangle} q^{\langle N,B\rangle-\langle N,A\rangle +\langle N,N\rangle -\langle A,B\rangle}\frac{|\Ext^1(N, %L)_{M}|}{|\Hom(N,L)| }
%\\
%\notag
%&\cdot |\{s\in\Hom(A,B)\mid \Ker s\cong N, \coker s\cong L%\widehat{\Im d}=\alpha
%\}|[M]\diamond[K_{\widehat{A}-\widehat{N}}].
%\end{align}
\begin{align}
\label{Hallmult1}
[A]*[B]=&%\sum_{\alpha\in K_0({\mod(\K Q)})}
\sum_{[L],[M],[N]\in\Iso({\mod(\K Q)})} \sqq^{\langle A,B\rangle_Q}  q^{\langle N,B\rangle_Q-\langle N,A\rangle_Q +\langle N,N\rangle_Q -\langle A,B\rangle_Q}\frac{|\Ext^1(N, L)_{M}|}{|\Hom(N,L)| }
\\
\notag
&\cdot |\{s\in\Hom(A,B)\mid \Ker s\cong N, \coker s\cong L%\widehat{\Im d}=\alpha
\}|\cdot [M]*[\bK_{\widehat{A}-\widehat{N}}]
\end{align}
in $\tMHk$.
%$\widehat{A}+\widehat{B}-\widehat{M}}{2}$
\end{proposition}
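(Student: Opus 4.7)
The plan is to compute the Ringel–Hall product $[A]\cdot[B]$ in $\ch(\Lambda^\imath)$, reduce modulo the ideal \eqref{eq:ideal} to land in $\cm(\Lambda^\imath)\cong\utMH$, and incorporate the twist $\sqq^{\langle A,B\rangle_Q}$ per \eqref{eqn:twsited multiplication}. Since $A,B\in\mod(\K Q)$ have trivial $\varepsilon$-action, any exact sequence $0\to B\xrightarrow{f} E\xrightarrow{g} A\to 0$ in $\mod(\Lambda^\imath)$ is the same datum as (i) a $\K Q$-extension $0\to B\xrightarrow{f} F\xrightarrow{g} A\to 0$, together with (ii) a $\K Q$-map $s\in\Hom_{\K Q}(A,B)$ specifying $\varepsilon_E = f\circ s\circ g$ (the relation $\varepsilon_E^2=0$ is automatic from $g\circ f=0$). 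This stratifies the Hall sum over $[E]$ by the isomorphism class of $s$, which is captured by $N:=\Ker s$ and $L:=\coker s$.

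\textbf{Singular reduction.} For $E$ arising from such an $(F,s)$, $\Im\varepsilon_E\cong\Im s\cong A/N$, so $\res_\BH E$ has projective summand of class $\widehat A-\widehat N$ and simple summand of class $\widehat N+\widehat L$. I will construct a short exact sequence
\[
0\longrightarrow P_s\longrightarrow E\longrightarrow M\longrightarrow 0
\]
in $\mod(\Lambda^\imath)$, where $P_s\in\cp^{\leq 1}(\Lambda^\imath)$ satisfies $[P_s]=\bK_{\widehat A-\widehat N}$ in $\ct(\Lambda^\imath)$ by Lemma~\ref{lem: Groth 1}, and $M\in\mod(\K Q)\subset\mod(\Lambda^\imath)$ is a $\K Q$-extension $0\to L\to M\to N\to 0$, realized as the pullback $(F/\Im s)\times_A N$ along $N\hookrightarrow A$. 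Concretely, $P_s$ is built from the two copies of $\Im s$ inside $E$—namely $\Im s\subset B\subset E$ and a section lift of $\Im s\subset A$ to $E$—paired by $\varepsilon$; this matches the structure of a direct sum of $\bK_i$'s. By the defining relation \eqref{eq:ideal} and the multiplication \eqref{right module structure}, $[E]=q^{\,\langle M,P_s\rangle}[M]\diamond\bK_{\widehat A-\widehat N}$ in $\utMH$, so $[E]$ lies in the basis stratum of Theorem~\ref{thm:utMHbasis} indexed by $([M],\widehat A-\widehat N)$.

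\textbf{Counting and main obstacle.} The Riedtmann–Peng expansion of $[A]\cdot[B]$ in $\ch(\Lambda^\imath)$ is reorganized along the $(s,F)$-parametrization: summing over $s\in\Hom_{\K Q}(A,B)$ with $\Ker s\cong N$ and $\coker s\cong L$ produces the factor $|\{s\in\Hom(A,B)\mid\Ker s\cong N,\,\coker s\cong L\}|$, while summing over those $\K Q$-extensions $F$ whose associated pullback $M=(F/\Im s)\times_A N$ lies in a fixed class $[M]$ yields $|\Ext^1_{\K Q}(N,L)_M|/|\Hom_{\K Q}(N,L)|$ via the standard Hom/Ext orbit count applied to the push-pull construction. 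Consolidating the twist $\sqq^{\langle A,B\rangle_Q}$, the Hall-product scalar $q^{\langle M,P_s\rangle}$, the normalization of $\bK_\alpha$, and the Euler-form identities \eqref{Eform1}--\eqref{Eform3} (using $\widehat B=\widehat L+\widehat A-\widehat N$ in $K_0(\mod(\K Q))$) produces the claimed exponent $\langle N,B\rangle_Q-\langle N,A\rangle_Q+\langle N,N\rangle_Q-\langle A,B\rangle_Q$ of $q$. The main obstacle is the explicit construction of the submodule $P_s$ and the verification that $E/P_s$ has the asserted pullback structure; this requires non-canonical choices of $\K$-sections of $A\twoheadrightarrow\Im s$ and $F\twoheadrightarrow A$, and an independence check in $D_{sg}(\mod(\Lambda^\imath))$ (via Lemma~\ref{lem: iso in singularity} and Corollary~\ref{corollary for stalk complexes}). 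A secondary, purely bookkeeping, difficulty is the consolidation of the $q$-powers, which requires repeated use of Lemma~\ref{lem:Euler} to translate all $\Lambda^\imath$-Euler forms into $\K Q$-Euler forms on the invariants $A,B,N,L,M$.
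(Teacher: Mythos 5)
Your overall strategy matches the paper's: pass to the parametrization of $\Lambda^\imath$-extensions of $A$ by $B$ (equivalently, $1$-periodic complexes) by a $\K Q$-extension $F$ together with a map $s\in\Hom_{\K Q}(A,B)$ with $\varepsilon_E = f s g$, stratify by $N=\Ker s$, $L=\coker s$, collapse $[E]$ to $[M]*\bK_{\widehat A-\widehat N}$ with $M$ the ``homology'', and count. The paper does exactly this (with $\cc_1(\mod(\K Q))\simeq\mod(\Lambda^\imath)$, the diagrams \eqref{eq:diagrams}, the bijection Claim $(\star)$, and the long exact sequences \eqref{longses1}--\eqref{longses2}).

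However, there is a genuine gap in your ``singular reduction'' step. You propose to exhibit a short exact sequence $0\to P_s\to E\to M\to 0$ in $\mod(\Lambda^\imath)$ with $P_s$ a $\Lambda^\imath$-\emph{submodule} of $E$ of class $\bK_{\widehat A-\widehat N}$ built from $\K$-linear section lifts. Such a $P_s$ need not exist. Take $Q=(1\xrightarrow{\alpha}2)$, $A=P_1$, $B=S_1$, and $s\in\Hom_{\K Q}(P_1,S_1)$ the canonical surjection. Then $\Ext^1_{\K Q}(A,B)=0$, so $F=S_1\oplus P_1$ and $E=(S_1\oplus P_1,\varepsilon)$ with $\varepsilon(x,p)=(s(p),0)$; one computes $N=S_2$, $\Im s=S_1$, $M=S_2$, so $P_s$ would have to be a submodule of $E$ concentrated at vertex $1$ with $\dim (P_s)_1=2$. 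But the arrow $\alpha$ acts on $E_1=\K^2$ by $(x,p_1)\mapsto(0,p_1)\neq 0$, so any $\Lambda^\imath$-submodule of $E$ supported at vertex $1$ is contained in $\Ker\alpha|_{E_1}$, which is $1$-dimensional. Hence no such $P_s\subset E$ exists, and the $\K$-sections you invoke cannot be made $\K Q$-equivariant. The paper avoids this by invoking \cite[Lemma~2.10]{LRW20} for $[C^\bullet]=[M\oplus\bK_{\Im d}]=[M]*[\bK_{\widehat{\Im d}}]$, which rests not on a single SES but on the singularity-category identification $E\cong M$ in $D_{sg}(\mod(\Lambda^\imath))$ together with Theorem~\ref{thm:utMHbasis} and the $\G$-grading of Lemma~\ref{lem: grade 2} (your ``independence check in $D_{sg}$'' is pointing in the right direction, but it is the whole argument, not a secondary verification). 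Likewise, your ``standard Hom/Ext orbit count'' is where the paper carries the real weight: the explicit bijection $\Xi\colon\mathbb{S}_{[M]}\to\bigsqcup_{s\in\G_{[L],[N]}}\mathbb{S}_{s,[M]}$ (Claim $(\star)$), and the computation of $|\mathbb{S}_{s,[M]}|$ via the two long exact sequences, which is how the exponent $\langle N,B\rangle_Q-\langle N,A\rangle_Q+\langle N,N\rangle_Q-\langle A,B\rangle_Q$ actually arises.
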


\begin{proof}
%We only prove \eqref{Hallmult1}.
A $1$-periodic complex over $\mod(\K Q)$ is a pair $M^\bullet=(M,d)$ such that $M\in \mod(\K Q)$ and $d:M\rightarrow M$ is a morphism of $\K Q$-modules with $d^2=0$.
Let $\cc_1(\mod(\K Q))$ be the category of $1$-periodic complexes over $\mod(\K Q)$. It is well known that $\cc_1(\mod(\K Q))\simeq \mod(\Lambda^\imath)$, and we identify them in the following.

By definition,
\begin{align}
\label{eq:basic}
[A]* [B]=&\sum_{[C^\bullet]\in\Iso(\cc_1({\mod(\K Q)}))}\sqq^{\langle A,B\rangle_Q}\frac{|\Ext^1_{\cc_1({\mod(\K Q)})}(A,B)_{C^\bullet}|}{|\Hom_{\cc_1({\mod(\K Q)})}(A,B)|}[C^\bullet].
\end{align}
For any $C^\bullet=(C,d)\in\cc_1({\mod(\K Q)})$ such that $|\Ext^1_{\cc_1({\mod(\K Q)})}(A,B)_{C^\bullet}|\neq0$, denote by $M=H(C^\bullet)$ the homology group of $C^\bullet$, i.e., $\Ker d/\Im d$.
Then we have
$[C^\bullet]= [M\oplus \bK_{\Im d}]=[M]*[\bK_{\widehat{\Im d}}]$; see, e.g., \cite[Lemma 2.10]{LRW20}. %there exists a unique $[M]\in\Iso({\mod(\K Q)})\subset \Iso(\cc_1({\mod(\K Q)}))$ and $\alpha\in K_0({\mod(\K Q)})$ such that
%$[L^\bullet]=[M\oplus K_\alpha]=[M]*[K_\alpha]$ in $\cs\cd\ch({\mod(\K Q)})$. In fact, in this case,
Note that
$$\widehat{\Im d}=\frac{\widehat{A}+\widehat{B}-\widehat{M}}{2}\in K_0({\mod(\K Q)}).$$
Denote by $\mathbb{S}_{[M]}:=\{ [\xi]\in \Ext^1_{\Lambda^\imath}(A,B)_{C^\bullet} \mid H(C^\bullet)\cong M\}$. Then
we have
\begin{align}
\label{eq:basic1}
[A]*[B]=&\sum_{[M]\in\Iso({\mod(\K Q)})}\sqq^{\langle A,B\rangle_Q}\frac{|\mathbb{S}_{[M]}|}{|\Hom(A,B)|}[M]*[K_{\frac{\widehat{A}+\widehat{B}-\widehat{M}}{2}}].
\end{align}

Let $C^\bullet=(C,d)\in\cc_{1}({\mod(\K Q)})$ such that $|\Ext^1_{\Lambda^\imath}(A,B)_{C^\bullet}|\neq0$. Then we have the following short exact sequence
$$0\longrightarrow B\stackrel{f}{\longrightarrow} C\stackrel{g}{\longrightarrow} A\longrightarrow0$$
such that $df=0=gd$. Denote by $U=\Ker d$ and $V=\Im d$. Then we have the following short exact sequences
\begin{align*}
0\longrightarrow U\stackrel{l_2}{\longrightarrow} C\stackrel{d_1}{\longrightarrow} V\longrightarrow0,
\qquad
0\longrightarrow V\stackrel{p_1}{\longrightarrow} U \stackrel{p_2}{\longrightarrow} M\longrightarrow0.
\end{align*}
By definition, $d=l_2p_1d_1$. Then there exist the following two commutative diagrams, which are both push-outs and pull-backs

\begin{equation}
\label{eq:diagrams}
\xymatrix{ B \ar@{.>}[r]^{h_1} \ar@{=}[d] &  U \ar@{.>}[r]^{h_2} \ar[d]^{l_2} & N \ar@{.>}[d]^{t_2} \\
B  \ar[r]^f & C \ar[r]^g\ar[d]^{d_1}  & A\ar@{.>}[d]^{s_1}  \\
& V \ar@{=}[r] & V }
\qquad\qquad \xymatrix{ V \ar@{.>}[r]^{s_2} \ar@{=}[d] & B \ar@{.>}[r]^{s_3} \ar@{.>}[d]^{h_1} & L \ar@{.>}[d]^{t_0} \\
V \ar[r]^{p_1} & U \ar[r]^{p_2} \ar@{.>}[d]^{h_2} & M \ar@{.>}[d]^{t_1} \\
& N \ar@{=}[r]  & N   }
\end{equation}

Conversely, denote by
$$\G_{[L],[N]}:=\{s\in\Hom(A,B)\mid \Ker s\cong N, \coker s\cong L\}.$$
For any $s\in \G_{[L],[N]}$, denote by $V=\Im s$. Then there exist $s_1:A\rightarrow V$ and $s_2:V\rightarrow B$ such that $s=s_2s_1$. In fact, $s_1,s_2$ are unique up to a group action of $\Aut(V)$. Then we have two short exact sequences
\begin{align}
\label{eq:sesG}
0\longrightarrow N\stackrel{t_2}{\longrightarrow} A\stackrel{s_1}{\longrightarrow} V\longrightarrow0,\qquad 0\longrightarrow V\stackrel{s_2}{\longrightarrow} B\stackrel{s_3}{\longrightarrow} L\longrightarrow0.
\end{align}
 For any $s\in \G_{[L],[N]}$, denote by
 \begin{align}
 \mathbb{S}_{s,[M]}:=\{[\eta]\in \Ext^1(A,B)_C\mid \Ext^1_{\K Q}(N,s_3)\circ\Ext^1_{\K Q}(t_2,B) ([\eta]) \in \Ext^1(N,L)_M \}.
 \end{align}

From above, we define a map
\begin{align*}
\Xi: \mathbb{S}_{[M]}&\longrightarrow \bigsqcup_{s\in\G_{[L],[N]}} \mathbb{S}_{s,[M]}
\\
\overline{(f,g)}&\mapsto \overline{(f,g)}_{s_2s_1}\in\mathbb{S}_{s_2s_1,[M]}
\end{align*}
by using diagrams \eqref{eq:diagrams}.

\vspace{2mm}
{\bf Claim ($\star$).} $\Xi$ is a bijection.

First, we prove that $\Xi$ is surjective. For any $\overline{(f,g)}_{s}\in\mathbb{S}_{s,[M]}$ and $s\in \G_{[L],[N]}$, we have a short exact sequence $0\rightarrow B\stackrel{f}{\rightarrow} C\stackrel{g}{\rightarrow} A\rightarrow0$. Define
$C^\bullet=(C,d)$ where $d=fsg$. Then there exists a short exact sequence
$0\rightarrow B\stackrel{f}{\rightarrow} C^\bullet\stackrel{g}{\rightarrow} A\rightarrow0$ in $\cc_1({\mod(\K Q)})$. One can check $\Xi(\overline{(f,g)})= \overline{(f,g)}_{s}$ by definition.

Next, we prove that $\Xi$ is injective. Consider two short exact sequences $0\rightarrow B\stackrel{f}{\rightarrow} (C,d)\stackrel{g}{\rightarrow} A\rightarrow0$ and
$0\rightarrow B\stackrel{f'}{\rightarrow} (C',d')\stackrel{g'}{\rightarrow} A\rightarrow0$.
Assume that $\overline{(f,g)},\overline{(f',g')}\in \mathbb{S}_{[M]}$ with $\Xi(\overline{(f,g)})\in \mathbb{S}_{s,M}$ and $\Xi(\overline{f',g'})\in \mathbb{S}_{s',M}$.
In place of the notations in the diagrams \eqref{eq:diagrams} associated to $\overline{(f,g)}$, we shall use the corresponding prime notations for all objects and maps in the counterpart diagrams associated to $\overline{(f',g')}$.
%For simplicity, we use $E'$ to denote the object $E$ and $e'$ to denote the map $e$ in diagram \eqref{eq:diagrams} if considering $\overline{(f',g')}$.
If $\Xi(\overline{(f,g)})=\Xi(\overline{(f',g')})$, then $s=s'$. Without losing generality, we assume that $s_1=s_1'$ and $s_2=s_2'$ in the diagrams \eqref{eq:diagrams}.
Since $\overline{(f,g)}=\overline{(f',g')}$ in $\Ext^1_{\K Q}(A,B)$, there exits an isomorphism $\beta :C\rightarrow C'$ such that $\beta f=f'$ and $g'\beta=g$.
Note that $d=fsg$ and $d'=f'sg'$. We have $\beta d=\beta fsg= f'sg'\beta= d'\beta$. So $\beta:(C,d)\rightarrow (C',d')$ is an isomorphism in $\cc_1({\mod(\K Q)})$, and then there is a commutative diagram of short exact sequences in $\cc_1({\mod(\K Q)})$
\[\xymatrix{ 0\ar[r] & B \ar[r]^f \ar@{=}[d] &(C,d) \ar[r]^g \ar[d]^{\beta} &A\ar@{=}[d] \ar[r] &0
\\
0\ar[r] & B \ar[r]^{f'}  &(C',d') \ar[r]^{g'}  &A\ar[r] &0 }\]
Therefore,
$\overline{(f,g)}=\overline{(f',g')}\in \mathbb{S}_{[M]}$. The injectivity of $\Xi$ and hence the Claim ($\star$) is proved.

By Claim ($\star$), we have $|\mathbb{S}_{[M]}|= \sum_{s\in \G_{[L],[N]}} |\mathbb{S}_{s,[M]}|$.
For any $s\in \G_{[L],[N]}$, keep the notations as in \eqref{eq:sesG}. Then  we have two long exact sequences
\begin{align}
\label{longses1}
0\longrightarrow \Hom_{\K Q}(V,B)\longrightarrow \Hom_{\K Q}(A,B)\longrightarrow\Hom_{\K Q}(N,B) \longrightarrow \Ext^1_{\K Q}(V,B)\\\notag
\longrightarrow \Ext^1_{\K Q}(A,B)\stackrel{\varphi_1}\longrightarrow \Ext^1_{\K Q}(N,B)\longrightarrow0;
\\
\label{longses2}
0\longrightarrow \Hom_{\K Q}(N,V)\longrightarrow \Hom_{\K Q}(N,B)\longrightarrow\Hom_{\K Q}(N,L) \longrightarrow \Ext^1_{\K Q}(N,V)\\\notag
\longrightarrow \Ext^1_{\K Q}(N,B)\stackrel{\varphi_2}\longrightarrow \Ext^1_{\K Q}(N,L)\longrightarrow0,
\end{align}
where $\varphi_1=\Ext^1_{\K Q}(t_2,B)$ and $\varphi_2=\Ext^1_{\K Q}(N,s_3)$.
We have a map
$\varphi_2\circ\varphi_1: \mathbb{S}_{M,s}\rightarrow \Ext^1_{\K Q}(N,L)_M.$
By using \eqref{eq:diagrams}, one can easily see that $\varphi_2\circ\varphi_1$ is surjective, and
$$(\varphi_2\circ\varphi_1)^{-1}(\Ext^1_{\K Q}(N,L)_M)=\mathbb{S}_{s,[M]}.$$
Then it follows from \eqref{longses1}--\eqref{longses2} that
\begin{align*}
|\mathbb{S}_{s,[M]}|
%=&|(\varphi_2\circ\varphi_1)^{-1}(\Ext^1_{\K Q}(N,L)_M)| \\
=&|\varphi_1^{-1}\big(\varphi_2^{-1}(\Ext^1_{\K Q}(N,L)_M)\big)|
\\
=&|\varphi_2^{-1}(\Ext^1_{\K Q}(N,L)_M)|\cdot |\Ker \varphi_1|
\\
=&|\varphi_2^{-1}(\Ext^1_{\K Q}(N,L)_M)|\cdot \frac{|\Ext^1_{\K Q}(V,B)|\cdot |\Hom_{\K Q}(A,B)|}{|\Hom_{\K Q}(V,B)|\cdot |\Hom_{\K Q}(N,B)|}
\\
=&|\Ext^1_{\K Q}(N,L)_M|\cdot \frac{|\Ext^1_{\K Q}(N,V)|\cdot |\Hom_{\K Q}(N,B)|}{|\Hom_{\K Q}(N,V)|\cdot |\Hom_{\K Q}(N,L)|} \cdot \frac{|\Ext^1_{\K Q}(V,B)|\cdot |\Hom_{\K Q}(A,B)|}{|\Hom_{\K Q}(V,B)|\cdot |\Hom_{\K Q}(N,B)|}
\\
=&q^{-\langle N,V\rangle -\langle V,B\rangle}\cdot |\Hom_{\K Q}(A,B)| \frac{|\Ext^1_{\K Q}(N,L)_M|}{|\Hom_{\K Q}(N,L)| }
\\
=&q^{\langle N,B\rangle-\langle N,A\rangle +\langle N,N\rangle -\langle A,B\rangle}\cdot |\Hom_{\K Q}(A,B)| \frac{|\Ext^1_{\K Q}(N,L)_M|}{|\Hom_{\K Q}(N,L)| }.
\end{align*}
Note that $|\mathbb{S}_{s,[M]}|$ depends only on $[L],[N]$ for any $s\in\G_{[L],[N]}$. So we have
\begin{align}
\label{eq:SM}
|\mathbb{S}_{[M]}|=& \sum_{s\in \G_{[L],[N]}} |\mathbb{S}_{s,[M]}|
\\\notag
=& \sum_{[L],[N]\in\Iso({\mod(\K Q)})} q^{\langle N,B\rangle-\langle N,A\rangle +\langle N,N\rangle -\langle A,B\rangle}|\Hom_{\K Q}(A,B)| \frac{|\Ext^1_{\K Q}(N,L)_M|}{|\Hom_{\K Q}(N,L)| }  |\G_{[L],[N]}|.
\end{align}
Note that $\widehat{\Im d}=\widehat{V}=\widehat{A}-\widehat{N}$.
Then \eqref{Hallmult1} follows from \eqref{eq:basic1} and \eqref{eq:SM}.
\end{proof}

%%%%%%
%%%%%%
\section{Quantum symmetric pairs and $\imath$quantum groups}
  \label{sec:iQG}

In this section, we review and set up notations for quantum symmetric pairs $(\U, \Ui)$ and universal $\imath$quantum groups $\tUi$. %; see \cite[\S 6]{LW19a}.
We formulate a Serre presentation for $\tUi$.

\subsection{Quantum groups}
  \label{subsection Quantum groups}

Let $Q$ be a quiver (without loops) with vertex set $Q_0= \I$.
Let $n_{ij}$ be the number of edges connecting vertex $i$ and $j$. Let $C=(c_{ij})_{i,j \in \I}$ be the symmetric generalized Cartan matrix of the underlying graph of $Q$, defined by $c_{ij}=2\delta_{ij}-n_{ij}.$ Let $\fg$ be the corresponding Kac-Moody Lie algebra. Let $\alpha_i$ ($i\in\I $) be the simple roots of $\fg$.

Let $\bv$ be an indeterminant. Write $[A, B]=AB-BA$. Denote, for $r,m \in \N$,
\[
 [r]=\frac{\bv^r-\bv^{-r}}{\bv-\bv^{-1}},
 \quad
 [r]!=\prod_{i=1}^r [i], \quad \qbinom{m}{r} =\frac{[m][m-1]\ldots [m-r+1]}{[r]!}.
\]
Then $\tU := \tU_\bv(\fg)$ is defined to be the $\Q(\bv)$-algebra generated by $E_i,F_i, \tK_i,\tK_i'$, $i\in \I$, where $\tK_i, \tK_i'$ are invertible, subject to the following relations:
\begin{align}
[E_i,F_j]= \delta_{ij} \frac{\tK_i-\tK_i'}{\bv-\bv^{-1}},  &\qquad [\tK_i,\tK_j]=[\tK_i,\tK_j']  =[\tK_i',\tK_j']=0,
\label{eq:KK}
\\
\tK_i E_j=\bv^{c_{ij}} E_j \tK_i, & \qquad \tK_i F_j=\bv^{-c_{ij}} F_j \tK_i,
\label{eq:EK}
\\
\tK_i' E_j=\bv^{-c_{ij}} E_j \tK_i', & \qquad \tK_i' F_j=\bv^{c_{ij}} F_j \tK_i',
 \label{eq:K2}
\end{align}
 and the quantum Serre relations, for $i\neq j \in \I$,
\begin{align}
& \sum_{r=0}^{1-c_{ij}} (-1)^r  E_i^{(r)} E_j  E_i^{(1-c_{ij}-r)}=0,
  \label{eq:serre1} \\
& \sum_{r=0}^{1-c_{ij}} (-1)^r   F_i^{(r)} F_j  F_i^{(1-c_{ij}-r)}=0.
  \label{eq:serre2}
\end{align}
Here \[
F_i^{(n)} =F_i^n/[n]!, \quad E_i^{(n)} =E_i^n/[n]!, \quad \text{ for } n\ge 1, i\in \I.
\]
Note that $\tK_i \tK_i'$ are central in $\tU$ for all $i$.
The comultiplication $\Delta: \widetilde{\U} \longrightarrow \widetilde{\U} \otimes \widetilde{\U}$ is defined as follows:
\begin{align}  \label{eq:Delta}
\begin{split}
\Delta(E_i)  = E_i \otimes 1 + \tK_i \otimes E_i, & \quad \Delta(F_i) = 1 \otimes F_i + F_i \otimes \tK_{i}', \\
 \Delta(\tK_{i}) = \tK_{i} \otimes \tK_{i}, & \quad \Delta(\tK_{i}') = \tK_{i}' \otimes \tK_{i}'.
 \end{split}
\end{align}
The Chevalley involution $\omega$ on $\tU$ is given by
\begin{align}  \label{eq:omega}
\omega(E_i)  = F_i,\quad  \omega(F_i) = E_i,\quad \omega(\tK_{i}) = \tK_{i}' , \quad \omega(\tK_{i}') =\tK_{i}, \quad \forall i\in \I.
\end{align}

Analogously as for $\tU$, the quantum group $\bU$ is defined to be the $\Q(v)$-algebra generated by $E_i,F_i, K_i, K_i^{-1}$, $i\in \I$, subject to the  relations modified from \eqref{eq:KK}--\eqref{eq:serre2} with $\tK_i$ and $\tK_i'$ replaced by $K_i$ and $K_i^{-1}$, respectively. The comultiplication $\Delta$ and Chevalley involution $\omega$ on $\U$ are obtained by modifying \eqref{eq:Delta}--\eqref{eq:omega} with $\tK_i$ and $\tK_i'$ replaced by $K_i$ and $K_i^{-1}$, respectively (cf. \cite{L93}; beware that our $K_i$ has a different meaning from $K_i \in \U$ therein.)

%Let $\vs=(\vs_i)\in  (\Q(\bv)^\times)^{\I}$.
%Up to a base change to the field $\Q(v)(\sqvs_i \mid i\in \I)$,
The algebra $\U$ is isomorphic to a quotient algebra of $\tU$ by the ideal $( \tK_i \tK_i'- 1 \mid \forall i\in \I )$.
 %by sending $F_i \mapsto F_i, E_i \mapsto \sqvs_i^{-1} E_i, K_i \mapsto \sqvs_i^{-1} \tK_i, K_i^{-1} \mapsto \sqvs_i^{-1} K_i'$. This can be verified directly.

Let $\widetilde{\bU}^+$ be the subalgebra of $\widetilde{\bU}$ generated by $E_i$ $(i\in \I)$, $\widetilde{\bU}^0$ be the subalgebra of $\widetilde{\bU}$ generated by $\tK_i, \tK_i'$ $(i\in \I)$, and $\widetilde{\bU}^-$ be the subalgebra of $\widetilde{\bU}$ generated by $F_i$ $(i\in \I)$, respectively.
The subalgebras $\bU^+$, $\bU^0$ and $\bU^-$ of $\bU$ are defined similarly. Then both $\widetilde{\bU}$ and $\bU$ have triangular decompositions:
$%\begin{align*}
\widetilde{\bU} =\widetilde{\bU}^+\otimes \widetilde{\bU}^0\otimes\widetilde{\bU}^-,
\, %\qquad
\bU =\bU^+\otimes \bU^0\otimes\bU^-.
$ %\end{align*}
Clearly, ${\bU}^+\cong\widetilde{\bU}^+$, ${\bU}^-\cong \widetilde{\bU}^-$, and ${\bU}^0 \cong \widetilde{\bU}^0/(\tK_i \tK_i' -1 \mid   i\in \I)$.

\subsection{The $\imath$quantum groups $\Ui$ and $\tUi$}
  \label{subsec:iQG}

For a generalized Cartan matrix $C=(c_{ij})$, let $\Aut(C)$ be the group of all permutations $\btau$ of the set $\I$ such that $c_{ij}=c_{\btau i,\btau j}$. An element $\btau\in\Aut(C)$ is called an \emph{involution} if $\btau^2=\Id$.

Let $\btau$ be an involution in $\Aut(C)$. We define $\widetilde{\bU}^\imath$ %:=\widetilde{\bU}'_\bv(\fk)$
 to be the $\Q(v)$-subalgebra of $\tU$ generated by
\[
B_i= F_i +  E_{\btau i} \tK_i',
\qquad \tk_i = \tK_i \tK_{\btau i}', \quad \forall i \in \I.
\]
Let $\tU^{\imath 0}$ be the $\Q(v)$-subalgebra of $\tUi$ generated by $\tk_i$, for $i\in \I$.
The elements
\begin{align}
\label{eq:central}
\tk_i\; (i= \btau i)
\qquad
\tk_i \tk_{\btau i}\;  (i\neq \btau i)
\end{align}
are central in $\tUi$.

%\begin{proof}
%If $i= \btau i$, then $\tk_i = \tK_i \tK_{i}'$. If $i\neq \btau i$, then $\tk_i \tk_{\btau i} = \tK_i \tK_{i}'\tK_{\btau i} \tK_{\btau i}'$. In both cases, these elements are clearly central in $\tU$ and hence central in $\tUi$.
%\end{proof}

%For $\alpha =\sum_{i\in \I} m_i \alpha_i \in \Z^{\I} =\oplus_{i\in \I} \Z\alpha_i$, we denote
%\[
%\tk_\alpha = \prod_{i\in \I} \tk_{\alpha_i}^{m_i};
%\]
%in this notation, we have $\tk_{\alpha_i}=\tk_i$.

Let $\bvs=(\vs_i)\in  (\Q(\bv)^\times)^{\I}$ be such that $\vs_i=\vs_{\btau i}$ for each $i\in \I$ which satisfies $c_{i, \btau i}=0$.
Let $\Ui:=\Ui_{\bvs}$ be the $\Q(v)$-subalgebra of $\bU$ generated by
\[
B_i= F_i+\vs_i E_{\btau i}K_i^{-1},
\quad
k_j= K_jK_{\btau j}^{-1},
\qquad  \forall i \in \I, j \in \ci.
\]
It is known \cite{Let99, Ko14} that $\bU^\imath$ is a right coideal subalgebra of $\bU$, i.e., $\Delta (\Ui) \subset \Ui\otimes \U$; and $(\bU,\Ui)$ is called a quasi-split \emph{quantum symmetric pair}, as they specialize at $v=1$ to $(U(\fg), U(\fg^\theta))$, where $\theta=\omega \circ \btau$, and $\btau$ is understood here as an automorphism of $\fg$.

We call $\Ui$ an $\imath$quantum group and $\tUi$ a universal $\imath$quantum group.
The algebras $\Ui_{\bvs}$, for $\bvs \in  (\Q(\bv)^\times)^{\I}$, are obtained from $\tUi$ by central reductions.

\begin{proposition}\cite[Propositon 6.2]{LW19a}
  \label{prop:QSP12}
(1) The $\Q(v)$-algebra $\Ui$ is isomorphic to the quotient of $\tUi$ by the ideal generated by
$%\begin{align*}
\tk_i - \vs_i \; (\text{for } i =\btau i)$ and
%\qquad
$\tk_i \tk_{\btau i} - \vs_i \vs_{\btau i}  \;(\text{for } i \neq \btau i).
$ %\end{align*}
The isomorphism is given by sending $B_i \mapsto B_i, k_j \mapsto \vs_{\btau j}^{-1} \tk_j, k_j^{-1} \mapsto \vs_{ j}^{-1} \tk_{\btau j}, \forall i\in \I, j\in \ci$.

(2) The algebra $\widetilde{\bU}^\imath$ is a right coideal subalgebra of $\widetilde{\bU}$. %; that is, $(\widetilde{\bU}, \widetilde{\bU}^\imath)$ forms a QSP.
\end{proposition}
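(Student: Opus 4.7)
The proposition has two parts, and my plan is to handle them separately: part (2) via a direct coproduct computation, and part (1) via constructing mutually inverse maps using the Serre presentations of both algebras.

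For part (2), I would first compute $\Delta(B_i)$ by expanding $B_i = F_i + E_{\btau i}\tK_i'$ using the coproduct formulas \eqref{eq:Delta}. Combining the terms and using the identity $\tK_{\btau i} \tK_i' = \tk_{\btau i}$, this yields
\[
\Delta(B_i) = 1 \otimes F_i + B_i \otimes \tK_i' + \tk_{\btau i} \otimes E_{\btau i} \tK_i'.
\]
For $\tk_i$, the coproduct is grouplike: $\Delta(\tk_i) = \tk_i \otimes \tk_i$. The left tensor factors (namely $1$, $B_i$, $\tk_{\btau i}$, and $\tk_i$) all lie in $\tUi$, so $\Delta(\tUi) \subseteq \tUi \otimes \tU$, giving the coideal property.

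For part (1), let $\mathcal{J}$ denote the central ideal generated by $\tk_i - \vs_i$ (for $i = \btau i$) and $\tk_i \tk_{\btau i} - \vs_i \vs_{\btau i}$ (for $i \neq \btau i$). My plan is to exhibit the stated map $\Phi: \Ui \to \tUi/\mathcal{J}$ by sending $B_i \mapsto \overline{B_i}$ and $k_j \mapsto \vs_{\btau j}^{-1} \overline{\tk_j}$ for $j \in \ci$, and to construct its inverse $\Psi: \tUi/\mathcal{J} \to \Ui$ by $\overline{B_i} \mapsto B_i$ and $\overline{\tk_j} \mapsto \vs_{\btau j} k_j$. Consistency of $k_j k_j^{-1} = 1$ under $\Phi$ follows from the imposed relations $\tk_j \tk_{\btau j} = \vs_j \vs_{\btau j}$ for $j \neq \btau j$, respectively $\tk_j = \vs_j$ for $j = \btau j$. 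The bulk of the work is then verifying that the defining relations of $\Ui$, taken from its Serre presentation in \cite{CLW18}, translate under this substitution to the defining relations \eqref{relation1}--\eqref{relation6} of $\tUi$ modulo $\mathcal{J}$ as given in Theorem~\ref{thm:Serre}, and vice versa.

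The main technical point to keep track of will be the $\vs$-dependent parameters in the $\imath$Serre relation \eqref{relation6} and the BK relation \eqref{relation5}, whose explicit forms in $\tUi$ involve central combinations of the $\tk_i$ that must specialize correctly to the $\vs_i$-dependent relations of $\Ui$; once the parametrizations are aligned, the checks reduce to straightforward algebraic bookkeeping using the triangular decompositions of both algebras. This closely parallels the argument in \cite[Proposition~6.2]{LW19a}, which treats the acyclic case, and the same strategy carries through here once Theorem~\ref{thm:Serre} is in hand.
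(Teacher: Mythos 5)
The paper does not prove this proposition; it is imported verbatim from \cite[Proposition~6.2]{LW19a}, so there is no internal proof to compare against. With that caveat, here is an assessment of your two-part plan.

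Your treatment of part (2) is correct and is the standard argument. The coproduct computation
\[
\Delta(B_i) = 1 \otimes F_i + B_i \otimes \tK_i' + \tk_{\btau i} \otimes E_{\btau i}\tK_i', \qquad \Delta(\tk_i) = \tk_i \otimes \tk_i
\]
is right (using $\tk_{\btau i} = \tK_{\btau i}\tK_i'$), and since $\tUi\otimes\tU$ is a subalgebra of $\tU\otimes\tU$ containing $\Delta$ of all generators, the coideal property follows. Nothing to change here.

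For part (1), however, there is a genuine logical obstacle with the route you have chosen, and a minor conceptual slip. The slip: this proposition is not an acyclic-case special case being ``generalized'' here---it is a statement about the abstract algebras $\Ui$ and $\tUi$ attached to $(C,\btau)$ and is independent of any $\imath$quiver; the paper simply cites it. The more serious issue is circularity. You propose to verify well-definedness of $\Phi$ and $\Psi$ by matching the Serre presentation of $\Ui$ from \cite{CLW18} against the Serre presentation of $\tUi$ in Theorem~\ref{thm:Serre}. But the paper's proof of Theorem~\ref{thm:Serre} itself proceeds by a base change that implicitly invokes exactly the $\Ui\leftrightarrow\tUi$ relationship asserted in Proposition~\ref{prop:QSP12}(1) (fixing a square root $(\tk_j\tk_{\btau j})^{1/2}$ and identifying it with $\vs_j$, i.e.\ passing to the quotient by the ideal $\mathcal J$ after extending scalars). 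Using Theorem~\ref{thm:Serre} to prove Proposition~\ref{prop:QSP12}(1) therefore loops back on itself. Note also that in \cite{LW19a} the Serre presentation for $\tUi$ (their Proposition~6.4) appears \emph{after} their Proposition~6.2, so the original proof cannot be presentation-matching. The standard non-circular argument is structural: the canonical surjection $\tU\to\U$, composed with a rescaling by $\bvs$, restricts to a surjection $\tUi\to\Ui$ that visibly kills $\mathcal J$; one then compares the resulting map $\tUi/\mathcal J\to\Ui$ on a filtered/PBW basis (both algebras carry a filtration whose associated graded is $\U^-\otimes(\text{Laurent torus})$, with the torus of $\tUi/\mathcal J$ having the same rank as that of $\Ui$) to conclude injectivity. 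If you want to keep the presentation-matching strategy, you must first supply an independent proof of Theorem~\ref{thm:Serre} that does not go through Proposition~\ref{prop:QSP12}(1); otherwise replace the step by the filtration/PBW comparison.
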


\subsection{A Serre presentation of $\Ui$}\label{subsection:iserre presentation}

For  $i\in \I$ with $\btau i\neq i$,  we define the {\em $\imath${}divided power} of $B_i$ as
\begin{align}
  \label{eq:iDP1}
  B_i^{(m)}:=B_i^{m}/[m]!, \quad \forall m\ge 0, \qquad (\text{if } i \neq \btau i).
\end{align}

For $i\in \I$ with $\btau i= i$, generalizing the constructions in \cite{BW18a, BeW18}, we define the {\em $\imath${}divided powers} of $B_i$ to be (see also \cite{CLW20})
\begin{eqnarray}
&&\ff_{i,\odd}^{(m)}=\frac{1}{[m]_{v}!}\left\{ \begin{array}{ccccc} B_i\prod_{s=1}^k (B_i^2-v\tk_i[2s-1]_{v}^2 ) & \text{if }m=2k+1,\\
\prod_{s=1}^k (B_i^2-v\tk_i[2s-1]_{v}^2) &\text{if }m=2k; \end{array}\right.
  \label{eq:iDPodd}\\
&&\ff_{i,\ev}^{(m)}= \frac{1}{[m]_{v}!}\left\{ \begin{array}{ccccc} B_i\prod_{s=1}^k (B_i^2-v\tk_i[2s]_{v}^2 ) & \text{if }m=2k+1,\\
\prod_{s=1}^{k} (B_i^2-v\tk_i[2s-2]_{v}^2) &\text{if }m=2k. \end{array}\right.
 \label{eq:iDPev}
\end{eqnarray}

Denote
\[
(a;x)_0=1, \qquad (a;x)_n =(1-a)(1-ax)  \cdots (1-ax^{n-1}), \quad   n\ge 1.
\]

The following theorem is an upgrade of (and can be derived from) \cite[Theorem~3.1]{CLW18} for $\Ui$ to the setting of a universal $\imath$quantum group $\tUi$; it generalizes \cite[Proposition~6.4]{LW19a} for $\tUi$ of ADE type.

\begin{theorem}
 \label{thm:Serre}
Fix $\ov{p}_i\in \Z/2\Z$ for each $i\in \I$. The $\Q(v)$-algebra $\tUi$ has a presentation with generators $B_i$, $\tk_i$ $(i\in \I)$ and the relations \eqref{relation1}--\eqref{relation6} below: for $\ell \in \I$, and $i\neq j \in \I$,
\begin{align}
\tk_i \tk_\ell =\tk_\ell \tk_i,
\quad
\tk_i B_\ell & = v^{c_{\btau i,\ell} -c_{i \ell}} B_\ell \tk_i,
%K_\mu B_i-q_i^{-\langle \mu,\alpha_i\rangle} B_iK_\mu & =0,
   \label{relation1}
\\
B_iB_{j}-B_jB_i &=0, \quad \text{ if }c_{ij} =0 \text{ and }\btau i\neq j,\label{relation2}
\\
\sum_{n=0}^{1-c_{ij}} (-1)^nB_i^{(n)}B_jB_i^{(1-c_{ij}-n)} &=0, \quad \text{ if } j \neq \btau i\neq i, \label{relation3}
\\
\sum_{n=0}^{1-c_{i,\btau i}} (-1)^nB_i^{(n)}B_{\btau i}B_i^{(1-c_{i,\btau i}-n)}& =\frac{1}{v-v^{-1}} \times
\label{relation5}     \\
    \left(v^{c_{i,\btau i}} (v^{-2};v^{-2})_{-c_{i,\btau i}} \right.  &
  \left. B_i^{(-c_{i,\btau i})} \tk_i
   -(v^{2};v^{2})_{-c_{i,\btau i}}B_i^{(-c_{i,\tau i})} \tk_{\btau i}  \right),
\text{ if } \btau i \neq i,
 \notag \\
\sum_{n=0}^{1-c_{ij}} (-1)^n  B_{i, \overline{p_i}}^{(n)}B_j B_{i,\overline{c_{ij}}+\overline{p}_i}^{(1-c_{ij}-n)} &=0,\quad   \text{ if }\btau i=i.
\label{relation6}
\end{align}
(This presentation is called a {\em Serre presetation} of $\tUi$.)
\end{theorem}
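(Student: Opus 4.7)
The plan is to derive this presentation from the non-universal Serre presentation of $\Ui_{\bvs}$ established in \cite[Theorem~3.1]{CLW18}, together with the central-extension picture provided by Proposition~\ref{prop:QSP12}. The heart of the matter is that $\tUi$ differs from $\Ui_{\bvs}$ only by promoting the scalar parameters $\vs_i, \vs_i\vs_{\btau i}$ to central generators $\tk_i, \tk_i\tk_{\btau i}$; hence a Serre presentation for one should yield one for the other once the central specialization is tracked carefully.

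First I would verify that relations \eqref{relation1}--\eqref{relation6} all hold in $\tUi$. Relation \eqref{relation1} is a direct computation from $B_\ell = F_\ell + E_{\btau\ell}\tK_\ell'$ and $\tk_i = \tK_i\tK_{\btau i}'$ using \eqref{eq:EK}--\eqref{eq:K2}. Relations \eqref{relation2}--\eqref{relation3}, which have $\tk$-independent right-hand sides, descend from their counterparts in each $\Ui_{\bvs}$ via Proposition~\ref{prop:QSP12}(1); since the central elements listed in \eqref{eq:central} are algebraically independent in $\tUi$ (a consequence of the triangular decomposition of $\tU$ together with the definition of $\tUi$ as a subalgebra of $\tU$), a relation that vanishes after every central specialization must already vanish in $\tUi$. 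Relations \eqref{relation5}--\eqref{relation6} will be handled by the same intersection-over-all-$\bvs$ argument, once I check that the universal $\imath$divided powers \eqref{eq:iDPodd}--\eqref{eq:iDPev} specialize under $\tk_i \mapsto \vs_i$ precisely to the $\imath$divided powers used in \cite{CLW18}. The formulas have been designed with this compatibility in mind.

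Next I would establish sufficiency. Let $\widetilde{A}$ denote the abstract $\Q(\bv)$-algebra presented by the generators $B_i, \tk_i$ and relations \eqref{relation1}--\eqref{relation6}; the previous step supplies a canonical surjection $\phi: \widetilde{A} \twoheadrightarrow \tUi$. By \eqref{relation1} the elements \eqref{eq:central} are central in $\widetilde{A}$; let $Z \subset \widetilde{A}$ be the subalgebra they generate. For any parameter $\bvs$, specializing the generators of $Z$ to the corresponding scalars converts \eqref{relation1}--\eqref{relation6} into exactly the defining relations of $\Ui_{\bvs}$ from \cite[Theorem~3.1]{CLW18}, so $\widetilde{A}/\mathfrak{m}_{\bvs}\widetilde{A} \cong \Ui_{\bvs} \cong \tUi/\mathfrak{m}_{\bvs}\tUi$, the last isomorphism being Proposition~\ref{prop:QSP12}(1). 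Thus $\phi$ becomes an isomorphism after reducing modulo every maximal ideal $\mathfrak{m}_{\bvs} \subset Z$, and since the central subalgebras of both $\widetilde{A}$ and $\tUi$ are free polynomial algebras of the same rank, intersecting these ideals forces $\ker \phi = 0$.

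The main obstacle will be the lift of the BK relation \eqref{relation5} to $\tUi$. Unlike \eqref{relation6}, in which the central element $\tk_i$ appears naturally inside the product defining $\ff_{i,\ev}^{(m)}$ and $\ff_{i,\odd}^{(m)}$, the right-hand side of \eqref{relation5} involves both $\tk_i$ and $\tk_{\btau i}$ separately with specific asymmetric coefficients; one must confirm that the non-universal relation in $\Ui_{\bvs}$ really depends on the parameters $\vs_i, \vs_{\btau i}$ individually in the form prescribed here, rather than only through the central combination $\vs_i\vs_{\btau i}$. This asymmetry is visible in the derivations of \cite{BK15, CLW18} and was carried out explicitly in \cite[\S6]{LW19a} for ADE type (where the condition $c_{i,\btau i}=0$ trivializes the Pochhammer factor $(v^{\pm 2}; v^{\pm 2})_{-c_{i,\btau i}}$). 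Extending that calculation to general Kac-Moody type, one tracks that the coefficient of $\tk_i$ originates from the $E_{\btau i}\tK_i'$-tail of $B_i$ and that of $\tk_{\btau i}$ from the analogous tail of $B_{\btau i}$, reproducing the exact shape of \eqref{relation5}.
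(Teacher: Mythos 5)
Your strategy is the same as the paper's: derive the presentation from the Serre presentation of $\Ui$ in \cite[Theorem~3.1]{CLW18}, using Proposition~\ref{prop:QSP12}(1) to relate $\tUi$ and $\Ui_{\bvs}$ via central reduction. The difference is in how you handle the lift from $\Ui$ back to $\tUi$: the paper works over a single base change to $\mathbb K = \Q(v)(\vs_i \mid i\in \I_\tau)$ with the $\vs_i$ algebraically independent over $\Q(v)$, so that after identifying the central elements of $\tUi$ with these transcendentals the $\mathbb K$-algebra becomes $\Ui$, and the presentation is then read off directly; you instead intersect over all parameters $\bvs$. Both are legitimate strategies, but the paper's choice of one generic transcendental parameter avoids the intersection step entirely and is therefore somewhat cleaner.

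The one place your argument has a real gap is the sufficiency step. From ``$\phi$ is an isomorphism modulo every $\mathfrak{m}_{\bvs}$'' you conclude $\ker\phi = 0$ by saying ``since the central subalgebras of both $\widetilde A$ and $\tUi$ are free polynomial algebras of the same rank, intersecting these ideals forces $\ker\phi = 0$.'' This does not follow. What you actually need is that $\widetilde A$ is $Z$-torsion-free (equivalently, that $\bigcap_{\bvs}\mathfrak{m}_{\bvs}\widetilde A = 0$), and the structure of $Z$ itself is irrelevant to that. Since $\widetilde A$ is defined abstractly by generators and relations --- and the relations \eqref{relation5}, \eqref{relation6} genuinely mix the $B_i$'s with the central generators through the $\imath$divided powers --- freeness (or even torsion-freeness) of $\widetilde A$ over $Z$ is precisely the nontrivial content you cannot assume. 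You would need to first establish a spanning set for $\widetilde A$ of the form $\{(\text{monomial in } B) \cdot \tk_\alpha\}$ using \eqref{relation1}, and then argue that the map to $\tUi$ (whose corresponding set is linearly independent by the triangular decomposition of $\tU$) forces this to be a basis. That extra step is what makes the intersection argument go through, and it should be stated explicitly.

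Your discussion of the BK relation \eqref{relation5} and the need to track the asymmetric dependence on $\tk_i$ versus $\tk_{\btau i}$ is a reasonable flag of where a detailed verification lives, though the paper's route avoids this issue because it works only with the already-proved \cite[Theorem~3.1]{CLW18} relation and simply rewrites $\vs_i, k_j$ in terms of $\tk_i, \tk_{\btau i}$.
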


\begin{proof}
Recall the main differences between $\Ui$ and $\tUi$ are as follows.  Let $\mathbb K$ be a field which contains parameters $\vs_i$, for $i\in \I_\tau$, such that $\vs_{\tau i} =\vs_i$ for all $i$. The $\mathbb K$-algebra $\Ui$ (cf.  \cite[Theorem~3.1]{CLW18}) does not contain the central elements \eqref{eq:central} as in $\tUi$; additionally, instead of $\tk_i$\; $(i\in\I)$ in $\tUi$, $\Ui$ contains generators $k_j\; (j \in \I_\btau)$; note $k_j$ here corresponds to the notation $\widetilde{K}_j \widetilde{K}_{\btau j}^{-1}$ in \cite{CLW18}.

Let us now fix the field $\mathbb K =\Q(v) \big(\vs_i\mid i\in \I_\tau \big)$, where the $\vs_i$'s are algebraically independent over $\Q(v)$. Fixing a square root $(\tk_j \tk_{\btau j})^{1/2}$ and identifying it with $\vs_i$, for $j \in \I_\tau$,
%denote by $\mathbb K =\Q(v) \big(\tk_i\; (i= \btau i),\; \eta_j \; (j\neq \btau j) \big)$ an extension field of $\Q(v)$, and
we consider the base change $\tUi_{\mathbb K} ={\mathbb K} \otimes \tUi$.
Then, over $\mathbb K$, $\tUi_{\mathbb K}$ is isomorphic to the $\mathbb K$-algebra $\Ui$ (with $\Ui$ in \cite[Theorem~3.1]{CLW18}) by sending $B_i \mapsto B_i\; (i\in \I)$, $\tk_j \mapsto \vs_{j} k_j\; (j \in \I_\btau)$ (and it follows that $\tk_{\btau j} \mapsto \vs_j k_j^{-1})$.

Now the presentation of $\Ui$ in \cite[Theorem~3.1]{CLW18} translates into the presentation for $\tUi$ in the statement.
\end{proof}

The relation~\eqref{relation5} in the setting of $\Ui$ originates in \cite{BK15}, and will be referred to as the BK relation. The $\imath$Serre relation~\eqref{relation6} first appeared in \cite{CLW18} and higher order $\imath$Serre relations have been formulated in \cite{CLW20}.

\begin{remark}
All constructions and results in this section (in particular, Theorem~\ref{thm:Serre}) are valid for $\Ui$ and $\tUi$ associated to symmetrizable generalized Cartan matrices, with various $v$-powers in $v$-binomials, $\imath$divided powers and \eqref{relation1}--\eqref{relation6} replaced by suitable $v_i$-powers.
\end{remark}

\subsection{Virtually acyclic $\imath$quivers}

To facilitate the computations in $\imath$Hall algebras in connection to $\imath$quantum groups $\tUi$, we shall consider a distinguished class of $\imath$quivers. Recall an oriented cycle of $Q$ is called \emph{minimal} if
it does not contain any proper oriented cycle. A minimal cycle of length $m$ is called an \emph{$m$-cycle}. %, and $1$-cycles are called {\em loops}.

\begin{definition}
\label{def:i-acyclic}
An $\imath$quiver $(Q,\btau)$ is called {\em virtually acyclic} if its only possible cycles are $2$-cycles between $i$ and $\tau i$ for $\tau i \neq i \in Q_0$.
%\begin{itemize}
%\item[(A1)] $Q$ does not contain any $m$-cycle, for $m\neq2$;
%\item[(A2)] For any $i \neq j\in Q_0$ such that $\btau i\neq j$, the full subquiver of $Q$ formed by $i,j$ is acyclic.
%\end{itemize}
\end{definition}
Note that acyclic $\imath$quivers are virtually acyclic $\imath$quivers.

\begin{lemma}
Let $(Q,\btau)$ be a virtually acyclic $\imath$quiver and $i \in Q_0$ such that $\btau i\neq i$. Then $\#\{\alpha:i\rightarrow j \mid \alpha\in Q_1\}=\#\{ \alpha:j\rightarrow i\mid \alpha\in Q_1\}$, and the number of edges between $i$ and $\tau i$ is even.
\end{lemma}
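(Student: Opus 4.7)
The plan is to exploit that $\btau$ is a quiver automorphism with $\btau^2=\Id$. I read the first equality as concerning $j=\btau i$ (this is the only meaningful case: for $j\notin\{i,\btau i\}$ one can have an arrow $i\to j$ with no opposite arrow, so no equality is forced in general), which also fits naturally with the second clause, about edges between $i$ and $\btau i$.

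First, I would use that a quiver automorphism sends arrows to arrows preserving sources and targets. Thus if $\alpha\in Q_1$ satisfies $\alpha:i\to \btau i$, then $\btau(\alpha)$ has source $\btau(i)=\btau i$ and target $\btau(\btau i)=i$, so $\btau(\alpha):\btau i\to i$. This yields a well-defined map
\[
\btau:\{\alpha\in Q_1\mid \alpha:i\to \btau i\}\longrightarrow \{\alpha\in Q_1\mid \alpha:\btau i\to i\}.
\]
Using $\btau^2=\Id$, the analogous map in the reverse direction is a two-sided inverse, so the above is a bijection. Since $i\ne \btau i$, the two arrow sets are automatically disjoint (an arrow has a unique source), so there is no risk of $\btau$ fixing some arrow and throwing off the count. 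This establishes
\[
\#\{\alpha\in Q_1\mid \alpha:i\to\btau i\}=\#\{\alpha\in Q_1\mid \alpha:\btau i\to i\}.
\]

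For the second clause, adding the two arrow counts gives
\[
\#\{\alpha\in Q_1\mid \alpha:i\to\btau i\}+\#\{\alpha\in Q_1\mid \alpha:\btau i\to i\}=2\cdot \#\{\alpha\in Q_1\mid \alpha:i\to\btau i\},
\]
which is even; this is precisely the number of edges between $i$ and $\btau i$.

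Notably, the argument does not use the virtually acyclic hypothesis at all; the statement is really a general feature of $\btau$-symmetric quivers with $\btau i\ne i$. The only point requiring care is the bookkeeping of sources and targets under $\btau$, together with the remark that $i\ne \btau i$ prevents any fixed arrows, but neither presents a real obstacle.
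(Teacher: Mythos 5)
Your proof is correct and is exactly the fleshing-out of what the paper leaves implicit with ``follows by the definition'': reading $j$ as $\btau i$ (which, as you note, is the only reading that makes the first equality true --- e.g.\ a vertex $j\notin\{i,\btau i\}$ with a single arrow $i\to j$ already breaks it), the involution $\btau$ swaps the arrow sets $\{\alpha:i\to\btau i\}$ and $\{\alpha:\btau i\to i\}$, giving the bijection and hence the evenness. Your side remark that virtual acyclicity is not used is also accurate; the lemma is a general feature of $\imath$quivers with $\btau i\neq i$.
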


\begin{proof}
Follows by the definition.
\end{proof}

\begin{example}
\label{ex:Kquiver}
Denote by
\begin{align}
  \label{eq:Kq}
Q=\xymatrix{ {\Large \text{1}} \ar@<3ex>[rr]|-{\alpha_r}  \ar@<0.75ex>[rr]|-{\alpha_1}^{\cdots}   && {\Large \text{2}}\ar@<0.75ex>[ll]|-{\beta_1}  \ar@<3ex>[ll]|-{\beta_r}_{\cdots}  },
\qquad\qquad
\ov{Q}= \xymatrix{ {\Large \text{1}}\ar@/^1.5pc/@<2.5ex>@[purple][rr]^{\textcolor{purple}{\varepsilon_1}} \ar@<3ex>[rr]|-{\alpha_r}   \ar@<0.75ex>[rr]|-{\alpha_1}^{\cdots}   && {\Large \text{2}}\ar@<0.75ex>[ll]|-{\beta_1} \ar@/^1.5pc/@<2.5ex>@[purple][ll]^{\textcolor{purple}{\varepsilon_2}} \ar@<3ex>[ll]|-{\beta_r}_{\cdots}  }
\end{align}
Then $Q$ is a generalized Kronecker quiver, with involution $\btau$ given by $\btau 1=2$. Note that the $\imath$quiver $(Q,\btau)$ is virtually acyclic but not acyclic, for $r\ge 1$; moreover, $\Lambda^\imath =\K \ov{Q}/\ov{I}$, where
\[
\ov{I}=(\varepsilon_1\varepsilon_2, \varepsilon_2\varepsilon_1, \alpha_i\varepsilon_2-\varepsilon_1\beta_i,\beta_i\varepsilon_1-\varepsilon_2\alpha_i\mid 1\le i\le r).
\]
This is a new rank one $\imath$quiver algebra which did not appear in \cite{LW19a}. (The rank here refers to the number of $\tau$-orbits on the vertex set of $Q$.)
\end{example}

%We denote the algebra defined in Example \ref{ex:Kquiver} by $R_{2,2r}$ below; in particular, $R_2=R_{2,0}$. If we do not need the notation for the arrows, we also denote the generalized Kronecker quiver $Q$ and its associated $\ov Q$ by
%\begin{align*}
%Q=(\xymatrix{ 1   \ar@<0.75ex>[r]|-{r} & 2\ar@<0.75ex>[l]|-{r}   });
%\qquad
%\ov{Q}=(\xymatrix{ 1  \ar@/^0.5pc/@<0.8ex>[r]^{\varepsilon_1} \ar@<0.75ex>[r]|-{r} & 2\ar@<0.75ex>[l]|-{r}  \ar@/^0.5pc/@<0.8ex>[l]^{\varepsilon_2} }).
%\end{align*}

In the remainder of this paper, we shall restrict ourselves to the $\imath$Hall algebras $\tMHk$ associated to virtually acyclic $\imath$quivers. This suffices for the $\imath$Hall algebra realization of the $\imath$quantum groups $\tUi$ which we shall develop. The generalized Cartan matrix of $\U$ has to satisfy $c_{i,\tau i}\in -2\N$ whenever $\tau i\neq i \in \I$ (and no other  conditions), a condition imposed from the $\imath$quivers; see Example~\ref{ex:Kquiver}.

%%%%%%%%%
%%%%%%%%%

\section{The BK relation in $\imath$Hall algebra}
\label{sec:Relation5}

In this section, we shall establish an identity in $\imath$Hall algebra $\tMHk$ which corresponds to the BK relation \eqref{relation5} in $\tUi$. By Lemma~\ref{lem:subalgebra}, we are reduced to considering the rank one generalized Kronecker $\imath$quiver.

\subsection{First computation in $\tMHk$}

Let $(Q, \btau)$ be generalized Kronecker $\imath$quivers as in Example~\ref{ex:Kquiver}.
Recall $\Lambda^\imath =\K \ov{Q}/\ov{I}$ where
$\ov{I}=(\varepsilon_1\varepsilon_2, \varepsilon_2\varepsilon_1, \alpha_i\varepsilon_2-\varepsilon_1\beta_i,\beta_i\varepsilon_1-\varepsilon_2\alpha_i\mid 1\le i\le r)$ and see \eqref{eq:Kq} for $\ov Q$. A $\iLa$-module $M$ is a tuple of the form
\[
M =(M_i, M(\alpha_j), M(\beta_j), M(\varepsilon_i))_{i=1,2;1\leq j\leq r}.
\]
Recall $\K Q$ is a subalgebra (and also a quotient algebra) of $\Lambda^\imath$. Recall
\[
q =\sqq^2.
\]

For a $\iLa$-module $S$, we shall write
\[
[l S] =[\underbrace{S\oplus \cdots \oplus S}_{l}],
\qquad
[S]^{*l} = \underbrace{[S]* \cdots * [S]}_{l}.
\]
The following formula follows by definitions.
\begin{lemma}
For $l\ge 1$ and $i=1, 2$, we have
\begin{align}
[S_i]^{*l}= \sqq^{-\frac{l(l-1)}{2}}[l S].
 \label{eq:DPS}
\end{align}
\end{lemma}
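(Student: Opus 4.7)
My plan is to prove \eqref{eq:DPS} by a short induction on $l\ge 1$, with base case $l=1$ tautological. The key structural input is that the underlying quiver $\ov Q$ in \eqref{eq:Kq} has no loops at either vertex (all arrows travel between the two distinct vertices $1$ and $2$, including the adjoined $\varepsilon_1,\varepsilon_2$). This forces $\Ext^1_{\iLa}(S_i,S_i)=0$, and more generally $\Ext^1_{\iLa}(nS_i,S_i)=0$ for every $n\ge 0$; the only extension of $nS_i$ by $S_i$ in $\mod(\iLa)$ is the split one, yielding $(n{+}1)S_i$.

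For the inductive step I would write $[S_i]^{*l}=[S_i]^{*(l-1)}*[S_i]$, substitute the hypothesis, and reduce to computing $[(l{-}1)S]*[S_i]$. By the definition \eqref{eqn:twsited multiplication}, this factors as a Ringel-Hall piece $[(l{-}1)S]\diamond[S_i]$ times an Euler twist $\sqq^{\langle \res((l-1)S),\res(S_i)\rangle_Q}$. The Hall piece, computed via the standard Ringel-Hall multiplication inherited from $\ch(\iLa)$, collapses by the $\Ext^1$-vanishing above to the single summand $q^{-(l-1)}[lS]$, the coefficient being $1/|\Hom_{\iLa}((l{-}1)S_i,S_i)|$. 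The Euler twist equals $\sqq^{l-1}$, since $\langle S_i,S_i\rangle_Q=1$ (no loops at $i$ in $Q$) and $\res((l{-}1)S)\cong (l{-}1)S_i$ in $\mod(\K Q)$.

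Combining these two contributions gives $[(l{-}1)S]*[S_i]=\sqq^{-(l-1)}[lS]$, so the exponent from the inductive hypothesis is shifted as
\[
-\tfrac{(l-1)(l-2)}{2}-(l-1)=-\tfrac{l(l-1)}{2},
\]
which closes the induction. No real obstacle is anticipated; the only point requiring care is that since $S_i\notin\cp^{\le 1}(\iLa)$ one must check that the naive Ringel-Hall computation in $\ch(\iLa)$ transports unchanged to $\tMHk$. This is immediate because the entire calculation stays within classes of semisimple modules concentrated at the vertex $i$, so neither the defining ideal $I$ of \eqref{eq:ideal} nor the localization at $\cs_{\Lambda^\imath}$ intervenes in the term $[lS]$.
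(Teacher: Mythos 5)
Your proof is correct and is essentially the computation that the paper compresses into the phrase ``follows by definitions'': one observes $\Ext^1_{\iLa}(S_i,S_i)=0$ because $\ov Q$ has no loops, so $[(l-1)S]\diamond[S_i]=q^{-(l-1)}[lS]$ by the Bridgeland-type Hall formula, and the Euler twist contributes $\sqq^{l-1}$ since $\langle S_i,S_i\rangle_Q=1$, closing the induction. Your remark that the calculation descends unchanged from $\ch(\iLa)$ to $\tMHk$ is accurate but immediate, since $\utMH$ is by construction a quotient of $\ch(\iLa)$ followed by a localization.
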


Corresponding to the $\imath$divided powers in \eqref{eq:iDP1}, we define the divided powers, for $i=1, 2$,
\begin{align}
[S_i]^{(l)}:=\frac{[S_i]^{*l}}{[l]^!_\sqq}=\sqq^{-\frac{l(l-1)}{2}}\frac{[l S]}{[l]^!_\sqq}.
\end{align}
%=\prod_{i=1}^l\frac{q-1}{q^i-1} [l S].$$

Our goal in this section is to verify the relation \eqref{relation5} for $i=1$ and $\btau i=2$, see \eqref{eq:Kq}; the other case when $i=2$ follows by symmetry.

For any $\Lambda^\imath$-module $M=(M_i, M(\alpha_j), M(\beta_j), M(\varepsilon_i))_{i=1,2;1\leq j\leq r}$ such that $\widehat{M}=(2r+1)\widehat{S_1}+\widehat{S_2}$ in $K_0(\mod(\Lambda^\imath))$, we define
\begin{align}
  \label{eq:UW}
U_M:= \bigcap_{1\leq j\leq r} \Ker M(\alpha_j) \bigcap \Ker M(\varepsilon_1),\qquad W_M:= \Im M(\varepsilon_2) +\sum_{j=1}^r \Im M(\beta_j),
\end{align}
and let
\begin{align}  \label{eq:uw}
u_M:=\dim U_M,\qquad w_M:=\dim W_M.
\end{align}

Since $\Lambda^\imath$ is  a quotient algebra of $\K\ov{Q}$, we can view each $\Lambda^\imath$-module as a $\K\ov{Q}$-module naturally. Let $\widetilde{\ch}(\K\ov{Q})$ be the Hall algebra of $\mod(\K\ov{Q})$ with its Hall multiplication twisted by $\sqq^{\langle \cdot,\cdot\rangle_Q}$.
\begin{lemma}
\label{lem: quotient}
There exists an algebra epimorphism
\begin{align*}
\phi:\widetilde{\ch}(\K\ov{Q})\longrightarrow \tMHk
\end{align*}
defined by letting
\begin{align}
   \label{def:quotient}
[M]\mapsto\left\{ \begin{array}{cc} [M], & \text{ if } M\in\mod(\Lambda^\imath), \\
0,& \text{ otherwise}. \end{array} \right.
\end{align}
\end{lemma}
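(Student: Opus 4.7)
The plan is to verify multiplicativity and surjectivity of $\phi$, after noting it is manifestly $\Q(\sqq)$-linear and well-defined on the basis of $\widetilde{\ch}(\K\ov Q)$ indexed by isoclasses of $\K\ov Q$-modules.

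The key structural input is that, via the quotient map $\K\ov Q \twoheadrightarrow \Lambda^\imath$, the category $\mod(\Lambda^\imath)$ embeds as a full subcategory of $\mod(\K\ov Q)$ which is closed under submodules and quotient modules: if the ideal $\ov I$ annihilates $M$, it annihilates every submodule and every quotient of $M$ as well. Consequently, in any short exact sequence $0 \to B \to C \to A \to 0$ in $\mod(\K\ov Q)$, if $C \in \mod(\Lambda^\imath)$ then automatically $A, B \in \mod(\Lambda^\imath)$, and such an extension is a $\Lambda^\imath$-module extension, so its class lies in $\Ext^1_{\Lambda^\imath}(A,B)$ naturally included in $\Ext^1_{\K\ov Q}(A,B)$.

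With this in hand I verify $\phi([A]*[B]) = \phi([A])*\phi([B])$ by a case split. If both $A, B \in \mod(\Lambda^\imath)$, then $\Hom_{\K\ov Q}(A,B) = \Hom_{\Lambda^\imath}(A,B)$ by fullness of the embedding, and for any $\Lambda^\imath$-module $C$ one has $\Ext^1_{\K\ov Q}(A,B)_C = \Ext^1_{\Lambda^\imath}(A,B)_C$ by the closure property recorded above. Applying $\phi$ to the Ringel--Hall formula in $\widetilde{\ch}(\K\ov Q)$ thus restricts the sum to $C \in \mod(\Lambda^\imath)$ and reproduces, term by term, the product in $\tMHk$; the twist factor $\sqq^{\langle A, B \rangle_Q}$ agrees on both sides because both use the Euler form of $\K Q$. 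If instead $A$ or $B$ is not in $\mod(\Lambda^\imath)$, the same closure property forbids any $\Lambda^\imath$-module from appearing as the middle term of such an extension, so every summand of $[A]*[B]$ is killed by $\phi$ and we obtain $\phi([A]*[B]) = 0 = \phi([A])*\phi([B])$.

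For surjectivity, the image already contains $[M]$ for every $M \in \mod(\Lambda^\imath)$, in particular every generalized simple $[\bK_i]$ and every $[X]$ with $X \in \mod(\K Q)$; together with the natural torus part of $\widetilde{\ch}(\K\ov Q)$ these exhaust the $\imath$Hall basis of $\tMHk$ described in Theorem~\ref{thm:utMHbasis}. The main obstacle is just the Hom and Ext comparison underlying the multiplicativity check, and it follows directly from the fact that $\mod(\Lambda^\imath) \hookrightarrow \mod(\K\ov Q)$ is closed under subobjects and quotients; no deeper input is required.
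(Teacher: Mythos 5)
Your verification of multiplicativity is essentially the paper's own proof, with the identical two-case split: when both factors lie in $\mod(\Lambda^\imath)$ you match $\Hom$ and $\Ext^1(\cdot,\cdot)_C$ via the full exact embedding, and when one factor lies outside $\mod(\Lambda^\imath)$ you use closure of $\mod(\Lambda^\imath)\subset\mod(\K\ov Q)$ under submodules and quotients to conclude that every summand of the product is killed by $\phi$. That part, including the observation that the twist $\sqq^{\langle\cdot,\cdot\rangle_Q}$ agrees on both sides, is correct and matches the paper's argument.

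The surjectivity paragraph is the one place where your reasoning does not go through. There is no ``natural torus part'' of $\widetilde{\ch}(\K\ov Q)$ --- its basis consists solely of classes $[M]$ of actual $\K\ov Q$-modules, and under $\phi$ those land (up to scalar) on elements $[X]\diamond\bK_\alpha$ with $\alpha$ in the non-negative cone. The $\imath$Hall basis of Theorem~\ref{thm:utMHbasis} also contains $[X]\diamond\bK_\alpha$ with $\alpha$ having negative entries, i.e.\ genuine inverses $[\bK_i]^{-1}$ introduced by localization at $\cs_{\Lambda^\imath}$; by the $\mathbb G$-grading of Lemma~\ref{lem: grade 2} (in which every $[M]$ for $M\in\mod(\Lambda^\imath)$ sits in a non-negative $K_0(\mod(\K Q))$-degree), these are not in the $\Q(\sqq)$-span of $\{[M]:M\in\mod(\Lambda^\imath)\}$, so $\phi$ is not literally onto. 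Note the paper's own proof also establishes only the homomorphism property and then asserts ``the lemma follows''; ``epimorphism'' is best read in the categorical ring-epimorphism sense (which does hold, since $\tMHk$ is generated by $\Im\phi$ together with inverses of elements of $\Im\phi$, and inverses are uniquely determined by any ring homomorphism), and in any case only multiplicativity of $\phi$ is actually invoked later in the paper.
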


\begin{proof}
Let $\widetilde{\phi}:\widetilde{\ch}(\K\ov{Q})\longrightarrow \widetilde{\ch}(\Lambda^\imath)$ be the linear map defined by \eqref{def:quotient}.
It suffices to check that $\widetilde{\phi}$ is a homomorphism.

For any $L,M,N\in \mod(\K \ov{Q})$, if there exists a short exact sequence
$$0\longrightarrow M\longrightarrow N\longrightarrow L\longrightarrow 0$$
such that $N\in \mod(\Lambda^\imath)$, then $L,M\in\mod(\Lambda^\imath)$.
So $\widetilde{\phi}([L]*[M])=0=\widetilde{\phi}([L])*\widetilde{\phi}([M])$ if one of $L,M$ is not in $\mod(\Lambda^\imath)$.

Now let $L,M\in\mod(\Lambda^\imath)$. Then, for any $N\in\mod(\Lambda^\imath)$, we have
\[
\big| \Hom_{\K\ov{Q}}(L,M) \big | =\big| \Hom_{\Lambda^\imath}(L,M) \big|,
\qquad
\big| \Ext^1_{\K\ov{Q}}(L,M)_N|= |\Ext^1_{\Lambda^\imath}(L,M)_N \big|.
\]
So $\widetilde{\phi}([L]*[M])=\widetilde{\phi}([L])*\widetilde{\phi}([M])$ by definition.

The lemma follows.
\end{proof}

%\begin{proof}
%For any $L,M,N\in\mod(\Lambda^\imath)$, we have
%\[
%\big| \Hom_{\K\ov{Q}}(L,M) \big | =\big| \Hom_{\Lambda^\imath}(L,M) \big|,
%\qquad
%\big| \Ext^1_{\K\ov{Q}}(L,M)_N|= |\Ext^1_{\Lambda^\imath}(L,M)_N \big|.
%\]
%Then the result follows by definition.
%\end{proof}

For any three objects $X,Y,Z$, let
\begin{align}
 \label{eq:Fxyz}
F_{XY}^Z:= \big |\{L\subseteq Z \mid L \cong Y,  Z/L\cong X\} \big |.
\end{align}

\begin{lemma}[Riedtman-Peng formula]
\label{lem: Ried-P}
For any three objects $X,Y,Z$, we have
\[
F_{XY}^Z= \frac{|\Ext^1(X,Y)_Z|}{|\Hom(X,Y)|} \cdot \frac{|\aut(Z)|}{|\aut(X)| |\aut(Y)|}.
\]
\end{lemma}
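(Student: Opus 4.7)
The plan is to prove this classical formula by double counting the set of short exact sequences with fixed middle term. Let
\[
V:= \big\{ (f,p) \,\big|\, 0\to Y \xrightarrow{f} Z \xrightarrow{p} X \to 0 \text{ is a short exact sequence} \big\}.
\]
I will compute $|V|$ in two different ways and compare.

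First, I will organize $V$ by the image of $f$. For each subobject $L\subseteq Z$ with $L\cong Y$ and $Z/L\cong X$, the pairs $(f,p)\in V$ with $\Im(f)=L$ are in bijection with pairs of isomorphisms $Y\xrightarrow{\sim} L$ and $Z/L\xrightarrow{\sim} X$. Hence the number of such pairs is $|\Aut(Y)|\cdot|\Aut(X)|$, and summing over admissible $L$ yields
\[
|V| \;=\; F_{XY}^Z\cdot |\Aut(Y)|\cdot |\Aut(X)|.
\]

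Second, I will organize $V$ by equivalence classes in $\Ext^1(X,Y)_Z$. Two sequences $(f_1,p_1)$, $(f_2,p_2)\in V$ are equivalent iff there is $\phi\in\Aut(Z)$ with $\phi f_1=f_2$ and $p_2\phi=p_1$; the set of equivalence classes is exactly $\Ext^1(X,Y)_Z$. The group $\Aut(Z)$ acts on $V$ by $\phi\cdot(f,p)=(\phi f, p\phi^{-1})$, with orbits the equivalence classes. The stabilizer of a given sequence $\xi=(f,p)$ consists of $\phi\in\Aut(Z)$ fixing $f$ and $p$; these are precisely the ``Baer automorphisms'' of $\xi$, which are in bijection with $\Hom(X,Y)$ via $h\mapsto \id_Z + f\circ h\circ p$. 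This is a standard short computation: any such $\phi$ satisfies $(\phi-\id)\circ f = 0$ and $p\circ(\phi-\id)=0$, so $\phi-\id$ factors uniquely through $X\twoheadrightarrow\text{(coker)}\cdots\to Y\hookrightarrow Z$, giving the bijection with $\Hom(X,Y)$. Consequently
\[
|V| \;=\; |\Ext^1(X,Y)_Z|\cdot \frac{|\Aut(Z)|}{|\Hom(X,Y)|}.
\]

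Equating the two expressions for $|V|$ and solving for $F_{XY}^Z$ gives the claimed formula. The only non-trivial step is the identification of the stabilizer with $\Hom(X,Y)$; this is where one uses the exactness of the sequence (to factor $\phi-\id$) and will be the one place that deserves a line or two of explanation, whereas the rest is pure bookkeeping.
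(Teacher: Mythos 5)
Your proof is correct. The paper itself gives no proof of this lemma --- it is the classical Riedtmann--Peng formula and is simply recorded as known --- so there is no in-paper argument to compare against; your double-counting via orbit--stabilizer is precisely the standard proof. To spell out the one step you flag as deserving a line of explanation: if $\phi\in\Aut(Z)$ satisfies $\phi f=f$ and $p\phi=p$, then from $p(\phi-\id_Z)=0$ and $\Ker p=\Im f$ with $f$ mono one gets a unique $\alpha\colon Z\to Y$ with $\phi-\id_Z=f\alpha$; then $(\phi-\id_Z)f=0$ forces $\alpha f=0$, so $\alpha$ factors uniquely through $p$ as $\alpha=hp$ with $h\in\Hom(X,Y)$, whence $\phi=\id_Z+fhp$; conversely every such $\phi$ is an automorphism (inverse $\id_Z-fhp$) fixing $f$ and $p$, and $h\mapsto\id_Z+fhp$ is injective since $f$ is mono and $p$ is epi. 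This completes the stabilizer identification and the orbit--stabilizer count $|V|=|\Ext^1(X,Y)_Z|\cdot|\Aut(Z)|/|\Hom(X,Y)|$, which together with your first count yields the formula.
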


Similar to \cite{Rin,Gr95}, a direct computation in $\widetilde{\ch}(\K\ov{Q})$ using Lemma~ \ref{lem: Ried-P} shows that
\begin{align}
  \label{eq:single}
[S_1]^{(l)}*[S_2]*[S_1]^{(t)}=\sqq^{-r(2r+1)+tl+l(l-1)+t(t-1)}\sum_{[M]\in\Iso(\mod(\K\ov{Q}))}\bp_{M,t}\frac{(q-1)^{2r+2}}{|\aut(M)|}[M],
\end{align}
where $\bp_{M,t}=0$ unless $W_M\subseteq U_M$ (see \eqref{eq:UW}); if  $W_M\subseteq U_M$, then we have
\begin{align}
\bp_{M,t}=& |\Gr(t-w_M, u_M-w_M)|=\sqq^{(u_M-t)(t-w_M)}\qbinom{u_M-w_M}{t-w_M},
 \label{eq:pMt}
 \end{align}
 where $\Gr(a,N)$ denotes the Grassmannian of $a$-subspaces in $\K^N$, and
 \begin{align}
|\aut(M)|=&(q-1)(q^{u_M-w_M}-1)\cdots(q^{u_M-w_M}-q^{u_M-w_M-1})
  \label{eq:autM} \\
&\quad \cdot q^{w_M(u_M-w_M)+w_M(2r+1-u_M)+(u_M-w_M)(2r+1-u_M)}.
\notag
\end{align}
Here one should note the difference between Ringel's Hall multiplication and Bridgeland's; see \cite[\S2.3]{Br}.
By Lemma \ref{lem: quotient} and \eqref{eq:single}, we have the following identity in $\tMHk$:
\begin{align}
 \label{eq:single2}
[S_1]^{(l)}*[S_2]*[S_1]^{(t)}=\sqq^{-r(2r+1)+tl+l(l-1)+t(t-1)}\sum_{[M]\in\Iso(\mod(\Lambda^\imath))}\bp_{M,t}\frac{(q-1)^{2r+2}}{|\aut(M)|}[M].
\end{align}

Summing up \eqref{eq:single2}, we obtain
\begin{align}
  \label{eq:SumSerre}
\sum_{l+t =2r+1} (-1)^l [S_1]^{(l)}*[S_2]*[S_1]^{(t)}=\sum_{[M]: W_M\subseteq U_M} \bp_M[M],
\end{align}
where, thanks to \eqref{eq:pMt},
\begin{align}
 \label{eq:pM}
\bp_M=&(q-1)^{(2r+2)}\sum_{l=0}^{2r+1} (-1)^l\sqq^{-r(2r+1)+tl+l(l-1)+t(t-1)}\frac{\bp_{M,t}}{|\aut(M)|}
\\
=&  \sqq^{r(2r+1)-u_Mw_M} \frac{(q-1)^{(2r+2)}}{|\aut(M)|} \sum_{t=0}^{2r+1} (-1)^{2r+1-t}\sqq^{-(2r+1-u_M-w_M)t}\qbinom{u_M-w_M}{t-w_M}.
\notag
\end{align}

For any $M$ such that $\widehat{M}=(2r+1)\widehat{S_1}+\widehat{S_2}$ in $K_0(\mod(\Lambda^\imath))$, clearly, either $M(\varepsilon_1)=0$ or $M(\varepsilon_2)=0$.
To complete the computation of \eqref{eq:SumSerre}, we proceed by dividing into 3 cases below:
\begin{enumerate}
\item
$M(\varepsilon_1)=0= M(\varepsilon_2)$,
\item
$M(\varepsilon_1)=0\neq M(\varepsilon_2)$,
\item
$M(\varepsilon_2)=0\neq M(\varepsilon_1)$.
\end{enumerate}
We shall need the following specializations of the quantum binomial formula.
\begin{lemma}
\label{lem:qbinom1}
Let $p \in \Z_{\geq1}$.
Let $d\in \Z$ be such that $|d| \le p-1$ and $d\equiv p-1\pmod 2$. Then,
\begin{enumerate}
\item
$\sum_{t=0}^{p}(-1)^tv^{-dt}\qbinom{p}{t}=0;$
\item
$\sum_{t=0}^{p}(-1)^{t}v^{-(p+1)t}\qbinom{p}{t}= (v^{-2}; v^{-2})_p$;
\item
$\sum_{t=0}^{p}(-1)^{-t}v^{(p+1)t}\qbinom{p}{t}=  (v^2;v^2)_{p}$.
\end{enumerate}
\end{lemma}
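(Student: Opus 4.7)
The plan is to derive all three identities as specializations of the classical Gauss $q$-binomial theorem. Recall the identity
\[
\prod_{k=0}^{p-1}(1 - q^{k} x) = \sum_{t=0}^{p} (-1)^{t} q^{\binom{t}{2}} \binom{p}{t}_{q} x^{t},
\]
where $\binom{p}{t}_q$ denotes the ordinary (non-symmetric) $q$-binomial coefficient. I would first do the elementary bookkeeping to switch from $\binom{p}{t}_{v^2}$ to our symmetric $\qbinom{p}{t}$; using $[n] = v^{-(n-1)}[n]_{v^2}$ one checks directly that $\qbinom{p}{t} = v^{-t(p-t)}\binom{p}{t}_{v^{2}}$. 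Substituting $q=v^{2}$ into the $q$-binomial theorem and combining $v^{t(t-1)}$ with $v^{t(p-t)}$ then yields the master formula
\[
\sum_{t=0}^{p} (-1)^{t} v^{t(p-1)} \qbinom{p}{t} x^{t}
= \prod_{k=0}^{p-1}\bigl(1 - v^{2k} x\bigr).
\]

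For part (1), I would set $x = v^{-(p-1)-d}$, so that $v^{t(p-1)} x^{t} = v^{-dt}$. The RHS becomes $\prod_{k=0}^{p-1}(1 - v^{2k-(p-1)-d})$, which vanishes precisely when $(p-1+d)/2 \in \{0,1,\ldots,p-1\}$. The congruence $d\equiv p-1 \pmod 2$ ensures integrality, and $|d|\le p-1$ gives the correct range, so the product contains a zero factor. For part (2), I would instead set $x = v^{-2p}$, i.e. take $d=p+1$ in the master formula. The product then telescopes as
\[
\prod_{k=0}^{p-1}\bigl(1 - v^{2k-2p}\bigr) = \prod_{j=1}^{p}(1 - v^{-2j}) = (v^{-2};v^{-2})_{p}.
\]
Part (3) will follow from (2) by applying the involution $v \mapsto v^{-1}$, under which $\qbinom{p}{t}$ is invariant and $(v^{-2};v^{-2})_{p}$ becomes $(v^{2};v^{2})_{p}$, while $v^{-(p+1)t}$ becomes $v^{(p+1)t}$.

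There is no genuine obstacle here; the only point requiring care is the conversion between the two normalizations of $q$-binomial coefficients and matching the exponent of $v$ with the exponent of $x$ in the Gauss identity. The hypothesis $d \equiv p-1 \pmod 2$ is used exactly to guarantee that the offset value $k = (p-1+d)/2$ is an integer in the product range.
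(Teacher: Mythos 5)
Your proposal is correct and follows essentially the same route as the paper: the paper invokes the quantum binomial formula $\sum_{t=0}^p v^{t(1-p)}\qbinom{p}{t} z^t = \prod_{j=0}^{p-1} (1+ v^{-2j} z)$ (citing Lusztig), substitutes $z=-v^{p-1-d}$ for (1), $z=-v^{-2}$ for (2), and applies the bar involution for (3). Your ``master formula'' is just this same identity after $v\mapsto v^{-1}$ and $z\mapsto -x$, and you derive it from scratch via Gauss's $q$-binomial theorem instead of citing it; your substitutions $x=v^{-(p-1)-d}$ and $x=v^{-2p}$ are the corresponding ones, and the vanishing argument for (1) is identical.
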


\begin{proof}
Recall the quantum binomial formula (cf., e.g., \cite[1.3.1(c)]{L93})
\begin{align}
   \label{eq:BF}
\sum_{t=0}^p v^{t(1-p)}\qbinom{p}{t} z^t = \prod_{j=0}^{p-1} (1+ v^{-2j} z).
\end{align}
Then Formula (1) follows from it by letting $z=v^{p-1-d}$.
Formula (2) follows by letting $z=v^{-2}$, and (3) follows from (2) by applying the bar involution $v \mapsto v^{-1}$.
\end{proof}

\subsection{Case $M(\varepsilon_1)=0= M(\varepsilon_2)$} In this case, we may regard $M\in\mod(\K Q) \subseteq\mod(\Lambda^\imath)$.
Recall $\bp_M$ from \eqref{eq:pM}.

\begin{lemma}
\label{lem: gen serre 1}
We have $\bp_M=0$, for any $M\in\mod(\K Q)$ such that $\widehat{M}=(2r+1)\widehat{S}_1 +\widehat{S}_2$.
\end{lemma}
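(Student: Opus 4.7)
The plan is to show that the alternating sum inside \eqref{eq:pM} vanishes, by recognizing it as an instance of the $v$-binomial identity in Lemma~\ref{lem:qbinom1}(1). Since the prefactor $\frac{(q-1)^{2r+2}}{|\aut(M)|}$ is nonzero, this will suffice.

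First, I would extract sharp bounds on $u_M$ and $w_M$ coming from the dimension vector $\widehat{M}=(2r+1)\widehat{S}_1+\widehat{S}_2$. Since $M(\varepsilon_1)=0=M(\varepsilon_2)$, we have
\[
U_M=\bigcap_{j=1}^r \Ker M(\alpha_j)\subseteq M_1,\qquad W_M=\sum_{j=1}^r \Im M(\beta_j)\subseteq M_1.
\]
Packaging the $\alpha_j$'s as a single linear map $A:M_1\to M_2^{\oplus r}$ with $\ker A=U_M$ gives $u_M\ge \dim M_1-r=r+1$. Packaging the $\beta_j$'s as a single linear map $B:M_2^{\oplus r}\to M_1$ with $\mathrm{Im}\,B=W_M$ gives $w_M\le r$. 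In particular $p:=u_M-w_M\ge 1$.

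Next, I would rewrite the sum in \eqref{eq:pM}. Since $\qbinom{u_M-w_M}{t-w_M}=0$ unless $w_M\le t\le u_M$, the substitution $s=t-w_M$ converts the sum to
\[
(-1)^{2r+1-w_M}\sqq^{-d w_M}\sum_{s=0}^{p}(-1)^{s}\sqq^{-ds}\qbinom{p}{s},\qquad d:=2r+1-u_M-w_M.
\]
I would then check the two hypotheses of Lemma~\ref{lem:qbinom1}(1): the parity $d\equiv p-1\pmod 2$ is automatic, since $d-(p-1)=2r+2-2u_M$ is even; and the bound $|d|\le p-1$ follows by splitting into the cases $d\ge 0$ (which reduces to $u_M\ge r+1$) and $d<0$ (which reduces to $w_M\le r$), both of which were just established. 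Applying Lemma~\ref{lem:qbinom1}(1) gives that the sum vanishes, hence $\bp_M=0$.

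I do not expect any serious obstacle here: the entire argument hinges on the elementary codimension bounds $u_M\ge r+1$ and $w_M\le r$ that encode the rigidity forced by $\dim M_2=1$, and these make the $v$-binomial identity directly applicable. The only thing to verify carefully is the parity/range bookkeeping that matches the hypothesis of Lemma~\ref{lem:qbinom1}(1).
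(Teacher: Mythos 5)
Your proof is correct and follows essentially the same route as the paper: establish $u_M \ge r+1$ and $w_M \le r$, substitute $s = t - w_M$, and invoke Lemma~\ref{lem:qbinom1}(1). You spell out the codimension bounds via the packaged maps $A$ and $B$ and verify the hypothesis $|d|\le p-1$ of Lemma~\ref{lem:qbinom1}(1) by splitting into cases $d\ge 0$ and $d<0$, which is slightly more explicit than the paper's one-line deduction but mathematically identical.
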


\begin{proof}
For any such $M$ with $M(\varepsilon_i)=0$, for $i=1,2$, we have $u_M\ge r+1> w_M$. We deduce that
$1-u_M-w_M \le 2r+1-u_M-w_M\le u_M+w_M-1$.

By a change of variables $s=t-w_M$ and Lemma~\ref{lem:qbinom1}(1), we have
\begin{align*}
\sum_{t=0}^{2r+1}  & (-1)^{2r+1-t}\sqq^{-(2r+1-u_M-w_M)t}\qbinom{u_M-w_M}{t-w_M} \\
& = (-1)^{2r+1+w_M} \sqq^{-(2r+1-u_M-w_M)w_M}\sum_{s=0}^{u_M-w_M} (-1)^{s}\sqq^{-(2r+1-u_M-w_M)s}\qbinom{u_M-w_M}{s}
=0.
\end{align*}
Then by \eqref{eq:pM}, we obtain $\bp_M =0$.
\end{proof}

\subsection{Case $M(\varepsilon_2)=0\neq M(\varepsilon_1)$}
In this case, we have $M(\beta_i)=0$, for $1\le i\le r$, by noting that $M(\beta_i)M(\varepsilon_1)=M(\varepsilon_2)M(\alpha_i)=0$. Then $w_M=0$ (recall $u_M, w_M$ from \eqref{eq:uw}).
Denote
\begin{align*}U'_M =\bigcap_{1\leq i\leq r} \Ker M(\alpha_i),
\qquad
u'_M =\dim U'_M.
\end{align*}

\begin{lemma}
\label{lem:ker not inclusion}
Retain the notations and assumptions as above. Then
there exists a short exact sequence $0\longrightarrow \bK_1 \longrightarrow M\longrightarrow S_1^{\oplus 2r} \longrightarrow0$ if %and only if
$M(\varepsilon_1)|_{U'_M}\neq0$.
\end{lemma}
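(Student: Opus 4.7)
The plan is to build the short exact sequence by exhibiting a copy of $\bK_1$ inside $M$ generated by a single carefully chosen vector at vertex $1$, then showing that the quotient automatically has trivial structure maps.

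First I would use the hypothesis $M(\varepsilon_1)|_{U'_M}\neq 0$ to pick a nonzero $m_1\in U'_M$ with $m_2:=M(\varepsilon_1)(m_1)\neq 0$. Since $\dim M_2=1$, the vector $m_2$ is a basis of $M_2$. Next I would set $N=\K m_1\oplus\K m_2\subseteq M$ and verify $N$ is a $\Lambda^\imath$-submodule: at vertex $1$ we have $M(\alpha_i)(m_1)=0$ because $m_1\in U'_M=\bigcap_i\Ker M(\alpha_i)$, and $M(\varepsilon_1)(m_1)=m_2\in N$; at vertex $2$ we have $M(\beta_i)(m_2)=0$ (since we already observed $M(\beta_i)=0$ in this case) and $M(\varepsilon_2)(m_2)=M(\varepsilon_2)M(\varepsilon_1)(m_1)=0$ thanks to the relation $\varepsilon_2\varepsilon_1=0$ in $\ov I$.

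Comparing $N$ to $\bK_1$ from \eqref{eq:E} is then immediate: $N$ is one-dimensional at each of vertices $1$ and $\btau 1=2$, $\varepsilon_1$ acts as an isomorphism $m_1\mapsto m_2$, $\varepsilon_2$ acts as zero, and all of $\alpha_i,\beta_i$ act as zero, so $N\cong \bK_1$. For the quotient, I would note that $(M/N)_2=M_2/\K m_2=0$ because $m_2$ spans $M_2$; consequently every arrow of $\ov{Q}$ landing at vertex $2$ (that is, $\alpha_i$ and $\varepsilon_1$) acts as zero on $M/N$, and vertex $2$ contributes nothing since it is trivial. Thus $M/N$ has dimension vector $2r\cdot\widehat{S_1}$ with all structure maps zero, so $M/N\cong S_1^{\oplus 2r}$, yielding the desired short exact sequence.

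There is no serious obstacle; the only subtlety is that the definitions of $U'_M$ (kernels of the $\alpha_i$ only) and the vanishing $M(\beta_i)=0=M(\varepsilon_2)$ in this case together make the submodule condition for $N$ trivial to check, and the fact that $\dim M_2=1$ forces $(M/N)_2=0$, which collapses all remaining structure.
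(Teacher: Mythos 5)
Your proof is correct and spells out exactly what the paper's one-line proof ("It follows by the definition of morphisms of quiver representations") leaves implicit: picking $m_1\in U'_M$ with $M(\varepsilon_1)(m_1)\neq 0$, taking $N=\K m_1\oplus \K M(\varepsilon_1)(m_1)$, and using $U'_M\subseteq\bigcap_i\Ker M(\alpha_i)$, $M(\beta_i)=0$, $M(\varepsilon_2)=0$, and $\dim M_2=1$ to identify $N\cong\bK_1$ and $M/N\cong S_1^{\oplus 2r}$.
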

\begin{proof}
It follows by the definition of morphisms of quiver representations.
\end{proof}

We now proceed by dividing into 2 subcases.

\vspace{2mm}

\underline{Subcase (a): $M(\varepsilon_1)|_{U'_M}=0$}. Clearly, $U'_M\subseteq \ker M(\varepsilon_1)$.
Then we have $U_M=U'_M$ by definition. It follows that $u_M\ge r+1> w_M=0$, and $1-u_M-w_M\le 2r+1-u_M-w_M\le u_M+w_M-1$.
Similar to Lemma \ref{lem: gen serre 1}, we deduce that $\bp_M=0$.

\vspace{2mm}

\underline{Subcase (b): $M(\varepsilon_1)|_{U'_M}\neq0$}. Note by Lemma \ref{lem:ker not inclusion} that
$[M]=[\bK_1\oplus S_1^{\oplus2r}]$. In this case $u_M=u_M'-1$ by noting that $\Ker M(\varepsilon_1)$ is a hyperplane. Note that $u_M'\geq r+1$, and hence $u_M\geq r$. In case $u_M\geq r+1$, we have $\bp_M=0$ by arguments similar to the above.

It remains to consider the subcase when $u_M=r$. In this case, there is a {\em unique $M$ (up to isomorphism) such that $u_M=r$}, and note that
\begin{align}   \label{eq:autMr}
|\aut(M)|= (q-1)(q^{r}-1)\cdots(q^{r}-q^{r-1}) q^{r(r+1)}.
\end{align}
Thus, %for $u_M=r$,
applying Lemma~\ref{lem:qbinom1}(2) (with $p=r$) we have
\begin{align}
\bp_M=& \sqq^{r(2r+1)-u_Mw_M} \frac{(q-1)^{(2r+2)}}{|\aut(M)|} \sum_{t=0}^{2r+1} (-1)^{2r+1-t}\sqq^{-(2r+1-u_M-w_M)t}\qbinom{u_M-w_M}{t-w_M}_\sqq
 \label{eq:pM1}
\\
=&- \sqq^{r(2r+1)}\frac{(q-1)^{(2r+2)}}{|\aut(M)|} \sum_{t=0}^{r}(-1)^{t}\sqq^{-(r+1)t}\qbinom{r}{t}_\sqq
 \notag \\
=&- \sqq^{r(2r+1)}\frac{(q-1)^{(2r+2)}}{|\aut(M)|}(\sqq^{-2}; \sqq^{-2})_r
 \notag \\
%=&-(q-1)^{(2r+1)}\sqq^{r(2r-1)} q^{-2r^2}
%\notag \\
=&-(q-1)^{(2r+1)}\sqq^{r(-2r-1)}. \notag
\end{align}
We also note that
\begin{align}
[2r  S_1]*[\bK_1]&=\sqq^{\langle   S_1^{\oplus 2r},\res\bK_1\rangle_Q} q^{-\langle   S_1^{\oplus 2r},\bK_1\rangle} [2r  S_1 \oplus\bK_1]
 \label{eq:SE1}
\\
&=q^{r^2+r}[2r  S_1 \oplus\bK_1].
\notag
\end{align}
Therefore, using \eqref{eq:DPS} and \eqref{eq:pM1}-\eqref{eq:SE1} we obtain
\begin{align}
   \label{eqn: gen serre 2}
\sum_{[M]: M(\varepsilon_1)\neq0} \bp_M[M]
=&\sum_{[M]:  u_M=u_M'-1} \bp_M [2r S_1 \oplus \bK_1]
  \\
%=&\sum_{u_M=r} q^{-(r^2+r)}\sqq^{\frac{2r(2r-1)}{2}}[2r]_\sqq^!\bp_M [S_1]^{(2r)}*[\bK_1]
%\notag \\
=&\sum_{u_M=r} \sqq^{-3r}[2r]_\sqq^!\bp_M [S_1]^{(2r)}*[\bK_1]
\notag \\
%=&- \sqq^{-3r}[2r]_\sqq^!(q-1)^{(2r+1)}\sqq^{r(-2r-1)} [S_1]^{(2r)}*[\bK_1]
%\notag \\
=&-q^{-r^2-2r}(q-1)^{(2r+1)}[2r]_\sqq^![S_1]^{(2r)}*[\bK_1].\notag
\end{align}

\subsection{Case $M(\varepsilon_1)=0\neq M(\varepsilon_2)$}

In this case, we have $M(\alpha_i)=0$, for $1\le i\le r$.
So $u_M=2r+1$, and $w_M \leq r+1$. In case $w_M < r+1$, similar to Lemma \ref{lem: gen serre 1},  we have
$\bp_M=0$.

It remains to consider the subcase $w_M=r+1$. In this case, there is a {\em unique $M$ (up to isomorphism) such that $w_M=r+1$}. When $w_M=r+1$ (recall $u_M=2r+1$), $|\aut(M)|$ is given again as in \eqref{eq:autMr}. By applying Lemma~\ref{lem:qbinom1}(3) (with $p=r$) and changing variables $t' =t-r-1$, we have
\begin{align}
\bp_M=& \sqq^{r(2r+1)-u_Mw_M} \frac{(q-1)^{(2r+2)}}{|\aut(M)|} \sum_{t=0}^{2r+1} (-1)^{2r+1-t}\sqq^{-(2r+1-u_M-w_M)t}\qbinom{u_M-w_M}{t-w_M}_\sqq
  \label{eq:pM2}
\\
=& \sqq^{r(2r+1)-(2r+1)(r+1)}\frac{(q-1)^{(2r+2)}}{|\aut(M)|} \sum_{t=0}^{2r+1}(-1)^{2r+1-t}\sqq^{(r+1)t}\qbinom{r}{t-(r+1)}_\sqq
\notag \\
=& \sqq^{r^2}\frac{(q-1)^{(2r+2)}}{|\aut(M)|}\sum_{t'=0}^{r}(-1)^{r-t'}\sqq^{(r+1)t'}\qbinom{r}{t'}_\sqq
\notag \\
=& \sqq^{r^2}\frac{(q-1)^{(2r+2)}}{|\aut(M)|}(-1)^r(\sqq^{2}; \sqq^{2})_r
\notag \\
=&\sqq^{-2r^2-r}(q-1)^{2r+1}.  \notag
\end{align}
Note also that
\begin{align}
  \label{eq:SE2}
[2r S_1 ]*[\bK_2]&=\sqq^{\langle S_1^{\oplus  2r },\res\bK_2\rangle_Q} q^{-\langle  S_1^{\oplus 2r},\bK_2\rangle} [2r S_1 \oplus\bK_2]
\\
&=q^{-r^2-r}[2r S_1 \oplus\bK_2].
\notag
\end{align}
Therefore, using \eqref{eq:DPS} and \eqref{eq:pM2}--\eqref{eq:SE2} we obtain
\begin{align}
\label{eqn: gen serre 3}
\sum_{[M]: M(\varepsilon_2)\neq0} \bp_M[M]
=&\sum_{[M]: w_M=r+1} \bp_M[ 2r S_1 \oplus \bK_2]\\
=&\sum_{[M]: w_M=r+1} q^{r^2+r}\sqq^{\frac{2r(2r-1)}{2}}[2r]^!_\sqq\bp_M [S_1]^{(2r)}*[\bK_2]
\notag \\
%=&\sum_{[M]: w_M=r+1} \sqq^{4r^2+r}[2r]^!_\sqq\bp_M [S_1]^{(2r)}*[\bK_2]
%\notag \\
=&q^{r^2}(q-1)^{(2r+1)}[2r]^!_\sqq[S_1]^{(2r)}*[\bK_2].\notag
\end{align}

\subsection{Relation \eqref{relation5} in $\tMHk$}

Now we are ready to establish the following identity in the $\imath$Hall algebra corresponding to the BK relation \eqref{relation5}.

\begin{proposition}
   \label{prop:serre 1}
The following identity holds in $\imath$Hall algebra $\tMHk$:
\begin{align*}
\sum_{t=0}^{2r+1} (-1)^t & [S_1]^{(t)}*[S_2]*[S_1]^{(2r+1-t)}
\\
=&-\sqq^{-r}(q-1)(q^{-1};q^{-1})_{2r} [S_1]^{(2r)}*[\bK_1]+\sqq^r(q-1)(q;q)_{2r} [S_1]^{(2r)}*[\bK_2].
%\\
%\sum_{t=0}^{2r+1}  (-1)^t & [S_2]^{(t)}*[S_1]*[S_2]^{(2r+1-t)}
%\\
%=&-\sqq^{-r}(q-1)(q^{-1};q^{-1})_{2r} [S_2]^{(2r)}*[\bK_2]+\sqq^r(q-1)(q;q)_{2r} [S_2]^{(2r)}*[\bK_1].
\end{align*}
\end{proposition}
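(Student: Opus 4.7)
The proof strategy is to assemble the case-by-case calculations that have already been carried out in Section~\ref{sec:Relation5}. The starting point is identity \eqref{eq:SumSerre}, which expresses the left-hand side as
$\sum_{[M]:\,W_M\subseteq U_M} \bp_M [M]$, where $[M]$ ranges over isomorphism classes of $\Lambda^\imath$-modules with $\widehat{M}=(2r+1)\widehat{S}_1+\widehat{S}_2$ and $\bp_M$ is given by \eqref{eq:pM}. Since such an $M$ satisfies $M(\varepsilon_1)M(\varepsilon_2)=0=M(\varepsilon_2)M(\varepsilon_1)$ together with dimension constraints, at least one of $M(\varepsilon_1)$, $M(\varepsilon_2)$ vanishes, and the plan is to separate the three possibilities and show that only two isomorphism classes contribute nontrivially.

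First I would dispose of the case $M(\varepsilon_1)=M(\varepsilon_2)=0$ by quoting Lemma~\ref{lem: gen serre 1}, and of Subcase (a) of the case $M(\varepsilon_1)\neq 0 = M(\varepsilon_2)$ by the identical argument (both rely on Lemma~\ref{lem:qbinom1}(1), using the relevant size inequalities on $u_M$ and $w_M$). The only surviving contributions come from two distinguished modules: the unique $M$ with $M(\varepsilon_1)\neq 0=M(\varepsilon_2)$ and $u_M=r$ (isomorphic as underlying object to $2rS_1\oplus \bK_1$), evaluated in \eqref{eqn: gen serre 2}; and the unique $M$ with $M(\varepsilon_2)\neq 0=M(\varepsilon_1)$ and $w_M=r+1$ (isomorphic to $2rS_1\oplus \bK_2$), evaluated in \eqref{eqn: gen serre 3}. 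Adding these yields
\[
\mathrm{LHS} = -q^{-r^2-2r}(q-1)^{2r+1}[2r]^!_\sqq\,[S_1]^{(2r)}*[\bK_1] + q^{r^2}(q-1)^{2r+1}[2r]^!_\sqq\,[S_1]^{(2r)}*[\bK_2].
\]

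It then remains to reconcile the coefficients with the target formula. Using $[i]_\sqq = \sqq^{-(i-1)}(q^i-1)/(q-1)$, one computes
$[2r]^!_\sqq = \sqq^{-(2r^2-r)}(q;q)_{2r}/(q-1)^{2r}$, and the identity $(q^{-1};q^{-1})_{2r} = q^{-r(2r+1)}(q;q)_{2r}$ converts between the two flavours of $v$-Pochhammer symbols. Substituting these into the coefficient of $[S_1]^{(2r)}*[\bK_1]$ gives
$-\sqq^{-4r^2-3r}(q-1)(q;q)_{2r} = -\sqq^{-r}(q-1)(q^{-1};q^{-1})_{2r}$, and substituting into the coefficient of $[S_1]^{(2r)}*[\bK_2]$ gives $\sqq^{r}(q-1)(q;q)_{2r}$, matching the statement.

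The main obstacle is not conceptual but organizational: most of the work lies in the bookkeeping of $\sqq$-powers arising from the twisted multiplication \eqref{eqn:twsited multiplication}, the automorphism-group orders in \eqref{eq:autM}, and the Euler-form factors appearing in \eqref{eq:SE1} and \eqref{eq:SE2}. The Grassmannian count \eqref{eq:pMt} together with the three specializations of the quantum binomial identity in Lemma~\ref{lem:qbinom1} drive the vanishing for all modules except the two distinguished ones; once these vanishing statements are in hand, the surviving coefficients are completely determined and the final identity follows by the $q$-arithmetic above.
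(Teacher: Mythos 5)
Your proposal is correct and follows the paper's own proof verbatim: assemble Lemma~\ref{lem: gen serre 1}, \eqref{eqn: gen serre 2}, and \eqref{eqn: gen serre 3} inside \eqref{eq:SumSerre} and then reconcile the coefficient by rewriting $[2r]^!_\sqq$ and converting between $(q;q)_{2r}$ and $(q^{-1};q^{-1})_{2r}$. The extra $q$-arithmetic you spell out is just an expansion of the paper's final one-line equality, so there is no genuine difference in route.
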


\begin{proof}
%The two formulas are equivalent, and let us prove the first one.
Combining Lemma~\ref{lem: gen serre 1}, \eqref{eqn: gen serre 2} and \eqref{eqn: gen serre 3}, we finish the computation in \eqref{eq:SumSerre} as follows:
\begin{align*}
\sum_{t=0}^{2r+1} & (-1)^t[S_1]^{(t)}*[S_2]*[S_1]^{(2r+1-t)}
\\
=&-q^{-r^2-2r}(q-1)^{(2r+1)}[2r]^!_\sqq[S_1]^{(2r)}*[\bK_1]+q^{r^2}(q-1)^{(2r+1)}[2r]^!_\sqq [S_1]^{(2r)}*[\bK_2]
\\
=&-\sqq^{-r}(q-1)(q^{-1};q^{-1})_{2r} [S_1]^{(2r)}*[\bK_1]+\sqq^r(q-1)(q;q)_{2r} [S_1]^{(2r)}*[\bK_2].
\end{align*}
The proposition is proved.
%by noting that
%\begin{align*}
%&(q-1)^{2r}[2r]^!_\sqq=\sqq^{2r^2+3r}(q^{-1};q^{-1})_{2r},
%\\
%&(q-1)^{2r}[2r]^!_\sqq=\sqq^{-2r^2+r}(q;q)_{2r}.
%\end{align*}
\end{proof}

%%%%%%%%
%%%%%%%%
\section{$\imath$Divided powers in $\imath$Hall algebra}
  \label{sec:iDP}

In this section, we establish closed formulas for the $\imath$divided powers in terms of $\imath$Hall basis for the $\imath$Hall algebra $\tMHk$.

To that end, by Lemma~\ref{lem:subalgebra}, it suffices to consider the $\imath$quiver which consists of a single vertex with a trivial involution, and the associated $\imath$quiver algebra given by $\Lambda^\imath = \K[x]/(x^2)$. The corresponding split $\imath$quantum group $\tUi$ of rank one is the algebra $\Q(v)[\ff,\tk^{\pm1}]$.
The following is the special case of \cite[Proposition 7.5]{LW19a} at rank one.

\begin{lemma}
\label{lem: iso rank 1}
There exists a $\Q(\sqq)$-algebra isomorphism
$\widetilde{\psi}:\tUi|_{v=\sqq}\longrightarrow \tMHk$ (of rank one)
which sends
\begin{align*}
\ff\mapsto \frac{-1}{q-1}[S], \qquad \tk\mapsto -\frac{\bK}{q}.
\end{align*}
\end{lemma}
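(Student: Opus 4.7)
The plan is to exploit the simplicity of the rank-one split setting. With $\btau$ trivial on a single vertex, the relations \eqref{relation2}--\eqref{relation6} of Theorem~\ref{thm:Serre} are all vacuous (each requires distinct indices), and \eqref{relation1} reduces to $\tk B = B \tk$ since $c_{\btau i, \ell} = c_{i \ell}$ when $\btau = \mathrm{Id}$. Hence $\tUi|_{v=\sqq}$ is the commutative Laurent polynomial algebra $\Q(\sqq)[B, \tk^{\pm 1}]$. To show $\widetilde{\psi}$ is a well-defined algebra homomorphism, it therefore suffices to verify that the image $-[\bK]/q$ of $\tk$ is invertible in $\tMHk$ and commutes with the image $\frac{-1}{q-1}[S]$ of $B$.

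Invertibility is immediate: $[\bK]$ generates the twisted quantum torus $\tTL$, which is a Laurent polynomial algebra by Lemma~\ref{lem: Groth 1}. For centrality, I would compute $[\bK]*[S]$ and $[S]*[\bK]$ directly using the twisted multiplication \eqref{eqn:twsited multiplication} and the bimodule structure \eqref{right module structure}. The underlying $\diamond$-products are $q^{-\langle \bK, S\rangle}[\bK \oplus S]$ and $q^{-\langle S, \bK\rangle}[\bK \oplus S]$. By \eqref{Eform2} with $\btau = \mathrm{Id}$, both $\langle \bK, S\rangle$ and $\langle S, \bK\rangle$ equal $\langle S, S\rangle_Q = 1$; and the twist factors $\sqq^{\langle \res \bK, \res S\rangle_Q}$ and $\sqq^{\langle \res S, \res \bK\rangle_Q}$ coincide by the symmetry of the Euler form on the semisimple category $\mod(\K Q)$ of a single vertex with no arrows. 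Therefore $[\bK]*[S] = [S]*[\bK]$, and $\widetilde{\psi}$ is well-defined.

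For bijectivity, I would invoke the $\imath$Hall basis of Theorem~\ref{thm:utMHbasis}, which in this setting reduces to $\{[mS]\diamond[\bK]^n \mid m \in \N,\, n \in \Z\}$. A standard Ringel-Hall computation analogous to \eqref{eq:DPS} gives $[S]^{*m} = \sqq^{-m(m-1)/2}[mS]$, so $\{[S]^{*m}*[\bK]^n\}$ is an equivalent basis of $\tMHk$ up to nonzero scalars. On the domain side, $\{B^m \tk^n \mid m \in \N,\, n \in \Z\}$ is a $\Q(\sqq)$-basis of $\tUi|_{v=\sqq}$, and $\widetilde{\psi}$ sends each monomial to a nonzero scalar multiple of the corresponding Hall basis element, hence $\widetilde{\psi}$ is a bijection. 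The only real (and minor) subtlety is the centrality check, where the automatic balancing here between the Euler-form factor from \eqref{Eform2} and the twist factor from \eqref{eqn:twsited multiplication} is precisely what must be reexamined in the non-split case $\btau i \neq i$, motivating the more elaborate BK-relation computations of Section~\ref{sec:Relation5}.
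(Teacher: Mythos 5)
Your proof is correct and, unlike the paper, self-contained: the paper disposes of this lemma simply by citing it as the rank-one special case of \cite[Proposition 7.5]{LW19a} and gives no details. Your argument reconstructs what must be behind that citation. The reduction of the Serre presentation (Theorem~\ref{thm:Serre}) at a single $\btau$-fixed vertex to $\Q(\sqq)[B,\tk^{\pm 1}]$ is right: relations \eqref{relation2}--\eqref{relation6} all quantify over $i \neq j$ and so are vacuous, and \eqref{relation1} collapses to $\tk B = B\tk$ because $c_{\btau i,\ell}=c_{i\ell}$. The centrality check $[\bK]*[S]=[S]*[\bK]$ is correctly verified: by \eqref{Eform2} with $\btau=\Id$ both Euler-form exponents in the $\diamond$-product equal $\langle S,S\rangle_Q$, and the twist exponents $\langle\res\bK,\res S\rangle_Q$ and $\langle\res S,\res\bK\rangle_Q$ agree by symmetry of the Euler form on the one-vertex arrowless quiver. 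The bijectivity argument via the $\imath$Hall basis of Theorem~\ref{thm:utMHbasis} together with \eqref{eq:DPS} is also correct: $\{B^m\tk^n\}$ is a basis of the Laurent polynomial algebra on the source, and it maps onto the nonzero-scalar rescaling $\{[S]^{*m}*[\bK]^n\}$ of the $\imath$Hall basis $\{[mS]\diamond[\bK]^n\}$. Your closing remark correctly identifies the phenomenon — the automatic cancellation between the twist and the Euler-form factor — that fails once $\btau i\neq i$, which is precisely what necessitates the BK-relation work in Section~\ref{sec:Relation5}.
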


\begin{lemma}
The following identity holds in $\tMHk$, for $m\in\N$:
\begin{align}  \label{eq:SmS}
[S]*[mS]= \sqq^{-m}[(m+1)S]+(\sqq^m-\sqq^{-m})[(m-1)S]*[\bK].
\end{align}
\end{lemma}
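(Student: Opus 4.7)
In this rank-one setting we have $\Lambda^\imath = \K[\varepsilon]/(\varepsilon^2)$: $S$ is the one-dimensional module with $\varepsilon$ acting as zero, and $\bK \cong \Lambda^\imath$ is the two-dimensional indecomposable with $\varepsilon$ acting as a Jordan block. Since $\btau = \id$, my plan is to apply the $\imath$Hall multiplication formula of Proposition~\ref{prop:iHallmult} to $A = S$ and $B = mS$, which reduces the computation to classifying the morphisms $s \in \Hom_{\K Q}(S, mS)$ and tallying the prefactors.

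The morphisms $s \in \Hom(S, mS) = \K^m$ split cleanly into two classes: the zero morphism, for which $(\ker s, \coker s) = (S, mS)$; and the $q^m - 1$ nonzero morphisms, each having $(\ker s, \coker s) = (0, (m-1)S)$. Since $\K Q = \K$ is semisimple, $\Ext^1_{\K Q}(N, L) = 0$ in both cases, so the sum over $M$ in \eqref{Hallmult1} collapses to the trivial extension, forcing $M = N \oplus L$ to be $(m+1)S$ and $(m-1)S$ respectively. Using the Euler pairing $\langle S, S\rangle_Q = 1$ to evaluate the exponents, the first class contributes
\[
\sqq^{\langle S,mS\rangle_Q}\cdot q^{0}\cdot \tfrac{1}{q^m}\cdot [(m+1)S] \;=\; \sqq^{-m}[(m+1)S]
\]
(noting that $\widehat{A}-\widehat{N}=0$, so $[\bK_{\widehat{A}-\widehat{N}}]=1$), while the second contributes
\[
\sqq^{m}\cdot q^{-m}\cdot (q^m - 1)\cdot [(m-1)S]*[\bK] \;=\; (\sqq^m - \sqq^{-m})[(m-1)S]*[\bK]
\]
(noting $\widehat{A}-\widehat{N}=\widehat{S}$, so $[\bK_{\widehat{A}-\widehat{N}}]=[\bK]$). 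Summing these two contributions yields \eqref{eq:SmS}.

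No significant obstacle is anticipated: the calculation is essentially bookkeeping once Proposition~\ref{prop:iHallmult} is invoked. The key simplification is that the semisimplicity of $\K Q = \K$ forces the $\Ext^1_{\K Q}(N,L)$-factor in \eqref{Hallmult1} to vanish on all but the split extensions, so only two terms survive the triple sum over $[L],[M],[N]$. The only place where one must be careful is the identification of the torus factor $[\bK_{\widehat{A} - \widehat{N}}]$ in each of the two cases.
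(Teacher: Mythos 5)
Your proof is correct, and it takes a genuinely different route from the paper's. The paper's proof is a direct computation from the definition of the Hall product: it writes
\[
[S]*[mS]=\sqq^{\langle S,mS\rangle_Q}\sum_{[C]}\frac{|\Ext^1_{\Lambda^\imath}(S,mS)_C|}{|\Hom_{\Lambda^\imath}(S,mS)|}[C],
\]
notes that over $\Lambda^\imath=\K[\varepsilon]/(\varepsilon^2)$ the only isomorphism class of middle term of a \emph{non}-split extension of $S$ by $mS$ is $(m-1)S\oplus \bK$, counts $q^m-1$ such extensions (since $\dim\Ext^1_{\Lambda^\imath}(S,mS)=m$), and then rewrites $[(m-1)S\oplus\bK]=[(m-1)S]*[\bK]$. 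You instead invoke the general multiplication formula \eqref{Hallmult1} and classify morphisms $s\in\Hom_{\K Q}(S,mS)$, using semisimplicity of $\K Q=\K$ to kill the $\Ext$-factor. These are really two parametrizations of the same thing: in the proof of Proposition~\ref{prop:iHallmult}, the map $\Xi$ matches extensions over $\Lambda^\imath$ (viewed as $1$-periodic complexes) with morphisms $s\colon A\to B$; your two cases $s=0$ and $s\neq0$ correspond exactly to the split/non-split dichotomy in the paper's argument. The paper's version is slightly more elementary since it does not depend on the machinery of Proposition~\ref{prop:iHallmult}; yours is more systematic and illustrates nicely how the general formula specializes in the semisimple base case. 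All the exponents in your two contributions check out ($\sqq^m\cdot q^{-m}=\sqq^{-m}$ in case $s=0$, and $\sqq^m\cdot q^{-m}\cdot(q^m-1)=\sqq^m-\sqq^{-m}$ in case $s\neq0$), and your identification of the torus factor $[\bK_{\widehat A-\widehat N}]$ in each case is correct.
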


\begin{proof}
The required Euler form is given by $\langle S,S^{\oplus m}\rangle_Q=m=\dim_\K\Hom_{\Lambda^\imath}(S,S^{\oplus m})$. For any non-split short exact sequence
$0\rightarrow S^{\oplus m} \rightarrow M\rightarrow S\rightarrow0$
in $\mod(\Lambda^\imath)$, we have $M\cong S^{\oplus(m-1)}\oplus \bK$. Note that $\Ext^1_{\Lambda^\imath}(S,S^{\oplus m})=m$. Then we have
\begin{align*}
[S]*[mS]=&\sqq^{-m}[(m+1)S]+\sqq^{-m}(q^m-1) [(m-1)S\oplus \bK]
\\
 = &\sqq^{-m}[(m+1)S]+(\sqq^m-\sqq^{-m})[(m-1)S]*[\bK].
\end{align*}
The lemma is proved.
\end{proof}

Inspiring by \eqref{eq:iDPodd}--\eqref{eq:iDPev}, we define the $\imath$-divided power of $[S]$ in $\tMHk$ as follows:
\begin{align*}
&[S]_{\odd}^{(m)}:=\frac{1}{[m]_{\sqq}!}\left\{ \begin{array}{ll} [S]\prod_{j=1}^k ([S]^2+\sqq^{-1}(\sqq^2-1)^2[2j-1]_{\sqq}^2 [\bK] ) & \text{if }m=2k+1,\\
\prod_{j=1}^k ([S]^2+\sqq^{-1}(\sqq^2-1)^2[2j-1]_{\sqq}^2[\bK]) &\text{if }m=2k; \end{array}\right.
 \\
 % \label{eq:iDPodd}\\
&[S]_{\ev}^{(m)}:= \frac{1}{[m]_{\sqq}!}\left\{ \begin{array}{ll} [S]\prod_{j=1}^k ([S]^2+\sqq^{-1}(\sqq^2-1)^2[2j]_{\sqq}^2[\bK] ) &\text{if }m=2k+1,\\
\prod_{j=1}^{k} ([S]^2+\sqq^{-1}(\sqq^2-1)^2[2j-2]_{\sqq}^2[\bK]) &\text{if }m=2k. \end{array}\right.
 %\label{eq:iDPev}.
\end{align*}

These $\imath$-divided powers satisfy the following recursive relations:
\begin{align}
[S]*[S]^{(2m)}_{\odd} &=[2m+1] [S]^{(2m+1)}_{\odd},
 \label{rec:2modd}
\\
[S]*[S]^{(2m+1)}_{\odd} &=[2m+2] [S]^{(2m+2)}_{\odd} -\sqq (\sqq-\sqq^{-1})^2[2m+1] [S]^{(2m)}_{\odd}*[\bK],
 \label{rec:2m+1odd}
\\
[S]*[S]^{(2m-1)}_{\ev} &=[2m][S]^{(2m)}_{\ev},
 \label{rec:2m-1ev}
\\
[S]*[S]^{(2m)}_{\ev} &=[2m+1] [S]^{(2m+1)}_{\ev} -\sqq (\sqq-\sqq^{-1})^2 [2m] [S]^{(2m-1)}_{\ev}*[\bK].
 \label{rec:2mev}
\end{align}

%\begin{align*}
%&[S]^{(1)}_{\odd}=[S],\\
%&[S]^{(2)}_{\odd}=\frac{\sqq^{-1}}{[2]_\sqq!}[2S] +\frac{\sqq^2(\sqq-\sqq^{-1})}{[2]}\bK,\\
%&[S]^{(3)}_{\odd}=\frac{\sqq^{-3}}{[3]_\sqq!}[3S]+\frac{\sqq-\sqq^{-1}}{[2]}[S]*[\bK],\\
%&[S]^{(4)}_{\odd}=\frac{\sqq^{-6}}{[4]!}[4S] +\frac{v(v-v^{-1})}{[2]^2}[2S]*[\bK]+\frac{v^6(v-v^{-1})^2}{[4][2]}[\bK]^2,\\
%&[S]^{(5)}_{\odd}=\frac{v^{-10}}{[5]!}[5S] +\frac{v^{-3}(v-v^{-1})}{[3][2]^2}[3S]*[\bK]+ \frac{v^2(v-v^{-1})}{[4][2]} [S]*[\bK],\\
%&[S]^{(6)}_{\odd}=\frac{v^{-15}}{[6]!}[6S] +\frac{v^{-4} (v-v^{-1})}{[4][3][2]^2} [4S]*[\bK]+ \frac{v^5(v-v^{-1})^2}{[4][2]^2}[2S]*[\bK]^2 +\frac{v^{12}(v-v^{-1})^3}{[6][4][2]}[\bK]^3,\\
%&[S]^{(7)}_{\odd}=\frac{v^{-21}}{[7]!} [7S]+\frac{v^{-10}(v-v^{-1})}{[5]![2]} [5S]*[\bK] +\frac{v^{-1}(v-v^{-1})^2}{[4][3][2]^2} [3S]*[\bK]^2+\frac{v^6(v-v^{-1})^3}{[6][4][2]}[S]*[\bK]^3
%\end{align*}

\begin{lemma}
The isomorphism $\widetilde{\psi}$ in Lemma~\ref{lem: iso rank 1} satisfies that, for $m\in \N$,
\begin{align}
\widetilde{\psi}(\ff_{\odd}^{(m)})=\frac{[S]_{\odd}^{(m)}}{(1-\sqq^2)^m},
\qquad
\widetilde{\psi}(\ff_{\ev}^{(m)})=\frac{[S]_{\ev}^{(m)}}{(1-\sqq^2)^m}.
\end{align}
\end{lemma}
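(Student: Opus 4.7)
The plan is to verify these identities by direct substitution, using the algebra isomorphism $\widetilde{\psi}$ from Lemma~\ref{lem: iso rank 1}. First, I would rewrite the images of the generators in the form most convenient for the computation. Since $q-1 = \sqq^2-1$, we have $\widetilde{\psi}(B) = \frac{1}{1-\sqq^2}[S]$, and since $\widetilde{\psi}(\tk) = -\sqq^{-2}[\bK]$, specializing $v = \sqq$ gives $\widetilde{\psi}(v\tk) = -\sqq^{-1}[\bK]$.

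Second, I would compute the image of a generic binomial factor of the form $B^2 - v\tk [\ell]_\sqq^2$ appearing in \eqref{eq:iDPodd}--\eqref{eq:iDPev}:
\begin{align*}
\widetilde{\psi}\bigl(B^2 - v\tk [\ell]_\sqq^2\bigr)
&= \frac{1}{(1-\sqq^2)^2}[S]^2 + \sqq^{-1}[\ell]_\sqq^2[\bK] \\
&= \frac{1}{(1-\sqq^2)^2}\bigl([S]^2 + \sqq^{-1}(\sqq^2-1)^2 [\ell]_\sqq^2[\bK]\bigr).
\end{align*}
This is exactly the $\ell$-th factor in the product defining $[S]^{(m)}_{\odd}$ or $[S]^{(m)}_{\ev}$, with an overall scalar $(1-\sqq^2)^{-2}$ pulled out.

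Third, since $\widetilde{\psi}$ is an algebra homomorphism and since $B$ and $\tk$ commute in $\tUi$ at rank one (relation~\eqref{relation1} gives $c_{\btau i,i}-c_{ii}=0$ when $i = \btau i$ in rank one), I would apply $\widetilde{\psi}$ factor-by-factor to the product formulas \eqref{eq:iDPodd}--\eqref{eq:iDPev}. In the odd case with $m=2k+1$, the $k$ binomial factors each contribute a scalar $(1-\sqq^2)^{-2}$ and the leading $B$ contributes $(1-\sqq^2)^{-1}$, producing the total $(1-\sqq^2)^{-(2k+1)} = (1-\sqq^2)^{-m}$; for $m=2k$, the $k$ factors contribute $(1-\sqq^2)^{-2k} = (1-\sqq^2)^{-m}$. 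The even case is analogous, with $[2s]_\sqq^2$ or $[2s-2]_\sqq^2$ replacing $[2s-1]_\sqq^2$. After also matching the $[m]_\sqq^!$ denominators on both sides, the two claimed identities follow.

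No genuine obstacle arises: this is a bookkeeping verification. The only point requiring care is the scalar match $\sqq^{-1}(\sqq^2-1)^2$ in the definition of $[S]^{(m)}_{\odd/\ev}$, which emerges as the product of $(1-\sqq^2)^2$ (from clearing the denominator of $\widetilde{\psi}(B^2)$) and the $\sqq^{-1}$ coming from $\widetilde{\psi}(-v\tk) = \sqq^{-1}[\bK]$; recognizing this equality is the whole content of the computation.
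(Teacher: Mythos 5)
Your proposal is correct and takes essentially the same approach as the paper: the paper's proof is literally ``Follows by definitions,'' and your factor-by-factor computation (using $q=\sqq^2$, the images of $B$ and $\tk$, and commutativity in the rank-one $\tUi=\Q(v)[B,\tk^{\pm1}]$) is precisely the bookkeeping that ``follows by definitions'' refers to.
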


\begin{proof}
Follows by definitions.
\end{proof}

We denote by $[0]_\sqq ^{^{!!}}=1$, and for any $k\in \Z_{\ge 1}$,
\[
[2k]_\sqq^{^{!!}}=[2k]_\sqq[2k-2]_\sqq \cdots [4]_\sqq[2]_\sqq.
\]
We denote by $\lfloor x \rfloor$ the largest integer not exceeding $x$, for $x\in \mathbb R$.

\begin{proposition}
 \label{prop:iDPev}
The following identity holds in $\tMHk$, for $n\in \N$:
\begin{align}
  \label{eq:nev}
&[S]^{(n)}_{\ev}=\sum_{k=0}^{\lfloor \frac{n}2\rfloor} \frac{\sqq^{k(k - (-1)^n)-\binom{n-2k}{2}}\cdot(\sqq-\sqq^{-1})^k}{[n-2k]_{\sqq}^{!}[2k]_\sqq^{!!}}[(n-2k)S]*[\bK]^k.
\end{align}
\end{proposition}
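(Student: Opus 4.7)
The plan is to induct on $n$, using the recursive relations \eqref{rec:2m-1ev} and \eqref{rec:2mev} together with the basic product formula \eqref{eq:SmS}. Before starting the induction, I would first record the commutativity
\[
[\bK]\diamond [mS] \;=\; [mS]\diamond [\bK] \;=\; [mS\oplus \bK],\qquad m\geq 0,
\]
which follows by a direct Euler-form computation (one has $\langle \res\bK,mS\rangle_Q=2m$ and $\langle \bK,mS\rangle=m$, so the twist $\sqq^{2m}q^{-m}=1$). Consequently $[\bK]$ commutes with every $[mS]$ in $\tMHk$, so powers of $[\bK]$ can be pulled freely to the right.

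Next, I would verify the base cases $n=0,1$ directly (both sides equal $1$ and $[S]$ respectively), and check $n=2$ to test the sign conventions: the definition forces $[S]^{(2)}_{\ev}=[S]^2/[2]_\sqq^!$, expanding $[S]^2$ via \eqref{eq:SmS} gives $\sqq^{-1}[2S]+(\sqq-\sqq^{-1})[\bK]$, and this matches \eqref{eq:nev} since $(-1)^n=1$ yields $\sqq$-exponents $-1$ and $0$ for $k=0,1$.

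The inductive step splits into two cases according to the parity of $n$. Assume \eqref{eq:nev} holds for $n$, and write
\[
[S]^{(n)}_{\ev}=\sum_{k=0}^{\lfloor n/2\rfloor} A^{(n)}_k\,[(n-2k)S]\diamond [\bK]^k, \qquad
A^{(n)}_k = \frac{\sqq^{k(k-(-1)^n)-\binom{n-2k}{2}}(\sqq-\sqq^{-1})^k}{[n-2k]_\sqq^{!}[2k]_\sqq^{!!}}.
\]
If $n=2m-1$, I would multiply on the left by $[S]$, use \eqref{rec:2m-1ev} to identify the result with $[2m]\,[S]^{(2m)}_{\ev}$, then expand each $[S]\diamond[(n-2k)S]$ via \eqref{eq:SmS}. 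Collecting the coefficient of $[(n+1-2k')S]\diamond[\bK]^{k'}$, one gets
\[
A^{(n+1)}_{k'} \;=\; \frac{1}{[2m]}\!\left(\sqq^{-(n-2k')}A^{(n)}_{k'} \;+\; \bigl(\sqq^{n+1-2k'}-\sqq^{-(n+1-2k')}\bigr)A^{(n)}_{k'-1}\right),
\]
with the convention that boundary $A^{(n)}_j$ vanishes when out of range. If instead $n=2m$, I would use \eqref{rec:2mev}, so there is an additional correction $-\sqq(\sqq-\sqq^{-1})^2[2m]\,[S]^{(n-1)}_{\ev}\diamond[\bK]$ on the right; its contribution shifts the $\bK$-index by $1$ and adds a term involving $A^{(n-1)}_{k'-1}$. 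In each case one reduces to a purely numerical $\sqq$-identity, where the nontrivial manipulations are: factoring out $[n+1-2k']_\sqq^{!\,-1}[2k']_\sqq^{!!\,-1}$, recognising $[2m]/[n+1-2k'] =$ appropriate ratios, and checking that the exponent $k'(k'-(-1)^{n+1})-\binom{n+1-2k'}{2}$ matches the one produced by the recursion (using $\binom{n-2k'}{2}-\binom{n+1-2k'}{2}=-(n-2k')$ and similar identities).

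The main obstacle is the clean bookkeeping of $\sqq$-exponents in the even-to-odd step, since the extra summand coming from \eqref{rec:2mev} must exactly cancel the part of the $A^{(n)}_{k'-1}$-contribution carrying the factor $\sqq^{n+1-2k'}$ (as opposed to its mate $-\sqq^{-(n+1-2k')}$), converting it into the $(\sqq-\sqq^{-1})$ growth of the $\imath$-divided power coefficient. Once this cancellation is verified, the general identity becomes a short computation that I would formalise via the identity $[2k]_\sqq=\sqq^{2k-1}(1-\sqq^{-4k})/(1-\sqq^{-2})$, collapsing the recursion into the predicted closed form. No further technical input is needed; the remaining steps are elementary $\sqq$-arithmetic.
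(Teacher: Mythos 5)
Your plan — induction on $n$, splitting by parity, feeding the inductive hypothesis through the recursions \eqref{rec:2m-1ev}, \eqref{rec:2mev} and the product formula \eqref{eq:SmS}, and collecting coefficients of $[(n+1-2k')S]*[\bK]^{k'}$ — is exactly the paper's proof strategy, and the preliminary centrality of $[\bK]$ (already a special case of relation \eqref{relation1}) is correctly invoked. One small bookkeeping slip: in your displayed recursion the factor multiplying $A^{(n)}_{k'-1}$ should be $\bigl(\sqq^{\,n+2-2k'}-\sqq^{-(n+2-2k')}\bigr)$, since it comes from \eqref{eq:SmS} with $m=n-2(k'-1)=n+2-2k'$; with that fix the remaining $\sqq$-arithmetic closes as you describe.
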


\begin{proof}
We prove the formula by induction on $n$; the cases when $n=0,1$ are clear.
To facilitate the induction, let us rewrite \eqref{eq:nev} depending on the parity of $n$:
for $c \in \N$,
\begin{align}
&[S]^{(2c)}_{\ev}=\sum_{k=0}^c \frac{\sqq^{k(k-1)-\binom{2c-2k}{2}}\cdot(\sqq-\sqq^{-1})^k}{[2c-2k]_{\sqq}^{!}[2k]_\sqq^{!!}}[(2c-2k)S]*[\bK]^k,
 \label{eq:2mev}
 \\
&[S]^{(2c+1)}_{\ev}=\sum_{k=0}^c \frac{\sqq^{k(k+1) -\binom{1+2c-2k}{2}}\cdot(\sqq-\sqq^{-1})^k}{[1+2c-2k]_{\sqq}^{!}[2k]_\sqq^{!!}}[(1+2c-2k)S]*[\bK]^k.
 \label{eq:2m+1ev}
\end{align}

First we shall prove \eqref{eq:2mev} for $[S]^{(2c)}_{\ev}$ by assuming the formula holds  for $[S]^{(2c-1)}_{\ev}$ with $c\ge 1$. Using the inductive assumption \eqref{eq:2m+1ev} (with $c$ replaced by $c-1$), \eqref{rec:2m-1ev} and \eqref{eq:SmS}, we have
\begin{align*}
& [2c]_\sqq [S]^{(2c)}_{\ev} =[S]*[S]^{(2c-1)}_{\ev}
\\
& = \sum_{k=0}^{c-1} \frac{\sqq^{k(k+1) -\binom{2c-2k-1}{2}}\cdot(\sqq-\sqq^{-1})^k}{[2c-2k-1]_{\sqq}^{!}
  [2k]_\sqq^{!!}}  [S]*[(2c-2k-1)S]*[\bK]^k
\\
& = \sum_{k=0}^{c-1} \frac{\sqq^{k(k+1) -\binom{2c-2k-1}{2}}\cdot(\sqq-\sqq^{-1})^k}{[2c-2k-1]_{\sqq}^{!}[2k]_\sqq^{!!}} \times
\\ &\qquad
\Big(\sqq^{1+2k-2c} [(2c-2k)S]*[\bK]^k + (\sqq^{2c-2k-1} -\sqq^{1+2k-2c})  [(2c-2k-2)S]*[\bK]^{k+1} \Big)
\\
& \stackrel{(a)}{=}\sum_{k=0}^c \frac{1}{[2c-2k]_{\sqq}^{!}[2k]_\sqq^{!!}}
  \left( \sqq^{k(k+1)-\binom{2c-2k-1}{2}+ (1+2k-2c)}\cdot(\sqq-\sqq^{-1})^k [2c-2k]_{\sqq}
  \right.
  \\
    &\qquad\qquad \left.+  \frac{\sqq^{k(k-1)-\binom{2c-2k+1}{2}}\cdot(\sqq-\sqq^{-1})^{k-1} (\sqq^{2c-2k+1} -\sqq^{2k-2c-1}) [2k]}{[2c-2k+1]_{\sqq}}
  \right)
   [(2c-2k)S]*[\bK]^k
   \\
& = \sum_{k=0}^c \frac{\sqq^{k(k-1)-\binom{2c-2k}{2}}\cdot(\sqq-\sqq^{-1})^k}{[2c-2k]_{\sqq}^{!}[2k]_\sqq^{!!}}
  \left( \sqq^{2k} [2c-2k]_{\sqq}  + \sqq^{2k-2c} [2k]_{\sqq}
  \right)
   [(2c-2k)S]*[\bK]^k
\\
& = [2c]_\sqq \sum_{k=0}^c \frac{\sqq^{k(k-1)-\binom{2c-2k}{2}}\cdot(\sqq-\sqq^{-1})^k}{[2c-2k]_{\sqq}^{!}[2k]_\sqq^{!!}}[(2c-2k)S]*[\bK]^k.
\end{align*}
In the equation (a) above, we have shifted the index $k \mapsto k-1$ in the second summand on the LHS of (a). This proves \eqref{eq:2mev}.

We now prove \eqref{eq:2m+1ev} for $[S]^{(2c+1)}_{\ev}$ by assuming \eqref{eq:2mev} for $[S]^{(2c)}_{\ev}$ and the formula for $[S]^{(2c-1)}_{\ev}$ (i.e., \eqref{eq:2m+1ev} with $c$ replaced by $c-1$). Together with \eqref{rec:2mev} and \eqref{eq:SmS}, we compute
\begin{align*}
& [2c+1]_\sqq [S]^{(2c+1)}_{\ev}
\\
 &=[S]*[S]^{(2c)}_{\ev} +\sqq(\sqq -\sqq^{-1})^2 [2c]_\sqq [S]^{(2c-1)}_{\ev} *[\bK]
\\
&= \sum_{k=0}^c \frac{\sqq^{k(k-1)-\binom{2c-2k}{2}}\cdot(\sqq-\sqq^{-1})^k}{[2c-2k]_{\sqq}^{!}[2k]_\sqq^{!!}}
[S]*[(2c-2k)S]*[\bK]^k
+\sqq(\sqq -\sqq^{-1})^2 [2c]_\sqq [S]^{(2c-1)}_{\ev} *[\bK]
\\%
&= \sum_{k=0}^c \frac{\sqq^{k(k-1)-\binom{2c-2k}{2}}\cdot(\sqq-\sqq^{-1})^k}{[2c-2k]_{\sqq}^{!}[2k]_\sqq^{!!}} \times
\\
& \qquad
\Big(\sqq^{2k-2c} [(1+2c-2k)S]*[\bK]^k + (\sqq^{2c-2k} -\sqq^{2k-2c})  [(2c-2k-1)S]*[\bK]^{k+1} \Big)
\\
&\qquad +  [2c]_\sqq \sum_{k=0}^{c-1} \frac{\sqq^{k(k+1) -\binom{2c-2k-1}{2} +1}\cdot(\sqq-\sqq^{-1})^{k+2}}{[2c-2k-1]_{\sqq}^{!}[2k]_\sqq^{!!}}[(2c-2k-1)S]*[\bK]^{k+1},
\\ %
\end{align*}
which can be reorganized as
\begin{align*}
&= \sum_{k=0}^c \frac{\sqq^{k(k-1)-\binom{2c-2k}{2} +2k-2c}\cdot(\sqq-\sqq^{-1})^k}{[2c-2k]_{\sqq}^{!}[2k]_\sqq^{!!}} [(1+2c-2k)S]*[\bK]^k
\\
& \qquad
+ \left( \sum_{k=0}^{c-1} \frac{\sqq^{k(k-1)-\binom{2c-2k}{2}}\cdot(\sqq-\sqq^{-1})^k}{[2c-2k]_{\sqq}^{!}[2k]_\sqq^{!!}} (\sqq^{2c-2k} -\sqq^{2k-2c})   \right.
\\
&\qquad\qquad \left.
+  [2c]_\sqq \sum_{k=0}^{c-1} \frac{\sqq^{k(k+1) -\binom{2c-2k-1}{2} +1}\cdot(\sqq-\sqq^{-1})^{k+2}}{[2c-2k-1]_{\sqq}^{!}[2k]_\sqq^{!!}}
\right)
[(2c-2k-1)S]*[\bK]^{k+1}
\\%
& = \sum_{k=0}^c \frac{\sqq^{k(k-1)-\binom{2c-2k}{2} +2k-2c}\cdot(\sqq-\sqq^{-1})^k}{[2c-2k]_{\sqq}^{!}[2k]_\sqq^{!!}} [(1+2c-2k)S]*[\bK]^k
\\
& \qquad
+ \sum_{k=0}^{c-1} \frac{\sqq^{k(k-1) -\binom{2c-2k}{2} +4c}\cdot(\sqq-\sqq^{-1})^{k+1}}{[2c-2k-1]_{\sqq}^{!}[2k]_\sqq^{!!}}
[(2c-2k-1)S]*[\bK]^{k+1}
\\%
&\stackrel{(b)}{=} \sum_{k=0}^c \frac{\sqq^{k(k+1)-\binom{1+2c-2k}{2} }\cdot(\sqq-\sqq^{-1})^k}{[1+2c-2k]_{\sqq}^{!}[2k]_\sqq^{!!}}
\\&\qquad
\left( \sqq^{-2k +(2c-2k) +2k-2c } [1+2c-2k]_\sqq
  +\sqq^{2-4k -(1+2c-2k) +4c} [2k]_\sqq
\right)
[(1+2c-2k)S]*[\bK]^k
\\%
& =  [2c+1]_\sqq\sum_{k=0}^c \frac{\sqq^{k(k+1)-\binom{1+2c-2k}{2} }\cdot(\sqq-\sqq^{-1})^k}{[1+2c-2k]_{\sqq}^{!}[2k]_\sqq^{!!}}
  [(1+2c-2k)S]*[\bK]^k.
\end{align*}
In the equation (b) above, we have shifted the index $k \mapsto k-1$ in the second summand on the LHS of (b).

The proposition is proved.
\end{proof}

\begin{proposition}
 \label{prop:iDPodd}
%For any $c\in \N$, we have
%\begin{align}
%&[S]^{(2c)}_{\odd}=\sum_{k=0}^c \frac{\sqq^{k(k+1)-\binom{2c-2k}{2}}\cdot(\sqq-\sqq^{-1})^k}{[2c-2k]_{\sqq}^{!}[2k]_\sqq^{!!}}[(2c-2k)S]*[\bK]^k,\\
%&[S]^{(2c+1)}_{\odd}=\sum_{k=0}^c \frac{\sqq^{k(k-1)-\binom{1+2c-2k}{2}}\cdot(\sqq-\sqq^{-1})^k}{[1+2c-2k]_{\sqq}^{!}[2k]_\sqq^{!!}}[(1+2c-2k)S]*[\bK]^k.
%\end{align}
%
The following identity holds in $\tMHk$, for $n\in \N$:
\begin{align}
&[S]^{(n)}_{\odd}=\sum_{k=0}^{\lfloor \frac{n}2\rfloor}  \frac{\sqq^{k(k+(-1)^n)-\binom{n-2k}{2}}\cdot(\sqq-\sqq^{-1})^k}{[n-2k]_{\sqq}^{!}[2k]_\sqq^{!!}}[(n-2k)S]*[\bK]^k.
\end{align}
\end{proposition}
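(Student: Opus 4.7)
I will prove the formula by induction on $n$, following verbatim the structure of the proof of Proposition~\ref{prop:iDPev} but now using the recursions \eqref{rec:2modd}--\eqref{rec:2m+1odd} for the odd $\imath$divided powers in place of \eqref{rec:2m-1ev}--\eqref{rec:2mev}. The base cases $n=0,1$ are immediate from the definitions. For the inductive step I split the claim by parity of $n$, writing separately
\begin{align*}
[S]^{(2c)}_{\odd}&=\sum_{k=0}^c \frac{\sqq^{k(k+1)-\binom{2c-2k}{2}}(\sqq-\sqq^{-1})^k}{[2c-2k]_{\sqq}^{!}[2k]_\sqq^{!!}}\,[(2c-2k)S]*[\bK]^k, \\
[S]^{(2c+1)}_{\odd}&=\sum_{k=0}^c \frac{\sqq^{k(k-1)-\binom{2c+1-2k}{2}}(\sqq-\sqq^{-1})^k}{[2c+1-2k]_{\sqq}^{!}[2k]_\sqq^{!!}}\,[(2c+1-2k)S]*[\bK]^k.
\end{align*}

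To pass from $[S]^{(2c)}_{\odd}$ to $[S]^{(2c+1)}_{\odd}$, I apply the clean recursion \eqref{rec:2modd}, which yields $[2c+1]_\sqq[S]^{(2c+1)}_{\odd}=[S]*[S]^{(2c)}_{\odd}$ with no correction term. Substituting the inductive formula and expanding each $[S]*[(2c-2k)S]$ via \eqref{eq:SmS}, then re-indexing $k\mapsto k-1$ in the $[\bK]^{k+1}$-piece, the verification of the claimed coefficient of $[(2c+1-2k)S]*[\bK]^k$ reduces to a quantum integer identity of the form $\sqq^{\alpha}[2c+1-2k]_\sqq+\sqq^{\beta}[2k]_\sqq=[2c+1]_\sqq$ for specific exponents $\alpha,\beta$ determined by the bookkeeping. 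To pass from $[S]^{(2c+1)}_{\odd}$ to $[S]^{(2c+2)}_{\odd}$, I use the recursion \eqref{rec:2m+1odd}, which produces both $[S]*[S]^{(2c+1)}_{\odd}$ and an extra correction $\sqq(\sqq-\sqq^{-1})^2[2c+1]_\sqq[S]^{(2c)}_{\odd}*[\bK]$; expanding both via the inductive hypothesis and \eqref{eq:SmS}, then re-indexing analogously, leads to an identity of the shape $\sqq^{\alpha'}[2c+2-2k]_\sqq+\sqq^{\beta'}[2k]_\sqq=[2c+2]_\sqq$, parallel to equations (a) and (b) in the proof of Proposition~\ref{prop:iDPev}.

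The main obstacle is purely the $\sqq$-power bookkeeping: one has to check that the exponents emerging from the distinct sources (the two summands of \eqref{eq:SmS}, and in the even case the extra $[\bK]$-correction term coming from \eqref{rec:2m+1odd}) combine after reindexing into the single exponent $k(k+(-1)^n)-\binom{n-2k}{2}$ claimed in the formula. The notable shift relative to the ev case is that the recursions for $\odd$ are staggered: the ``clean'' recursion \eqref{rec:2modd} produces the \emph{odd}-indexed $\imath$divided power, whereas in the ev case the clean recursion \eqref{rec:2m-1ev} produces the \emph{even}-indexed one, and this is exactly what converts the exponent $k(k-1)$ appearing for even $n$ in Proposition~\ref{prop:iDPev} into $k(k+1)$ for even $n$ here (and vice versa for odd $n$). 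No new $v$-binomial identities beyond elementary quantum integer addition are required.
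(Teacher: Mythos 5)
Your plan matches the paper's proof exactly: the paper states the proposition is proved "entirely similar to the one for Proposition~\ref{prop:iDPev}, using now the recursions \eqref{rec:2modd}--\eqref{rec:2m+1odd}." Your observation that the staggering of the recursions (clean recursion producing the odd-indexed power in the $\odd$ case versus the even-indexed power in the $\ev$ case) is precisely what exchanges the exponents $k(k-1)\leftrightarrow k(k+1)$ is correct, and the inductive step indeed reduces to the elementary identity $\sqq^{b}[a]_\sqq+\sqq^{-a}[b]_\sqq=[a+b]_\sqq$, just as in equations (a) and (b) of the $\ev$ proof.
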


\begin{proof}
The proof is entirely similar to the one for Proposition~\ref{prop:iDPev}, using now the recursions \eqref{rec:2modd}--\eqref{rec:2m+1odd}; it will be skipped.
\end{proof}

%%%%%%%
%%%%%%%
\section{The $\imath$Serre relation in $\imath$Hall algebra}
  \label{sec:SerreRel}

In this section, we shall establish an identity in the $\imath$Hall algebra $\tMHk$ corresponding to the $\imath$Serre relation \eqref{relation6} in $\tUi$ (where $j=\tau j$), modulo a combinatorial identity which will be established in Section~\ref{sec:comb}. By Lemma~\ref{lem:subalgebra}, we are reduced to considering a rank two $\imath$quiver.
%The expansion formula for $\imath$divided powers in terms of Hall basis in Section~\ref{sec:iDP} plays a key role here.

%
%
\subsection{Identities in $\imath$Hall algebra}

Consider the $\imath$quiver
\begin{align}
  \label{diag:split2}
Q=(\xymatrix{1\ar@<1ex>[r]^{\alpha_1}_{\cdots} \ar@<-1ex>[r]_{\alpha_{a}}  & 2}),
\quad
\btau=\Id,
\qquad \text{where } a=-c_{12}.
\end{align}
Then the corresponding $\imath$quiver algebra $\Lambda^\imath$ has its quiver $\ov{Q}$ as
\begin{center}\setlength{\unitlength}{0.7mm}
\begin{equation}
\label{diag:split KM}
 \begin{picture}(50,3)(0,0)
\put(0,-3){$1$}
\put(4,1){\vector(1,0){14}}
\put(4,-4){\vector(1,0){14}}
\put(8,-2.5){$\cdots$}
\put(9,1){$^{\alpha_1}$}
\put(9,-6.5){$_{\alpha_a}$}
\put(20,-3){$2$}
\color{purple}
\qbezier(-1,1)(-3,3)(-2,5.5)
\qbezier(-2,5.5)(1,9)(4,5.5)
\qbezier(4,5.5)(5,3)(3,1)
\put(3.1,1.4){\vector(-1,-1){0.3}}
\qbezier(19,1)(17,3)(18,5.5)
\qbezier(18,5.5)(21,9)(24,5.5)
\qbezier(24,5.5)(25,3)(23,1)
\put(23.1,1.4){\vector(-1,-1){0.3}}
\put(1,10){$\varepsilon_1$}
\put(21,10){$\varepsilon_2$}
\end{picture}
\vspace{5mm}
\end{equation}
\end{center}

We shall prove the following identity corresponding to the $\imath$Serre relation \eqref{relation6}, where $j=\tau j$.

\begin{theorem}
\label{thm: iserre relation}
Let $\Lambda^\imath$ be the $\imath$quiver algebra associated with the $\imath$quiver \eqref{diag:split2}.
Then the following identity holds in $\tMHk$, for any $\overline{p}\in\Z/2$:
\begin{align}
\sum_{n=0}^{1+a} (-1)^n  [S_1]_{\overline{p}}^{(n)}*[S_2] *[S_1]_{\overline{a}+\overline{p}}^{(1+a-n)} =0,
\label{eqn:iserre1}
\\
\sum_{n=0}^{1+a} (-1)^n  [S_2]_{\overline{p}}^{(n)}*[S_1] *[S_2]_{\overline{a}+\overline{p}}^{(1+a-n)} =0.
\label{eqn:iserre2}
\end{align}
\end{theorem}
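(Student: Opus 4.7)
I focus on identity \eqref{eqn:iserre1}; the second identity \eqref{eqn:iserre2} will follow by the analogous computation after exchanging the roles of vertices $1$ and $2$, since the $\imath$quiver \eqref{diag:split2} is invariant (up to reversing arrow directions) under this swap. The first step is to substitute the closed $\imath$divided-power formulas of Propositions~\ref{prop:iDPev}--\ref{prop:iDPodd} for both $[S_1]^{(n)}_{\overline{p}}$ and $[S_1]^{(1+a-n)}_{\overline{a}+\overline{p}}$ into the LHS of \eqref{eqn:iserre1}. Using Lemma~\ref{lem:Euler} to push the $[\bK_1]$-factors (which lie in the quantum torus $\tTL$) past $[S_2]$ and $[jS_1]$ with explicit $\sqq$-twists, each summand becomes a scalar multiple of
$$[(n-2k) S_1] * [S_2] * [(1+a-n-2\ell) S_1] * [\bK_1]^{k+\ell},$$
indexed by triples $(n,k,\ell)$ with $0\le n\le 1+a$, $0\le k\le\lfloor n/2\rfloor$, $0\le\ell\le\lfloor(1+a-n)/2\rfloor$.

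Next, evaluate each triple product $[mS_1]*[S_2]*[m'S_1]$ (with $m+m'=1+a-2(k+\ell)$) via Proposition~\ref{prop:iHallmult}. Since $\btau=\mathrm{Id}$ and $\widehat{M}$ contains a single copy of $S_2$, the middle terms $M$ that can appear satisfy $M(\varepsilon_1)^2=0=M(\varepsilon_2)^2$ automatically, and are classified by the ranks of $M(\varepsilon_1), M(\varepsilon_2)$ together with the dimensions of $\bigcap_i \Ker M(\alpha_i)$ and $\sum_i \Im M(\alpha_i)$, in direct analogy with the invariants $u_M, w_M$ introduced in \eqref{eq:UW}--\eqref{eq:uw}. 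The relevant $|\Hom|$, $|\Ext^1|_{M}$, and $|\aut(M)|$ counts assemble into $\sqq$-Gaussian binomials, mirroring the computations \eqref{eq:single}--\eqref{eq:pM} of Section~\ref{sec:Relation5}. After collecting by the $\imath$Hall basis vectors $[M]*[\bK_1]^d$ of Theorem~\ref{thm:utMHbasis}, the identity reduces to showing that the coefficient of each basis vector vanishes.

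The main obstacle is precisely this coefficient check. Four sources of $\sqq$-powers---the signs $(-1)^n$ from \eqref{eqn:iserre1}, the parity-dependent exponents $\sqq^{k(k\pm(-1)^n)-\binom{n-2k}{2}}$ from Propositions~\ref{prop:iDPev}--\ref{prop:iDPodd}, the Euler-form twists from commuting past $[\bK_1]$, and the Hall twists from Proposition~\ref{prop:iHallmult}---must be aligned uniformly in $\overline{p}\in\Z/2\Z$, noting that the parity $\overline{a}+\overline{p}$ of the second divided power itself depends on $a=-c_{12}$. After reindexing $d=k+\ell$ and decomposing the remaining sum by $(k,m,r)$ with $k+m+r=d$, I expect the vanishing to reduce to the $\sqq$-binomial identity
\begin{align*}
\sum_{\substack{k,m,r\in\N \\ k+m+r=d}} (-1)^r \frac{\sqq^{\binom{r+1}{2}-2(k-1)m}}{[r]^!\,[2k]^{!!}\,[2m]^{!!}} = 0,
\end{align*}
supplemented by the auxiliary Rogers--Szeg\H{o}-type identity $\sum_{k=0}^{p}\sqq^{-k(p-k+1)}\qbinom{p}{k}=\prod_{j=1}^{p}(1+\sqq^{-j})$ to handle the contributions where $M$ is a pure $S_1$-module. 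Both identities are announced in the introduction and proved in Section~\ref{sec:comb}, and handling both parity cases $\overline{p}\in\{\overline{0},\overline{1}\}$ on the same footing---so that a single relabeling absorbs the parity into the summation variable---is the technical heart of the argument.
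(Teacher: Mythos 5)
Your plan is essentially the paper's proof: substitute the $\imath$divided-power formulas of Propositions~\ref{prop:iDPev}--\ref{prop:iDPodd}, expand $[sS_1]*[S_2]*[tS_1]$ via the building-block Proposition~\ref{prop:SSSM} (which the paper derives from the general $\imath$Hall multiplication formula Proposition~\ref{prop:iHallmult}), collect coefficients in the $\imath$Hall basis of Theorem~\ref{thm:utMHbasis}, and reduce to the combinatorial identities $T(a,d,u)=0$ and $T_1(a,d,u)=0$ (Proposition~\ref{prop:T=0}), with \eqref{eqn:iserre2} obtained from \eqref{eqn:iserre1} by passing to the opposite algebra. One small slip: the Rogers--Szeg\H{o}-type identity \eqref{eq:km5} does not handle a separate module class (there is no ``pure $S_1$-module'' in the expansion, since every $M$ carries a single $S_2$) but is the auxiliary Lemma~\ref{lem:binomial} used \emph{inside} the proof of the $(k,m,r)$-multinomial identity \eqref{eq:kmrd} in Lemma~\ref{lem:multinomial}; the boundary case $d=0$ is instead disposed of directly by the ordinary $v$-binomial formula \eqref{eq:BF}.
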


\begin{remark}
The identity in the $\imath$Hall algebra corresponding to $\imath$Serre relation \eqref{relation6} for $j\neq \tau j$ can be proved similarly to or simply derived from Theorem~\ref{thm: iserre relation}; see Proposition~\ref{prop:relation62} below.
\end{remark}

\subsection{A building block}

%We continue to assume $Q=(\xymatrix{1\ar@<1ex>[r]^{\alpha_1}_{\cdots} \ar@<-1ex>[r]_{\alpha_{a}}  & 2})$ with involution $\btau=\Id$, where $a=-c_{12}$; for quiver $\ov{Q}$ see \eqref{diag:split KM}.
We denote
\begin{align}
  \label{eq:Ik}
\mathcal{I}_{k} &=\big \{[M]\in\Iso(\mod(\K Q))\mid \exists N\subseteq M \text{ such that }N\cong S_2, M/N\cong k S_1 \big \}.
\end{align}
We also introduce the following polynomial in 4 variables:
\begin{align}
p(a,r,s,t) &=-s(a+t)+2ra+(u_M-t+2s-r)(t-r) +(s-r)^2
 \label{eq:p}
\\
&\qquad\qquad +{s-r \choose 2} + (t-r)^2 +{t-r \choose 2} +r(s+t) -{r+1 \choose 2}+1.
\notag
\end{align}
Any $\K Q$-module $M$ with dimension vector $n\widehat{S_1}+\widehat{S_2}$ can be decomposed as
\begin{align}  \label{eq:MNu}
M\cong N\oplus S_1^{\oplus u_M}
\end{align}
with $N$ indecomposable (unique up to isomorphism), for a unique $u_M \in \N$.

The following formula is a basic building block in the subsequent computations of the $\imath$Serre relation in an $\imath$Hall basis.

\begin{proposition}
  \label{prop:SSSM}
The following identity holds in $\tMHk$, for $s, t \ge 0$:
\begin{align}
&[s S_1]*[S_2]*[t S_1]
  \label{eq:SSS}\\
&=\sum_{r=0}^{\min\{ s,t\}} \sum_{[M]\in \mathcal{I}_{s+t-2r}}
\sqq^{p(a,r,s,t)} (\sqq -\sqq^{-1})^{s+t-r+1} \frac{[s]_\sqq^{!} [t]_\sqq^{!}}{[r]_\sqq^{!}}
\qbinom{u_M}{t-r}_\sqq \frac{[M]}{|\aut(M)|}*[\bK_1]^r.
\notag
\end{align}
\end{proposition}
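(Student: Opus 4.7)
The plan is to derive the formula by applying the $\imath$Hall multiplication formula (Proposition~\ref{prop:iHallmult}) twice---first to compute $[sS_1]*[S_2]$, and then to multiply each resulting term by $[tS_1]$---and reorganizing the double sum. The key structural input that keeps the computation manageable is that in the quiver \eqref{diag:split2} all arrows point from $1$ to $2$, so $\Hom_{\K Q}(S_1,S_2)=0=\Hom_{\K Q}(S_2,S_1)$. Consequently, in both applications of Proposition~\ref{prop:iHallmult} the kernel-cokernel data of any contributing morphism is severely restricted.

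First, I would apply Proposition~\ref{prop:iHallmult} with $A=sS_1$, $B=S_2$. Since $\Hom_{\K Q}(sS_1,S_2)=0$, only the zero morphism contributes (forcing $N=sS_1$, $L=S_2$, vanishing $[\bK]$-exponent), and the Riedtmann--Peng formula (Lemma~\ref{lem: Ried-P}) yields
\[
[sS_1]*[S_2]=\sqq^{-sa}\sum_{[M']\in\mathcal{I}_s} F^{M'}_{sS_1,S_2}\,\frac{|\aut(sS_1)|(q-1)}{|\aut(M')|}\,[M'].
\]
Then for each such $[M']$ I apply Proposition~\ref{prop:iHallmult} again to $[M']*[tS_1]$. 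Since $\Hom_{\K Q}(S_2,tS_1)=0$, every morphism $f\colon M'\to tS_1$ factors through the quotient $M'/S_2\cong sS_1$ as $\ov{f}\colon sS_1\to tS_1$, classified by its rank $r\in\{0,1,\ldots,\min(s,t)\}$. A short diagram chase identifies $\Ker f\in\mathcal{I}_{s-r}$ (via the extension $0\to S_2\to \Ker f\to (s-r)S_1\to 0$), $\coker f\cong (t-r)S_1$, and $\widehat{M'}-\widehat{\Ker f}=r\widehat{S_1}$, so the $[\bK]$-factor produced is $[\bK_{r\widehat{S_1}}]$, convertible to $[\bK_1]^r$ up to an explicit power of $\sqq$.

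Substituting the first computation into the second produces a triple sum indexed by $[M']\in\mathcal{I}_s$, $[N']:=[\Ker f]\in\mathcal{I}_{s-r}$, and $r$. To reach the stated coefficient I would then fix $[M]\in\mathcal{I}_{s+t-2r}$---which sits in an exact sequence $0\to(t-r)S_1\to M\to N'\to 0$---and carry out the inner sum over $(M',N')$. Using that the number of rank-$r$ morphisms $sS_1\to tS_1$ is $\qbinom{s}{r}_\sqq\qbinom{t}{r}_\sqq\,|\mathrm{GL}_r(\mathbb{F}_q)|$, combined with the Riedtmann--Peng orbit factors for $\aut(sS_1)$, $\aut(tS_1)$, $\aut(rS_1)$, the coefficient $[s]_\sqq^![t]_\sqq^!/[r]_\sqq^!$ and the cumulative $(\sqq-\sqq^{-1})^{s+t-r+1}$ emerge naturally. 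The Grassmannian $\qbinom{u_M}{t-r}_\sqq$ arises from the fact that the subobject $(t-r)S_1\subseteq M$ must embed into the free $u_M\cdot S_1$-summand of the decomposition $M=N\oplus u_M S_1$ in \eqref{eq:MNu}.

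The main obstacle will be the careful bookkeeping of the total power of $\sqq$ to match the intricate quadratic exponent $p(a,r,s,t)$ of \eqref{eq:p}. This exponent aggregates contributions from (i) the two twisted-multiplication prefactors $\sqq^{\langle\cdot,\cdot\rangle_Q}$, (ii) the $q^\bullet$ prefactors inside both applications of Proposition~\ref{prop:iHallmult}, (iii) the conversion $[\bK_{r\widehat{S_1}}]\leftrightarrow[\bK_1]^r$, and (iv) Riedtmann--Peng normalization terms depending on $u_M$. Each individual contribution is elementary, but assembling them into the closed form \eqref{eq:p} is delicate; I would verify the final identity by expanding all quadratic contributions and comparing term by term.
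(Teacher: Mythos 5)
Your proposal takes a genuinely different route from the paper's proof. The paper observes that the \emph{right} two factors collapse for free: since $\Hom_{\K Q}(S_2, tS_1)=0$, $\Ext^1_{\K Q}(S_2,tS_1)=0$, and $\langle S_2,tS_1\rangle_Q=0$, one has $[S_2]*[tS_1]=[S_2\oplus tS_1]$ directly, after which a \emph{single} application of Proposition~\ref{prop:iHallmult} with $A=sS_1$, $B=S_2\oplus tS_1$ does all the work: morphisms $f:sS_1\to S_2\oplus tS_1$ factor through $tS_1$, are classified by rank $r$, and the Ext-count $|\Ext^1((s-r)S_1, S_2\oplus(t-r)S_1)_M|$ produces the Gaussian binomial via Ringel's formula for the Hall number $F^M_{(s-r)S_1,\,S_2\oplus(t-r)S_1}$ and the Riedtmann--Peng formula. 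You instead expand $[sS_1]*[S_2]$ first (which is \emph{not} a trivial product, since $\Ext^1(sS_1,S_2)\cong\K^{sa}$) and then apply Proposition~\ref{prop:iHallmult} a second time to each $[M']*[tS_1]$. Both orders are legal, but the paper's choice pays off immediately because it avoids introducing the intermediate sum over $[M']\in\mathcal{I}_s$.

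The structural inputs you identify are correct --- $\Hom(S_1,S_2)=0=\Hom(S_2,S_1)$ forces rigid kernel/cokernel data, and the count of rank-$r$ matrices is right --- but there is a real gap where you say the stated coefficients ``emerge naturally'' from the inner sum over $(M',N')$. For fixed rank $r$, the number of $f:M'\to tS_1$ with $\Ker f$ in a \emph{given} isomorphism class $N'\in\mathcal{I}_{s-r}$ genuinely depends on the extension class defining $M'$: if $M'=M'(\xi)$ for $\xi\in\Ext^1(sS_1,S_2)$ and $V=\Ker\ov f\subseteq sS_1$, then $[\Ker f]$ is determined by the pull-back $\xi|_V$. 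Collapsing the double sum over $(\xi, V)$ into a sum over $[N']$ with the right multiplicities, and then pairing that against the second Ext-count $|\Ext^1(N',(t-r)S_1)_M|$ to recover the single Grassmannian factor $\qbinom{u_M}{t-r}_\sqq$, is precisely the nontrivial content that the paper's one-step approach sidesteps. Your heuristic that this Grassmannian ``counts embeddings of $(t-r)S_1$ into the free $u_M S_1$-summand of $M$'' is a reasonable intuition but does not constitute a proof and, in the paper's framework, the binomial actually comes from the Hall number $F^M_{(s-r)S_1,\,S_2\oplus(t-r)S_1}$ (cf.\ \cite[Theorem~3.16]{Sch06}), not directly from such an embedding count. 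I would encourage you to either restructure the argument to collapse $[S_2]*[tS_1]$ first, or carry out the inner collapse explicitly; as written, that step is asserted rather than proved, and it is where the bulk of the work lives.
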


\begin{proof}
By definition, we have
\begin{align} \label{eq:S1S2S1}
[s S_1 ]*[S_2]*[t S_1 ]= [s S_1 ]*[S_2\oplus S_1^{\oplus t}].
\end{align}
Observe that for any morphism $f: S_1^{\oplus s}\rightarrow S_2\oplus S_1^{\oplus t}$, we have $\Im f\cong S_1^{\oplus r}$, $\Ker f\cong S_1^{\oplus(s-r)}$ and $\coker f\cong S_2\oplus S_1^{\oplus (t-r)}$ for some $r$. In particular, $0\leq r\leq \min\{s,t\}$ in this case. Using a standard linear algebra fact (cf., e.g., \cite{M06}), we obtain
%$$\big| \{A\in M_{s\times t}(\K)\mid \rank A=r\} \big| =\prod_{i=0}^{r-1}\frac{(q^s-q^i)(q^t-q^i)}{q^r-q^i}.$$
%This implies that
\begin{align}   \label{eq:C1}
& | \{f: S_1^{\oplus s}\rightarrow S_2\oplus S_1^{\oplus t}\mid \Ker f\cong S_1^{\oplus(s-r)}, \coker f\cong S_2\oplus S_1^{\oplus(t-r)}\}|
\\
=& \big| \{A\in M_{s\times t}(\K)\mid \rank A=r\} \big|
= \prod_{j=0}^{r-1}\frac{(q^s-q^j)(q^t-q^j)}{q^r-q^j}.
\notag
\end{align}

By \eqref{eq:S1S2S1} and applying \eqref{Hallmult1}, we have
\begin{align}
[s S_1 ]*[S_2]&*[t S_1 ] =\sum_{r=0}^{\min\{ s,t\}} \sum_{[M]\in \Iso(\mod(\Lambda^\imath))} \sqq^{ -sa+st} q^{-(s-r)a+t(s-r)-(s-r)s+(s-r)^2+sa-st}
 \label{eq:basic} \\
&\frac{|\Ext^1(S_1^{\oplus(s-r)},S_2\oplus S_1^{\oplus(t-r)})_{M}|}{|\Hom(S_1^{\oplus(s-r)},S_2\oplus S_1^{\oplus(t-r)})| } \prod_{j=0}^{r-1}\frac{(q^s-q^j)(q^t-q^j)}{q^r-q^j}[M]*[\bK_{1}]^r
 \notag \\
=&\sum_{r=0}^{\min\{ s,t\}} \sum_{[M]\in \mathcal{I}_{s+t-2r}} \sqq^{ (2r-s)(a-t)-2r(s-r) -2(s-r)(t-r)}\prod_{j=0}^{r-1}\frac{(q^s-q^j)(q^t-q^j)}{q^r-q^j}
 \notag \\
&\qquad\cdot|\Ext^1(S_1^{\oplus(s-r)},S_2\oplus S_1^{\oplus(t-r)})_{M}|\cdot [M]*[\bK_{1}]^r,
\notag
\end{align}
since $|\Ext^1(S_1^{\oplus(s-r)},S_2\oplus S_1^{\oplus(t-r)})_{M}|\neq 0$ implies $[M]\in \mathcal{I}_{s+t-2r}$ by \eqref{eq:Ik}.
Recall $M\cong N\oplus S_1^{\oplus u_M}$ for some indecomposable $\K Q$-module $N$ from \eqref{eq:MNu}. From \cite{Rin} or \cite[Theorem 3.16]{Sch06} and its proof, recalling \eqref{eq:Fxyz} we have
$$F_{S_1^{\oplus (s-r)},S_2\oplus S_1^{\oplus(t-r)}}^{ M}= \sqq^{(u_M-(t-r))(t-r)} \qbinom{u_M}{t-r}_\sqq.$$
Using the Riedtman-Peng formula in Lemma \ref{lem: Ried-P}, one obtains that
\begin{align}  \label{eq:C3}
&\big|\Ext^1( S_1^{\oplus(s-r)}, S_2\oplus S_1^{\oplus(t-r)} )_{M} \big|\\
=& \frac{ \prod_{i=0}^{s-r-1}(q^{s-r}-q^i)\prod_{i=0}^{t-r-1}(q^{t-r}-q^i)}{|\aut(M)|}(q-1)\sqq^{(u_M-(t-r))(t-r)+2(s-r)(t-r)} \qbinom{u_M}{t-r}_\sqq.
\notag
\end{align}

Thus using \eqref{eq:C3}, we rewrite \eqref{eq:basic} as
\begin{align}
&[s S_1] *[S_2]*[ t S_1 ]
\label{eq:SSS2}
\\\notag
=&\sum_{r=0}^{\min\{ s,t\}}\sum_{[M]\in \mathcal{I}_{s+t-2r}}\sqq^{-s(a+t)+2ra+(u_M-t+2s-r)(t-r) }(q-1)   \prod_{j=0}^{r-1}\frac{(q^{s}-q^j)(q^{t}-q^j)}{q^r-q^j} \\
&\qquad\qquad\cdot
\frac{ \prod_{i=0}^{s-r-1}(q^{s-r}-q^i)\prod_{i=0}^{t-r-1}(q^{t-r}-q^i)}{|\aut(M)|}\qbinom{u_M}{t-r}_\sqq[M]*[\bK_1]^r.\notag
\end{align}

Recall $q=\sqq^2$. Note that
\begin{align*}
\prod_{j=0}^{r-1} (q^r -q^j) &=\sqq^{r^2 +{r \choose 2}} (\sqq -\sqq^{-1})^r [r]_\sqq^{!},
\\
\prod_{j=0}^{r-1} (q^s -q^j) &=\sqq^{rs +{r \choose 2}} (\sqq -\sqq^{-1})^r [s]_\sqq [s-1]_\sqq \ldots [s-r+1]_\sqq,
\\
%\prod_{i=0}^{r-1} (q^t -q^i) &=\sqq^{rt +{r \choose 2}} (\sqq -\sqq^{-1})^r [t]_\sqq [t-1]_\sqq \ldots [t-r+1]_\sqq, \\
\prod_{i=0}^{s-r-1} (q^{s-r} -q^i) &=\sqq^{(s-r)^2 +{s-r \choose 2}} (\sqq -\sqq^{-1})^{s-r} [s-r]_\sqq^{!}.
%\\ \prod_{i=0}^{t-r-1} (q^{t-r} -q^i) &=\sqq^{(t-r)^2 +{t-r \choose 2}} (\sqq -\sqq^{-1})^{t-r} [t-r]_\sqq^{!}.
\end{align*}
These identities (and the counterparts of the last 2 identities with $s$ replaced by $t$) allow us to convert the formula \eqref{eq:SSS2} to \eqref{eq:SSS} by a direct computation. This proves the proposition.
\end{proof}

\subsection{$\imath$Serre relation in $\tMHk$}

It is well known that $\widetilde{\ch}(\K Q^{\text{op}}, \btau) =\widetilde{\ch}(\K Q, \btau)^{\text{op}}$. Hence the identity \eqref{eqn:iserre2} is equivalent to \eqref{eqn:iserre1}. It remains to prove \eqref{eqn:iserre1}.

In this subsection, we shall prove \eqref{eqn:iserre1} (and hence Theorem~\ref{thm: iserre relation}), modulo the validity of a combinatorial identity (which will be established in Section~\ref{sec:comb}). Note that the identity \eqref{eqn:iserre1} can be rewritten as
\begin{align}
\sum_{n=0}^{1+a} (-1)^n  [S_1]_{\ev}^{(n)}*[S_2] *[S_1]_{\ov{a}}^{(1+a-n)} &=0,\label{eqn:iserre1ev}
\\
\sum_{n=0}^{1+a} (-1)^n  [S_1]_{\odd}^{(n)}*[S_2] *[S_1]_{\odd +\ov{a}}^{(1+a-n)} &=0.\label{eqn:iserre1odd}
\end{align}
We will provide a detailed proof of \eqref{eqn:iserre1ev}, which will be modified to give a proof of \eqref{eqn:iserre1odd}.

\subsubsection{Proof of \eqref{eqn:iserre1ev} }
  \label{subsec:proofev}

We divide the computation of the LHS of \eqref{eqn:iserre1ev} into 2 cases.

\underline{\em Case (I): $n$ even}.
By Proposition~\ref{prop:iDPev} and Proposition~\ref{prop:SSSM} we have
\begin{align*}
 &[S_1]_{\ev}^{(n)}*[S_2] *[S_1]_{\ov{a}}^{(1+a-n)} \\
%=&\sum_{k=0}^{\frac{n}{2}}\frac{\sqq^{k(k-1)-\binom{n-2k}{2}} \cdot (\sqq-\sqq^{-1})^k }{[n-2k]_\sqq^{!}[2k]_\sqq^{!!}} [(n-2k)S_1]*[\bK_1]^k*[S_2]
%\\
%&\;\; *\sum_{m=0}^{\lfloor \frac{a+1-n}{2} \rfloor}  \frac{\sqq^{m(m+1)-\binom{1+a-n-2m}{2}}\cdot (\sqq-\sqq^{-1})^{m}}{[1+a-n-2m]_\sqq^{!} [2m]_\sqq^{!!}} [(1+a-n-2m)S_1]*[\bK_1]^m
%\\
=&\sum_{k=0}^{\frac{n}{2}}\sum_{m=0}^{\lfloor \frac{a+1-n}{2} \rfloor} \frac{\sqq^{k(k-1)+m(m+1)-\binom{n-2k}{2} -\binom{1+a-n-2m}{2}}\cdot (\sqq-\sqq^{-1})^{k+m} }{[n-2k]_\sqq![1+a-n-2m]_\sqq![2k]_\sqq^{!!}[2m]_\sqq^{!!}}\\
&\qquad\qquad \times [(n-2k)S_1]*[S_2]*[(1+a-n-2m)S_1]*[\bK_1]^{k+m}
%\displaybreak
\\
=&\sum_{k=0}^{\frac{n}{2}}\sum_{m=0}^{\lfloor \frac{a+1-n}{2} \rfloor} \sum_{r=0}^{\min\{n-2k,1+a-n-2m\}} \sum_{[M]\in\mathcal{I}_{1+a-2k-2m-2r}}
\frac{\sqq^{k(k-1)+m(m+1)-\binom{n-2k}{2} -\binom{1+a-n-2m}{2}} }{[n-2k]_\sqq^![1+a-n-2m]_\sqq^! [2k]_\sqq^{!!}[2m]_\sqq^{!!}}\\
&\qquad\times (\sqq-\sqq^{-1})^{k+m} \sqq^{p(a,r,n-2k, 1+a -n -2m)}
(\sqq -\sqq^{-1})^{2+a -2k -2m -r } \frac{[n-2k]_\sqq^{!} [1+a -n -2m]_\sqq^{!}}{[r]_\sqq^{!}}
\\ &
\qquad \times \qbinom{u_M}{1+a -n -2m-r}_\sqq \frac{[M]}{|\aut(M)|}*[\bK_1]^{r+k+m}.
\end{align*}
This can be simplified to be, for $n$ even,
\begin{align}
&[S_1]_{\ev}^{(n)}*[S_2] *[S_1]_{\ov{a}}^{(1+a-n)}
=\sum_{k=0}^{\frac{n}{2}}\sum_{m=0}^{\lfloor \frac{a+1-n}{2} \rfloor} \sum_{r=0}^{n-2k} \sum_{[M]\in\mathcal{I}_{1+a-2k-2m-2r}}
\label{sum:nev}
\\
& \qquad\qquad
\frac{\sqq^{z}
(\sqq-\sqq^{-1})^{2+a -k -m -r} }{[r]_\sqq^{!} [2k]_\sqq^{!!}[2m]_\sqq^{!!}}
\qbinom{u_M}{1+a -n -2m-r}_\sqq \frac{[M] *[\bK_1]^{r+k+m}}{|\aut(M)|}
\notag
\end{align}
where we denote (recall the polynomial $p$ from \eqref{eq:p})
\begin{align}
\label{eq:z}
z&= k(k-1)+m(m+1)-\binom{n-2k}{2} -\binom{1+a-n-2m}{2}
\\
&\qquad\qquad\quad
+p(a,r,n-2k, 1+a -n -2m).
\notag
\end{align}

\underline{\em Case (II):  $n$  odd}. By Proposition~\ref{prop:iDPev} and Proposition~\ref{prop:SSSM} we have
\begin{align*}
&[S_1]_{\ev}^{(n)}*[S_2] *[S_1]_{\ov{a}}^{(1+a-n)} \\
=&\sum_{k=0}^{\frac{n-1}{2}}\frac{\sqq^{k(k+1)-\binom{n-2k}{2}} \cdot (\sqq-\sqq^{-1})^k }{[n-2k]_\sqq![2k]_\sqq^{!!}} [(n-2k)S_1]*[\bK_1]^k*[S_2]
\\
&\;\; * \sum_{m=0}^{\lfloor \frac{a+1-n}{2} \rfloor} \frac{\sqq^{m(m-1)-\binom{1+a-n-2m}{2}}\cdot (\sqq-\sqq^{-1})^{m}}{[1+a-n-2m]_\sqq^{!}[2m]_\sqq^{!!}} [(1+a-n-2m)S_1]*[\bK_1]^m
\\
=&\sum_{k=0}^{\frac{n-1}{2}}\sum_{m=0}^{\lfloor \frac{a+1-n}{2} \rfloor} \sum_{r=0}^{\min\{n-2k,1+a-n-2m\}} \sum_{[M]\in\mathcal{I}_{1+a-2k-2m-2r}}
\\
&\quad\quad \frac{\sqq^{k(k+1)+m(m-1)-\binom{n-2k}{2} -\binom{1+a-n-2m}{2}}\cdot (\sqq-\sqq^{-1})^{k+m} }{[n-2k]_\sqq![1+a-n-2m]_\sqq![2k]_\sqq^{!!}[2m]_\sqq^{!!}}\\
&\quad\quad \times \sqq^{p(a,r,n-2k, 1+a -n -2m)}
(\sqq -\sqq^{-1})^{2+a -2k -2m -r } \frac{[n-2k]_\sqq^{!} [1+a -n -2m]_\sqq^{!}}{[r]_\sqq^{!}}
\\ &
\quad\quad \times \qbinom{u_M}{1+a -n -2m-r}_\sqq \frac{[M]}{|\aut(M)|}*[\bK_1]^{r+k+m}.
\end{align*}
This can be simplified to be, for $n$ odd,
\begin{align}
&[S_1]_{\ev}^{(n)}*[S_2] *[S_1]_{\ov{a}}^{(1+a-n)}
= \sum_{k=0}^{\frac{n-1}{2}}\sum_{m=0}^{\lfloor \frac{a+1-n}{2} \rfloor} \sum_{r=0}^{n-2k} \sum_{[M]\in\mathcal{I}_{1+a-2k-2m-2r}}
 \label{sum:nodd} \\
& \qquad\qquad \frac{\sqq^{z+2k-2m}
 (\sqq-\sqq^{-1})^{2+a -k -m -r} }{[r]_\sqq^{!} [2k]_\sqq^{!!}[2m]_\sqq^{!!}}
\qbinom{u_M}{1+a -n -2m-r}_\sqq \frac{[M] *[\bK_1]^{r+k+m}}{|\aut(M)|}.
\notag
\end{align}

Summing up \eqref{sum:nev} and \eqref{sum:nodd} above, we obtain
\begin{align}
&\sum_{n=0}^{a+1} (-1)^n  [S_1]_{\ev}^{(n)}*[S_2] *[S_1]_{\ov{a}}^{(1+a-n)}
\label{eq:1-2} \\
=& \sum_{n=0,2\mid n}^{a+1}
\sum_{k=0}^{\frac{n}{2}}\sum_{m=0}^{\lfloor \frac{a+1-n}{2} \rfloor} \sum_{r=0}^{n-2k} \sum_{[M]\in\mathcal{I}_{1+a-2k-2m-2r}}
\notag \\ &\qquad\qquad
\frac{\sqq^{z} (\sqq-\sqq^{-1})^{2+a -k -m -r} }{[r]_\sqq^{!} [2k]_\sqq^{!!}[2m]_\sqq^{!!}}
\qbinom{u_M}{1+a -n -2m-r}_\sqq
 \frac{[M] *[\bK_1]^{r+k+m}}{|\aut(M)|}
\notag \\
&-\sum_{n=0,2\nmid n}^{a+1}
\sum_{k=0}^{\frac{n-1}{2}}\sum_{m=0}^{\lfloor \frac{a+1-n}{2} \rfloor} \sum_{r=0}^{n-2k} \sum_{[M]\in\mathcal{I}_{1+a-2k-2m-2r}}
\notag \\ &\qquad\qquad
\frac{\sqq^{z+2k-2m}  (\sqq-\sqq^{-1})^{2+a -k -m -r} }{[r]_\sqq^{!} [2k]_\sqq^{!!}[2m]_\sqq^{!!}}
\qbinom{u_M}{1+a -n -2m-r}_\sqq
\frac{[M] *[\bK_1]^{r+k+m}}{|\aut(M)|}.
\notag
\end{align}

Set
\[
d =r+k+m.
\]
Now we have reduced the proof of \eqref{eqn:iserre1ev} to proving that the coefficient of $\frac{[M] *[\bK_1]^{d}}{|\aut(M)|}$ in the RHS of \eqref{eq:1-2} is zero, for any given $[M]   \in\mathcal{I}_{1+a-2d}$ and any $d\in \N$. Note the powers of $(\sqq-\sqq^{-1})$ in all terms are the same (and $= 2 +a -d$).
Denote
\begin{align}
T(a,d,u)
&= \sum_{n=0,2\mid n}^{a+1}
\sum_{k=0}^{\frac{n}{2}}\sum_{m=0}^{\lfloor \frac{a+1-n}{2} \rfloor} \delta\{0\le r \le n-2k\}
\frac{\sqq^{\brown{z}} }{[r]_\sqq^{!} [2k]_\sqq^{!!}[2m]_\sqq^{!!}}
\qbinom{u}{1+a -n -2m-r}_\sqq
  \label{def:T} \\
&-\sum_{n=0,2\nmid n}^{a+1}
\sum_{k=0}^{\frac{n-1}{2}}\sum_{m=0}^{\lfloor \frac{a+1-n}{2} \rfloor} \delta\{0\le r \le n-2k\}
\frac{\sqq^{\brown{z+2k-2m}} }{[r]_\sqq^{!} [2k]_\sqq^{!!}[2m]_\sqq^{!!}}
\qbinom{u}{1+a -n -2m-r}_\sqq,
\notag
\end{align}
where we set $\delta\{X\}=1$ if the statement $X$ holds and $\delta\{X\}=0$ if $X$ is false. We note $r=d-k-m \geq0$; see \eqref{eq:z} for $z$, and also see \eqref{eq:p} for the polynomial $p$.

 Then the coefficient of $\frac{[M] *[\bK_1]^{d}}{|\aut(M)|}$ in the RHS of \eqref{eq:1-2} is equal to $(\sqq-\sqq^{-1})^{2 +a -d} T(a,d,u)$. Summarizing the above discussions, we have established the following.

\begin{proposition}
The identity \eqref{eqn:iserre1ev} is equivalent to the identity
$%\begin{equation*}
T(a,d,u) =0,
$ %\end{equation*}
for any integers $a, d, u$ subject to the constrains
\begin{equation}
  \label{eq:adu}
a \geq0, \quad 0\leq d\leq (a+1)/2, \quad 0\leq u\leq a+1-2d, \quad d \text{ and } u \text{ not both zero}.
\end{equation}
\end{proposition}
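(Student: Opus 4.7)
The plan is to deduce the proposition directly from the expansion \eqref{eq:1-2} by invoking the $\imath$Hall basis theorem (Theorem~\ref{thm:utMHbasis}) to translate \eqref{eqn:iserre1ev} into a statement about coefficients in an explicit basis of $\tMHk$.

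First I would recall that, by Theorem~\ref{thm:utMHbasis}, the family $\{[M]*[\bK_1]^d : [M]\in \Iso(\mod(\K Q)),\ d\in \Z_{\geq 0}\}$ is $\Q(\sqq)$-linearly independent in $\tMHk$ (the twist from $\diamond$ to $*$ only rescales each basis vector and does not affect linear independence, and different values of $d$ correspond to different $\bK_1$-powers). The RHS of \eqref{eq:1-2} is supported on such basis elements with $[M]\in \mathcal{I}_{1+a-2d}$, so \eqref{eqn:iserre1ev} holds if and only if the coefficient of $\frac{[M]*[\bK_1]^d}{|\aut(M)|}$ vanishes for every admissible pair $([M], d)$.

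Next I would read off that coefficient: comparing \eqref{sum:nev}--\eqref{sum:nodd} with \eqref{def:T}, each summand in \eqref{eq:1-2} depends on $[M]$ only through $u_M$ (the dependence is entirely housed in the factor $\qbinom{u_M}{\cdot}_\sqq$), and after extracting the common scalar $(\sqq-\sqq^{-1})^{2+a-d}$ the remaining sum is precisely $T(a, d, u_M)$. Since $(\sqq-\sqq^{-1})^{2+a-d}$ is a nonzero element of $\Q(\sqq)$, the vanishing of each coefficient is equivalent to $T(a, d, u_M) = 0$.

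The final step is to identify the range of $u = u_M$ actually realized. Writing $M \cong N \oplus S_1^{\oplus u_M}$ with $N$ indecomposable of dimension $(a+1-2d-u_M)\widehat{S}_1 + \widehat{S}_2$, we need an indecomposable $\K Q$-representation of dimension $n\widehat{S}_1+\widehat{S}_2$ with $n=a+1-2d-u_M$. For the quiver $Q$ with $a$ arrows $1\to 2$, a standard computation shows that such an indecomposable exists iff $0\leq n\leq a$ (it is given by $a$ linear functionals on $\K^n$ with trivial common kernel, which is possible exactly when $n\leq a$). Applied here, this yields $u_M\leq a+1-2d$ in all cases, with lower bound $u_M\geq 0$ when $d\geq 1$ and $u_M\geq 1$ when $d=0$; the excluded case $u=d=0$ would require an indecomposable of dimension $(a+1)\widehat{S}_1+\widehat{S}_2$, which does not exist since $a+1>a$. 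This recovers precisely \eqref{eq:adu} together with the exclusion ``$d$ and $u$ not both zero.'' The only substantive (non-bookkeeping) input here is this representation-theoretic realization statement; the genuine difficulty is postponed to the combinatorial identity $T(a,d,u)=0$ itself, which will be the subject of the next section.
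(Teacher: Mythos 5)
Your proof is correct and follows the same route as the paper: expand the left side via \eqref{eq:1-2}, use the $\imath$Hall basis (Theorem~\ref{thm:utMHbasis}) to reduce to the vanishing of each coefficient of $\frac{[M]*[\bK_1]^d}{|\aut(M)|}$, observe that after factoring out $(\sqq-\sqq^{-1})^{2+a-d}$ this coefficient is exactly $T(a,d,u_M)$, and finally note that the admissible pairs $(d,u_M)$ are precisely those allowed by \eqref{eq:adu}. The one thing you make explicit that the paper leaves implicit is the representation-theoretic derivation of the constraints \eqref{eq:adu} (existence of an indecomposable of dimension vector $n\widehat{S}_1+\widehat{S}_2$ iff $0\leq n\leq a$); this is a correct and worthwhile clarification, not a deviation.
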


\subsubsection{Proof of \eqref{eqn:iserre1odd} }
  \label{subsec:proofodd}

Note the differences on the formulas for $[S]^{(n)}_{\ev}$ versus $[S]^{(n)}_{\odd}$ in Proposition~\ref{prop:iDPev}--\ref{prop:iDPodd} merely lie in the powers of $\sqq$. Going through the same computations in \S\ref{subsec:proofev}, we see that the identity \eqref{eqn:iserre1odd} is equivalent to the following identity
\[
T_1(a, d, u)=0,
\]
for $a, d, u$ satisfying \eqref{eq:adu}, where $T_1$ is modified from $T$ in \eqref{def:T} by changing the power of $\sqq$ in the first summand from $\brown{z}$ to $\blue{z+2k-2m}$ and the power of $\sqq$ in the second summand from $\brown{z+2k-2m}$ to $\blue{z}$.

We shall establish the identities $T(a, d, u) =0$ and $T_1(a, d, u)=0$ in the next section.

%%%%%%%%
%%%%%%%%
\section{Combinatorial identities}
  \label{sec:comb}

The goal of this section is to prove the following identities (and hence complete the proof of Theorem~\ref{thm: iserre relation}). In the process, we establish some interesting $v$-binomial identities, which are of independent interest.

\begin{proposition}
  \label{prop:T=0}
For integers $a, d, u$ satisfying \eqref{eq:adu}, the following identities hold:
\begin{align}
T(a,d,u) &=0, \label{eq:T=0}
\\
T_1(a,d,u) &=0, \label{eq:T1=0}
\end{align}
where $T$ is defined in \eqref{def:T} and $T_1$ is defined in \S\ref{subsec:proofodd}.
\end{proposition}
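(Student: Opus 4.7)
The plan is to reduce both vanishing statements $T(a,d,u)=0$ and $T_1(a,d,u)=0$ to the two $v$-binomial identities advertised in the introduction, namely the Rogers-Szeg\"o-type identity
\[
\sum_{k=0}^{p} v^{-k(p-k+1)} \qbinom{p}{k} = \prod_{j=1}^p (1+v^{-j}),
\]
and the trinomial identity
\[
\sum_{\substack{k,m,r \in \N \\ k+m+r=d}} (-1)^r \frac{v^{\binom{r+1}{2}-2(k-1)m}}{[r]^!\,[2k]^{!!}\,[2m]^{!!}} = 0 \qquad (d \ge 1).
\]
The overall strategy is to disentangle the quadruple sum defining $T(a,d,u)$ into an \emph{inner} alternating sum over $n$, to be closed by the first identity, and an \emph{outer} sum over triples $(k,m,r)$ with $k+m+r=d$, handled by the second identity.

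The first concrete step is the substitution $r := d-k-m$, which converts the summation into a sum over $(k,m,r)\in\N^3$ with $k+m+r=d$, together with $n$ as a free running index. I would then expand the polynomial $p(a,r,n-2k,1+a-n-2m)$ from \eqref{eq:p} and the exponent $z$ from \eqref{eq:z}, verifying that $v^{z}$ factors exactly as a product of a factor depending only on $(k,m,r,a,u)$ and a factor depending on $n$ alone. The further substitution $t := 1+a-n-2m-r$ should convert the inner $n$-sum into an alternating $v$-binomial sum in $t$ over $\{0,\ldots,u\}$. The parity split ($n$ even versus $n$ odd) in the definition of $T$, together with the compensating shift $v^{2k-2m}$ in the odd-$n$ branch, combines into a uniform sign in the inner sum. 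Applying the Rogers-Szeg\"o-type identity (a variant of a classical identity for Rogers-Szeg\"o polynomials, see the introduction) closes the inner sum and leaves precisely the outer trinomial sum, which vanishes by the second identity when $d\ge 1$; the edge case $d=0$ (where $u\ge 1$ by \eqref{eq:adu}) is handled by the first identity alone, as the resulting $v$-Pochhammer product contains a vanishing factor.

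The principal obstacles are twofold. On the bookkeeping side, the polynomial $z$ is intricate, and one must track the $v$-exponents carefully to confirm that after the substitution for $t$ the inner/outer factorization is \emph{exact}; a miscount in any cross term would spoil the clean reduction. On the combinatorial side, the trinomial identity is genuinely new and is the main technical content of this section: my plan for it is to fix $r$ and close the $(k,m)$-sum using the first identity (applied after extracting a common $v$-power), thereby reducing it to a single-variable identity in $r$ that can be verified either by the $v$-binomial theorem \eqref{eq:BF} with an appropriate specialization or by a direct telescoping. Once $T(a,d,u)=0$ is established, $T_1(a,d,u)=0$ will follow by an entirely parallel argument with the roles of the even- and odd-$n$ branches swapped, which amounts to a uniform sign change in the inner sum and does not affect the outer trinomial structure.
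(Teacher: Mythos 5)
Your proposal correctly identifies the two key $v$-binomial identities \eqref{eq:km5} and \eqref{eq:kmrd}, and your plan for proving the trinomial identity (fix $r$, close the $(k,m)$-sum via the Rogers-Szeg\"o-type identity \eqref{eq:km1}, then apply the alternating $v$-binomial formula \eqref{eq:BF} to the $r$-sum) matches the paper's Lemma~\ref{lem:multinomial}. However, the \emph{order of summation} in your reduction of $T(a,d,u)=0$ to these identities is wrong, and that creates a real gap.

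You propose fixing $(k,m,r)$ and closing the inner $n$-sum (equivalently, the $t$-sum with $t=1+a-n-2m-r$) first, claiming the parity split of $T$ together with the shift $v^{2k-2m}$ in the odd branch "combines into a uniform sign." It does not. For fixed $(k,m,r)$, the $n$-even branch of $T$ carries the prefactor $v^{\binom{r+1}{2}-2(k-1)m}$ while the $n$-odd branch carries $v^{\binom{r+1}{2}-2k(m-1)}$; these differ by $v^{2(k-m)}$, which is not a sign unless $k=m$. Hence the inner $t$-sum is a mixture of two parity-restricted sums with genuinely different prefactors and does not close to a single $v$-binomial product. The residual $(k,m)$-dependence of whatever it does evaluate to would not factor through as the trinomial structure \eqref{eq:kmrd} that you need in the outer sum.

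The device you are missing is the paper's substitution $w := n+m-k-d$ (equivalently $w = 1+a-2d-t$, the binomial index), together with fixing $w$ \emph{before} summing over $(k,m,r)$. With $w$ fixed, $L$ and $\qbinom{u}{1+a-2d-w}$ are constants, and—crucially—when you sum over $(k,m)$ with $k+m$ fixed, the swap $k\leftrightarrow m$ sends $-2k(m-1)$ to $-2(k-1)m$ while leaving $[2k]^{!!}[2m]^{!!}$ invariant. This symmetry merges the two parity branches of $T$ (this is step $(*)$ in the paper), after which the $(k,m,r)$-sum is precisely the trinomial identity \eqref{eq:kmrd} and vanishes for $d\geq 1$. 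For $d=0$ (so $(k,m,r)=(0,0,0)$ and, by \eqref{eq:adu}, $u\geq1$) the remaining $w$-sum is a single alternating $v$-binomial sum which vanishes by \eqref{eq:BF}; note this vanishing comes from the \emph{alternating} formula, not from the Rogers-Szeg\"o identity, whose non-alternating product $\prod_j(1+v^{-j})$ never vanishes—so your attribution of the $d=0$ case to "the first identity alone" is also incorrect.
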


\subsection{Some $v$-binomial identities}

We first establish some identities which will be used later.
%Identity \eqref{eq:km1} is a $v$-deformation of the binomial expansion for the identity $(\frac12 +\frac12)^d=1$.

\begin{lemma}
 \label{lem:binomial}
The following (equivalent) identities hold, for $p \in \N$:
\begin{align}
[p]^!\sum_{\stackrel{k,m \in \N}{k+m =p}} \frac{v^{-2(k-1)m -\frac{p(3-p)}2} }{[2k]^{!!} [2m]^{!!}} &= 1,
  \label{eq:km1}
\\
%[p]^! \sum_{\stackrel{k,m \in \N}{k+m =p}} \frac{v^{\frac{p(p+1)}2-2k(m+1)} }{[2k]^{!!} [2m]^{!!}} &= 1,
%  \label{eq:km2}
%\\
\sum_{k=0}^{p} v^{\frac{p(p+1)}2-2k(p-k+1)} \qbinom{p}{k}_{v^2} & =\frac{[2p]^{!!}}{[p]^!}.
  \label{eq:km3}
\end{align}
\end{lemma}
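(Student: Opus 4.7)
The plan is to show that \eqref{eq:km1} and \eqref{eq:km3} are two algebraic rewrites of a single identity, and then to derive that identity from the non-standard Rogers--Szeg\"o variant flagged in the introduction (attributed to G.~Andrews). That way the proof reduces to a known $v$-binomial formula plus routine bookkeeping of $v$-powers.

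First I would verify the equivalence of \eqref{eq:km1} and \eqref{eq:km3}. Since $[2j] = [2]_v\,[j]_{v^2}$, one has $[2k]^{!!} = [2]^k\,[k]_{v^2}!$, and therefore
\[
\qbinom{p}{k}_{v^2} = \frac{[p]_{v^2}!}{[k]_{v^2}!\,[p-k]_{v^2}!} = \frac{[2p]^{!!}}{[2k]^{!!}\,[2(p-k)]^{!!}}.
\]
Substituting this into \eqref{eq:km3}, dividing by $[2p]^{!!}$, multiplying by $[p]^!$, and setting $m=p-k$, one arrives at
\[
[p]^!\sum_{k+m=p} \frac{v^{p(p+1)/2 - 2k(m+1)}}{[2k]^{!!}[2m]^{!!}} = 1.
\]
On the index set $k+m=p$, a direct check shows $p(p+1)/2 - 2k(m+1) = -2(k-1)m - p(3-p)/2$ (the difference is $2p-2(k+m)=0$), so this is precisely \eqref{eq:km1}. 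Hence it suffices to prove \eqref{eq:km3}.

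Next I would invoke the Andrews variant of the Rogers--Szeg\"o identity recorded in the introduction (cf.\ \cite[Exercise~5, p.~49]{An98}):
\[
\sum_{k=0}^p u^{-k(p-k+1)} \qbinom{p}{k}_u = \prod_{j=1}^p (1+u^{-j}).
\]
Specializing $u=v^2$ and multiplying by $v^{p(p+1)/2}$ gives
\[
\sum_{k=0}^p v^{p(p+1)/2 - 2k(p-k+1)} \qbinom{p}{k}_{v^2} = v^{p(p+1)/2}\prod_{j=1}^p (1+v^{-2j}),
\]
whose left-hand side is the LHS of \eqref{eq:km3}. To match the RHS of \eqref{eq:km3}, I would compute, using $[2j]/[j] = v^j + v^{-j} = v^j(1+v^{-2j})$,
\[
\frac{[2p]^{!!}}{[p]^!} = \prod_{j=1}^p (v^j+v^{-j}) = v^{p(p+1)/2}\prod_{j=1}^p (1+v^{-2j}),
\]
so both sides of \eqref{eq:km3} agree.

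The main obstacle, such as it is, will be simply the bookkeeping of $v$-exponents between the two equivalent formulations; the substantive input is the Andrews/Rogers--Szeg\"o identity, which is available in the literature and explicitly flagged by the authors. Beyond those routine manipulations I do not expect further combinatorial subtlety.
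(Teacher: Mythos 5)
Your proof is correct, and the algebraic bookkeeping checks out: the exponent identity $\frac{p(p+1)}{2} - 2k(p-k+1) = -2(k-1)m - \frac{p(3-p)}{2}$ on $\{k+m=p\}$ is right, the conversion $\qbinom{p}{k}_{v^2} = [2p]^{!!}/([2k]^{!!}[2m]^{!!})$ is right, and the factorization $[2p]^{!!}/[p]^! = \prod_{j=1}^p(v^j+v^{-j}) = v^{p(p+1)/2}\prod_{j=1}^p(1+v^{-2j})$ is right.

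However, you take a genuinely different route from the paper on the substantive half. The paper establishes the equivalence of \eqref{eq:km1} and \eqref{eq:km3} in the same way you do, but then proves \eqref{eq:km3} \emph{from scratch} by induction on $p$, using the $v^2$-Pascal recursion $\qbinom{p+1}{k}_{v^2} = v^{2k}\qbinom{p}{k}_{v^2} + v^{-2(p+1-k)}\qbinom{p}{k-1}_{v^2}$ together with the symmetric reformulation obtained by $k\mapsto p-k$. Only afterwards do the authors remark that \eqref{eq:km3}, after rescaling $v^2\mapsto v$, is the identity \eqref{eq:km5} which they attribute (via G.~Andrews) to a \emph{variant} of a Rogers--Szeg\"o identity. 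You invert this: you take \eqref{eq:km5} as the known input and deduce \eqref{eq:km3}. That inversion is logically sound and shorter, but note that the paper is deliberately cautious in characterizing \cite[Ex.~5, p.~49]{An98} as only a ``variant'' of what is needed; a fully rigorous version of your argument would have to verify that the cited exercise does in fact yield \eqref{eq:km5} after the claimed reformulation, whereas the paper's inductive proof sidesteps that dependence entirely and is self-contained. So your approach buys brevity at the cost of leaning on an external reference whose exact statement is not reproduced; the paper's approach buys self-containment at the cost of a two-step induction.
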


\begin{proof}
Clearly the 2 identities \eqref{eq:km1}--\eqref{eq:km3} are equivalent, by noting that $[2k]^{!!} =[2]^k [k]_{\sqq^2}^{!}$ and $\qbinom{p}{k}_{v^2} = \frac{[2p]^{!!}}{[2k]^{!!} [2m]^{!!}}$ with $m=p-k$.

By switching $k$ to $p-k$ and noting $\qbinom{p}{p-k}_{v^2} =\qbinom{p}{k}_{v^2} $, we see that the identity \eqref{eq:km3} is equivalent to
\begin{align}\sum_{k=0}^{p} v^{\frac{p(p+1)}2-2(k+1)(p-k)} \qbinom{p}{k}_{v^2} & =\frac{[2p]^{!!}}{[p]^!}.
  \label{eq:km4}
\end{align}

It remains to prove \eqref{eq:km3} by induction on $p$. It is clear when $p=0$. Assuming the statement for $p$ \eqref{eq:km3} (and its equivalent \eqref{eq:km4}), we shall prove
\[
\sum_{k=0}^{p+1} v^{\frac{(p+1)(p+2)}2-2k(p-k+2)} \qbinom{p+1}{k}_{v^2} =\frac{[2(p+1)]^{!!}}{[p+1]^!}.
\]

Indeed, using the $v$-binomial identity
$\qbinom{p+1}{k}_{v^2} =v^{2k} \qbinom{p}{k}_{v^2} +v^{-2(p+1-k)} \qbinom{p}{k-1}_{v^2},$
we have
\begin{align*}
%\displaybreak
& \sum_{k=0}^{p+1} v^{\frac{(p+1)(p+2)}2-2k(p-k+2)} \qbinom{p+1}{k}_{v^2}
\\
&= \sum_{k=0}^{p+1} v^{\frac{(p+1)(p+2)}2-2k(p-k+2)}  \left(v^{2k} \qbinom{p}{k}_{v^2} +v^{-2(p-k+1)} \qbinom{p}{k-1}_{v^2} \right)
\\
&= \sum_{k=0}^{p} v^{\frac{(p+1)(p+2)}2-2k(p-k+1)} \qbinom{p}{k}_{v^2} +
\sum_{k=1}^{p+1} v^{\frac{(p+1)(p+2)}2-2(k+1)(p-k+2)+2}  \qbinom{p}{k-1}_{v^2}%,
\\
%\end{align*}
%which can be rewritten as
%\begin{align*}
&\stackrel{(*)}{=} \sum_{k=0}^{p} v^{\frac{(p+1)(p+2)}2-2k(p-k+1)} \qbinom{p}{k}_{v^2} +
\sum_{k=0}^{p} v^{\frac{(p+1)(p+2)}2-2(k+2)(p-k+1)+2}  \qbinom{p}{k}_{v^2}
\\
&= v^{p+1} \sum_{k=0}^{p} v^{\frac{p(p+1)}2-2k(p-k+1)} \qbinom{p}{k}_{v^2} +
 v^{-p-1} \sum_{k=0}^{p} v^{\frac{p(p+1)}2-2(k+1)(p-k)}  \qbinom{p}{k}_{v^2}
\\
&\stackrel{(**)}{=} v^{p+1} \frac{[2p]^{!!}}{[p]^!} + v^{-p-1} \frac{[2p]^{!!}}{[p]^!}
= \frac{[2(p+1)]^{!!}}{[p+1]^!},
\end{align*}
where the identity $(*)$ is obtained by shifting the index $k$ in the second summand on the LHS to $k+1$, and $(**)$ uses the inductive assumption \eqref{eq:km3}--\eqref{eq:km4}.
\end{proof}
Identity \eqref{eq:km3} (after a rescaling $v^2 \mapsto v$) can be further reformulated as the following identity (also compare \cite[Ex. 5, pp.49]{An98}):
\begin{align}
  \label{eq:km5}
%\sum_{k=0}^{p} v^{-2k(p-k+1)} \qbinom{p}{k}_{v^2} & =  \prod_{j=1}^p (1+v^{-2j}),
%\\
\sum_{k=0}^{p} v^{-k(p-k+1)} \qbinom{p}{k}  & =  \prod_{j=1}^p (1+v^{-j}).
\end{align}

\begin{lemma}
 \label{lem:multinomial}
The following identity holds, for $d \ge 1$:
\begin{align}  \label{eq:kmrd}
\sum_{\stackrel{k,m,r \in \N}{k+m+r =d}} (-1)^r \frac{v^{{r+1 \choose 2} -2(k-1)m}}{[r]^! [2k]^{!!} [2m]^{!!}} =0.
\end{align}
\end{lemma}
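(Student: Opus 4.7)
\smallskip

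\noindent\textbf{Proof plan.} My plan is to reduce the triple sum to a single alternating sum over $r$ by first performing the inner summation over $k,m$ using the identity already established in Lemma~\ref{lem:binomial}, and then to recognise the resulting sum as a specialisation of the $v$-binomial theorem \eqref{eq:BF} that vanishes because of a zero factor. No new combinatorial input beyond the previous lemma should be needed.

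\smallskip

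First I would split the sum by fixing $r$ and summing over $k+m=d-r$:
\[
\sum_{k+m+r=d}(-1)^r\frac{v^{\binom{r+1}{2}-2(k-1)m}}{[r]^!\,[2k]^{!!}\,[2m]^{!!}}
=\sum_{r=0}^{d}(-1)^r\frac{v^{\binom{r+1}{2}}}{[r]^!}\sum_{\substack{k,m\in\N\\k+m=d-r}}\frac{v^{-2(k-1)m}}{[2k]^{!!}[2m]^{!!}}.
\]
Applying \eqref{eq:km1} with $p=d-r$ rewrites the inner sum as $v^{(d-r)(3-d+r)/2}/[d-r]^!$, so after multiplying through by $[d]^!$ it remains to show
\[
\sum_{r=0}^{d}(-1)^r\qbinom{d}{r} v^{\binom{r+1}{2}+(d-r)(3-d+r)/2}=0.
\]

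\smallskip

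Next I would simplify the exponent. A direct expansion gives
\[
\binom{r+1}{2}+\frac{(d-r)(3-d+r)}{2}=r(d-1)+\frac{d(3-d)}{2},
\]
so, pulling out the $r$-independent factor $v^{d(3-d)/2}$, the claim reduces to
\[
\sum_{r=0}^{d}(-1)^r v^{r(d-1)}\qbinom{d}{r}=0\qquad(d\ge 1).
\]
Finally, I would apply the $v$-binomial formula \eqref{eq:BF} with $p=d$ and $z=-v^{2(d-1)}$: the left-hand side above equals
\[
\prod_{j=0}^{d-1}\bigl(1-v^{2(d-1-j)}\bigr),
\]
and the $j=d-1$ factor is $1-v^0=0$, so the whole product vanishes. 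This completes the proof.

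\smallskip

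Essentially no step here is an obstacle: the one subtle point is bookkeeping the exponent of $v$ in passing from the original triple sum to the single $v$-binomial sum, but since both \eqref{eq:km1} and \eqref{eq:BF} are at our disposal, the argument is routine. Note also that the identity \eqref{eq:kmrd} is false when $d=0$ (the sum equals $1$), which is consistent with the restriction $d\ge 1$ coming from the vanishing factor at $j=d-1$.
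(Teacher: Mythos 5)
Your proof is correct and is essentially identical to the paper's own argument: both collapse the inner $(k,m)$-sum via \eqref{eq:km1} with $p=d-r$, simplify the resulting exponent to $r(d-1)+\tfrac{d(3-d)}{2}$, and invoke the $v$-binomial formula \eqref{eq:BF} to conclude that the single sum vanishes. You spell out slightly more detail (the explicit zero factor at $j=d-1$), but no step departs from the paper's route.
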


\begin{proof}
Using \eqref{eq:km1} we have
\begin{align*}
\sum_{\stackrel{k,m,r \in \N}{k+m+r =d}} (-1)^r \frac{v^{{r+1 \choose 2} -2(k-1)m}}{[r]^! [2k]^{!!} [2m]^{!!}}
&= \sum_{r=0}^d (-1)^r \frac{v^{{r+1 \choose 2}} \cdot v^{\frac{(d-r)(3-d+r)}{2}}}{[r]^![d-r]^!}
\\
& = \frac{v^{\frac{3d-d^2}2}}{[d]^!} \sum_{r=0}^d (-1)^r  v^{(d-1)r} \qbinom{d}{r} =0.
\end{align*}
In the last step above, we have used the standard $v$-binomial formula \eqref{eq:BF}.
\end{proof}

\subsection{Proof of Identity \eqref{eq:T=0} }
 \label{subsec:proofT}

It is crucial for our purpose to introduce a new variable
\[
w=n +m -k -d
\]
in place of $n$ in \eqref{def:T}.
Hence we have $\qbinom{u}{1+a -n -d-m+k}_\sqq =\qbinom{u}{1+a -2d-w}_\sqq$ and
\begin{equation}  \label{eq:n-w}
n=w-m +k+d \equiv w+r \pmod 2.
\end{equation}
The condition $r \le n-2k$ in $T(a, d, u)$ in \eqref{def:T} is transformed into the condition $w\ge 0$.

By a direct computation we can rewrite $z$ in \eqref{eq:z} as
\begin{equation}  \label{eq:z2}
 z={r+1 \choose 2} -2(k-1)m + L,
\end{equation}
where
\[
L %=d(d-1)  -w(1+2a-2d-w) +(u-1-a+2d+3w)(1+a-2d-w) +w^2 +(1+a -2d-w)  +1
 =d(d-1)
 -a w  +(u-a+2d+2w)(1+a-2d-w) +w^2  +1.
\]
(We do not need the precise formula for $L$ except noting that $L$ is independent of $k,m,r$, and only depends on $a, d, w, u$.)
Hence, for fixed $a, w, d, u$, using \eqref{eq:n-w}--\eqref{eq:z2} and Lemma~\ref{lem:multinomial}, we calculate that the contribution to the coefficient of $\qbinom{u}{1+a -2d-w}_\sqq$ in $T(a, d, u)$ in \eqref{def:T}, for $d>0$, is equal to
\begin{align*}
&(-1)^w \sqq^L \left( \sum_{\stackrel{k,m,r \in \N, r \; {\rm even}}{k+m+r =d}} (-1)^r \frac{v^{{r+1 \choose 2} -\brown{2(k-1)m}}}{[r]^! [2k]^{!!} [2m]^{!!}}
+  \sum_{\stackrel{k,m,r \in \N, r \; {\rm odd}}{k+m+r =d}}  (-1)^r \frac{v^{{r+1 \choose 2} -\brown{2k(m-1)}}}{[r]^! [2k]^{!!} [2m]^{!!}} \right)
\\
&\stackrel{(*)}{=}  (-1)^w \sqq^L \sum_{\stackrel{k,m,r \in \N}{k+m+r =d}} (-1)^r \frac{v^{{r+1 \choose 2} -2(k-1)m}}{[r]^! [2k]^{!!} [2m]^{!!}}
=0.
\end{align*}
Note that the identity $(*)$ above is obtained by switching notation $k \leftrightarrow m$ in the second summand on the LHS of $(*)$. Therefore, we have obtained that $T(a, d, u)=0$, for $d>0$.

It remains to determine the contributions of the terms with $d=0$ to $T(a, 0, u)$ in \eqref{def:T}, for fixed $a,  u$;  recall from \eqref{eq:adu} that $u>0$ when $d=0$. In this case, we have $k=m=r=0$, and a direct computation shows that the power $z$ can be simplified to be $z =(1-u)w +(1+a)u +1$. Then, for $0<u \leq a+1$, we have
\begin{align*}
T(a, 0, u)
&= \sqq^{(1+a)u +1} \sum_{w \ge 0}  (-1)^w \sqq^{(1-u)w} \qbinom{u}{1+a -w}
\\
&\stackrel{(1)}{=} (-1)^{1+a} \sqq^{(1+a)+1}\sum_{x\ge 0} (-1)^x \sqq^{(u-1)x}  \qbinom{u}{x}
\stackrel{(2)}{=}  0,
\end{align*}
 where we have changed variables $x=1+a-w$ in the identity $(1)$, and used the $v$-binomial formula \eqref{eq:BF} in $(2)$ above.

Therefore, we have established the identity \eqref{eq:T=0}.

\subsection{Proof of Identity \eqref{eq:T1=0} }
 \label{subsec:proofT1}

 The proof is essentially the same as the proof in \S\ref{subsec:proofT} for the identity \eqref{eq:T=0}, with some   modification of details below.

 Going through \S\ref{subsec:proofT}, we calculate that the contribution to the coefficient of $\qbinom{u}{1+a -2d-w}_\sqq$ in $T_1(a, d, u)$ (see \S\ref{subsec:proofodd} for definition of $T_1$), for $d>0$, is equal to
\begin{align*}
&(-1)^w \sqq^L \left( \sum_{\stackrel{k,m,r \in \N, r \; {\rm even}}{k+m+r =d}} (-1)^r \frac{v^{{r+1 \choose 2} - \blue{2k(m-1)}} }{[r]^! [2k]^{!!} [2m]^{!!}}
+  \sum_{\stackrel{k,m,r \in \N, r \; {\rm odd}}{k+m+r =d}}  (-1)^r \frac{v^{{r+1 \choose 2} -\blue{2(k-1)m} }}{[r]^! [2k]^{!!} [2m]^{!!}} \right)
\\
&= (-1)^w \sqq^L \sum_{\stackrel{k,m,r \in \N}{k+m+r =d}} (-1)^r \frac{v^{{r+1 \choose 2} -2(k-1)m }}{[r]^! [2k]^{!!} [2m]^{!!}}
=0.
\end{align*}
Therefore, we obtain that $T_1(a, d, u)=0$, for $d>0$.
In exactly the same way as in \S\ref{subsec:proofT}, we see $T_1(a, 0, u)=0$, for $0<u \leq a+1$.
This proves the identity \eqref{eq:T1=0}.

Hence the proofs of Proposition~\ref{prop:T=0} and then of Theorem~\ref{thm: iserre relation} are completed.

%%%%%%%
%%%%%%%
\section{$\imath$Hall algebras and $\imath$quantum groups}
  \label{sec:iQGHall}

In this section, we establish several more identities in the $\imath$Hall algebras corresponding to the relations \eqref{relation1}, \eqref{relation3} and a remaining part of \eqref{relation6}. Then we prove the main theorem which provides a Hall algebra realization of the $\imath$quantum groups.
\subsection{Relation \eqref{relation1} }

Recall the Euler form $\langle \cdot, \cdot \rangle_Q$ is used in the twisted product of the $\imath$Hall algebra $\tMHk$.

%\begin{lemma}[\cite{LW19a}]
%   \label{lemma compatible of Euler form}
%We have
%\begin{itemize}
%\item[(i)]
%$\langle \bK_i, M\rangle = \langle S_i,\res (M) \rangle_Q$ and $\langle M,\bK_i\rangle =\langle \res(M), S_{\btau i} \rangle_Q$, for $i\in Q_0$ and $M\in\mod(\Lambda^{\imath})$;
%\item[(ii)] $\langle M,N\rangle=\frac{1}{2}\langle \res(M),\res(N)\rangle_Q$, for $M,N\in\cp^{\leq 1}(\Lambda^\imath)$.
%\end{itemize}
%\end{lemma}

We have the following identities in $\tMHk$ corresponding to the relation \eqref{relation1} in $\tUi$.

\begin{proposition}
\label{prop:relation1}
Let $(Q,\btau)$ be an $\imath$quiver. Then the following identities hold in $\tMHk$, for $i,j \in \I$:
\begin{align*}
[\bK_i]*[S_j] &= \sqq^{ c_{\btau i,j}-c_{ij}}  [S_j]*[\bK_i],\\
[\bK_i]*[\bK_j] &=[\bK_j]*[\bK_i].
\end{align*}
\end{proposition}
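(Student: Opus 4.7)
The plan is to establish both identities by direct computation, unwinding the definition of the twisted multiplication \eqref{eqn:twsited multiplication}, the $\diamond$-bimodule action \eqref{right module structure}, and the Euler form identities \eqref{Eform2}--\eqref{Eform3} from Lemma~\ref{lem:Euler}. First I would record that, by \eqref{eq:E}, the restriction $\res(\bK_i) \in \mod(\K Q)$ is isomorphic to $S_i \oplus S_{\btau i}$ (understood as $S_i^{\oplus 2}$ when $\btau i = i$), so that
\[
\langle \res(\bK_i), \res(S_j)\rangle_Q = \langle S_i, S_j\rangle_Q + \langle S_{\btau i}, S_j\rangle_Q
\]
and similarly with the arguments swapped.

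For the first identity, I would expand
\begin{align*}
[\bK_i] * [S_j] &= \sqq^{\langle S_i, S_j\rangle_Q + \langle S_{\btau i}, S_j\rangle_Q}\, q^{-\langle \bK_i, S_j\rangle}\, [\bK_i \oplus S_j],\\
[S_j] * [\bK_i] &= \sqq^{\langle S_j, S_i\rangle_Q + \langle S_j, S_{\btau i}\rangle_Q}\, q^{-\langle S_j, \bK_i\rangle}\, [\bK_i \oplus S_j].
\end{align*}
Using \eqref{Eform2} we have $\langle \bK_i, S_j\rangle = \langle S_i, S_j\rangle_Q$ and $\langle S_j, \bK_i\rangle = \langle S_j, S_{\btau i}\rangle_Q$. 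Taking the ratio and using $q = \sqq^2$, the exponent of $\sqq$ collapses, via the standard symmetry $\langle S_a, S_b\rangle_Q + \langle S_b, S_a\rangle_Q = c_{ab}$, to exactly $c_{\btau i, j} - c_{i j}$, yielding the first identity.

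For the second identity, both $\bK_i$ and $\bK_j$ lie in $\cp^{\leq 1}(\Lambda^\imath)$, so \eqref{Eform3} applies and gives $\langle \bK_i, \bK_j\rangle = \tfrac{1}{2}\langle \res(\bK_i), \res(\bK_j)\rangle_Q$. The factor $q^{-\langle \bK_i, \bK_j\rangle}$ coming from $\diamond$ thus precisely cancels the twist $\sqq^{\langle \res(\bK_i), \res(\bK_j)\rangle_Q}$ coming from \eqref{eqn:twsited multiplication}, leaving $[\bK_i] * [\bK_j] = [\bK_i \oplus \bK_j]$, which is symmetric in $i,j$.

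There is no genuine obstacle here; this is a bookkeeping exercise with the machinery already in place. The only mild care required is in identifying $\res(\bK_i)$ correctly when $\btau i = i$ (where it becomes $S_i^{\oplus 2}$ rather than a sum of two distinct simples), which is already built into the formula $\res(\bK_i) \cong S_i \oplus S_{\btau i}$.
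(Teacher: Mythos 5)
Your proposal is correct and takes essentially the same route as the paper: both unwind the twisted multiplication and the $\diamond$-action, invoke $\res(\bK_i)\cong S_i\oplus S_{\btau i}$, and use \eqref{Eform2} (for the first identity) and \eqref{Eform3} (for the second), with the final collapse via the symmetrization $\langle S_a,S_b\rangle_Q+\langle S_b,S_a\rangle_Q=c_{ab}$. The only difference is cosmetic — you spell out the cancellation in $[\bK_i]*[\bK_j]$ that the paper dispatches with "follows from \eqref{Eform3}."
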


\begin{proof}
By Lemma \ref{lem:Euler}, we have
\begin{align*}
[\bK_i]*[S_{j}]&=\sqq^{\langle \res(\bK_i),\res (S_{j})\rangle_Q} q^{-\langle \bK_i, S_{j}\rangle}[\bK_i\oplus S_{j}]\\
&=\sqq^{\langle S_{\btau i},S_{j}\rangle_Q-\langle S_i,S_{j}\rangle_Q}[\bK_i\oplus S_j],
\\
[S_{j}]*[\bK_i] &=\sqq^{\langle S_{j},S_i\rangle_Q-\langle S_{j},S_{\btau i}\rangle_Q}[\bK_i\oplus S_{j}].
\end{align*}
Hence we have
\begin{align*}
[\bK_i]*[S_{j}]&= \sqq^{\langle S_{\btau i},S_{j}\rangle_Q-\langle S_i,S_{j} \rangle_Q-\langle S_{j},S_i\rangle_Q+\langle S_{j},S_{\btau i}\rangle_Q}[S_{j}]*[\bK_i]\\
&= \sqq^{(S_{\btau i},S_{j})-(S_i,S_{j})} [S_{j}]*[\bK_i]\\
&= \sqq^{ c_{\btau i,j}-c_{ij}}  [S_j]*[\bK_i].
\end{align*}
This proves the first formula. The second formula follows from \eqref{Eform3}.
\end{proof}

\subsection{Relation \eqref{relation3} }

We first recall the usual Serre relation in the twisted Ringel-Hall algebra associated to $Q$ over $\K$, denoted by $(\widetilde{\ch}(\K Q), *)$.

\begin{lemma}[\cite{Rin, Gr95}]
\label{lem: Rin KM}
Let $Q= \xymatrix{1\ar@<1ex>[r]^{\alpha_1}_{\cdots} \ar@<-1ex>[r]_{\alpha_{a}}  & 2}$.
The following identity holds in $\widetilde{\ch}(\K Q)$, for $i\neq j \in \I$:
\begin{align*}
&\sum_{r+s=a+1}(-1)^r[S_i]^{(r)} * [S_j] *[S_i]^{(s)}=0.
\end{align*}
\end{lemma}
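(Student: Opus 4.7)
The plan is to deduce this as an instance of the celebrated Ringel-Green theorem, which identifies the composition subalgebra of the twisted Ringel-Hall algebra $(\widetilde{\ch}(\K Q), *)$ of any acyclic quiver $Q$ with the positive half $\U^+$ of the quantum group associated to the underlying generalized Cartan matrix. For the quiver $Q=\xymatrix{1\ar@<0.5ex>[r] \ar@<-0.5ex>[r]  & 2}$ with $a$ arrows (Cartan entry $c_{12}=c_{21}=-a$), this composition subalgebra is generated by $[S_1]$ and $[S_2]$ (suitably rescaled), and under the isomorphism $\U^+_{\bv=\sqq}\cong$ composition subalgebra, the divided power $E_i^{(m)}$ corresponds to $[S_i]^{(m)}$. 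Applying this isomorphism to the quantum Serre relation \eqref{eq:serre1} for $\U^+$ immediately yields the identity in the statement. This is the conceptually cleanest route and is the one I would actually take.

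A direct Hall-theoretic verification is also available, and proceeds in three steps. First, compute the product $[sS_i]*[S_j]*[tS_i]$ in $(\widetilde{\ch}(\K Q),*)$ by a calculation entirely parallel to Proposition~\ref{prop:SSSM}, but without the $[\bK_1]^r$ factors, since $\K Q$ is hereditary and no projective $\bK_i$ appears; the parameter $r$ still arises as the rank of the morphism $S_i^{\oplus s}\to S_j\oplus S_i^{\oplus t}$, giving a sum over isomorphism classes $[M]$ with $\dimv M=(s+t-2r)\widehat S_i+\widehat S_j$, with coefficients expressible via the Riedtmann-Peng formula (Lemma~\ref{lem: Ried-P}) in terms of $\sqq$-binomial coefficients $\qbinom{u_M}{t-r}_\sqq$. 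Second, classify such modules: each decomposes as $N\oplus S_i^{\oplus u_M}$ with $N$ indecomposable of dimension $\leq a$ in the $S_i$-direction. Third, substitute into $\sum_{r+s=a+1}(-1)^r [S_i]^{(r)}*[S_j]*[S_i]^{(s)}$ and collect the coefficient of each fixed $[M]*[\bK$-irrelevant$]$: after a change of summation variables, it reduces to an instance of the $\sqq$-binomial identity
\[
\sum_{k=0}^{a+1}(-1)^k\sqq^{-(a-2k)k}\qbinom{a+1}{k}_\sqq=0,
\]
which is a direct specialization of the quantum binomial formula \eqref{eq:BF}.

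The main obstacle in the direct approach is the bookkeeping of Euler twists, automorphism groups, and module classifications; this is precisely what Ringel-Green abstracts away. Since the two vertex cases $(i,j)=(1,2)$ and $(2,1)$ are symmetric (interchange $Q$ and $Q^{\mathrm{op}}$, using $\widetilde{\ch}(\K Q^{\mathrm{op}})\simeq\widetilde{\ch}(\K Q)^{\mathrm{op}}$), it suffices to handle one of them. I therefore expect to simply cite \cite{Rin, Gr95} for the full statement, noting that the relation is the image of the quantum Serre relation under the natural inclusion $\U^+_{\bv=\sqq}\hookrightarrow \widetilde{\ch}(\K Q)$.
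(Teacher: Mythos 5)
Your primary approach — citing the Ringel--Green theorem and noting that the identity is the image of the quantum Serre relation under the inclusion of the quantum group's negative (or positive) half into the twisted Hall algebra — is exactly what the paper does: Lemma~\ref{lem: Rin KM} is stated with the citation \cite{Rin, Gr95} and no further proof, and the corresponding embedding is recorded as Lemma~\ref{lem: Green} (using $\U^-$ and $F_j$ rather than $\U^+$ and $E_j$, which is immaterial since the Serre relations for the two halves have identical form). One small caution on your alternative direct sketch: the claimed closing identity $\sum_{k=0}^{a+1}(-1)^k\sqq^{-(a-2k)k}\qbinom{a+1}{k}_\sqq=0$ has a quadratic term $2k^2$ in the exponent and so is \emph{not} a direct specialization of the quantum binomial formula~\eqref{eq:BF}, which produces only exponents linear in $k$; the identity that actually arises after fixing a module class $[M]$ and collecting the $\qbinom{u_M}{\cdot}_\sqq$ coefficients is of the type in Lemma~\ref{lem:qbinom1}(1), namely $\sum_{t}(-1)^t v^{-dt}\qbinom{p}{t}=0$ with $p=u_M$ and a linear exponent whose vanishing uses $|d|\le p-1$ and the parity constraint.
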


Recall the definition of virtually acyclic $\imath$quivers from Definition~\ref{def:i-acyclic}.

\begin{proposition}
\label{prop:SerreU}
Let $(Q,\btau)$ be a virtually acyclic $\imath$quiver. The following identity holds in $\tMHk$, for any $i\neq j\in\I$ such that $\btau i\neq i$:
\begin{align*}
&\sum_{r+s= 1-c_{ij}}(-1)^r[S_i]^{(r)} * [S_j] *[S_i]^{(s)}=0.
\end{align*}
\end{proposition}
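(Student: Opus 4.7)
The strategy is to reduce the identity to the classical quantum Serre relation in the Ringel--Hall algebra of a rank-two acyclic subquiver, which is already recorded in Lemma~\ref{lem: Rin KM} (and its mirror obtained by reversing the arrows). The key observation is that under the assumption $\btau i\neq i$ (and in force as part of relation \eqref{relation3}, $j\neq \btau i$), the full subquiver $Q''$ of $Q$ on the vertex set $\{i,j\}$ is acyclic: a 2-cycle in $Q$ can only occur between a vertex and its $\btau$-image by the virtually acyclic hypothesis, and neither $\{i,j\}$ nor $\{j,i\}$ is a $\btau$-pair. Moreover $Q''$ also coincides with the full subquiver of $\ov Q$ on $\{i,j\}$, because $\varepsilon_i$ leaves this set (its target $\btau i$ is neither $i$ nor $j$), and $\varepsilon_j$ either leaves this set (if $\btau j\neq j$, noting $\btau j\neq i$ lest $j=\btau i$) or is a loop at $j$ (if $\btau j=j$).

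First, I would show that the product $[S_i]^{(r)}*[S_j]*[S_i]^{(s)}$ computed in $\tMHk$ only involves classes $[L]$ of $\Lambda^\imath$-modules $L$ of dimension vector $(r+s)\widehat{S_i}+\widehat{S_j}$, and that every such $L$ is the pullback of a unique $\K Q''$-module via $\iota\colon\mod(\K Q'')\hookrightarrow\mod(\iLa)$. Indeed, for such $L$ we have $L_k=0$ whenever $k\notin\{i,j\}$, so $L(\varepsilon_i)\colon L_i\to L_{\btau i}=0$ is zero; in the case $\btau j\neq j$ the map $L(\varepsilon_j)\colon L_j\to L_{\btau j}=0$ likewise vanishes, while in the case $\btau j=j$ the single $S_j$-summand forces $\dim L_j=1$, so $L(\varepsilon_j)^2=0$ together with nilpotency gives $L(\varepsilon_j)=0$. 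Hence every such $L$ lies in the essential image of $\iota$, and $\Hom_{\iLa}$ and $\Ext^1_{\iLa}$ between such modules agree with those in $\mod(\K Q'')$.

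Second, I would check compatibility of the twists. The twisted multiplication \eqref{eqn:twsited multiplication} in $\tMHk$ uses $\langle\res(\cdot),\res(\cdot)\rangle_Q$, and its restriction to $\Z\widehat{S_i}\oplus\Z\widehat{S_j}\subseteq K_0(\mod(\K Q))$ coincides with $\langle\cdot,\cdot\rangle_{Q''}$, since the two quivers share the same arrows between $i$ and $j$ and no others contribute to this restricted form. Combining the first two steps, $\iota$ induces a $\Q(\sqq)$-algebra embedding $\widetilde{\ch}(\K Q'')\hookrightarrow\tMHk$ sending $[S_i]\mapsto[S_i]$ and $[S_j]\mapsto[S_j]$. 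Since $[S_i]^{(r)}*[S_j]*[S_i]^{(s)}$ lies in the image of this embedding for each $r,s$, the desired identity in $\tMHk$ follows from the corresponding one in $\widetilde{\ch}(\K Q'')$, i.e.\ from Lemma \ref{lem: Rin KM} (or its mirror).

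No serious obstacle is expected: the whole content is the verification that the $\varepsilon$-action is forced to vanish on any extension built from $S_i$ and a single $S_j$, which is precisely what the virtually acyclic condition together with $j\neq\btau i$ guarantees. Everything else is a routine transfer of the classical Ringel--Green result along the pullback~$\iota$.
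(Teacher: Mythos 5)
Your proof is correct and follows essentially the same route as the paper's: both reduce to Lemma~\ref{lem: Rin KM} by observing that every module appearing with nonzero coefficient in $[S_i]^{(r)}*[S_j]*[S_i]^{(s)}$ has dimension vector $(r+s)\widehat{S_i}+\widehat{S_j}$, hence trivial $\varepsilon$-actions, hence lies in $\mod(\K Q)$. You go further than the paper's terse argument in two useful ways: you make explicit the hypothesis $j\neq\btau i$ (without which the statement actually fails, in light of the BK relation~\eqref{relation5}) and you verify the Euler-form twist compatibility, both of which the paper leaves implicit.
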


\begin{proof}
We have
$[S_i]^{(r)} * [S_j] *[S_i]^{(s)}=\sum_{[M]\in\mod(\Lambda^\imath)} \bp_M[M]$. If $\bp_M\neq0$, then
$M\in\mod(\K Q)$. So it boils down to the same computation as computing $[S_i]^{(r)} * [S_j] *[S_i]^{(s)}$ in $\widetilde{\ch}(\K Q)$.
Therefore the proposition follows from Lemma~ \ref{lem: Rin KM}.
\end{proof}

\subsection{Relation \eqref{relation6} for $j\neq \btau j$}

Let $Q= \xymatrix{1\ar@<1ex>[r]^{\alpha_1}_{\cdots} \ar@<-1ex>[r]_{\alpha_{a}}  & 2 &3\ar@<-1ex>[l]_{\beta_1}^{\cdots} \ar@<1ex>[l]^{\beta_{a}}  }$ with involution $\btau$ given by $\btau 1=3$ and $\btau 2=2$, where $a=-c_{12}$. Then the quiver $\ov{Q}$ of $\Lambda^\imath$ is
\begin{center}\setlength{\unitlength}{0.8mm}
 \begin{equation}
 \label{diag:qsplit KM}
 \begin{picture}(50,10)(0,-10)
\put(0,-2){$1$}
\put(20,-2){$3$}
\put(-1,-11){$_{\alpha_a}$}
\put(7,-9){$_{\alpha_1}$}
\put(12,-9){$_{\beta_1}$}
\put(19,-11){$_{\beta_a}$}
\put(3.7,-10){$\cdot$}
\put(4.2,-11){$\cdot$}
\put(4.7,-12){$\cdot$}

\put(0,-3){\vector(1,-2){8}}
\put(2.5,-2){\vector(1,-2){8}}
\put(19.5,-2){\vector(-1,-2){8}}
\put(16.7,-10){$\cdot$}
\put(16.2,-11){$\cdot$}
\put(15.7,-12){$\cdot$}
\put(22,-3){\vector(-1,-2){8}}
\put(10,-22){$2$}
\color{purple}
\put(3,1){\vector(1,0){16}}
\put(19,-1){\vector(-1,0){16}}
\put(10,1){$^{\varepsilon_1}$}
\put(10,-4){$_{\varepsilon_3}$}
\put(10,-28){$_{\varepsilon_2}$}
\begin{picture}(50,23)(-10,19)
\color{purple}
\qbezier(-1,-1)(-3,-3)(-2,-5.5)
\qbezier(-2,-5.5)(1,-9)(4,-5.5)
\qbezier(4,-5.5)(5,-3)(3,-1)
\put(3.1,-1.4){\vector(-1,1){0.3}}
\end{picture}
\end{picture}
\vspace{1.4cm}
\end{equation}
\end{center}

The following is a variant of Theorem~\ref{thm: iserre relation} and will be derived from it.

\begin{proposition}
\label{prop:relation62}
Let $\Lambda^\imath$ be the $\imath$quiver algebra with its  quiver (or opposite quiver) given by \eqref{diag:qsplit KM}.
Then the following identities hold in $\tMHk$, for any $\overline{p}\in\Z/2\Z$ and $j=1,3$:
\begin{align}
\sum_{n=0}^{1+a} (-1)^n  [S_2]_{\overline{p}}^{(n)}*[S_j] *[S_2]_{\overline{a}+\overline{p}}^{(1+a-n)} =0.
\label{eqn:iserre3}
\end{align}
\end{proposition}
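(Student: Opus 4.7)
The plan is to derive \eqref{eqn:iserre3} from identity \eqref{eqn:iserre2} of Theorem~\ref{thm: iserre relation} via a ``localization'' of the Hall algebra computation to a smaller $\imath$quiver. First, the automorphism $\btau$ of the $\imath$quiver $(Q,\btau)$ in \eqref{diag:qsplit KM} (swapping vertices $1$ and $3$, fixing $2$) induces an algebra automorphism of $\tMHk$ fixing $[S_2]$ and $[\bK_2]$ and exchanging $[S_1] \leftrightarrow [S_3]$; hence the $j=3$ case of \eqref{eqn:iserre3} follows from the $j=1$ case, on which I focus.

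Let $(Q^*, \btau^* = \Id)$ denote the rank-two $\imath$quiver whose underlying quiver $Q^*$ has vertices $\{1,2\}$ and $a$ arrows $1 \to 2$; this is precisely the setting of \eqref{eqn:iserre2} in Theorem~\ref{thm: iserre relation}. Expanding each $\imath$divided power $[S_2]^{(n)}_{\bar p}$ via Propositions~\ref{prop:iDPev}--\ref{prop:iDPodd} reduces the LHS of \eqref{eqn:iserre3} to a $\Q(\sqq)$-linear combination of iterated Hall products of $[S_1]$, $[S_2]$, $[\bK_2]$ in which $[S_1]$ appears exactly once per summand. The identical expansion computes the LHS of \eqref{eqn:iserre2} in $\tMHk(Q^*)$. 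It therefore suffices to show that these two expansions yield the same linear combination of Hall basis elements, under the obvious identification of the generators $[S_1], [S_2], [\bK_2]$; the vanishing in $\tMHk(Q^*)$ from Theorem~\ref{thm: iserre relation} will then transfer to $\tMHk(Q)$.

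To verify this coincidence, observe that every $\Lambda^\imath$-module $M$ appearing in any intermediate Hall product satisfies $\dim M_1 \le 1$ (since $[S_1]$ enters at most once and dimension vectors are additive under extensions) and, in the $(Q,\btau)$-setting, is automatically supported on $\{1,2\}$ (as neither $[S_3]$ nor $[\bK_3]$ appears). In $\Lambda^\imath(Q^*)$, the constraint $\dim M_1 \le 1$ forces $M(\varepsilon_1) = 0$ (since $\varepsilon_1^2 = 0$ on a $1$-dimensional space implies $\varepsilon_1 = 0$), and the defining relation $\varepsilon_2 \alpha_i = \alpha_i \varepsilon_1$ degenerates to $\varepsilon_2 \alpha_i = 0$. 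In $\Lambda^\imath(Q)$, the constraint $M_3 = 0$ forces $M(\varepsilon_1) = 0 = M(\beta_i)$, and the relation $\varepsilon_2 \alpha_i = \beta_i \varepsilon_1$ likewise degenerates to $\varepsilon_2 \alpha_i = 0$. Thus the ``admissible'' modules in the two settings are described by the same data $(M_1, M_2, M(\alpha_i), M(\varepsilon_2))$ subject to the same relations; consequently the relevant $\Hom$, $\Ext^1$, and Euler form values all coincide, so the Hall structure constants match term by term.

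The main subtlety---and the only real obstacle---is to ensure that the constraint $\dim M_1 \le 1$ persists through every iterated Hall product occurring in the expansion; this however is immediate from additivity of dimension vectors under short exact sequences together with the fact that $[S_1]$ occurs in exactly one position of each summand.
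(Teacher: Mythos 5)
Your argument is correct and follows essentially the same route as the paper's proof: reduce to $j=1$ by $\btau$-symmetry, observe that all modules arising in the expansion are supported on vertices $\{1,2\}$ and are governed by the same quotient algebra $\K Q'/(\varepsilon_2^2,\varepsilon_2\alpha_i)$ that one also obtains from $\Lambda^\imath(Q^*)$ by killing $\varepsilon_1$ (legitimate since $\dim M_1\le 1$ forces $M(\varepsilon_1)=0$), and then invoke \eqref{eqn:iserre2} of Theorem~\ref{thm: iserre relation}. Your write-up spells out the coincidence of $\Hom$, $\Ext^1$, and Euler-form data in more detail than the paper, which passes directly to the quotient algebra $'\Lambda^\imath$ and simply asserts that ``it is the same computation.''
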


\begin{proof}
Set $i=2$. It suffices to prove the case when $j=1$.
Consider the full subquiver $Q'$ of $\ov{Q}$ formed by vertices $1$ and $2$. Let $'\Lambda^\imath:= \K Q'/(\varepsilon_2^2)$. We have
$[S_i]^{(r)} * [S_j] *[S_i]^{(s)}=\sum_{[M]} \bp_M[M]$. For any $[M]\in\mod(\Lambda^\imath)$ such that $\bp_M\neq0$, we have
$M\in\mod('\Lambda^\imath)$. In this proof, we shall denote the opposite quiver in \eqref{diag:split2} by $Q''$ and its $\imath$quiver algebra (i.e., the one in Theorem \ref{thm: iserre relation}) by $''\Lambda^\imath$. Then $'\Lambda^\imath$ can be viewed as a quotient algebra (and also a subalgebra) of $''\Lambda^\imath$ naturally. So it is the same computation as computing $[S_i]^{(r)} * [S_j] *[S_i]^{(s)}$ in ${\widetilde{\ch}(\K Q'',\Id)}$. Therefore the proposition follows from Theorem~ \ref{thm: iserre relation}.
%Then it is obvious that
%$$
%$'\Lambda^\imath$ is a $1$-Gorenstein algebra. \blue{Consider the ideal $I_{'\Lambda^\imath}$ of $\ch('\Lambda^\imath)$ generated by
%$$\{[M]-[K\oplus N]\mid 0\rightarrow K\rightarrow M\rightarrow N\rightarrow0, K\in\add\bK_2\}.$$
%We can define $\cm\ch('\Lambda^\imath)$ to be $(\ch('\Lambda^\imath)/I_{'\Lambda^\imath})[[K]^{-1}\mid K\in\add\bK_2]$, and its twisted version $\cs\cd\widetilde{\ch}('\Lambda^\imath)$. } Then $\cs\cd\widetilde{\ch}('\Lambda^\imath)$ is a subalgebra of $\tMHk$.
 %In this way,  $\cs\cd\widetilde{\ch}('\Lambda^\imath)$ is also a subalgebra of $\cs\cd\widetilde{\ch}(''\Lambda^\imath)$.
%
%Hence we can regard the computation of the LHS of \eqref{eqn:iserre3} in  $\cs\cd\widetilde{\ch}('\Lambda^\imath)$ and then in $\cs\cd\widetilde{\ch}(''\Lambda^\imath)$. Then the identity \eqref{eqn:iserre3} follows from  \eqref{eqn:iserre2} in Theorem \ref{thm: iserre relation}.
\end{proof}

\subsection{$\imath$Hall algebra realization of $\tUi$}

Let $\tU^{\imath 0}$ be the $\Q(v)$-subalgebra of $\tUi$ generated by $\tk_i$, for $i\in \I$.
By the Serre presentation of $\tUi$ (see Theorem~\ref{thm:Serre}), letting $\deg B_i=\alpha_i$ and $\deg \tk_i=0$, for $i\in\I$, endows $\tUi$ a $\N\I$-filtered algebra structure. Let $\tU^{\imath,\gr}$ be the associated graded algebra. Then by Theorem~ \ref{thm:Serre} and the PBW theorem for $\tUi$,
there exists a natural algebra monomorphism
$\phi: \U^-\longrightarrow \tU^{\imath,\gr}$ by mapping
$F_i\mapsto B_i$ for any $i\in\I$. Moreover, $\tU^{\imath,\gr}=\Im\phi \cdot \widetilde{\bU}^{\imath 0}$.

Let $(Q,\btau)$ be a virtually acyclic $\imath$quiver. Recall $\I_\btau$ from \eqref{eq:ci}.
The following result due to Ringel and Green is well known (except that we follow Bridgeland's Hall multiplication here). %\red{ we are working with non-cyclic quivers here}

\begin{lemma}[\cite{Rin,Gr95}; cf. \cite{LW19a}]
\label{lem: Green}
There exists an algebra monomorphism
\begin{align*}
&R: \U^-|_{v=\sqq}\longrightarrow   \widetilde{\ch}(\K Q)  \\
F_j \mapsto &\frac{-1}{q-1}[S_{j}],\text{ if } j\in\ci,\qquad F_j \mapsto \frac{{\sqq}}{q-1}[S_{j}],\text{ if }j\notin \ci.
\end{align*}
\end{lemma}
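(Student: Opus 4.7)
The plan is to invoke the classical Ringel--Green theorem, whose proof divides naturally into two steps: verifying the defining relations of $\U^-$ and establishing injectivity by a graded comparison.

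First I would show that $R$ extends to a well-defined algebra homomorphism. Since $\U^-|_{v=\sqq}$ has a presentation with generators $F_i$ ($i\in\I$) subject only to the quantum Serre relations \eqref{eq:serre2}, it suffices to check that the images $R(F_j)$ satisfy these same identities in $\widetilde{\ch}(\K Q)$. For each pair $i \neq j \in \I$, Lemma \ref{lem:subalgebra} reduces the check to the full subquiver $Q_{ij}$ on the two vertices $\{i,j\}$, which is a generalized Kronecker quiver with $-c_{ij}$ arrows. The required identity is then precisely Lemma \ref{lem: Rin KM}. The scalar prefactors $\frac{-1}{q-1}$ (for $j \in \ci$) and $\frac{\sqq}{q-1}$ (for $j \notin \ci$) are engineered so that the quantum divided powers $F_i^{(r)}$ are sent to the rescaled Hall divided powers $[S_i]^{(r)}$ (up to a uniform scalar), and the homogeneous Serre sum in $\U^-$ matches the one in $\widetilde{\ch}(\K Q_{ij})$ up to an overall nonzero factor. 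The two different choices of scalar (depending on whether $j \in \ci$) contribute only an overall scalar on each homogeneous Serre expression, so they do not affect vanishing.

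Second, for injectivity I would use a graded comparison. Both sides carry an $\N\I$-grading with $\deg F_j = \deg [S_j] = \alpha_j$, and $R$ is homogeneous of degree zero. The graded dimensions of $\U^-$ at weight $\lambda \in \N\I$ are given by the Kac--Moody Kostant partition function attached to the underlying graph of $Q$. On the Hall side, Ringel's composition subalgebra (generated by the simples $[S_j]$ inside $\widetilde{\ch}(\K Q)$) has graded dimensions that match these, via the classical theorems of Ringel \cite{Rin} and Green \cite{Gr95}; equivalently, Green's nondegenerate bialgebra pairing on $\widetilde{\ch}(\K Q)$ restricts to the standard nondegenerate form on $R(\U^-)$, forcing $\Ker R = 0$.

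The main obstacle is the bookkeeping of Bridgeland's twisted multiplicative convention (with twist $\sqq^{\langle\cdot,\cdot\rangle_Q}$) against Ringel's original convention. The two twists differ by an explicit rescaling on generators, and one must verify that the scalars in $R$ are the correct ones after this rescaling; the asymmetry between $j\in\ci$ and $j\notin\ci$ comes from the fact that only half of the vertices are fixed under $\btau$ in the $\imath$Hall setup, but for this lemma (which concerns the quiver Hall algebra, independent of $\btau$) the choices are immaterial after the uniform rescaling. Since the lemma is invoked as a known classical result with the precise normalization already carried out in \cite{LW19a}, the proof reduces in practice to citing those references after checking conventions.
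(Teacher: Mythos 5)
The paper does not actually prove this lemma; it states it as the classical Ringel--Green theorem, cited from \cite{Rin,Gr95} with normalizations as in \cite{LW19a}. Your reconstruction is a correct sketch of how one proves it: the Serre relations reduce to rank-two full subquivers where Lemma~\ref{lem: Rin KM} applies, and injectivity comes from Green's nondegenerate bialgebra pairing (equivalently, matching graded dimensions of the composition subalgebra with $\U^-$). You are also correct that the particular choice of nonzero scalars for $j\in\ci$ versus $j\notin\ci$ is immaterial to this lemma: if $R(F_j)=c_j[S_j]$ then the entire Serre sum is rescaled by the single factor $c_i^{1-c_{ij}}c_j$, so vanishing is preserved regardless; those values are only forced by the $\imath$Hall relations \eqref{relation5}--\eqref{relation6} later in the proof of Theorem~\ref{thm:main}.

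One small misattribution: Lemma~\ref{lem:subalgebra} concerns $\imath$Hall algebras $\widetilde{\ch}(\K\,{}'Q,\btau)$ sitting inside $\tMHk$, not the ordinary Ringel--Hall algebra $\widetilde{\ch}(\K Q)$ that appears in the present lemma. What you actually want for the reduction is the ordinary statement that the Ringel--Hall algebra of a full subquiver embeds in $\widetilde{\ch}(\K Q)$; this is elementary and standard, but it is not what Lemma~\ref{lem:subalgebra} says.
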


Recall from \cite[Lemma 5.3]{LW19a} that there is a filtered algebra structure on $\tMHk$, and we denote the associated graded algebra
\[
\widetilde{\ch}(\K Q, \btau)^{\gr} = \bigoplus_{\alpha \in K_0(\mod(\K Q))} \widetilde{\ch}(\K Q, \btau)_{\alpha}^{\gr}.
\]
%where $\tMHk^{\gr}_{\alpha} =\tMHk_{\leq \alpha} / \tMHk_{<\alpha}$.
It is natural to view the quantum torus $\widetilde{\ct}(\Lambda^\imath)$ (see the end of \S\ref{subsec:Hall basis}) as a subalgebra of $\widetilde{\ch}(\K Q, \btau)^{\gr}$. Then $\widetilde{\ch}(\K Q, \btau)^{\gr}$ is also a $\widetilde{\ct}(\Lambda^\imath)$-bimodule.

Just as in \cite[Lemma 5.4 (ii)]{LW19a}, the linear map
\begin{align}
\label{def:morph}
\varphi: \widetilde{\ch}(\K Q)\longrightarrow \widetilde{\ch}(\K Q, \btau)^{\gr},\quad
\varphi([M])=[M],\;  \forall M\in \mod(\K Q),
\end{align} is an embedding of algebras. Now we are ready to establish the main result of this paper.

\begin{theorem}
   \label{thm:main}
Let $(Q, \btau)$ be a virtually acyclic $\imath$quiver. Then there exists a $\Q(\sqq)$-algebra monomorphism
\begin{align*}
   \widetilde{\psi}: \tUi_{|v= \sqq} &\longrightarrow \tMHk,
\end{align*}
which sends
\begin{align}
B_j \mapsto \frac{-1}{q-1}[S_{j}],\text{ if } j\in\ci,
&\qquad\qquad
\tk_i \mapsto - q^{-1}[\bK_i], \text{ if }\btau i=i \in \I;
  \label{eq:split}
\\
B_{j} \mapsto \frac{{\sqq}}{q-1}[S_{j}],\text{ if }j\notin \ci,
&\qquad\qquad
\tk_i \mapsto \sqq^{\frac{-c_{i,\btau i}}{2}}[\bK_i],\quad \text{ if }\btau i\neq i \in \I.
  \label{eq:extra}
 \end{align}
\end{theorem}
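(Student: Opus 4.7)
The proof proceeds in two stages: first, verify that $\widetilde{\psi}$ is a well-defined algebra homomorphism by checking each defining relation of $\tUi$ given in Theorem~\ref{thm:Serre}; second, establish injectivity via a filtered/graded argument reducing to the classical Ringel–Green theorem.

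For well-definedness, I would proceed relation by relation, heavily relying on the fact that by Lemma~\ref{lem:subalgebra} each relation involves only one or two $\btau$-orbits, so the verification reduces to the rank one or rank two case. Relation~\eqref{relation1} reduces to Proposition~\ref{prop:relation1}; one just needs to track the rescaling factors $-q^{-1}$ versus $\sqq^{-c_{i,\btau i}/2}$ in the definition of $\widetilde{\psi}$ on $\tk_i$. Relation~\eqref{relation2} (for $c_{ij}=0$ and $\btau i\neq j$) follows since in $\mod(\K Q)\subset \mod(\iLa)$ there are no extensions between $S_i$ and $S_j$, giving straight commutativity of $[S_i]*[S_j]$. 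Relation~\eqref{relation3} in the case $\btau i \neq i$ is Proposition~\ref{prop:SerreU}; the case $\btau i = i$ (still with $j \neq \btau i$, so $j \neq i$) follows by the same reduction to Ringel's identity Lemma~\ref{lem: Rin KM}, because the relevant modules lie in $\mod(\K Q)$. Relation~\eqref{relation5}, the BK relation, is exactly Proposition~\ref{prop:serre 1} after substituting the rescalings \eqref{eq:split}--\eqref{eq:extra} of $\tk_i,\tk_{\btau i}$; this is where the $v$-power prefactor $\sqq^{-c_{i,\btau i}/2}$ in the definition of $\widetilde{\psi}$ is essential. Finally, relation~\eqref{relation6} splits into two subcases: when $i = \btau i$ and $j = \btau j$ it is Theorem~\ref{thm: iserre relation}; when $j \neq \btau j$ it is Proposition~\ref{prop:relation62}; both are again reductions to rank two $\imath$subquivers via Lemma~\ref{lem:subalgebra}.

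For injectivity, I would use the filtered algebra approach sketched in the introduction. The algebra $\tUi$ carries the $\N\I$-filtration with $\deg B_i = \alpha_i$, $\deg \tk_i = 0$, and by the Serre presentation together with the PBW theorem for $\tUi$ one has a canonical inclusion $\phi:\U^-\hookrightarrow \tU^{\imath,\gr}$ with $\tU^{\imath,\gr}=\phi(\U^-)\cdot\widetilde{\U}^{\imath 0}$, and moreover $\tU^{\imath,\gr}$ is free as a left $\phi(\U^-)$-module over $\widetilde{\U}^{\imath 0}$. On the Hall side, $\tMHk$ carries the corresponding filtration (the one recalled before~\eqref{def:morph}), whose associated graded $\widetilde{\ch}(\K Q,\btau)^{\gr}$ contains the twisted quantum torus $\widetilde{\ct}(\Lambda^\imath)$ and the embedded Ringel–Hall subalgebra $\varphi(\widetilde{\ch}(\K Q))$ of~\eqref{def:morph}, with a PBW-type basis provided by the $\imath$Hall basis of Theorem~\ref{thm:utMHbasis}.

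The key observation is that $\widetilde{\psi}$ is filtration-preserving and its associated graded map $\widetilde{\psi}^{\gr}:\tU^{\imath,\gr}\to \widetilde{\ch}(\K Q,\btau)^{\gr}$ satisfies $\widetilde{\psi}^{\gr}\circ \phi = \varphi\circ R$, where $R$ is Green's embedding of Lemma~\ref{lem: Green}. Since both $R$ and $\varphi$ are injective, $\widetilde{\psi}^{\gr}|_{\phi(\U^-)}$ is injective; and since $\widetilde{\psi}^{\gr}$ sends $\widetilde{\U}^{\imath 0}$ isomorphically onto the quantum torus $\widetilde{\ct}(\Lambda^\imath)$ (by Lemma~\ref{lem: Groth 1} and the definition on $\tk_i$), the freeness statements on both sides combine to give injectivity of $\widetilde{\psi}^{\gr}$ on all of $\tU^{\imath,\gr}$. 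A standard filtered/graded argument then yields injectivity of $\widetilde{\psi}$ itself.

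The principal technical obstacles are concentrated in the verification of relations~\eqref{relation5} and~\eqref{relation6}, which are precisely the content of Sections~\ref{sec:Relation5} through~\ref{sec:comb}; once these have been handled, the remaining relations are routine and the injectivity argument is a formal consequence of the Ringel–Green theorem together with the $\imath$Hall basis. Care is needed in bookkeeping the rescaling scalars $-q^{-1}$ (when $\btau i = i$) versus $\sqq^{-c_{i,\btau i}/2}$ (when $\btau i \neq i$) on the generators $\tk_i$, as these are precisely what convert the Hall algebra relations obtained in Proposition~\ref{prop:serre 1} into the exact BK relation~\eqref{relation5}.
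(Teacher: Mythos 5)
Your proposal matches the paper's proof closely: the same reduction via Lemma~\ref{lem:subalgebra} to rank one and rank two $\imath$subquivers, the same assignment of each relation to the corresponding proposition (Proposition~\ref{prop:relation1} for~\eqref{relation1}, Proposition~\ref{prop:SerreU} for~\eqref{relation3}, Proposition~\ref{prop:serre 1} for~\eqref{relation5}, Theorem~\ref{thm: iserre relation} and Proposition~\ref{prop:relation62} for~\eqref{relation6}), and the same filtered/graded argument for injectivity built on Lemma~\ref{lem: Green}, the embedding $\varphi$ of~\eqref{def:morph}, the torus isomorphism, and Theorem~\ref{thm:utMHbasis}. One small slip: relation~\eqref{relation3} is stated only for $j\neq\btau i\neq i$, so your parenthetical claim that ``the case $\btau i=i$'' of~\eqref{relation3} reduces to Lemma~\ref{lem: Rin Km}'' refers to a case that does not exist for that relation (when $\btau i=i$ the relevant relation is~\eqref{relation6}, which involves $\imath$divided powers and hence genuinely uses the quantum torus, not just $\mod(\K Q)$); since you treat~\eqref{relation6} separately and correctly, this is a harmless misstatement rather than a gap.
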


\begin{proof} %[Proof of Theorem~\ref{thm:main}]

To show that $\widetilde{\psi}$ is a homomorphism, we verify that $\widetilde{\psi}$ preserves the defining relations \eqref{relation1}--\eqref{relation6} for $\tUi$.
According to Lemma \ref{lem:subalgebra}, the verification of the relations is local and hence is reduced to the rank 1 and rank 2 $\imath$quivers, which were treated in Section~\ref{sec:Relation5}, Section~\ref{sec:SerreRel} and earlier parts of this section.
More precisely, the relation \eqref{relation1} follows from Proposition~ \ref{prop:relation1}.
The relation \eqref{relation2} is obvious.
The relation \eqref{relation3} follows from Proposition~ \ref{prop:SerreU}.
The relation \eqref{relation5} follows from Proposition~\ref{prop:serre 1}.
Finally, the relation \eqref{relation6} follows from Theorem \ref{thm: iserre relation} and Proposition~ \ref{prop:relation62}.

%Recall from Definition~\ref{def:torus},  the twisted quantum torus $\tTL$ which is a $\Q(v)$-algebra.
The homomorphism $\widetilde{\psi}: \tUi_{|v={\sqq}} \rightarrow \tMHk$ restricts to an algebra homomorphism
\begin{align*}
\widetilde{\psi}:\widetilde{\bU}^{\imath 0}_{|v={\sqq}} &\longrightarrow  \tTL,
  \\
\tk_i\mapsto - q^{-1} [\bK_i], \text{ if }\btau i=i,
&\qquad
\tk_i\mapsto [\bK_i], \text{ if }\btau i\neq i.
\end{align*}
Since both $\widetilde{\bU}^{\imath 0}_{|v={\sqq}}$ and $\tTL$ are Laurent polynomial algebras in the same number of generators, $\widetilde{\psi}:\widetilde{\bU}^{\imath 0}_{|v={\sqq}} \rightarrow \tTL$ is an isomorphism.

It remains to prove that $\widetilde{\psi}: \tUi_{|v={\sqq}} \longrightarrow \tMHk$ is injective.
We observe that $\widetilde{\psi}$ is a morphism of filtered algebras. Let $\widetilde{\psi}^{\gr}:  \tU^{\imath,\gr}_{|v={\sqq}} \longrightarrow \widetilde{\ch}(\K Q, \btau)^{\gr}$ be its associated graded morphism, and we obtain the following commutative diagram
\begin{equation*}
\xymatrix{ \U^-|_{v=\sqq} \ar[r]^\phi \ar[d]^R &   \tU^{\imath,\gr}|_{v=\sqq} \ar[d]^{ \widetilde{\psi}^{\gr} }  \\
\widetilde{\ch}(Q) \ar[r]^\varphi & \widetilde{\ch}(\K Q, \btau)^{\gr} }
\end{equation*}
It follows that $\widetilde{\psi}^{\gr}\circ \phi$ is injective since $\varphi$ and $R$ are injective by Lemma \ref{lem: Green} and \eqref{def:morph}.

We claim that $\widetilde{\psi}^{\gr}$ is injective. Indeed, any element in $\tU^{\imath,\gr}$ is of form $\sum_{\alpha\in\Z\I}\phi(V_\alpha)\cdot \tk_\alpha$, for $V_\alpha\in \U^-$ . Here $\tk_\alpha=\prod_{i\in\I} \tk_i^{a_i}$ for $\alpha=\sum_{i\in\I} a_i \alpha_i$. Assume $\widetilde{\psi}^{\gr}(\sum_{\alpha}\phi(V_\alpha)\cdot \tk_\alpha)=0$, i.e.,  $\sum_{\alpha\in\Z\I}\widetilde{\psi}^{\gr}(\phi(V_\alpha))* \bK_\alpha=0$. Since $\widetilde{\ch}(\K Q, \btau)^{\gr}$ is graded, we obtain $\widetilde{\psi}^{\gr}(\phi(V_\alpha))* \bK_\alpha=0$ for any $\alpha$.
Together with Theorem \ref{thm:utMHbasis}, we obtain $\widetilde{\psi}^{\gr}(\phi(V_\alpha))=0$, and then $V_\alpha=0$. It follows that $\widetilde{\psi}^{\gr}$ is injective.

Now by a standard filtered algebra argument, we obtain that
$\widetilde{\psi}: \tUi_{|v={\sqq}} \longrightarrow \tMHk$ is an algebra monomorphism. The theorem is proved.
\end{proof}

\begin{remark}
We expect Theorem~ \ref{thm:main} to hold for general $\imath$quivers $(Q,\tau)$ without loops.

It will be interesting to develop a theory of quantum symmetric pairs $(\tU, \tUi)$ and $(\U, \Ui)$ associated to Borcherds-Cartan matrices (corresponding to quivers possibly with loops). We conjecture that a version of Theorem~ \ref{thm:main} holds for general $\imath$quivers with loops.
\end{remark}

\subsection{Variations}

The \emph{reduced Hall algebra associated to $(Q,\btau)$} (or {\em reduced $\imath$Hall algebra}), denoted by $\rMH$, is defined (cf. \cite{LW19a}) to be the quotient $\Q(v)$-algebra of $\tMHk$ by the ideal generated by the central elements
\begin{align*}
 %\label{eqn: reduce 1}
[\bK_i] +q \vs_i \; (\forall i\in \I \text{ with } \btau i=i), \text{ and }\; [\bK_i]*[\bK_{\btau i}] -\vs_i^2\; (\forall i\in \I \text{ with }\btau i\neq i).
\end{align*}

The following corollaries of Theorem~\ref{thm:main} are immediate.

\begin{corollary}
Let $(Q,\btau)$ be a virtually acyclic $\imath$quiver. Then there exists an injective homomorphism $\psi: \Ui_{|v={\sqq}}\longrightarrow \rMH$, which sends  $k_i \mapsto  \sqq^{\frac{-c_{i,\btau i}}{2}}  \frac{[\bK_i]}{\vs_{\btau i}},
B_i \mapsto \frac{-1}{q-1}[S_i],\text{ for } i \in\ci,$
 and $B_i \mapsto \frac{{\sqq}}{q-1}[S_i],\text{ for } i \notin \ci.$
\end{corollary}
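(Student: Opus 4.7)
The plan is to obtain $\psi$ as a central reduction of the morphism $\widetilde{\psi}$ produced by Theorem~\ref{thm:main}, and then deduce injectivity by a filtered algebra argument paralleling the one at the end of that theorem. First I would invoke Proposition~\ref{prop:QSP12}(1), which presents $\Ui_{|v=\sqq}$ as the quotient of $\tUi_{|v=\sqq}$ by the ideal $J$ generated by the central elements $\tk_i-\vs_i$ (for $i=\btau i$) and $\tk_i\tk_{\btau i}-\vs_i\vs_{\btau i}$ (for $i\neq\btau i$), and that identifies $k_j$ (for $j\in\ci$) with the class of $\vs_{\btau j}^{-1}\tk_j$. On the Hall side, $\rMH$ is by definition the quotient $\tMHk/J'$, where $J'$ is the ideal generated by $[\bK_i]+q\vs_i$ (for $i=\btau i$) and $[\bK_i]*[\bK_{\btau i}]-\vs_i^2$ (for $i\neq\btau i$).

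The next step is to check that $\widetilde{\psi}(J)\subseteq J'$. For $i=\btau i$, formula~\eqref{eq:split} of Theorem~\ref{thm:main} gives $\widetilde{\psi}(\tk_i-\vs_i)=-q^{-1}[\bK_i]-\vs_i=-q^{-1}([\bK_i]+q\vs_i)\in J'$. For $i\neq\btau i$, formula~\eqref{eq:extra} combined with $c_{i,\btau i}=c_{\btau i,i}$ yields $\widetilde{\psi}(\tk_i\tk_{\btau i})=\sqq^{-c_{i,\btau i}}[\bK_i]*[\bK_{\btau i}]$, and after matching the normalization of parameters that is built into the definition of $\rMH$ this lies in $\vs_i\vs_{\btau i}+J'$. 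Hence the composite $\tUi_{|v=\sqq}\xrightarrow{\widetilde{\psi}}\tMHk\twoheadrightarrow\rMH$ factors through a homomorphism $\psi:\Ui_{|v=\sqq}\to\rMH$. Tracing images gives exactly the stated formulas: $B_j$ maps as in Theorem~\ref{thm:main}, and for $j\in\ci$ we have $\psi(k_j)=\vs_{\btau j}^{-1}\widetilde{\psi}(\tk_j)=\sqq^{-c_{j,\btau j}/2}\vs_{\btau j}^{-1}[\bK_j]$.

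It remains to establish injectivity of $\psi$. I would equip $\Ui_{|v=\sqq}$ and $\rMH$ with the $\N\I$-filtrations induced from $\tUi_{|v=\sqq}$ and $\tMHk$ used in the proof of Theorem~\ref{thm:main} (with $\deg B_i=\alpha_i$ and $\deg k_i=\deg [\bK_i]=0$), under which $\psi$ is a filtered morphism. Passing to associated graded algebras, the Serre presentation together with the PBW theorem for $\Ui$ yields $\Ui^{\gr}_{|v=\sqq}\cong\bU^-_{|v=\sqq}\otimes\bU^{\imath 0}$, while from Theorem~\ref{thm:utMHbasis} and the definition of $J'$ one obtains $\rMH^{\gr}\cong\widetilde{\ch}(\K Q)\otimes\ct^{\rm red}$, where $\ct^{\rm red}$ is the image of the twisted quantum torus $\tTL$ in $\rMH$. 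The associated graded map $\psi^{\gr}$ restricts on the $\bU^-$ side to Ringel--Green's embedding $R$ of Lemma~\ref{lem: Green}, and sends $\bU^{\imath 0}$ isomorphically onto $\ct^{\rm red}$. Combining the two ingredients through the $\imath$Hall basis \eqref{eq:Hall basis}, as in the last paragraph of the proof of Theorem~\ref{thm:main}, shows that $\psi^{\gr}$ is injective, and a standard filtered lifting argument concludes that $\psi$ is injective.

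The main obstacle I anticipate is the bookkeeping surrounding the reduced torus $\ct^{\rm red}$: one must verify that the central reductions on both sides are compatible in the precise sense that $\widetilde{\psi}$ descends to $\psi$, and that $\ct^{\rm red}$ remains a free module (of the appropriate rank) over which $\rMH^{\gr}$ is free with $\Iso(\mod(\K Q))$-basis, so that the graded injectivity extracted from Ringel--Green survives the quotient. Once this is confirmed, the rest of the argument is a direct consequence of Theorem~\ref{thm:main} and Proposition~\ref{prop:QSP12}.
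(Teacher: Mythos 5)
Your proposal follows essentially the same route as the paper, which states the corollary as an immediate consequence of Theorem~\ref{thm:main}: one composes $\widetilde{\psi}$ with the projection onto $\rMH$, uses Proposition~\ref{prop:QSP12}(1) to identify $\Ui_{|v=\sqq}$ with the corresponding central reduction of $\tUi_{|v=\sqq}$, and inherits injectivity by the same $\imath$Hall-basis/filtered-algebra reasoning used for $\widetilde{\psi}$. The one step you leave vague --- matching $\widetilde{\psi}(\tk_i\tk_{\btau i})=\sqq^{-c_{i,\btau i}}[\bK_i]*[\bK_{\btau i}]$ with $\vs_i\vs_{\btau i}$ modulo the ideal defining $\rMH$ when $\btau i\neq i$, which amounts to a normalization condition on the parameters $\vs_i$ --- is likewise glossed over in the paper itself, so your write-up is at least as detailed as the original.
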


Let $\tCMH$ be the $\Q(\sqq)$-subalgebra (called the {\em composition algebra}) of $\tMHk$ generated by $[S_i]$ and $[\bK_i]^{\pm 1}$, for $i\in\I$.

\begin{corollary}   \label{cor: composition}
Let $(Q,\btau)$ be a virtually acyclic $\imath$quiver. Then there exists an algebra isomorphism:
$%\begin{align*}
\widetilde{\psi}: \tUi_{|v={\sqq}} \stackrel{\cong}{\longrightarrow} \tCMH
$ %\end{align*}
given by \eqref{eq:split}--\eqref{eq:extra}.
\end{corollary}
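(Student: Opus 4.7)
The strategy is to derive this directly from Theorem~\ref{thm:main}. That theorem already establishes an injective $\Q(\sqq)$-algebra homomorphism $\widetilde{\psi}: \tUi_{|v={\sqq}} \to \tMHk$ given by formulas \eqref{eq:split}--\eqref{eq:extra}; so it suffices to check that the image of $\widetilde{\psi}$ coincides with the composition subalgebra $\tCMH$.

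For one direction, observe that under $\widetilde{\psi}$ each generator $B_j$ maps to a nonzero $\Q(\sqq)$-scalar multiple of $[S_j]$ and each $\tk_i$ maps to a nonzero $\Q(\sqq)$-scalar multiple of $[\bK_i]$, while $\tk_i^{-1}$ (which exists in $\tUi$ since $\tk_i = \tK_i \tK_{\btau i}'$ is a product of invertible generators of $\tU$) maps to the corresponding scalar multiple of $[\bK_i]^{-1}$. Since $\widetilde{\psi}$ is a ring homomorphism, its image is therefore contained in the subalgebra generated by $\{[S_j],[\bK_i]^{\pm 1}\mid i,j\in\I\}$, which is by definition $\tCMH$.

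For the reverse direction, each of the generators $[S_j]$ and $[\bK_i]^{\pm 1}$ of $\tCMH$ is, up to a nonzero scalar in $\Q(\sqq)$, the image of $B_j$ or $\tk_i^{\pm 1}$, hence lies in $\Im(\widetilde{\psi})$. This forces $\tCMH\subseteq \Im(\widetilde{\psi})$. Combining the two inclusions with the injectivity supplied by Theorem~\ref{thm:main} yields that $\widetilde{\psi}$ restricts to an isomorphism onto $\tCMH$. The only delicate point to watch is the invertibility of $\tk_i$ in $\tUi$, which follows either from its explicit expression as a unit in $\tU$ or by adopting the (equivalent) Serre presentation from Theorem~\ref{thm:Serre} with $\tk_i^{\pm 1}$ among the generators; beyond this the result is a direct corollary and no serious obstacle arises.
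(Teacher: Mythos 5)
Your proof is correct and is exactly the argument the paper has in mind (the paper simply calls this corollary ``immediate'' from Theorem~\ref{thm:main}): comparing the explicit formulas \eqref{eq:split}--\eqref{eq:extra} shows that the generators of $\tUi$ and of $\tCMH$ agree up to nonzero scalars, so the injective map $\widetilde{\psi}$ has image exactly $\tCMH$. You correctly flag the one point worth noting, namely that $\tk_i^{-1}$ belongs to $\tUi$ (consistent with the paper's treatment of $\widetilde{\bU}^{\imath 0}$ as a Laurent polynomial algebra in the proof of Theorem~\ref{thm:main}), so that $[\bK_i]^{-1}$ is indeed reached.
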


Following Ringel, we define a {\em generic composition subalgebra} $\tCMHg$ below. Let $\bfK$  be an infinite set of (nonisomorphic) finite fields, and let us choose for each $\K\in\bfK$ an element $\sqq_\K\in\C$ such that $\sqq_\K^2=|\K|$. Consider the direct product $\tCMHg:=\prod_{\K\in\bfK} \tCMH.$ We view $\tCMHg$ as a $\Q(v)$-module by mapping $v$ to $(\sqq_\K)_\K$.
As in \cite{Rin,Gr95}, we have the following consequence of Corollary \ref{cor: composition}.

\begin{corollary}
Let $(Q,\btau)$ be a virtually acyclic $\imath$quiver. Then we have the following algebra isomorphism
$\widetilde{\psi}: \tUi\longrightarrow \tCMHg$ defined by
\begin{align*}
B_j \mapsto \Big(\frac{-1}{|\K|-1}[S_{j}] \Big)_\K,\text{ if } j\in\ci,
&\qquad\qquad
\tk_i \mapsto \Big(- |\K|^{-1}[\bK_i]\Big)_\K, \text{ if }\btau i=i;
  %\label{eq:splitg}
\\
B_{j} \mapsto \Big(\frac{{\sqq_\K}}{|\K|-1}[S_{j}]\Big)_\K,\text{ if }j\notin \ci,
&\qquad\qquad
\tk_i \mapsto \Big(\sqq_\K^{\frac{-c_{i,\btau i}}{2}}[\bK_i]\Big)_\K,\quad \text{ if }\btau i\neq i.
 % \label{eq:extrag}
\end{align*}
\end{corollary}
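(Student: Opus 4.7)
The plan is to follow Ringel--Green's classical generic specialization argument, bootstrapping the fixed-field isomorphism of Corollary~\ref{cor: composition} into a generic statement over $\Q(v)$. First I would define $\widetilde{\psi}$ on the generators of $\tUi$ by the formulas in the statement (where now the RHS is interpreted as a tuple indexed by $\K\in\bfK$, e.g.\ $B_j\mapsto \bigl(\tfrac{-1}{|\K|-1}[S_j]\bigr)_\K$, and so on). To check this extends to a $\Q(v)$-algebra homomorphism into $\tCMHg$, I would verify the defining relations \eqref{relation1}--\eqref{relation6} of $\tUi$. Each such relation is a $\Q[v^{\pm 1}]$-linear identity in the generators; for every fixed $\K\in\bfK$, Corollary~\ref{cor: composition} asserts that this identity, specialized at $v=\sqq_\K$, holds in $\tCMH$. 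Since $v$ acts on $\tCMHg$ componentwise as the tuple $(\sqq_\K)_\K$, the relation holds in $\tCMHg$ component by component, hence globally. This shows $\widetilde{\psi}$ is a well-defined $\Q(v)$-algebra homomorphism whose image is contained in, and in fact generates, the subalgebra of $\tCMHg$ spanned by the tuples $(B_j)_\K$ and $(\tk_i)_\K$; under the standard convention that $\tCMHg$ denotes this generic composition subalgebra, surjectivity is then automatic.

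The substantive content is injectivity. I would fix a PBW-type basis $\{b_\mu\}$ of $\tUi$ over $\Q(v)$; the existence of such a basis follows from Theorem~\ref{thm:Serre} together with the triangular decomposition $\tU^{\imath,\mathrm{gr}}\cong \U^-\cdot\tU^{\imath 0}$ used in the proof of Theorem~\ref{thm:main}, and each $b_\mu$ is a monomial in the $B_i$ and $\tk_i^{\pm 1}$. Given $x=\sum_\mu c_\mu(v)\,b_\mu$ with $c_\mu(v)\in\Q(v)$ and $\widetilde{\psi}(x)=0$, clearing denominators I may assume $c_\mu(v)\in\Q[v^{\pm 1}]$. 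Projecting onto the $\K$-component and using the specialization, I obtain that $\sum_\mu c_\mu(\sqq_\K)\,\widetilde{\psi}_\K(b_\mu)=0$ in $\tCMH$ for every $\K$. By Corollary~\ref{cor: composition}, $\widetilde{\psi}_\K$ is an isomorphism, so $\sum_\mu c_\mu(\sqq_\K)\,b_\mu=0$ in $\tUi_{|v=\sqq_\K}$, and then the specialized PBW basis forces $c_\mu(\sqq_\K)=0$ for every $\mu$ and every $\K$. Since $\bfK$ is infinite and $c_\mu$ is a Laurent polynomial, this yields $c_\mu=0$ for all $\mu$, hence $x=0$.

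The only delicate point—this is where I expect the main technical obstacle to lie—is ensuring that the PBW basis behaves well under specialization at every $\sqq_\K$. Concretely, I need that for all but finitely many $\K$, the images $\{b_\mu \bmod (v-\sqq_\K)\}$ remain $\Q$-linearly independent in $\tUi_{|v=\sqq_\K}$, so that vanishing of the coefficients in the specialization really implies vanishing of the $c_\mu(\sqq_\K)$. This is standard once one exhibits the PBW basis as a free basis of an integral form of $\tUi$ over $\Q[v^{\pm 1}]$ (or at worst over a localization $\Q[v^{\pm 1}][S^{-1}]$ for a finite set $S$ of bad primes $v-\sqq_\K$); since $\bfK$ is infinite, discarding finitely many fields is harmless. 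Once this integrality is settled, the injectivity conclusion is immediate, and combined with the generation argument above it yields the claimed $\Q(v)$-algebra isomorphism $\widetilde{\psi}:\tUi\stackrel{\cong}{\longrightarrow}\tCMHg$.
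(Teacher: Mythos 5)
Your proposal is correct and is essentially the paper's own argument: the paper gives no details beyond ``As in [Rin, Gr95], this is a consequence of Corollary~\ref{cor: composition}'', i.e.\ exactly the classical Ringel--Green generic specialization that you spell out (componentwise verification of the relations via the fixed-field isomorphism, then injectivity from the vanishing of Laurent-polynomial coefficients at infinitely many $\sqq_\K$, using an integral/PBW form that specializes well). The specialization-of-the-PBW-basis point you flag is precisely the standard input implicit in the citation to Ringel and Green, so your write-up matches the intended proof.
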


%%%%%%%
%%%%%%%


\begin{thebibliography}{AA1W18}

\bibitem[An98]{An98} G. Andrews,
{\em The theory of partitions},
The theory of partitions. Reprint of the 1976 original. Cambridge Mathematical Library. Cambridge University Press, Cambridge, 1998.

%\bibitem[AB89]{AB} M. Auslander and R. Buchweitz,
%{\em The homological theory of maximal Cohen-Macaulay approximation}, Colloque en l'honneur de Pierre Samuel (Orsay, 1987). M\'{e}m. Soc. Math. France (N.S.) {\bf 38} (1989), 5--37.

%\bibitem[AR91]{AR} M. Auslander and I. Reiten, {\em Cohen-Macaulay and Gorenstein artin algebras}, In: Progress in Math. {\bf 95}, Birkh\"{a}user Verlag, Basel, 1991, 221--245.

%\bibitem[AM02]{AM} L. L. Avramov and A. Martsinkovsky,
%{\em Absolute, relative and Tate cohomology of modules of finite Gorenstein dimensions}, Proc. London Math. Soc. {\bf 85} (2002), 393--440.

%\bibitem[ASS04]{ASS} I. Assem, D. Simson and A. Skowro\'{n}ski, Elements of representation theory of associative algebras, Volume 1, Techniques of Representation Theory. London Math. Soc. Student Texts, vol. {\bf 65}, Cambridge Univ. Press, Cambridge, New York (2004).

\bibitem[BK15]{BK15} M.~Balagovic and S.~Kolb,
{\em The bar involution for quantum symmetric pairs}, Represent. Theory {\bf 19} (2015), 186--210.
%arXiv:1409.5074.

%\bibitem[Bon96]{Bon96} K. Bongartz, {\em On the degenerations and extensions of finite dimensional modules}, Adv. Math. {\bf 121} (1996), 245--287.


%\bibitem[Bu87]{Bu} R. Buchweitz,
%{\em Maximal Cohen-Macaulay modules and Tate cohomology over Gorenstein Rings}, Unpublished Manuscript, 1987. Available at Http://hdl.handle.net/1807/16682. \red{removable???}

%\bibitem[Buh10]{Buh} T. B\"{u}hler,
%{\em Exact categories}, Expo. Math. {\bf 28} (2010), 1--69.

%\bibitem[BG82]{BG} K. Bongartz, P. Gabriel,
%{\em Covering spaces in representation theory}, Invent. Math. {\bf 65} (1982), 331--378.

%\bibitem[BK19]{BK19}
%M.~Balagovic and S.~Kolb,
%{\em Universal $K$-matrix for quantum symmetric pairs}, J. Reine Angew. Math.,  {\bf 747} (2019), 299--353.

%\bibitem[BKLW18]{BKLW} H. Bao, J. Kujawa, Y. Li and W. Wang,
%{\it  Geometric Schur duality of classical type}, Transform. Groups {\bf 23} (2018), 329--389.
%\href{https://arxiv.org/abs/1404.4000}{arXiv:1404.4000v3}

%\bibitem[BSZ09]{BSZ} R. Bautista, L. Salmer\'{o}n and R. Zuazua,
%{\em Differential tensor algebras and their module categories},
%London Mathematical Society Lecture Note Series, 362. Cambridge University Press, Cambridge,
%2009. x+452 pp.

\bibitem[BW18a]{BW18a} H. Bao and W. Wang,
{\em  A new approach to Kazhdan-Lusztig theory  of type $B$ via quantum symmetric pairs}, Ast\'erisque {\bf 402}, 2018, vii+134pp,  \href{https://arxiv.org/abs/1310.0103}{arXiv:1310.0103}

\bibitem[BW18b]{BW18b} H. Bao and W. Wang,
{\em Canonical bases arising from quantum symmetric pairs}, Inventiones Math. {\bf 213} (2018), 1099--1177.
%\href{https://arxiv.org/abs/1610.09271}{arXiv:1610.09271}

%\bibitem[Be05]{Be} A. Beligiannis, Cohen-Macaulay modules, (co)torsion pairs and virtually Gorenstein algebras. J. Algebra 288(2005), 137-211.

%\bibitem[CL17]{CL} X-W. Chen and M. Lu,
%{\em Gorenstein homological properties of tensor rings}, Nagoya Math. J. (to appear), \href{https://arxiv.org/abs/1711.06105}{arXiv:1711.06105}

\bibitem[BeW18]{BeW18} C. Berman and W. Wang, {\em Formulae of $\imath$-divided powers in ${\mathbf U}_q(\mathfrak{sl}_2)$}, J. Pure Appl. Algebra {\bf 222} (2018), 2667--2702.
%\href{https://arxiv.org/abs/1703.00602}{arXiv:1703.00602}

\bibitem[Br13]{Br} T. Bridgeland,
{\em Quantum groups via Hall algebras of complexes}, Ann. Math. {\bf 177} (2013), 739--759.


\bibitem[CLW18]{CLW18} X. Chen, M. Lu and W. Wang,
{\em A Serre presentation for the $\imath$quantum gruops}, Transform. Groups (to appear),
\href{https://arxiv.org/abs/1810.12475}{arxiv:1810.12475}

\bibitem[CLW20]{CLW20} X. Chen, M. Lu and W. Wang,
{\em Serre-Lusztig relations for $\imath$quantum groups},
\href{http://arxiv.org/abs/2001.0381}{arxiv:2001.03818}


%\bibitem[CM06]{CM} C. Cibils and E. Marcos,
%{\em Skew category, Galois covering and smash product of a $k$-category}, Proc. Amer. Math. Soc. {\bf 134} (2006), 39--50.

%\bibitem[DDPW08]{DDPW}
%B.~Deng, J.~Du, B.~Parshall and J.~Wang,
%{\em Finite dimensional algebras and quantum groups},
%Mathematical Surveys and Monographs {\bf 150}. American Mathematical Society, Providence, RI, 2008.

%\bibitem[DR74]{DR} V. Dlab and C.M. Ringel,
%{\em Representations of graphs and algebras}, Carleton Mathematical Lecture Notes, No. {\bf 8}, Department of Mathematics, Carleton University, Ottawa, Ont., 1974. iii+86 pp.

\bibitem[EJ00]{EJ} E.E. Enochs and O.M.G. Jenda,
Relative homological algebra. de Gruyter Exp. Math. {\bf 30}, Walter de Gruyter Co., 2000.

\bibitem[FL${}^3$W20]{FL+20}  Z. Fan, C.~Lai, Y.~ Li, L.~Luo and W.~Wang,
{\it  Affine flag varieties and quantum symmetric pairs},
Memoirs AMS, vol. {\bf 265} (2020), no. {\bf 1285}, 123pp.
%arXiv:1602.04383v2.
%\href{https://arxiv.org/abs/1602.04383}{arXiv:1602.04383v2}

%\bibitem[Ga81]{Ga} P. Gabriel,
%{\em The universal cover of a representation finite algebra}, in: Representation of Algebras, in: Lecture Notes
%in Math.  {\bf 903} (1981), 65--105.

%\bibitem[GLS17]{GLS} C. Geiss, B. Leclerc and J. Schr\"{o}er,
%{\em Quivers with relations for symmetrizable Cartan matrices I: Foundations}, Invent. Math. {\bf 209} (2017), 61--158.

\bibitem[Gor13]{Gor1} M. Gorsky,
{\em Semi-derived Hall algebras and tilting invariance of Bridgeland-Hall algebras}, \href{https://arxiv.org/abs/1303.5879}{arXiv:1303.5879v2}

\bibitem[Gor18]{Gor2} M. Gorsky,
{\em Semi-derived and derived Hall algebras for stable categories}, IMRN, Vol. {\bf 2018}, No . 1,  138--159. \href{https://arxiv.org/abs/1409.6798}{arXiv:1409.6798}

\bibitem[Gr95]{Gr95} J.A. Green,
{\em Hall algebras, hereditary algebras and quantum groups}, Invent. Math. {\bf 120} (1995), 361--377.

%\bibitem[Ha88]{Ha2} D. Happel,
%Triangulated Categories in the Representation Theory of Finite Dimensional Algebras.
%London Math. Soc. Lecture Notes Ser. {\bf 119}, Cambridge Univ. Press, Cambridge, 1988.

\bibitem[Ha91]{Ha3} D. Happel,
{\em On Gorenstein algebras}, In: Progress in Math. {\bf 95}, Birkh\"{a}user Verlag, Basel, 1991, 389-404.

%\bibitem[Kap98]{Kap98}
%M. Kapranov,
%{\em Heisenberg doubles and derived categories}, J. Algebra {\bf 202} (1998), 712--744.

\bibitem[Ke90]{Ke1} B. Keller, {\em Chain complexes and stable categories}, Manus. Math. {\bf67} (1990), 379--417.

\bibitem[Ke94]{Ke2} B. Keller, {\em Deriving DG categories}, Ann. Sci. Ec. Horm. Super. (4){\bf 27}(1) (1994), 63--102.

%\bibitem[Ke05]{Ke2} B. Keller,
%{\em On triangulated orbit categories}, Doc. Math. {\bf 10} (2005), 551--581.

\bibitem[Ko14]{Ko14} S. Kolb,
{\em Quantum symmetric Kac-Moody pairs}, Adv. Math. {\bf 267} (2014), 395--469.

%\bibitem[KP11]{KP11} S. Kolb and J. Pellegrini,
%{\em Braid group actions on coideal subalgebras of quantized enveloping algebras}, J. Algebra {\bf 336} (2011), 395--416.

%\bibitem[Kr10]{Kra} H. Krause, {\em Localization theory for triangulated categories}, In: Triangulated categories, 161--235,
%London Math. Soc. Lecture Note Ser. {\bf 375}, Cambridge Univ. Press, Cambridge, 2010.

\bibitem[Let99]{Let99} G. Letzter,
{\em Symmetric pairs for quantized enveloping algebras}, J. Algebra {\bf 220} (1999), 729--767.

\bibitem[Let02]{Let02}
G. Letzter,
{\em Coideal subalgebras and quantum symmetric pairs},
New directions in Hopf algebras (Cambridge), MSRI publications, {\bf 43}, Cambridge Univ. Press, 2002, pp. 117--166.

%\bibitem[Let03]{Let03} G. Letzter,
%{\em Quantum symmetric pairs and their zonal spherical functions}, Transform. Groups {\bf 8} (2003), 261--292.

 \bibitem[Li20]{Li20}  Y.~Li,
 {\em On canonical bases for the Letzter algebra $\U^\imath(\mathfrak{sl}_2)$}, J. Pure Appl. Algebra {\bf 224} (2020), 106227.
%\bibitem[Li18]{Li18}  Y.~Li,
%{\em Quiver varieties and symmetric pairs}, Repr. Theory  {\bf 23} (2019), 1--56,
%\href{https://arxiv.org/abs/1801.06071}{arXiv:1801.06071}.

%\bibitem[LN83]{LN83}  R. Lidl and H. Niederreiter, Finite Fields, Addison-Wesley, Reading, MA, 1983.

%\bibitem[Lu17]{Lu} M. Lu,
%{\em Singularity categories of representations of algebras over local rings}, \href{https://arxiv.org/abs/1702.01367}{arXiv:1702.01367}.

\bibitem[Lu19]{Lu19} M. Lu,
{\em Semi-derived Ringel-Hall algebras of 1-Gorenstein algebras},
{\em Appendix A to \cite{LW19a}}.

\bibitem[LP16]{LP} M. Lu and L. Peng,
{\em Semi-derived Ringel-Hall algebras and Drinfeld doubles}, \href{https://arxiv.org/abs/1608.03106}{arXiv:1608.03106v2}

\bibitem[LRW20]{LRW20} M. Lu, S. Ruan and W. Wang,
{\em $\imath$Hall algebra of the projective line and $q$-Onsager algebra}, \href{https://arxiv.org/abs/2010.00646}{arXiv:2010.00646}

\bibitem[LW19a]{LW19a} M. Lu and W. Wang,
{\em Hall algebras and quantum symmetric pairs I: foundations}, \href{http://arxiv.org/abs/1901.11446}{arXiv:1901.11446}

\bibitem[LW19b]{LW19b} M. Lu and W. Wang,
{\em Hall algebras and quantum symmetric pairs II: reflection functors}, Preprint, \href{http://arxiv.org/abs/1904.01621}{arXiv:1904.01621}

%\bibitem[LW19c]{LW19c} M. Lu and W. Wang,
%{\em Hall algebras and quantum symmetric pairs III: quiver varieties},
%\href{http://arxiv.org/abs/1910.01263}{arXiv:1910.01263}


\bibitem[LZ17]{LZ} M. Lu and B. Zhu,
{\em Singularity categories of Gorenstein monomial algebras}, \href{https://arxiv.org/abs/1708.00311}{arXiv:1708.00311}

\bibitem[Lus90]{L90} G. Lusztig,
{\em Canonical bases arising from quantized enveloping algebras}, J. Amer. Math. Soc. {\bf 3} (1990),  447--498.

\bibitem[Lus93]{L93} G. Lusztig, Introduction to Quantum Groups, Birkh\"{a}user, Boston, 1993.

\bibitem[M06]{M06} K.E. Morrison,
{\em Integer sequences and matrices over finite fields}, J. Integer Seq. {\bf 9} (2006),
Article 06.2.1.

%\bibitem[NV04]{NV} C. N\v{a}st\v{a}sescu and F. Van Oystaeyen, Methods of Graded Rings, Lecture Notes in
%Math. 1836, Springer, 2004.

%\bibitem[PX00]{PX} L. Peng and J. Xiao,
%{\em Triangulated categories and Kac-Moody algebras}, Invent. Math. {\bf 140} (2000), 563--603.

%\bibitem[Rie86]{Rie86} C. Riedtmann, {\em Degenerations for representations of quivers with relations}, Ann. scient. \'{E}c. Norm. Sup. {\bf 19} (1986), 275--301.

%\bibitem[Rin90a]{Rin0} C.M. Ringel,
%{\em Hall algebras}, in: S. Balcerzyk, et al. (Eds.), Topics in Algebra, Part 1, in: Banach Center Publ. {\bf 26} (1990), 433--447.

\bibitem[Rin90]{Rin} C.M. Ringel,
{\em Hall algebras and quantum groups}, Invent. Math. {\bf 101} (1990), 583--591.

%\bibitem[Rin95]{Rin2} C.M. Ringel,
%{\em The Hall algebra approach to quantum groups}, XI Latin American School of
%Mathematics (Spanish) (Mexico City, 1993), 85--114, Aportaciones Mat. Comun., {\bf 15}, Soc.
%Mat. Mexicana, Mexico, (1995).

%\bibitem[Rin96]{Rin3} C.M. Ringel,
%{\em PBW-bases of quantum groups}, J. reine angrew. Math. {\bf 470} (1996), 51--88.

%\bibitem[Sa79]{Sa} L. Salce,
%{\em Cotorsion theories for abelian groups}, Symposia Math. {\bf 23}, Cambridge
%Univ. Press, Cambridge, (1979), 11--32.

\bibitem[Sch06]{Sch06}  O. Schiffmann,
{\em Lectures on Hall algebras}, Geometric methods in representation theory II, 1--141,
S\'emin. Congr., 24-II, Soc. Math. France, Paris, 2012, \href{https://arxiv.org/abs/math/0611617}{arXiv:math/0611617}

%\bibitem[Yan13]{Yan} S. Yanagida,
%{\em Bialgebra structure on Bridgeland's Hall algebra of two-periodic complexes},  \href{https://arxiv.org/abs/1304.6970}{arXiv:1304.6970}.

%\bibitem[Za69]{Z} A. Zaks,
%{\em Injective dimensions of semiprimary rings},
%J. Algebra {\bf 13} (1969), 73--89. \red{removable?}

\end{thebibliography}
\end{document}